\documentclass[twoside]{amsart}
 \usepackage[colorlinks=false]{hyperref}
\usepackage{latexsym}
\usepackage{mathrsfs}
\usepackage{pstricks}
\usepackage{amssymb}
\usepackage{amsmath}
\usepackage{amsfonts}
\usepackage{amsthm,array}
\usepackage{mathtools}
\usepackage{epsfig}
\usepackage{verbatim}
\usepackage{graphicx}
\usepackage{float}

  \usepackage{cleveref}
  \usepackage{graphicx}
  \usepackage{subfigure}
  \usepackage{tikz}
  \usepackage{tabularx}
  \usepackage{booktabs}
  \usepackage{algorithm}
  \usepackage{algorithmic}
\setcounter{MaxMatrixCols}{10}

  \DeclareMathOperator{\id}{Id}
  \DeclareMathOperator*{\myspan}{span}
\newcommand{\amal}[5]{#1\prescript{#4}{#5}\times_{#3}#2}
\newcommand{\eqdeg}[1]{#1\mbox{\rm -}\deg}

  \newcommand{\abs}[1]{\left\lvert#1\right\rvert}
  
  \newcommand{\set}[1]{\left\{#1\right\}}

  \newcommand{\sgnrep}[1]{\mathfrak{1}^-_{#1}}

\setlength{\textwidth}{15truecm}
\setlength{\textheight}{21truecm}
\setlength{\oddsidemargin}{.65truecm}
\setlength{\evensidemargin}{.25truecm}
\setlength{\topmargin}{-.15in}
\def\bc{{\mathbb{C}}}

\def\bn{{\mathbb{N}}}

\def\br{{\mathbb{R}}}

\def\bz{{\mathbb{Z}}}

\def\br{\mathbb R}

\def\id{{\text{\rm Id}}}

\def\wt{\widetilde }

\def\vs{\vskip.3cm}
\def\noi{\noindent}
\def\wt{\widetilde}

\def\Ker{\text{\rm Ker\,}}
\def\Im{\text{\rm Im\,}}
\def\vp{\varphi}
\def\ve{\varepsilon}
\def\id{\text{\rm Id\,}}

  \def\tD{{\widetilde D}}

  \def\td{{\tilde d}}

\newcommand\cU{\ensuremath{\mathcal U}}
\newcommand\cV{\ensuremath{\mathcal V}}

\newrgbcolor{violet}{.6 .1 .8}
\newrgbcolor{lightyellow}{1 1 .8}
\newrgbcolor{lightblue}{.80 1 1}
\newrgbcolor{mygreen}{0 .66 .05}
\definecolor{mygreen}{rgb}{0,.66,.05}
\definecolor{lightyellow}{rgb}{1,1,.80}
\newrgbcolor{orange}{1 .6 0}
\newrgbcolor{GreenYellow}{.85 1 .31}
\newrgbcolor{Yellow}{1  1  0}
\newrgbcolor{Goldenrod}{1  .90  .16}
\newrgbcolor{Dandelion}{1  .71  .16}
\newrgbcolor{Apricot}{1  .68  .48}
\newrgbcolor{Peach}{1  .50  .30}
\newrgbcolor{Melon}{1  .54  .50}
\newrgbcolor{YellowOrange}{1  .58  0}
\newrgbcolor{Orange}{1  .39  .13}
\newrgbcolor{BurntOrange}{1  .49  0}
\newrgbcolor{Bittersweet}{1.  .4300  .24}
\newrgbcolor{RedOrange}{1  .23  .13}
\newrgbcolor{Mahogany}{1.  .4475  .4345}
\newrgbcolor{Maroon}{1.  .4084  .5376}
\newrgbcolor{BrickRed}{1.  .3592  .3232}
\newrgbcolor{Red}{1  0  0}
\newrgbcolor{OrangeRed}{1  0  .50}
\newrgbcolor{RubineRed}{1  0  .87}
\newrgbcolor{WildStrawberry}{1  .04  .61}
\newrgbcolor{Salmon}{1  .47  .62}
\newrgbcolor{CarnationPink}{1  .37  1}
\newrgbcolor{Magenta}{1  0  1}
\newrgbcolor{VioletRed}{1  .19  1}
\newrgbcolor{Rhodamine}{1  .18  1}
\newrgbcolor{Mulberry}{.6668  .1180  1.}
\newrgbcolor{RedViolet}{.9538  .4060  1.}
\newrgbcolor{Fuchsia}{.5676  .1628  1.}
\newrgbcolor{Lavender}{1  .52  1}
\newrgbcolor{Thistle}{.88  .41  1}
\newrgbcolor{Orchid}{.68  .36  1}
\newrgbcolor{DarkOrchid}{.60  .20  .80}
\newrgbcolor{Purple}{.55  .14  1}
\newrgbcolor{Plum}{.50  0  1}
\newrgbcolor{Violet}{.98 .15 .95}
\newrgbcolor{RoyalPurple}{.25  .10  1}
\newrgbcolor{BlueViolet}{.84  .38  .98}
\newrgbcolor{Periwinkle}{.43  .45  1}
\newrgbcolor{CadetBlue}{.38  .43  .77}
\newrgbcolor{CornflowerBlue}{.35  .87  1}
\newrgbcolor{MidnightBlue}{.4414  .9259  1.}
\newrgbcolor{NavyBlue}{.06  .46  1}
\newrgbcolor{RoyalBlue}{0  .50  1}
\newrgbcolor{Blue}{0  0  1}
\newrgbcolor{Cerulean}{.06  .89  1}
\newrgbcolor{Cyan}{0  1  1}
\newrgbcolor{ProcessBlue}{.04  1  1}
\newrgbcolor{SkyBlue}{.38  1  .88}
\newrgbcolor{Turquoise}{.15  1  .80}
\newrgbcolor{TealBlue}{.1572  1.  .6668}
\newrgbcolor{Aquamarine}{.18  1  .70}
\newrgbcolor{BlueGreen}{.15  1  .67}
\newrgbcolor{Emerald}{0  1  .50}
\newrgbcolor{JungleGreen}{.01  1  .48}
\newrgbcolor{SeaGreen}{.31  1  .50}
\newrgbcolor{Green}{0  1  0}
\newrgbcolor{ForestGreen}{.1992  1.  .2256}
\newrgbcolor{PineGreen}{.3100  1.  .5575}
\newrgbcolor{LimeGreen}{.50  1  0}
\newrgbcolor{YellowGreen}{.56  1  .26}
\newrgbcolor{SpringGreen}{.74  1  .24}
\newrgbcolor{OliveGreen}{.6160  1.  .4300}
\newrgbcolor{RawSienna}{.53  .28  .16}
\newrgbcolor{Sepia}{1.  .7510  .70}
\newrgbcolor{Brown}{.41  .25  .18}
\newrgbcolor{TAN}{.86  .58  .44}
\newrgbcolor{Gray}{1.  1.  1.}
\newrgbcolor{Black}{1  1  1}
\newrgbcolor{White}{1  1  1}
\newtheorem{theorem}{Theorem}[section]
\newtheorem{proposition}{Proposition}[section]
\newtheorem{lemma}{Lemma}[section]
\newtheorem{corollary}{Corollary}[section]

\newtheorem{definition}{Definition}[section]

\newtheorem{remark}{Remark}[section]
\newtheorem{example}{Example}[section]

\newtheorem{remark-definition}{Remark and Definition}[section]
\newtheorem{rem-not}{Remark and Notation}[section]

\begin{document}

\title{Multiple Periodic Solutions for $\Gamma$-Symmetric Newtonian
Systems}
\author{Mieczyslaw Dabkowski}
\address{Department of Mathematical Sciences, the University of Texas at
Dallas,
 Richardson, TX, 75080-3021, U.S.A.}
 \email{mdab@utdallas.edu}
 \author{ Wieslaw Krawcewicz} 
 \address{Department of Mathematical Sciences, the University of Texas at
Dallas,
 Richardson, TX, 75080-3021, U.S.A. \\
 and\\
 College of Mathematics and Information Sciences, Guangzhou University,
Guangzhou, 510006
China}
 \email{wieslaw@utdallas.edu}
\author{Yanli Lv}
\address{Department of Mathematical Sciences, the University of Texas at
Dallas,
 Richardson, TX, 75080-3021, U.S.A.\\ and\\
 College of Science, China Three Gorges University,
Yichang, 443002, P.R.C.}
\email{yxl103720@utdallas.edu}
\author{Hao-Pin Wu}
\address{Department of Mathematical Sciences, the University of Texas at
Dallas,
 Richardson, TX, 75080-3021, U.S.A.}
 \email{hxw132130@utdallas.edu}
\date{}
\maketitle

\begin{abstract} The existence of periodic solutions in $\Gamma$-symmetric Newtonian systems $\ddot x=-\nabla f(x)$ can be effectively studied by means of the $\Gamma\times O(2)$-equivariant gradient degree with values in the Euler ring $U(\Gamma\times O(2))$. In this paper we show that in the case of $\Gamma$ being a finite group, the Euler ring $U(\Gamma\times O(2))$ and the  related basic degrees are effectively computable using Euler ring homomorphisms, the Burnside ring $A(\Gamma\times O(2)) $, and the reduced   $\Gamma\times O(2)$-degree with no free parameters.  We present several examples of Newtonian systems with various symmetries, for which we show existence of  multiple periodic solutions. We also provide exact value of the equivariant topological invariant for those problems. 
\end{abstract}

\footnote{\hskip.1cm 2010 \textit{AMS Mathematics Subject Classification:}
37J45, 37C25, 37C80, 37K05, 47H11, 55M25} \footnote{\hskip.1cm $^{\dagger }$ Corresponding author,
E-mail: wieslaw@utdallas.edu.} \footnote{\hskip.1cm\textbf{Key Words and
Phrases:} Newtonian systems, symmetries, periodic solutions, Euler ring, Burnside ring, equivariant degree, variational problem}

\section{Introduction} In this paper we are  studying the existence of non-constant $p$-periodic solutions in $\Gamma$-symmetric  Newtonian systems (here  $\Gamma$ is assumed to be  a finite group) of the type
\begin{equation}\label{eq:int1}
\ddot x(t)=-\nabla f(x(t)), \quad x(t)\in V,
\end{equation}
where $V:=\br^n$ is an orthogonal $\Gamma$-representation and  $f:V\to \br$ a $C^2$-differentiable $\Gamma$-invariant function. By rescaling the time, this problem  can be reduced  to the following one-parameter system
\begin{equation}\label{eq:New-L}
\begin{cases}
\ddot x(t)=-\lambda^2 \nabla f(x(t)), \quad x(t)\in V,\\
x(0)=x(2\pi),\;\; \dot x(0)=\dot x(2\pi).
\end{cases}
\end{equation}
Many methods applied to  \eqref{eq:int1} were inspired   by the  Hamiltonian systems  of type
\begin{equation}\label{eq:int2}
\dot x(t)=J\nabla f(x(t)), \quad x(t)\in V
\end{equation}
where $V=\br^{2n}$ and $J=\left(
           \begin{array}{cc}
             0 & -\id\\
             \id & 0 \\
           \end{array}
         \right)$
is the symplectic matrix, for which the existence of $2\pi$-periodic solutions was intensively studied 
(see for example \cite{AB,Amb,Bar,IE1,IE2,IE3,GF1,GF2,MG1,GM,YL1,YL3,YL4,MP,MW,RP,Rab,Ryba}). Similar methods were also developed for  the system \eqref{eq:int1} (see \cite{AZ,BF,GI1,GI2,GI3,GI4,GI5,Wang}).  After K.~Geba introduced the concept of the gradient equivariant degree (cf. \cite{G}) leading to a development of  new equivariant-theoretical methods, several interesting papers  by A.~Golebiewska,  J.~Fura, A.~Ratajczak, W.~Radzki, H.~Ruan and  S.~Rybicki were published (cf. \cite{FRR,FRRuan,GolRyb,RR,RY2,RY4,RY3,RybSurvey}).  These authors applied the   $\Gamma\times SO(2)$-equivariant degree to study the existence of multiple $2\pi$-periodic solutions to \eqref{eq:int1}. However, there is a significant difference between the systems \eqref{eq:int1} and \eqref{eq:int2}: the system \eqref{eq:int1} is time-reversible, so it leads to a variational problem with $\Gamma\times O(2)$-symmetries. As the equivariant gradient degree provides {\it full} equivariant topological classification of the critical set for the related to \eqref{eq:int1} variational functional $\mathscr J:H^1(S^1;V)\to \br$, it is important to consider the {\it full} symmetry group for $\mathscr J$. In addition, if the function $f$ is even, then the associated with \eqref{eq:int1} functional $\mathscr J$ becomes $\Gamma\times \bz_2\times O(2)$-invariant. It is our strong conviction that in order to make the most efficient use of the equivariant degree methods, one cannot ignore symmetric properties of $\mathscr J$ for the sake of computational simplifications. 
\vs
It seems that the main difficulty related to the usage of the $G$-equivariant gradient degree  method  is related to the topological sophistication of its definition. The $G$-equivariant gradient degree takes its values  in the Euler ring $U(G)$, which was introduced by T. tom Dieck. In his monograph (cf. \cite{tD}) tom Dieck presented several cohomological formulae used for the computations of the ring multiplication in $U(G)$. However, in the case of an arbitrary compact Lie group $G$, these computations may  constitute extraordinary challenges. Very often,  the users of the $G$-equivariant degree theory are restricted  by these computational obstacles only  to certain  (simpler or well worked out) types of symmetry groups $G$, leading inevitable to  ignoring the full symmetry group for the related problem (cf. \cite{GR,RR,BKR}). Notice that  $U(G)$ is a generalization of the so-called Burnside ring $A(G)$, which is also a part of the ring $U(G)$.  While the computations of $U(G)$ remain a difficult task, the Burnside ring $A(G)$  is relatively easy to describe.  One should point out, that other algebraic structures were also used for the ranges of various versions of equivariant degrees. For instant the twisted $\Gamma\times S^1$-equivariant degree with one parameter is taking its values in the $A(\Gamma)$-module $A_1(\Gamma\times S^1)$ (see \cite{AED}) and the twisted $\Gamma\times \mathbb T^n$-equivariant degree with $n$-parameters is taking its values in $A(\Gamma)$-module $A_1(\Gamma\times \mathbb T^n)$ (see \cite{DKL}). These additional structures, for which there is a well established computational base, can be used in order to simplify the computations of the Euler ring $U(G)$.

\vs
Consider the Euler ring homomorphism $\Psi: U(\Gamma\times O(2))\to U(\Gamma\times SO(2))$  induced by the inclusion $i:\Gamma\times SO(2)\hookrightarrow \Gamma\times O(2)$ (cf. \cite{tD,BKR}.
In this paper we will show that in the case $G:=\Gamma\times O(2)$ (where $\Gamma$ is a finite group) the Euler ring $U(G)$ ring can be fully described by the means of the multiplication in the Burnside ring $A(G)$ and the homomorphism $\Psi$,  using the known structure of the Euler ring $U(\Gamma\times SO(2))$ and the Burnside ring $A(\Gamma\times O(2))$ (see \cite{RR,BKR} for more details). More precisely,   one can obtain a complete set of relations for the multiplication in the Euler ring $U(\Gamma\times O(2))$. These relations can be effectively implemented in a special computer software designed for this type of groups. Since several computer programs are presently developed for the Euler ring $U(\Gamma\times SO(2))$ and for the Burnside ring $A(\Gamma\times O(2))$ (see  \url{https://bitbucket.org/psistwu/gammao2} for the most recent updates), we believe that one can expect such programs for the computation of $U(\Gamma\times O(2))$ and other objects related  to the equivariant degree theory.
\vs
Clearly, efficient applications of the gradient equivariant degree depend on more than just the structure of the Euler ring. For instance, in order to compute the gradient equivariant degrees of linear maps, one also needs a database composed of the so-called gradient basic degrees (which are simply the degrees of $-\id$ on irreducible representations). Also it is possible to use a topological definition of the equivariant gradient degree  (cf. \cite{GR}) one can also apply the Euler homomorphism for such computations (cf. \cite{RR,BKR}). 
However, for a given compact Lie group $G$, there is also the so-called $G$-equivariant degree with no free parameter taking its values in the Burnside ring $A(G)$  (see \cite{AED,survey}) which  can be considered as a part of the $G$-equivariant gradient degree. Consequently, all the algorithmic computational formulae for the $G$-equivariant degree with no free parameter can be used in order to obtain partial values of the $G$-equivariant gradient degree, in particular the gradient basic degrees. In this paper we will show that this information is sufficient to fully reconstruct the complete values of the gradient basic degrees in the case of $G=\Gamma\times O(2)$. 
\vs
Finally, let us discuss the applications of the $\Gamma\times O(2)$-equivariant degree to the systems \eqref{eq:int1} and \eqref{eq:New-L}. In the case when it is possible to obtain {\it a priori} bounds for the $2\pi$-periodic solutions of these equations, one can compute the $\Gamma\times O(2)$-equivariant invariant $\omega\in U(G)$ (which could be for example a difference of the equivariant gradient degrees on large and small ball). Such cases are for example when the system \eqref{eq:int1} is asymptotically linear or satisfy a Nagumo-type growth condition. Then clearly the existence of non-trivial solutions (i.e. outside the small ball) can be concluded by the fact that $\omega\not=0$. However, one can be also interested to predict the existence of   multiple $2\pi$-periodic solutions with  different types of symmetry. In such a case, the coefficients of $\omega$ corresponding to the so-called {\it maximal orbit type} can provide the crucial information in order to formulate such results. But the {\it maximality} of  such  obit types implies that it is a generator of the Burnside ring $A(G)$, therefore it can actually be detected by the $\Gamma\times O(2)$-equivariant degree with no free parameter, which can be much easier computed than the equivariant gradient degree.
Similar arguments apply to the system \eqref{eq:New-L}, which we can consider as a bifurcation problem with a parameter $\lambda$. More precisely, in this case we are looking for critical values $\lambda_o$ of the parameter $\lambda>0$, to which we can associate the $\Gamma\times O(2)$-equivariant gradient bifurcation invariants $\omega(\lambda_o)\in U(\Gamma\times O(2))$ classifying the bifurcation of $2\pi$-periodic solutions from the zero solution. The existence and multiplicity of such bifurcating branches of $2\pi$-periodic solutions can be described from the information contained in the invariants  $\omega(\lambda_o)$.
 Consequently,  all the essential information needed to establish the existence and multiplicity results for the systems \eqref{eq:int1} and \eqref{eq:New-L} can be extracted from the $\Gamma\times O(2)$-equivariant degree (with no free parameter) of $\mathscr J$  which takes values in the Burnside ring $A(G)$.   It is clear that the $\Gamma\times O(2)$-equivariant degree without free parameter can be easily  computed (without getting entangled in  complicated technical details), has similar properties and provides enough information for analyzing these problems.

 \vs
 Nevertheless, let us emphasize that only the equivariant invariants $\omega\in U(G)$ (without truncation of its coefficients) provide a complete equivariant topological classification for the related solution sets to \eqref{eq:int1} or \eqref{eq:New-L}. 
 
\vs
To illustrate the usage and the computations of the associated with the systems \eqref{eq:int1} and \eqref{eq:New-L} equivariant invariants, in section 7 we present several examples of symmetric Newtonian systems, for which the exact values of the associated invariants can be effectively computed. These computations are possible only with the assistance of computer programs  in GAP (developed by H-P. Wu), which were developed for several different groups $\Gamma$  (cf. \cite{Pin}). 

\vs \paragraph{\bf Acknowledgment}The two last authors acknowledge the support from National Science Foundation through grant DMS-1413223. The third author was also supported by National Natural Science Foundation of China (no. 11301102).
\vs

\section{Preliminaries}

In this section, we present all the preliminary notions that will be used
in the remaining parts of the paper.  In this section we recall some basic notations and results
relevant to this paper. In what follows $G$ always stands for a compact Lie
group and whenever we refer to $H$ as a subgroup of $G$, i.e. 
 $H\leq G$, we will assume that $H$ is a closed subgroup of $G$.

\subsection{$G$-Actions, Sets $N(L,H)$, $N(L,H)/H$ and $N(L,H)/N(H)$}
Let $H\le G$.  We denote by $N(H)$ the
normalizer of $H$ in $G$ and by $W(H)=N(H)/H$ the Weyl group of $H$.  The symbol $(H)$ will stand for the conjugacy class of $H$ in $G$. We also will write $N_{G}(H)$, $W_{G}(H)$ and $(H)_G$ in order to clarify that 
$H$ is considered as a subgroup of $ G$.  We
also use the following notations: 
\begin{equation*}
\Phi (G)=\{(H):H\;\;\text{is a subgroup of }\;G\}\text{ and }\Phi
_{n}(G)=\{(H)\in \Phi (G):\text{\textrm{dim\thinspace }}W(H)=n\}.
\end{equation*}%
The set $\Phi (G)$ has a natural partial order defined by 
$(H)\leq (K)$ iff $\exists g\in G\;\;gHg^{-1}\leq K$. 
\vs

A topological space $X$ equipped with a left (resp. right) $G$-action is
called a $G$\textit{-space} (resp. {space-}$G$) and if an action is not
specified we assume that $G$ acts from the left. For a $G$-space $X$ and $x\in X$, we denote by: 
$G_{x} :=\{g\in G:gx=x\}$  -- the {\it isotropy group}  of $x$, $G(x) :=\{gx:g\in G\}$ -- the {\it orbit} of $x$,
$(G_{x})$ ---  the {\it orbit type} of $x\in X$
For a subgroup $H\leq G$, we also put:
\begin{align*}
X_{H}& :=\{x\in X:G_{x}=H\}; \\
X^{H}& :=\{x\in X:G_{x}\geq H\}; \\
X_{(H)}& :=\{x\in X:\;(G_{x})=(H)\}; \\
X^{(H)}& :=\{x\in X:\;(G_{x})\geq (H)\}
\end{align*}
and we will denote
\[
\Phi(G;X):=\{(G_x): x\in X\} \;\;\; \text{ and }\;\;\; \Phi_n(G;X):=\Phi(G;X)\cap \Phi_n(G).
\]
It is well-known, $W(H)$ acts on $X^{H}$ and it acts
freely on $X_{H}$. The orbit space of $G$ acting on $X$ will be denoted by $X/G$ (we use $G\backslash X$ for $G$ acting on the right).
\vs 
 Let $G_{1}$ and $G_{2}$ be compact Lie groups. Assume that $X$ is
a $G_{1}$-space and space-$G_{2}$ simultaneously  and the following condition
is satisfied:
\begin{equation*}
g_{1}(xg_{2})=(g_{1}x)g_{2},
\end{equation*}
for all $x\in X$, $g_{i}\in G_{i}$, $i=1,$ $2$ (i.e. we assume that the double action $G_1\times X\times G_2\to X$ is continuous). In this case, we say that $X$
is a $G_{1}$-space-$G_{2}$. Clearly, $X/G_{1}$ is a space-$G_{2}$ while $G_{2}\backslash X$ is a $G_{1}$-space. For the double orbit spaces $G_{2}\backslash (X/G_{1})$ and $(G_{2}\backslash X)/G_{1}$, we use the
notation $G_{2}\backslash X/G_{1}$ since both double orbit spaces are
homeomorphic. In particular, if $X=G$ and $H\leq G$ (resp. $L\leq G$) acts
on $G$ by left (resp. right) translations, $G$ is an $H$-space-$L$.
Moreover, $G/H$ (resp. $L\backslash G$) can be identified with the set of
left cosets $\{Hg:g\in G\}$ (resp. right cosets $\{gL:g\in G\}$), and $L$
(resp. $H$) acts on $G/H$ (resp. $L\backslash G$) as follows
\begin{equation*}
(Hg)l=H(gl),l\in L(\text{resp}.h(gL)=(hg)L,(h\in H)).
\end{equation*}
In addition, $L\backslash G/H$ can be identified with the set of the
corresponding double cosets. If we replace $G$ by a $G$-invariant subset of $G$ analogous observations can be applied\footnote{%
For the equivariant topology background used in this paper, we refer to \cite{tD,BtD,Kawa,Bred,AED}.}.\medskip
\vs
Let $L\leq H\leq G$. Define 
\begin{equation}
N(L,H)=\{g\in G:gLg^{-1}\leq H\}.  \label{eq:NLH}
\end{equation}
One can show ( see \cite{AED}, Lemma 2.55) that \textrm{dim}\thinspace $W(H)\leq \text{\textrm{dim\thinspace }}W(L)$ and $N(L,H)$ is a $N(H)$-space-$N(L)$ (cf. \cite{BKR,Kawa}). Furthermore, $N(L,H)/H$ is space-$N(L)$, hence
it is also a space-$W(L)$. If one takes a $G$-space $G/H$ then:
\begin{itemize}
\item[(i)] $(G/H)^{L}$ is $W(L)$-equivariantly diffeomorphic to $N(L,H)/H$;\medskip

\item[(ii)] $(G/H)^{L}$ contains finitely many $W(L)$-orbits
(cf. \cite{Bred}, Corollary 5.7).\medskip
\end{itemize} 
 Consider set $N(L,H)/N(H)$ and let 
\begin{equation*}
n(L,H)=|N(L,H)/N(H)|,
\end{equation*}
where $|X|$ stands for the cardinality of $X$ (cf. \cite{GolSchSt}). As it
was shown in \cite{AED} (see Proposition 2.52), if \textrm{dim}\thinspace $W(L)=\text{\textrm{dim\thinspace }}W(H)$, then $n(L,H)$ is finite. Moreover,
one can show (cf. \cite{BKR}) that there is finite sequence $g_{1},g_{2},$\dots ,$g_{n}$, of elements of $G$ (where $g_{1}=e$ is the identity of $G$), such that $N(L,H)$ is a disjoint union of $N(H)g_{j}N(L)$, $j=1,2,\dots,n $ (here $N(H)g_{j}N(L)$ denotes a double coset), i.e. 
\begin{equation*}
N(L,H)=N(H)g_{1}N(L)\sqcup N(H)g_{2}N(L)\sqcup \dots \sqcup N(H)g_{n}N(L),
\end{equation*}
For more information about the numbers $n(L,H)$ and their properties we
refer to \cite{AED}.
\vs
\subsection{Isotypical Decomposition 
\label{subsec:G-represent}}
As is well-known, any compact Lie group $G$ admits countably many
non-equivalent real (resp. complex) irreducible representations. Therefore,
we will assume that a complete list of all real (resp. complex) irreducible $G$-representations $\mathcal{V}_{i}$, $i=0,$ $1,$ $\ldots $ (resp. $\mathcal{U}_{j}$, $j=0,1,\dots $) is given. Let $V$ (resp. $U$) be a finite-dimensional real (resp. complex) $G$- representation and (without loss of generality) we may assume that $V$ (resp. $U$) is an orthogonal
(resp. unitary)) representation. Then, $V$ (resp. $U$) decomposes into a direct sum 
\begin{equation}
V=V_{0}\oplus V_{1}\oplus \dots \oplus V_{r},  \label{eq:Giso}
\end{equation}
\begin{equation}
\text{(resp. }U=U_{0}\oplus U_{1}\oplus \dots \oplus U_{s}\;),
\label{eq:Giso-comp}
\end{equation}
where each component $V_{i}$ (resp. $U_{j}$) is {\it modeled} on the
irreducible $G$-representation $\mathcal{V}_{i}$, $i=0,1,2,\dots ,r$, (resp. 
$\mathcal{U}_{j}$, $j=0,$ $1,$ $\dots ,$ $s$), that is, $V_{i}$ (resp. $U_{j} $) contains all the irreducible subrepresentations of $V$ (resp. $U$)
equivalent to $\mathcal{V}_{i}$ (resp. $\mathcal{U}_{j}$). We call such
decomposition  \eqref{eq:Giso} (resp.  \eqref{eq:Giso-comp}) a $G$\textit{-isotypical decomposition of }$V$ (resp. $U$).
\vs

Given an orthogonal $G$-representation $V$, denote by $\text{L}^{G}(V)$
(resp. $\text{GL}^{G}(V)$) the $\mathbb{R}$-algebra (resp. group) of all $G$-equivariant linear (resp. invertible) operators on $V$. Clearly, the
isotypical decomposition \eqref{eq:Giso} induces the following direct sum
decomposition of $\text{GL}^{G}(V):$ 
\begin{equation*}
\text{GL}^{G}(V)=\bigoplus_{i=0}^{r}\text{GL}^{G}(V_{i}),
\label{eq:GLG-decomp}
\end{equation*}
where for every isotypical component $V_{i}$
\begin{equation*}
\text{GL}^{G}(V_{i})\simeq \text{GL}(m_{i},\mathbb{F}),\quad m_{i}=\text{\rm dim\,}V_{i}/\text{\rm dim\,}\mathcal{V}_{i}
\end{equation*}
and depending on the type of the irreducible representation $\mathcal{V}_{i}, $ $\mathbb{F}$ ($=\mathbb{R}$, $\mathbb{C}$ or $\mathbb{H}$) is a
finite-dimensional division algebra $\text{L}^{G}(\mathcal{V}_{i})$.
\vs
Finally, for a linear operator $A\in \text{L}^{G}(V)$ we will denote by $\sigma(A)$ the spectrum of $A$ and for each $\mu\in \sigma(A)$ and $i=0,1,2,\dots, r$ we put 
\[
m_i(\mu):=\text{dim} \, (E(\mu)\cap V_i)/\text{dim}\, \cV_i,
\]
where $V_i$ is given by \eqref{eq:Giso} and $E(\mu)$ denotes the generalized eigenspace of $\mu$. The number $m_i(\mu)$ will be called $\cV_i$-multiplicity of $\mu$. 
\vs

\section{Euler Ring and Related Algebraic Structures}

\subsection{Euler Characteristic}

For a topological space $Y$, denote by $H_{c}^{\ast }(Y)$ the ring of
Alexander-Spanier cohomology with compact support (see \cite{Spa}). If $%
H_{c}^{\ast }(Y)$ is finitely generated then the Euler characteristic $\chi
_{c}(Y)$ is correctly defined. For a compact $CW$-complex $X$ and its closed
subcomplex $A$, as it is very well-known $H_{c}^{\ast }(X\setminus A))\cong
H^{\ast }(X,A;\mathbb{R})$, where $H^{\ast }(\cdot )$ stands for a usual
cellular cohomology ring (see \cite{Spa}, Chap. 6, Sect. 6, Lemma 11).
Therefore, $\chi _{c}(X\setminus A)$ is correctly defined and 
\begin{equation*}
\chi (X)=\chi _{c}(X\setminus A)+\chi (A)=\chi (X,A)+\chi (A).
\end{equation*}%
where $\chi (\cdot )$ stands for the Euler characteristic with respect to
the cellular cohomology groups. Moreover, if $Y$ a compact $CW$-complex, $%
B\subseteq Y$ its closed subcomplex and $p:X\setminus A\rightarrow
Y\setminus B$ a locally trivial fibre bundle with path-connected base and
fibre $F$ which is a compact manifold, then (cf. \cite{Spa}, Chap. 9, Sect.
3, Theorem 1) 
\begin{equation*}
\chi _{c}(X\setminus A)=\chi (F)\chi _{c}(Y\setminus B).
\end{equation*}%
Furthermore, for a compact $G$-ENR-space $X$, where $G=\mathbb{T}^{n}$ an $n$%
-dimensional torus ($n>0$) one shows 
\begin{equation}\label{eq:EchTn}
\chi (X)=\chi (X^{G}),
\end{equation}%
and $X^{G}=\emptyset $, so $\chi (X)=0$ (see \cite{Kawa,Kom}).\medskip

\vs
\subsection{Euler and Burnside Rings\label{subsect:Euler}}

In this section we recall some basic
properties of the  Euler and Burnside Rings (cf. \cite{tD}).
\vs 
\begin{definition}\rm 
\label{def:EulerRing} (cf. \cite{tD}) Let $U(G)={\mathbb{Z}}[\Phi
(G)]$ be a free ${\mathbb{Z}}$-module with basis $\Phi (G)$ and $\chi _{c}$ denotes the Euler characteristic defined in terms of the
Alexander-Spanier cohomology with compact support (cf. \cite{Spa}). Define the {\it 
multiplication}  on $U(G)$ as follows:  for generators $(H)$, $(K)\in
\Phi (G)$ put: 
\begin{equation}
(H)\ast (K)=\sum_{(L)\in \Phi (G)}n_{L}(L),  \label{eq:Euler-mult}
\end{equation}
where 
\begin{equation}
n_{L}:=\chi _{c}((G/H\times G/K)_{L}/N(L)),  \label{eq:Euler-coeff}
\end{equation}%
and we extend it linearly to the multiplication on entire $U(G)$. We call
the free ${\mathbb{Z}}$-module $U(G)$ equipped with multiplication (\ref{eq:Euler-mult}) the {\it Euler ring} of the group $G$ (cf. \cite{BtD}).
\end{definition}

Let $\Phi _{0}(G)=\{(H)\in \Phi (G):$ \textrm{dim\thinspace }$W(H)=0\}$ and
denote by $A(G)={\mathbb{Z}}[\Phi _{0}(G)]$ a free ${\mathbb{Z}}$-module
with basis $\Phi _{0}(G).$ Define multiplication on $A(G)$ by restricting
multiplication from $U(G)$ to $A(G)$, i.e. for $(H)$, $(K)\in \Phi _{0}(G)$
let
\begin{equation}
(H)\cdot (K)=\sum_{(L)}n_{L}(L),\qquad (H),(K),(L)\in \Phi _{0}(G),\text{
where}  \label{eq:multBurnside}
\end{equation}
\begin{equation}
n_{L}=\chi ((G/H\times G/K)_{L}/N(L))=|(G/H\times G/K)_{L}/N(L)|
\label{eq:CoeffBurnside}
\end{equation}
($\chi $ here denotes the usual Euler characteristic). Then $A(G)$ with
multiplication  \eqref{eq:multBurnside} becomes a ring which is called the {\it Burnside ring} of $G$. As it can
be shown, the coefficients \eqref{eq:CoeffBurnside}
can be found (cf. \cite{AED}) using the following recursive formula:
\begin{equation}
n_{L}=\frac{n(L,K)|W(K)|n(L,H)|W(H)|-\sum_{(\widetilde{L})>(L)}n(L,\widetilde{L})n_{\widetilde{L}}|W(\widetilde{L})|}{|W(L)|},
\label{eq:rec-coef}
\end{equation}
where $(H),$ $(K),$ $(L)$ and $(\widetilde{L})$ are taken from $\Phi _{0}(G)$.\medskip
\vs

Observe that although $A(G)$ is clearly a ${\mathbb{Z}}$-submodule of $U(G)$, in general, it may {\bf not} be a subring of $U(G)$.
\vs

Define $\pi _{0}:U(G)\rightarrow A(G)$ as follows: for $(H)\in \Phi (G)$ let 
\begin{equation}
\pi _{0}((H))=
\begin{cases}
(H) & \text{ if }\;(H)\in \Phi _{0}(G), \\ 
0 & \text{ otherwise.}
\end{cases}
\label{eq:pi_0-homomorphism}
\end{equation}
The map $\pi _{0}$ defined by (\ref{eq:pi_0-homomorphism}) is a ring
homomorphism {(cf. \cite{BKR})}, i.e. 
\begin{equation*}
\pi _{0}((H)\ast (K))=\pi _{0}((H))\cdot \pi _{0}((K)),\qquad (H),(K)\in
\Phi (G).
\end{equation*}
The following  well-known result (cf. \cite{tD}, Proposition 1.14, page
231), shows a difference between the generators $(H)$ of $U(G)$ and $A(G)$..

\begin{proposition}
\label{pro:nilp-elements} Let $(H)\in \Phi _{n}(G)$.
\begin{itemize}
\item[(i)] If  $n>0$,  then,  $(H)^{k}=0$ in $U(G)$ for some $k\in \mathbb{N}$, i.e.  $(H)$
is a nilpotent element in $U(G)$;
\item[(ii)] If $n=0$, then $(H)^k\not=0$  for all $k\in \bn$.
\end{itemize}
\end{proposition}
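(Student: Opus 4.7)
The plan is to treat (ii) first via a reduction to the Burnside ring, and then attack (i) via an iterative Euler-characteristic vanishing argument in $U(G)$.

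\medskip
\noindent\textbf{For (ii).} Since $\pi_0:U(G)\to A(G)$ from \eqref{eq:pi_0-homomorphism} is a ring homomorphism and $(H)\in\Phi_0(G)$ gives $\pi_0((H))=(H)$, we have $\pi_0((H)^k)=(H)^k$ in $A(G)$, so it suffices to show $(H)^k\neq 0$ in the Burnside ring. I track the coefficient $c_H^{(k)}$ of $(H)$ in $(H)^k$. For $k=2$, the coefficient is $n_H=|(G/H\times G/H)_H/N(H)|\geq 1$ because $(eH,eH)$ has isotropy exactly $H$ and represents a nonempty $N(H)$-orbit. For the inductive step, all $(L)$ in the support of $(H)^k$ satisfy $(L)\leq (H)$, while the coefficient of $(H)$ in $(L)\cdot(H)$ is nonzero only when $(H)\leq (L)$; hence only $(L)=(H)$ contributes, and $c_H^{(k+1)}=c_H^{(k)}\cdot n_H\geq 1$.

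\medskip
\noindent\textbf{For (i).} The base case is an explicit computation of the coefficient of $(H)$ itself in $(H)\ast(H)$. Since $(G/H)^H=N(H)/H=W(H)$, one obtains
\[
(G/H\times G/H)_H\;\cong\;W(H)\times W(H),
\]
and the diagonal $N(H)$-action descends to the free diagonal action of $W(H)$, whose quotient is homeomorphic to $W(H)$ via $(aH,bH)\mapsto a^{-1}b H$. The coefficient therefore equals $\chi_c(W(H))=\chi(W(H))$, which vanishes since $W(H)$ is a compact Lie group of positive dimension $n>0$ (its identity component is a compact connected positive-dimensional Lie group and thus admits a nontrivial maximal torus forcing $\chi=0$). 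Consequently $(H)\ast(H)$ has no $(H)$-summand; by the dimension inequality cited in Section~2, every $(L)$ appearing in the support satisfies $(L)<(H)$ and $\dim W(L)\geq n>0$.

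\medskip
For the inductive step, let $(L_0)$ be maximal in the support of $(H)^k$. The coefficient of $(L_0)$ in $(H)^{k+1}=(H)^k\ast(H)$ receives contributions only from terms $(L)\ast(H)$ with $(L_0)\leq(L)$; by maximality, $(L)=(L_0)$, so the total contribution is $c_{L_0}^{(k)}$ times the coefficient of $(L_0)$ in $(L_0)\ast(H)$. An analysis analogous to the base case identifies this latter coefficient with $\chi_c$ of a compact orbit space $Y$ (essentially $N(L_0,H)/H$) carrying a natural $W(L_0)$-action. Taking a positive-dimensional torus $T\leq W(L_0)_0$ and applying \eqref{eq:EchTn} to the nested sequence of $T$-fixed-point sets recursively, one shows $\chi_c(Y)=0$. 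Therefore the maximum in the support of $(H)^k$ strictly decreases under multiplication by $(H)$; since the support is finite for each $k$, finitely many iterations force $(H)^k=0$.

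\medskip
\noindent\textbf{Main obstacle.} The delicate point in (i) is the iterative vanishing of the diagonal coefficient $\chi_c(N(L_0,H)/H)$ when $\dim W(L_0)>0$: the $W(L_0)$-action on $N(L_0,H)/H$ is not free in general, so a direct principal-bundle vanishing is unavailable. The remedy is to invoke \eqref{eq:EchTn} on the nested fixed-point sets of a maximal torus of $W(L_0)_0$, recursing until the torus action becomes trivial, and then to close the argument using the vanishing of $\chi$ for compact Lie groups of positive dimension.
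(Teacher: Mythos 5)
The paper offers no proof of this proposition --- it is quoted from tom Dieck --- so your argument has to stand on its own. Part (ii) is correct and complete: pushing forward along $\pi_0$ and tracking the coefficient of $(H)$ itself, which gets multiplied by $n_H=|W(H)|\ge 1$ at each step because only $(L)=(H)$ can contribute, is exactly the right reduction. The base case of part (i) is also correct: $(G/H\times G/H)_H/N(H)\cong W(H)$ and $\chi(W(H))=0$ since $\dim W(H)=n>0$.

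The genuine gap is in the inductive step of (i), precisely at the point you flag as the main obstacle. Your identification of the coefficient of $(L_0)$ in $(L_0)\ast(H)$ with $\chi\bigl((G/H)^{L_0}\bigr)=\chi\bigl(N(L_0,H)/H\bigr)$ is correct, but the proposed vanishing mechanism --- a torus $T\le W(L_0)_0$ and \eqref{eq:EchTn} applied recursively to nested fixed-point sets --- cannot work, because it would prove that $\chi\bigl((G/H)^{L}\bigr)=0$ whenever $\dim W(L)>0$, and that is false. Take $G=SO(3)$, $H=SO(2)$, $L=\{e\}$: then $\dim W(L)=3$, yet $(G/H)^{L}=S^2$ and $(\{e\})\ast(SO(2))=2(\{e\})\neq 0$ in $U(SO(3))$. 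Formula \eqref{eq:EchTn} only yields $\chi(Y)=\chi(Y^{T})$, and $Y^{T}$ may be a nonempty finite set of positive Euler characteristic (the two poles of $S^{2}$); iterating fixed points of the same torus adds nothing, and the terminal set is not a Lie group, so the final appeal to $\chi=0$ for positive-dimensional compact groups has nothing to attach to. The hypothesis your inductive step never uses --- and must use --- is $\dim W(H)>0$, not $\dim W(L_0)>0$. The repair: $W(H)$ acts freely on $G/H$ from the right by $gH\cdot nH:=gnH$, and this action preserves $(G/H)^{L_0}$ because $(gn)H(gn)^{-1}=gHg^{-1}$ for $n\in N(H)$; a circle subgroup $S\le W(H)_0$ then acts freely on the compact set $(G/H)^{L_0}$, so $\bigl((G/H)^{L_0}\bigr)^{S}=\emptyset$ and $\chi\bigl((G/H)^{L_0}\bigr)=0$ by \eqref{eq:EchTn}. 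This single lemma subsumes your base case ($(G/H)^{H}=W(H)$) and every step of the induction. Finally, ``finitely many iterations force $(H)^k=0$'' silently uses the descending chain condition for conjugacy classes of closed subgroups below $H$; this does hold in a compact Lie group, but it should be stated, since strict decrease of the maximal orbit types alone does not force termination in an arbitrary poset.
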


The multiplicative structure of $A(G)$ can be only partially used to
determine the multiplication structure of $U(G)$. In subsection \ref{sec:UGO2}, we will discuss this idea in
more details  in the case of the group $G:=\Gamma\times O(2)$ (with $\Gamma$ being a finite group). .

\subsection{Euler Ring Homomorphism}

\label{sec:Euler-hom}Let $\psi :{G^{\prime }}\rightarrow G$ be a
homomorphism of compact Lie groups. Then, the ${G^{\prime }}$ acts on the
left on $G$ by ${g^{\prime }}x:=\psi ({g^{\prime }})x$ (a similarly $x{%
g^{\prime }}:=x\psi ({g^{\prime }})$ defines the right action). In
particular, for any subgroup $H\leq G$, the $\psi $ induces a $G^{\prime }$%
-action on $G/H$ and the stabilizer of $gH\in G/H$ is given by%
\begin{equation}
G_{gH}^{\prime }=\psi ^{-1}(gHg^{-1}).  \label{eq:grad-iso-rel}
\end{equation}%
Therefore, $\psi $ induces a map $\Psi :U(G)\rightarrow U({G^{\prime }})$
defined by 
\begin{equation}
\Psi ((H))=\sum_{(\tilde{H})\in \Phi ({G^{\prime }})}\chi _{c}((G/H)_{({%
H^{\prime }})}/{G^{\prime }})({H^{\prime }}).  \label{eq:RingHomomorphism}
\end{equation}
\vs

\begin{proposition}
\label{prop:RingHomomorphism} \textrm{(cf. \cite{BKR,tD})} The map $\Psi $
defined by (\ref{eq:RingHomomorphism}) is a homomorphism of Euler rings.
\end{proposition}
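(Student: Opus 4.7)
The plan is that since $\Psi$ is additive by construction, I only need to verify the product formula $\Psi((H)\ast(K)) = \Psi((H))\ast\Psi((K))$ on generators $(H),(K)\in\Phi(G)$. My strategy is to interpret both sides as two different computations of the same invariant attached to the single space
\[
Z := G/H \times G/K
\]
endowed with the diagonal $G$-action and viewed as a $G'$-space via $\psi$. More precisely, for any reasonable $G'$-space $X$ set $[X]_{G'} := \sum_{(L')\in\Phi(G')}\chi_c(X_{(L')}/G')(L') \in U(G')$; the target equality will be $\Psi((H)\ast(K)) = [Z]_{G'} = \Psi((H))\ast\Psi((K))$.

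For the left-hand side, I insert the definition \eqref{eq:Euler-mult}--\eqref{eq:Euler-coeff} of $(H)\ast(K)$ and then apply $\Psi$ to each generator $(L)$. The coefficient of $(L')\in\Phi(G')$ becomes
\[
\sum_{(L)\in\Phi(G)} \chi_c\bigl(Z_{(L)}/G\bigr)\,\chi_c\bigl((G/L)_{(L')}/G'\bigr).
\]
To identify this sum with $\chi_c(Z_{(L')}/G')$, I partition $Z$ by $G$-orbit type as $Z=\bigsqcup_{(L)} Z_{(L)}$. Because the $G'$-action is the restriction of the $G$-action through $\psi$, every $G$-orbit in $Z_{(L)}$ is $G$-equivariantly a copy of $G/L$, and its intersection with $Z_{(L')}^{G'}$ is the $G'$-subspace $(G/L)_{(L')}$. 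Consequently the natural projection
\[
\bigl(Z_{(L)}\cap Z_{(L')}^{G'}\bigr)/G'\;\longrightarrow\; Z_{(L)}/G
\]
is a locally trivial fibre bundle with (essentially discrete) base and fibre $(G/L)_{(L')}/G'$. Summing over $(L)$, the multiplicativity of $\chi_c$ for fibre bundles with compact fibre and its additivity on locally closed partitions give exactly $\chi_c(Z_{(L')}/G')$, so the left-hand side coincides with $[Z]_{G'}$.

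For the right-hand side, $\Psi((H))=[G/H]_{G'}$ and $\Psi((K))=[G/K]_{G'}$ directly from \eqref{eq:RingHomomorphism}. I then establish the auxiliary product formula $[X]_{G'}\ast[Y]_{G'}=[X\times Y]_{G'}$ for finite $G'$-CW complexes $X,Y$ (applied to $X=G/H$, $Y=G/K$ this yields the right-hand side equals $[Z]_{G'}$). The proof mirrors the previous step: expand the product on the right using \eqref{eq:Euler-mult} with basis elements $(H'),(K')\in\Phi(G')$, partition $X\times Y$ simultaneously by the $G'$-isotropy types of its two factors, and compare the coefficient of $(L')$ on both sides using the same fibre-bundle/additivity toolkit applied to projections $((X_{(H')}\times Y_{(K')})_{(L')})/G' \to X_{(H')}/G' \times Y_{(K')}/G'$.

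The main obstacle is this last piece of bookkeeping: the argument rests on two interleaved stratifications (by $G$- and by $G'$-isotropy type), and one must verify that the resulting orbit spaces really are total spaces of fibre bundles in the category where the product formula for $\chi_c$ applies, as well as that the cohomologies involved are finitely generated. For homogeneous spaces $G/H$ and their finite products these finiteness hypotheses follow from the standard $G$-CW structure, and the fibre-bundle structure is standard once one fixes the base point in a single $G$-orbit; in the general auxiliary lemma on $[X\times Y]_{G'}$ one restricts to finite $G'$-CW complexes or compact $G'$-ENR's, which is the setting in which $U(G')$ was defined in \cite{tD}.
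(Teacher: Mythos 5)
The paper itself gives no proof of Proposition \ref{prop:RingHomomorphism}; the statement is simply quoted from \cite{BKR,tD}. Your argument is, in substance, the standard proof found in those references: both sides of $\Psi((H)\ast(K))=\Psi((H))\ast\Psi((K))$ are identified with the single invariant $[G/H\times G/K]_{G'}=\sum_{(L')}\chi_c\bigl((G/H\times G/K)_{(L')}/G'\bigr)(L')$, using that restriction of the action along $\psi$ commutes with products, and the identification is carried out by the double stratification by $G$- and $G'$-orbit types. This is the right invariant to compare, and the coefficient computation for the left-hand side is correct.

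One step deserves more care than you give it. The base $Z_{(L)}/G$ of your projection is in general not ``essentially discrete'' (for compact Lie groups the double coset space $H\backslash G/K$ can be positive-dimensional, e.g.\ $SO(2)\backslash SO(3)/SO(2)\cong[0,\pi]$), and, more importantly, the fibre $(G/L)_{(L')}/G'$ is the orbit space of a locally closed stratum, not a compact manifold, so the product formula for $\chi_c$ recalled in Section 3.1 (Spanier, Ch.~9, which requires a compact manifold fibre over a path-connected base) does not apply verbatim to the map $\bigl(Z_{(L)}\cap Z_{(L')}^{G'}\bigr)/G'\to Z_{(L)}/G$. This is repairable in two ways: either refine the stratification until the fibres become genuine manifolds (finitely many further orbit-type strata suffice, since $G/H\times G/K$ has only finitely many orbit types for either group and each single stratum fibres with homogeneous-space fibre), or replace the $\chi_c$-bookkeeping altogether by tom Dieck's argument --- give $G/H\times G/K$ a $G$-CW structure, note that restriction along $\psi$ makes it a $G'$-CW complex up to $G'$-homotopy, and invoke the universal property of $U(G')$ as the additive invariant satisfying $[X]\ast[Y]=[X\times Y]$, after which multiplicativity of $\Psi$ is formal. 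The same remark applies to your auxiliary lemma $[X]_{G'}\ast[Y]_{G'}=[X\times Y]_{G'}$, which is precisely the statement that the Euler-ring multiplication represents the product of $G'$-spaces and is established in \cite{tD} by the CW/universal-property route rather than by fibre-bundle multiplicativity of $\chi_c$.
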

\vs

Let us recall the following  fact about Euler Ring Homomorphism $\Psi
:U(G)\rightarrow U(\mathbb{T}^{n})$ where $\mathbb{T}^{n}$ is a maximal
torus in $G$ and $\psi :\mathbb{T}^{n}\rightarrow G$ is the natural
embedding. Then we have (cf. \cite{BKR}):
\vs

\begin{proposition}
\label{pro:torus-coefficient} Let $\mathbb{T}^{n}$ be a maximal torus in $G$. Then, 
\begin{equation*}
\Psi (\mathbb{T}^{n})=|W(\mathbb{T}^{n})|(\mathbb{T}^{n})+\sum_{(\mathbb{T}^{\prime })}n_{\mathbb{T}^{\prime }}(\mathbb{T}^{\prime }),
\end{equation*}%
where $\mathbb{T}^{\prime }=g\mathbb{T}^{n}g^{-1}\cap \mathbb{T}^{n}$ for
some $g\in G$ and $(\mathbb{T}^{\prime })\not=(\mathbb{T}^{n})$.
\end{proposition}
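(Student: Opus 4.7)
The plan is to apply the definition \eqref{eq:RingHomomorphism} directly to the generator $(\mathbb{T}^n)\in U(G)$, where $G'=\mathbb{T}^n$ and $\psi$ is the natural inclusion. Since $\mathbb{T}^n$ is abelian, conjugacy classes in $\Phi(\mathbb{T}^n)$ are singletons, so the sum in \eqref{eq:RingHomomorphism} runs over \emph{subgroups} $H'\le \mathbb{T}^n$, and the orbit-type stratification of $G/\mathbb{T}^n$ under the induced left $\mathbb{T}^n$-action is indexed by such subgroups. By \eqref{eq:grad-iso-rel}, the stabilizer at $g\mathbb{T}^n$ is $\psi^{-1}(g\mathbb{T}^ng^{-1})=\mathbb{T}^n\cap g\mathbb{T}^n g^{-1}$, so the stratum $(G/\mathbb{T}^n)_{(H')}$ is nonempty only when $H'=\mathbb{T}^n\cap g\mathbb{T}^ng^{-1}$ for some $g\in G$. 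This immediately yields the shape of the sum: every orbit type $(\mathbb{T}')$ that can contribute must be of the form $\mathbb{T}'=g\mathbb{T}^ng^{-1}\cap \mathbb{T}^n$.

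Next, I would single out the top stratum $H'=\mathbb{T}^n$. The condition $\mathbb{T}^n\cap g\mathbb{T}^ng^{-1}=\mathbb{T}^n$ is equivalent to $g\mathbb{T}^ng^{-1}=\mathbb{T}^n$ (because $\mathbb{T}^n$ is maximal and hence maximal among subtori of its own dimension), i.e.\ $g\in N(\mathbb{T}^n)$. Therefore
\[
(G/\mathbb{T}^n)_{\mathbb{T}^n}=N(\mathbb{T}^n)/\mathbb{T}^n=W(\mathbb{T}^n),
\]
a finite set of $|W(\mathbb{T}^n)|$ points on each of which $\mathbb{T}^n$ acts trivially. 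Consequently $(G/\mathbb{T}^n)_{\mathbb{T}^n}/\mathbb{T}^n\cong W(\mathbb{T}^n)$ and $\chi_c$ of this discrete finite set equals $|W(\mathbb{T}^n)|$, which is precisely the leading coefficient in the asserted formula. All remaining strata correspond to proper intersections $H'<\mathbb{T}^n$, i.e.\ to orbit types $(\mathbb{T}')\neq(\mathbb{T}^n)$, producing the residual sum $\sum n_{\mathbb{T}'}(\mathbb{T}')$ exactly as stated, with each coefficient defined by $n_{\mathbb{T}'}=\chi_c\bigl((G/\mathbb{T}^n)_{\mathbb{T}'}/\mathbb{T}^n\bigr)$.

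The main technical point, which the statement does not require us to evaluate in closed form, is to ensure that only finitely many strata contribute and that each $\chi_c$ is well-defined. For this I would invoke the existence of a finite $\mathbb{T}^n$-equivariant CW-structure on the compact manifold $G/\mathbb{T}^n$, which implies that $\Phi(\mathbb{T}^n;G/\mathbb{T}^n)$ is finite and that each stratum quotient is a space whose compactly supported Alexander–Spanier cohomology is finitely generated (cf.\ the general setup preceding Definition~\ref{def:EulerRing}). The one place where care is needed is the identification of the top stratum: one must confirm that the $|W(\mathbb{T}^n)|$ fixed points are isolated in $G/\mathbb{T}^n$, which follows from maximality of $\mathbb{T}^n$ because any nearby coset $g\mathbb{T}^n$ with $g$ outside $N(\mathbb{T}^n)$ produces a strictly smaller intersection $g\mathbb{T}^ng^{-1}\cap\mathbb{T}^n$. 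Once this is in place, the decomposition of $\Psi(\mathbb{T}^n)$ into the leading term $|W(\mathbb{T}^n)|(\mathbb{T}^n)$ plus contributions from proper intersections of conjugate maximal tori is immediate.
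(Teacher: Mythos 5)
Your proof is correct. The paper itself states this proposition without proof, merely citing \cite{BKR}, and your argument is the natural direct verification from the definition \eqref{eq:RingHomomorphism} together with the stabilizer formula \eqref{eq:grad-iso-rel}: the identification of the top stratum with $N(\mathbb{T}^{n})/\mathbb{T}^{n}=W(\mathbb{T}^{n})$ (finite, since $\mathbb{T}^n$ is maximal) gives the leading coefficient $|W(\mathbb{T}^{n})|$, and all other strata have stabilizer $g\mathbb{T}^{n}g^{-1}\cap\mathbb{T}^{n}\neq\mathbb{T}^{n}$, which is exactly the claimed form of the remainder.
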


\vs
The above Euler ring homomorphism  $\Psi: U(G)\to U(\mathbb T^n)$ can sometimes be used to compute the multiplication tabled for the Euler ring $U(G)$.
\vs
\begin{example}\rm 
Let us consider the group $G=O(2)$. Then we have 
\begin{align*}
\Phi _{0}(O(2))& =\{(O(2)),(SO(2)),(D_{k});k=1,2,3,\dots \}, \\
\Phi _{1}(O(2))& =\{(\mathbb{Z}_{k}):k=1,2,3,\dots \}.
\end{align*}
The structure of the Burnside ring $A(O(2))$ is well known (see for example 
\cite{AED}), therefore we can use it to obtain the following partial
multiplication table in $U(O(2))$: 

\begin{center}
\begin{tabular}{|c||c|c|c|c|}
\hline
& $(O(2))$ & $(SO(2))$ & $(D_{n})$ & $(\mathbb{Z}_{n})$ \\ \hline\hline
$(O(2))$ & $(O(2))$ & $(SO(2))$ & $(D_{n})$ & $(\mathbb{Z}_{n})$ \\ 
$(SO(2))$ & $(SO(2))$ & $2(SO(2))$ & $a(\mathbb{Z}_{n})$ & $b(\mathbb{Z}%
_{n}) $ \\ 
$(D_{k})$ & $(D_{k})$ & $a(\mathbb{Z}_{k})$ & $2(D_{l})+c(\mathbb{Z}_{l})$ & 
$x(\mathbb{Z}_{l})$ \\ 
$(\mathbb{Z}_{k})$ & $(\mathbb{Z}_{k})$ & $b(\mathbb{Z}_{k})$ & $x(\mathbb{Z}%
_{l})$ & $c(\mathbb{Z}_{l})$ \\ \hline
\end{tabular}%
\end{center}
\noindent where $l:=gcd(n,k)$ and the coefficients $a,b,c,d,x,$ are
unknown. In order to determine these coefficients, we will use the
homomorphism $\Psi :U(O(2))\rightarrow U(S^{1})$. Notice that 
\begin{equation*}
\Psi (SO(2))=2(S^{1}),\quad \Psi (D_{n})=(\mathbb{Z}_{n}),\quad \Psi (\mathbb{Z}_{n})=2(\mathbb{Z}_{n}).
\end{equation*}
Indeed, notice for example that 
\begin{equation*}
\frac{N(\mathbb{Z}_{n},D_{n})}{D_{n}}/S^{1}=\frac{O(2)}{D_{n}}/S^{1}=pt,\quad \text{ thus }\;\chi \left( \frac{N(\mathbb{Z}_{n},D_{n})}{D_{n}}/S^{1}\right) =1.
\end{equation*}
The structure of the Euler ring $U(S^{1})$ is very simple: by \eqref{eq:EchTn} we have  $(\mathbb{Z}_{k})\ast (\mathbb{Z}_{n})=0$, $n,k\in \mathbb{N}$. Therefore, by applying
the homomorphism $\Psi $ to the partial multiplication table we obtain: 
\begin{align*}
2(\mathbb{Z}_{n})& =\Psi (SO(2))\ast \Psi (D_{n})=\Psi (a(\mathbb{Z}_{n}))=a2(\mathbb{Z}_{n}), \\
4(\mathbb{Z}_{n})& =\Psi (SO(2))\ast \Psi (\mathbb{Z}_{n})=\Psi (b(\mathbb{Z}_{n}))=b2(\mathbb{Z}_{n}), \\
0& =(\mathbb{Z}_{k})\ast (\mathbb{Z}_{n})=\Psi (D_{k})\ast \Psi (D_{n})=\Psi
(2(D_{l})+c(\mathbb{Z}_{l}))=2\mathbb{(}\mathbb{Z}_{l})+2c(\mathbb{Z}_{l}),\\
0&=2(\mathbb{Z}_{k})\ast (\mathbb{Z}_{n})=\Psi (\bz_{k})\ast \Psi (D_{n})=\Psi
(x(\mathbb{Z}_{l}))=2x(\mathbb{Z}_{l}),
\end{align*}%
and consequently we get the following multiplication table for $U(O(2))$:
\begin{center}
\begin{tabular}{|c||c|c|c|c|}
\hline
& $(O(2))$ & $(SO(2))$ & $(D_{n})$ & $(\mathbb{Z}_{n})$ \\ \hline\hline
$(O(2))$ & $(O(2))$ & $(SO(2))$ & $(D_{n})$ & $(\mathbb{Z}_{n})$ \\ 
$(SO(2))$ & $(SO(2))$ & $2(SO(2))$ & $(\mathbb{Z}_{n})$ & $2(\mathbb{Z}_{n})$
\\ 
$(D_{k})$ & $(D_{k})$ & $(\mathbb{Z}_{k})$ & $2(D_{l})-(\mathbb{Z}_{l})$ & $0$\\ 
$(\mathbb{Z}_{k})$ & $(\mathbb{Z}_{k})$ & $2(\mathbb{Z}_{k})$ & $0$ & $0$ \\ \hline
\end{tabular}
\end{center}

\end{example}
\vs
\section{Product Group $G_{1}\times G_{2}$}

In order to proceed further with computations of $U\left( G\right) ,$ where $G=\Gamma \times \mathrm{O}(2)$ and $\Gamma $ is a finite group, we need to
develop a description of subgroups its subgroups and conjugacy classes. For
completeness of this paper, following \cite{BSZ}, we will discuss a
description of subgroups of $G\times G^{\prime }$. The classical result,
Goursat's Lemma allows to characterize all subgroups $H$ of the direct
product $G\times G^{\prime }$ of groups $G$, $G^{\prime }$ in terms of the
isomorphisms between their quotients. Namely, given $H\leq G\times G^{\prime
}$ there are two subgroups $K\leq G$, $K^{\prime }\leq G^{\prime }$, a group 
$L,$ and a pair of epimorphisms $\varphi :H\rightarrow L,$ $\varphi
:H^{\prime }\rightarrow L$ such that
\begin{equation*}
H=\left\{ \left( a,b\right) \in K\times K^{\prime }\text{ }|\text{ }\varphi
\left( a\right) =\psi \left( b\right) \right\} .
\end{equation*}
\vs
\subsection{Description of Subgroups in the Product Group $G_{1}\times G_{2}$}

We start this section by recalling classical result known as Goursat's Lemma
which characterizes subgroups of a direct product of groups.
\vs
\begin{lemma}[Goursat's Lemma]
Let $G_{1},$ $G_{2}$ be groups, and let $H$ be a subgroup of $G_{1}\times
G_{2}$ such that the two projections $p_{1}:H\rightarrow G_{1}$ and $%
p_{2}:H\rightarrow G_{2}$ are surjective. Let $N_{1}\trianglelefteq G_{1}$
be the kernel of $p_{2}$ and $N_{2}\trianglelefteq G_{2}$ the kernel of $%
p_{1}$. Then the image of $H$ in $G_{1}/N_{1}\times G_{2}/N_{2}$ is the
graph of an isomorphism $G_{1}/N_{1}\simeq G_{2}/N_{2}.$
\end{lemma}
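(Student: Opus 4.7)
The plan is to construct the isomorphism $\phi: G_1/N_1 \to G_2/N_2$ directly from the structure of $H$, and then observe that the image of $H$ in $G_1/N_1 \times G_2/N_2$ is exactly the graph of $\phi$.

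First, I would identify $N_1$ and $N_2$ more concretely inside $G_1$ and $G_2$. By definition, $N_1 = p_1(\ker p_2) = \{a \in G_1 : (a,e) \in H\}$ and $N_2 = p_2(\ker p_1) = \{b \in G_2 : (e,b) \in H\}$. To check $N_1 \trianglelefteq G_1$, take $a \in N_1$ and arbitrary $g \in G_1$; since $p_1$ is surjective, there exists $b \in G_2$ with $(g,b) \in H$, and then the computation
\begin{equation*}
(g,b)(a,e)(g,b)^{-1} = (gag^{-1}, e) \in H
\end{equation*}
gives $gag^{-1} \in N_1$. Normality of $N_2$ in $G_2$ follows symmetrically. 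This already identifies the two factors of the ambient quotient $G_1/N_1 \times G_2/N_2$.

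Next I would define $\phi: G_1/N_1 \to G_2/N_2$ as follows: for $a \in G_1$ pick any $b \in G_2$ with $(a,b) \in H$ (such $b$ exists by surjectivity of $p_1$), and set $\phi(aN_1) = bN_2$. The main routine check is well-definedness: if $(a,b)$ and $(a',b')$ lie in $H$ with $a^{-1}a' \in N_1$, then $(a^{-1}a', e) \in H$, hence $(a,b)^{-1}(a',b')(a^{-1}a',e)^{-1} = (e, b^{-1}b') \in H$, so $b^{-1}b' \in N_2$. The homomorphism property is immediate from multiplying representatives in $H$. By swapping the roles of $G_1$ and $G_2$ one obtains an inverse $\psi: G_2/N_2 \to G_1/N_1$ built from the projection $p_2$, and the construction makes $\psi \circ \phi = \mathrm{id}$ tautologically, so $\phi$ is an isomorphism.

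Finally, the image of $H$ in $G_1/N_1 \times G_2/N_2$ is, by the very definition of $\phi$, precisely the set $\{(aN_1, \phi(aN_1)) : a \in G_1\}$, i.e. the graph of $\phi$. The main (and really only) obstacle is the well-definedness step for $\phi$; every other step is a direct unpacking of definitions and a use of the surjectivity hypotheses. No deep topology or group theory beyond elementary normal-subgroup manipulations is needed.
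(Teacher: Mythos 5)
Your proof is correct and follows essentially the same route as the paper, which (in the discussion surrounding Theorem 4.3) constructs the isomorphism $\theta:H/H_o\to K/K_o$ by exactly this ``choose a partner $k$ with $(h,k)\in\mathscr H$'' recipe and then verifies well-definedness, the homomorphism property, injectivity and surjectivity. The only cosmetic difference is that you obtain bijectivity by exhibiting the symmetric inverse $\psi$ rather than checking trivial kernel and surjectivity separately; both are routine.
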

\vs
The above statement gives descriptions of subgroups $H\leq G_{1}\times G_{2}$
in terms of graphs of isomorphisms $G_{1}/N_{1}\simeq G_{2}/N_{2}.$ Since
description of subgroups above is a rather difficult to use practically, we
will use reformulation of Goursat's Lemma. Let $H\leq G_{1}\times G_{2}$ and 
$p_{i}:G_{1}\times G_{2}\rightarrow G_{i},$ $i=1,$ $2$ be natural
projections,
\begin{equation}
K_{i}=p_{i}\left( H\right) \leq G_{i},\text{ }i=1,\text{ }2.  \label{Eq_1}
\end{equation}
and
$\pi _{i} :H\rightarrow K_{i}\leq G_{i},$  given by  
$\pi _{i}\left( g\right) =p_{i}\left( g\right)$
be
the be restriction of $p_{i}$ to $H,$ $i=1,2$. Let
\begin{equation}
H_{1}=\pi _{1}(\text{\textrm{Ker\thinspace }}\pi _{2})\text{ and }H_{2}=\pi
_{2}(\text{\textrm{Ker\thinspace }}\pi _{1}).  \label{Eq_2}
\end{equation}
\vs 
\begin{theorem}
If $H\leq G_{1}\times G_{2}$ there exist a group $L$ and epimorphisms $\varphi _{i}:K_{i}\rightarrow L$, where $K_{1}$ and $K_{2}$ are given by $\left( \text{\textrm{\ref{Eq_1}}}\right) $ and $\left( \text{\textrm{\ref{Eq_2}}}\right) $ such that 
\begin{equation}
H=\{(k_{1},k_{2})\in K_{1}\times K_{2}\mid \varphi _{1}(k_{1})=\varphi
_{2}(k_{2})\}.  \label{Eq_A1}
\end{equation}
\end{theorem}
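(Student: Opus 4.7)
The plan is to deduce the stated theorem directly from Goursat's Lemma, which is available as the preceding lemma in the text. The key observation is that the two statements differ only cosmetically: Goursat's Lemma is phrased under the assumption that the projections $p_1, p_2$ are surjective, while the theorem drops this hypothesis by working inside $K_1\times K_2$ instead of $G_1\times G_2$. The proof amounts to reducing to the surjective case, identifying the kernels explicitly in terms of $H_1$ and $H_2$, and then repackaging the resulting isomorphism of quotients as two epimorphisms onto a common group $L$.

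First, I would replace $(G_1,G_2)$ with $(K_1,K_2)$. Since $K_i=p_i(H)$, the restricted projections $\pi_i:H\to K_i$ are surjective by construction, so Goursat's Lemma applies to $H\leq K_1\times K_2$. Next, I would identify the normal subgroups that appear in Goursat's Lemma. Unraveling the definitions, the kernel of $\pi_2:H\to K_2$ consists of pairs $(k_1,e)\in H$, and projecting to $K_1$ gives exactly the subgroup $H_1$ of \eqref{Eq_2}; similarly the kernel of $\pi_1$ projects to $H_2$. A short check shows $H_i\trianglelefteq K_i$ for $i=1,2$, using the fact that $H$ is a subgroup and conjugation by an arbitrary element of $K_i$ lifts (via $\pi_i$) to conjugation by an element of $H$.

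Having made these identifications, Goursat's Lemma provides an isomorphism $\alpha: K_1/H_1 \xrightarrow{\;\sim\;} K_2/H_2$ whose graph is the image of $H$ in $K_1/H_1\times K_2/H_2$. I would then define $L := K_2/H_2$ together with epimorphisms
\begin{equation*}
\varphi_1 := \alpha\circ q_1: K_1 \twoheadrightarrow L, \qquad \varphi_2 := q_2: K_2\twoheadrightarrow L,
\end{equation*}
where $q_i:K_i\to K_i/H_i$ are the canonical quotient maps. Both $\varphi_i$ are surjective because $q_i$ is surjective and $\alpha$ is an isomorphism. The equality \eqref{Eq_A1} then follows by chasing cosets: a pair $(k_1,k_2)\in K_1\times K_2$ lies in $H$ iff its image in $K_1/H_1\times K_2/H_2$ lies on the graph of $\alpha$, iff $\alpha(k_1 H_1)=k_2 H_2$, iff $\varphi_1(k_1)=\varphi_2(k_2)$.

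The only non-routine step is the verification that $H_1\trianglelefteq K_1$ and $H_2\trianglelefteq K_2$, together with the well-definedness of the isomorphism $\alpha$; everything else is bookkeeping on the statement of Goursat's Lemma. I expect this normality/well-definedness check to be the main (very mild) obstacle, since it requires lifting elements of $K_i$ back to elements of $H$ via the surjections $\pi_i$ and verifying that the induced map on quotients does not depend on the chosen lift. Once this is in place the theorem follows immediately, and in fact the construction is canonical up to the choice of which side to identify $L$ with (one could equally take $L=K_1/H_1$ and compose with $\alpha^{-1}$ on the other side).
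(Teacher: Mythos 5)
Your proposal is correct and follows essentially the same route the paper takes: the paper also treats this theorem as a repackaging of Goursat's Lemma, and in the discussion surrounding Theorem \ref{th:prod1} and Theorem \ref{th:product-1} it carries out exactly your construction --- verifying normality of the kernels' images, building the quotient isomorphism $\theta$, and then setting $L:=K/K_o$, $\varphi:=\theta\circ p_1$, $\psi:=p_2$, which is your $\varphi_1=\alpha\circ q_1$, $\varphi_2=q_2$. No gaps; the normality and well-definedness checks you flag are precisely the ones the paper spells out.
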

\vs 
\begin{definition}\rm
Let $L$ be a group, $K_{i}\leq G_{i}$, $\varphi _{i}:K_{i}\rightarrow L$ be
epimorphisms, $i=1,$ $2$. The subdirectpoduct of subgroups $K_{1}$ and $K_{2} $ and subgroup $L$ is defined as follows 
\begin{equation}
K_{1}{^{\varphi _{1}}\times^{\varphi _{2}}_{L}}K_{2}=\left\{
\left( k_{1},\text{ }k_{2}\right) \in K_{1}\times K_{2}\mid \varphi
_{1}(k_{1})=\varphi _{2}(k_{2})\right\}  \label{Eq_A2}
\end{equation}
\end{definition}
\vs 
As we mentioned it before, for subgroups $K_{i}\leq G_{i}$ $i=1$, $2$ and a given group $L$ the description of the group
\begin{equation*}
H=\{(k_{1},k_{2})\in K_{1}\times K_{2}\mid \varphi _{1}(k_{1})=\varphi
_{2}(k_{2})\}
\end{equation*}
in the form $K_{1}{^{\varphi _{1}}\times^{\varphi _{2}}_{L}}K_{2}$ is not unique. In fact, as we observed we have the following.
\vs 

\begin{proposition}
Let $L$ be a group, $K_{i}\leq G_{i}$ subgroups given by given by \eqref{Eq_1} and \eqref{Eq_2} and 
\begin{equation*}
\varphi _{1},\text{ }\varphi _{2}:K_{1}\rightarrow L\text{ and }\psi _{1},\text{ }\psi _{2}:K_{2}\rightarrow L
\end{equation*}
be epimorphisms. Then 
\begin{equation*}
K_{1}{^{\varphi _{1}}\times^{\psi _{1}}_L}K_{2}=K_{1}{^{\varphi _{2}}\times^{\psi _{2}}_{L}}K_{2}
\end{equation*}
if and only if there exists an automorphism $\sigma \in \mathrm{Aut}\left(L\right) ,$ such that $\varphi _{1}=\sigma \circ \varphi _{2}$ and $\psi
_{1}=\sigma \circ \psi _{2}.$
\end{proposition}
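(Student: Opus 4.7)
The plan is to treat the two implications separately. The direction $(\Leftarrow)$ is immediate: if $\sigma\in\mathrm{Aut}(L)$ satisfies $\varphi_1=\sigma\circ\varphi_2$ and $\psi_1=\sigma\circ\psi_2$, then, because $\sigma$ is a bijection, the equation $\varphi_1(k_1)=\psi_1(k_2)$ is equivalent to $\varphi_2(k_1)=\psi_2(k_2)$, so the defining conditions of the two subdirect products coincide.

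For $(\Rightarrow)$, assume the two subdirect products are equal as subsets of $K_1\times K_2$. I would construct $\sigma$ by a choice-and-verify argument: for each $l\in L$, use surjectivity of $\varphi_2$ to choose $k_1\in K_1$ with $\varphi_2(k_1)=l$, and set $\sigma(l):=\varphi_1(k_1)$. Four items must then be checked. (a) Well-definedness: if $\varphi_2(k_1)=\varphi_2(k_1')=l$, pick $k_2\in K_2$ with $\psi_2(k_2)=l$ (using surjectivity of $\psi_2$); then $(k_1,k_2)$ and $(k_1',k_2)$ both lie in $K_1{^{\varphi_2}\times^{\psi_2}_L}K_2=K_1{^{\varphi_1}\times^{\psi_1}_L}K_2$, forcing $\varphi_1(k_1)=\psi_1(k_2)=\varphi_1(k_1')$. (b) The companion identity $\sigma\circ\psi_2=\psi_1$: given $l\in L$, pick $k_1,k_2$ with $\varphi_2(k_1)=l=\psi_2(k_2)$; the pair $(k_1,k_2)$ lies in the common subdirect product, so $\psi_1(k_2)=\varphi_1(k_1)=\sigma(l)$. (c) Homomorphism property: the relation $\sigma\circ\varphi_2=\varphi_1$, together with surjectivity of $\varphi_2$ and the fact that $\varphi_1,\varphi_2$ are homomorphisms, transfers multiplicativity from $K_1$ to $L$. (d) Bijectivity: the analogous map $\tau\colon L\to L$ defined by $\tau(\varphi_1(k_1)):=\varphi_2(k_1)$ is well-defined by the same argument as (a) with the roles of the two subdirect products swapped, and is a two-sided inverse to $\sigma$.

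The essential content lies in step (a), where the hypothesis of equality of the two subdirect products is actually used. The point is that the set-theoretic equality only asserts something about pairs in $K_1\times K_2$, and surjectivity of the $\psi_i$'s is what allows this pairwise information to be converted into the coordinate-wise statement $\ker\varphi_1=\ker\varphi_2$ (and symmetrically $\ker\psi_1=\ker\psi_2$) that makes $\sigma$ well-defined on the quotient identified with $L$. Once those kernel equalities are in hand, the remaining verifications are essentially formal consequences of the first isomorphism theorem.
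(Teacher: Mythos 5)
Your proof is correct and follows essentially the same route as the paper's argument (given there for the equivalent Proposition \ref{pro_Equal_amal}): the backward implication by composing the defining conditions with the automorphism, and the forward implication by setting $\sigma(l):=\varphi_1(k_1)$ for any $k_1$ with $\varphi_2(k_1)=l$. The only difference is one of completeness — the paper asserts that this $\sigma$ is well defined and an automorphism with a ``clearly,'' whereas you supply the actual verifications (well-definedness via surjectivity of $\psi_2$, transfer of multiplicativity, and the inverse map $\tau$), which is where the hypothesis of set-theoretic equality is genuinely used.
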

\vs 
Let $G_{1}$ and $G_{2}$ be two subgroups and consider the product group and $G_{1}\times G_{2}$. We define the following projection homomorphisms: 
\begin{align*}
\pi _{1}& :G_{1}\times G_{2}\rightarrow G_{1};\;\;\;\pi
_{1}(g_{1},g_{2})=g_{1}, \\
\pi _{2}& :G_{1}\times G_{2}\rightarrow G_{2}\;\;\;\pi
_{2}(g_{1},g_{2})=g_{2},
\end{align*}
and the embedding homomorphisms: 
\begin{align*}
i_{1}& :G_{1}\rightarrow G_{1}\times G_{2}\;\;\;i_{1}(g_{1})=(g_{1},e) \\
i_{2}& :G_{2}\rightarrow G_{1}\times G_{2}\;\;\;i_{2}(g_{2})=(e,g_{2}).
\end{align*}
Then we have the following result describing all the subgroups $H$ of the
product $G_{1}\times G_{2}$.
\vs

\begin{theorem}
\label{th:prod1}There is a bijection correspondence between subgroups $H$ of 
$G_{1}\times G_{2}$ and $5$-tuples $\left( H_{1},H_{0},K_{1},K_{0},\theta \right) $, where $H$ is a subgroup of $H_{o}$
(resp. $K$ is a subgroup of $\mathscr G_{2}$), $H_{o}$ is a normal subgroup
of $H$ (resp. $K_{o}$ is a normal subgroup of $K$) and $\theta
:H/H_{o}\rightarrow K/K_{o}$ is an isomorphism, given by the relation 
\begin{equation}
\mathscr H:=\{(h,k)\in H\times K:\theta \circ p_{1}(h)=p_{2}(k)\},
\label{eq:pr1}
\end{equation}
where $p_{1}:H\rightarrow H/H_{o}$ and $p_{2}:K\rightarrow K/K_{o}$ are
natural projections.
\end{theorem}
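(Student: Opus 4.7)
The plan is to establish the bijection explicitly in both directions, using the projection machinery \eqref{Eq_1}--\eqref{Eq_2} already set up. Given a subgroup $\mathscr H\le G_1\times G_2$, I first associate to it a 5-tuple by setting $H:=\pi_1(\mathscr H)\le G_1$, $K:=\pi_2(\mathscr H)\le G_2$, $H_o:=\pi_1(\Ker \pi_2|_{\mathscr H})=\{h\in H:(h,e_2)\in\mathscr H\}$, and $K_o:=\pi_2(\Ker \pi_1|_{\mathscr H})=\{k\in K:(e_1,k)\in\mathscr H\}$. Normality of $H_o$ in $H$ follows by a one-line conjugation argument: for $h\in H$ pick any $k$ with $(h,k)\in\mathscr H$, and then for $h_o\in H_o$ one has $(hh_oh^{-1},e_2)=(h,k)(h_o,e_2)(h,k)^{-1}\in\mathscr H$; the same works for $K_o\triangleleft K$.

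Next, I define $\theta:H/H_o\to K/K_o$ by $\theta(hH_o):=kK_o$, where $k$ is any element with $(h,k)\in\mathscr H$. Well-definedness amounts to checking that the ambiguity of $k$ modulo $K_o$ matches the ambiguity of $h$ modulo $H_o$: if $(h,k),(h',k')\in\mathscr H$ with $h^{-1}h'\in H_o$, then $(h^{-1}h',e_2)\in\mathscr H$, so $(e_1,k^{-1}k')=(h^{-1}h',e_2)^{-1}(h,k)^{-1}(h',k')\in\mathscr H$, giving $k^{-1}k'\in K_o$. The map $\theta$ is a homomorphism because $\mathscr H$ is closed under multiplication; it is surjective because every $k\in K$ equals $\pi_2(h,k)$ for some $h$; and it is injective because $\theta(hH_o)=K_o$ forces $(h,k)\in\mathscr H$ for some $k\in K_o$, hence $(e_1,k)\in\mathscr H$, hence $(h,e_2)\in\mathscr H$, so $h\in H_o$.

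For the inverse direction, given a 5-tuple $(H,H_o,K,K_o,\theta)$ satisfying the stated conditions, define $\mathscr H$ by \eqref{eq:pr1}. The verification that $\mathscr H$ is a subgroup of $G_1\times G_2$ is immediate from $\theta$ being a homomorphism: if $(h_1,k_1),(h_2,k_2)\in\mathscr H$, then $\theta(h_1h_2H_o)=\theta(h_1H_o)\theta(h_2H_o)=k_1K_o\cdot k_2K_o=k_1k_2K_o$, so $(h_1h_2,k_1k_2)\in\mathscr H$, and closure under inverses is similar. Finally, I check that the two constructions are mutually inverse: starting from $\mathscr H$ and forming the tuple $(H,H_o,K,K_o,\theta)$ and then the subgroup $\mathscr H':=\{(h,k):\theta(hH_o)=kK_o\}$, the containment $\mathscr H\subseteq\mathscr H'$ is immediate from the definition of $\theta$, while $\mathscr H'\subseteq\mathscr H$ follows because each element of $\mathscr H'$ differs from some element of $\mathscr H$ by an element of $\{e_1\}\times K_o\subseteq\mathscr H$. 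The other composition is even more direct from the definitions.

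The only subtle point is keeping the book-keeping of cosets consistent when verifying well-definedness of $\theta$ and the mutual-inverse property; the rest is a straightforward unpacking of Goursat's lemma (Lemma 4.1) in a form convenient for later use with $G_1=\Gamma$ and $G_2=O(2)$. There is no deep obstacle beyond careful notation.
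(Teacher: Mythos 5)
Your proposal is correct and follows essentially the same route as the paper: the tuple is extracted via $H=\pi_1(\mathscr H)$, $K=\pi_2(\mathscr H)$, $H_o=\{h:(h,e_2)\in\mathscr H\}$, $K_o=\{k:(e_1,k)\in\mathscr H\}$, normality is obtained by the same conjugation computation, and $\theta$ is defined and checked to be a well-defined isomorphism exactly as in the paper's argument following the theorem. The only difference is that you verify the inverse construction and the mutual-inverse property explicitly, whereas the paper dismisses these as clear from the construction; your version is the more complete write-up of the same proof.
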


\vs

Suppose that $\mathscr H$ is a subgroup of $\mathscr G_1\times\mathscr G_2$.
Put 
\begin{align*}
H&:=\pi _{1}( \mathscr H), \quad H_o:=i_1^{-1}(\mathscr H) \\
K&:=\pi_2(\mathscr H), \quad K_o:=i_2^{-1}(\mathscr H).
\end{align*}
Clearly, $H_o$ is normal in $H$. Indeed, Notice that $H_o$ is normal in $H$.
Indeed, let $h^{\prime }\in H_o$ and $h\in H$, thus by the definition $(
h^{\prime },e) \in \mathscr H$ and $( h,k) \in \mathscr H$ for some $k\in \mathscr G_1$. Thus $(h,k) ( h^{\prime },e) ( h,k) ^{-1}\in \mathscr H$ and
since 
\begin{equation*}
( h,k) ( h^{\prime },e) ( h,k) ^{-1}=( h,k) \left( h^{\prime },e\right) (
h^{-1},k^{-1}) =( hh^{\prime }h^{-1},kek^{-1}) =( hh^{\prime }h^{-1},e) \in \mathscr H,
\end{equation*}
which implies $( h,k) ( h^{\prime },e) ( h,k) ^{-1}\in H_o$. Similarly, one
can show that $K_o$ is normal in $K$. The situation is illustrated on Figure \ref{fig:productG}. 

\begin{figure}[tbp]
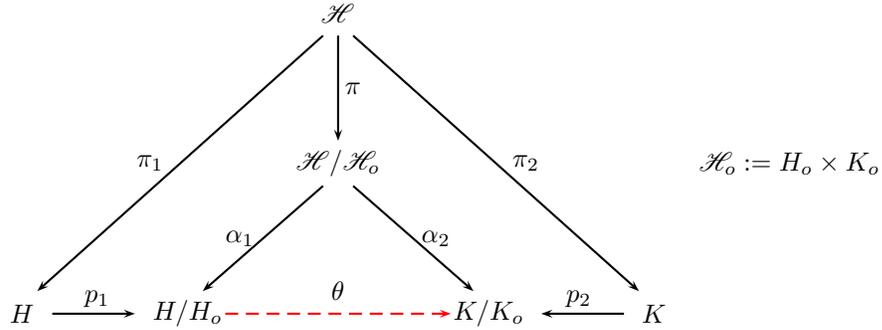

\label{fig:productG}
 \vskip4.5cm 
 \rput(-2,0){$H/H_o$} \rput(2,0){$K/K_o$} 
 \rput(0,2){$\mathscr H/\mathscr H_o$} 
 \rput(0,4){$\mathscr H$} 
 \rput(-4.2,0){$H$} 
 \rput(4.2,0){$K$} 
 \psline{->}(0,3.7)(0,2.3) 
 \psline{->}(-.2,1.7)(-1.8,.3) 
 \psline{->}(.2,1.7)(1.8,.3) 
 \psline{->}(-.2,3.7)(-4,.3) 
\psline{->}(.2,3.7)(4,.3) 
\psline{->}(3.8,0)(2.7,0) 
\psline{->}(-3.8,0)(-2.7,0) 
\rput(2.5,2){$\pi_2$} 
\rput(-2.5,2){$\pi_1$} 
\rput(-1.3,1){$\alpha_1$} 
\rput(1.3,1){$\alpha_2$} 
\rput(0.2,3){$\pi$} 
\rput(3.2,.2){$p_2$} 
\rput(-3.2,.2){$p_1$} 
\rput(6,2){$\mathscr H_o:=H_o\times K_o$} 
\psline[linecolor=red,linestyle=dashed]{->}(-1.5,0)(1.5,0) \rput(0,.3){$\theta$} 
\vskip.3cm
\caption{Commutative diagram used for the construction of the isomorphism $\protect\theta: H/H_o\to K/K_o$.}
\end{figure}

Since $H_o \trianglelefteq H$ and $K_o \trianglelefteq K$, we can consider
the quotient subgroups $H/H_o$ and $K/K_o$. We claim that there exists a
natural isomorphism $\theta : H_o \trianglelefteq H\to K_o \trianglelefteq K 
$. Indeed, For $\overline h:=H_oh\in H/H_o$ choose $k\in K$ such that $(h,k)\in \mathscr H$ and put $\theta(\overline h)=\overline k$, We need to
show that this definition doesn't depend on the choice of $k$ and that $\theta$ is an isomorphism. Indeed, put 
\begin{equation*}
\overline{h}=H_oh,\quad \overline{k}=K_ok,\quad \overline{h^{\prime } }=H_oh^{\prime },\quad \overline{k^{\prime }}=K_ok^{\prime },
\end{equation*}
then if $(h,k)\in \mathscr H$ and $(h,k^{\prime })\in \mathscr H$, this $\widetilde k={k^{\prime }}^{-1}k\in K_o$, which implies that the above
definition doesn't depend on the choice of $k$. Similarly, if $\overline h=\overline{h^{\prime }}$, then $h^{\prime }=\widetilde h h$ for some $%
\widetilde h\in H_o$ and since $(\widetilde h,e)\in \mathscr H$ and $(h,k)\in \mathscr H$ implies $(h^{\prime },k)\in \mathscr H$, it follows
that $\theta(\overline h)=\theta(\overline{h^{\prime }}$. It can be easily
verified that $\theta$ is a homomorphism. Suppose now that $\theta(\overline h)=K_o$, i.e. there is $\widetilde k\in K_o$ such that $(h,\widetilde k)\in \mathscr H$, then since $(h,e)=(h,\widetilde k)(e,{\widetilde k}^{-1})\in \mathscr H$, it follows that $\pi(h,\widetilde k)\in H_o$, which implies $\theta(\overline h)=H_o$, and consequently $\theta$ is injective. On the
other hand, for for every $\overline k\in K/K_o$, there is $h\in \mathscr G_1 $ such that $(h,k)\in \mathscr H$, it is clear that $\theta(\overline h)=\overline k$, which concludes the argument that $\theta$ is an isomorphism.
\vs
 It is also clear from the construction that the 5-tuples $\left(H.H_o,K,K_o, \theta\right)$ is uniquely determined and $\mathscr H:=\{(h,k)\in H\times K: \theta\circ p_1(h)=p_2(k)\}$.
\vs

\begin{corollary}
There is a bijection correspondence between subgroups $\mathscr H$ of $\mathscr G_{1}\times \mathscr G_{2}$ and 4-tuples $\left( H,K,K_{o},\varphi
\right) $, where $H$is a subgroup of $\mathscr H_{o}$ (resp. $K$ is a
subgroup of $\mathscr G_{2}$), $H_{o}$ is a normal subgroup of $H$ and $\varphi :H\rightarrow K/K_{o}$ is an epimorphism, given by the relation 
\begin{equation*}
\mathscr H=\{(h,k)\in H\times K:\varphi (h)=p_{2}(k)\},
\end{equation*}
where $p_{2}:K\rightarrow K/K_{o}$ is a natural projection.
\end{corollary}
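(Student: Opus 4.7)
The plan is to deduce this corollary directly from Theorem~\ref{th:prod1} by repackaging the data $(H_{o},\theta)$ as a single epimorphism $\varphi:H\rightarrow K/K_{o}$. The key observation is that a pair consisting of a normal subgroup $H_{o}\trianglelefteq H$ together with an isomorphism $\theta:H/H_{o}\rightarrow K/K_{o}$ is equivalent, by the first isomorphism theorem, to a single epimorphism $\varphi:H\rightarrow K/K_{o}$ with $\ker\varphi=H_{o}$.

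First I would construct the forward map. Given $\mathscr{H}\leq\mathscr{G}_{1}\times\mathscr{G}_{2}$, apply Theorem~\ref{th:prod1} to obtain the 5-tuple $(H,H_{o},K,K_{o},\theta)$ with $\theta:H/H_{o}\rightarrow K/K_{o}$ an isomorphism. Set $\varphi:=\theta\circ p_{1}:H\rightarrow K/K_{o}$, where $p_{1}:H\rightarrow H/H_{o}$ is the canonical projection. Since $\theta$ is an isomorphism and $p_{1}$ is surjective, $\varphi$ is an epimorphism. Moreover, the defining relation $\theta\circ p_{1}(h)=p_{2}(k)$ in \eqref{eq:pr1} becomes simply $\varphi(h)=p_{2}(k)$, so the 4-tuple $(H,K,K_{o},\varphi)$ produces exactly the same $\mathscr{H}$.

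Next I would construct the inverse map. Given a 4-tuple $(H,K,K_{o},\varphi)$ with $H\leq\mathscr{G}_{1}$, $K\leq\mathscr{G}_{2}$, $K_{o}\trianglelefteq K$ and $\varphi:H\rightarrow K/K_{o}$ an epimorphism, define
\[
\mathscr{H}:=\{(h,k)\in H\times K:\varphi(h)=p_{2}(k)\}.
\]
A straightforward check confirms that $\mathscr{H}$ is a subgroup of $\mathscr{G}_{1}\times\mathscr{G}_{2}$ (closure under multiplication and inverses follows because $\varphi$ and $p_{2}$ are homomorphisms). Setting $H_{o}:=\ker\varphi$ gives $H_{o}\trianglelefteq H$, and the induced map $\overline{\varphi}:H/H_{o}\rightarrow K/K_{o}$ is an isomorphism by the first isomorphism theorem combined with surjectivity of $\varphi$. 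Hence the 5-tuple $(H,H_{o},K,K_{o},\overline{\varphi})$ satisfies the hypotheses of Theorem~\ref{th:prod1} and reproduces the same $\mathscr{H}$.

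Finally I would verify that the two maps are mutually inverse. Starting from a 4-tuple $(H,K,K_{o},\varphi)$, applying the forward procedure to the associated $\mathscr{H}$ recovers $H=\pi_{1}(\mathscr{H})$ (by surjectivity of $\varphi$), $K=\pi_{2}(\mathscr{H})$, $K_{o}=i_{2}^{-1}(\mathscr{H})$ (since $\varphi(e)=p_{2}(k)$ forces $k\in K_{o}$), and the epimorphism $\varphi$ (up to the identification of $\overline{\varphi}\circ p_{1}$ with $\varphi$). The reverse composition is handled by the uniqueness part of Theorem~\ref{th:prod1}. The main conceptual step — and essentially the only nontrivial point — is identifying $H_{o}$ with $\ker\varphi$ and recognizing $\theta$ as the isomorphism induced by $\varphi$ on the quotient; no genuine obstacle arises beyond this standard bookkeeping.
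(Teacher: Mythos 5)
Your proposal is correct and follows essentially the same route as the paper: the paper likewise passes from the $5$-tuple $(H,H_o,K,K_o,\theta)$ to the $4$-tuple $(H,K,K_o,\varphi)$ by setting $\varphi:=\theta\circ p_1$, and conversely recovers $H_o=\ker\varphi$ and $\theta$ as the isomorphism induced by $\varphi$ via the first isomorphism theorem. Your write-up is somewhat more careful about verifying that the two assignments are mutually inverse, but the underlying argument is identical.
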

\vs

It is clear that one can assign to a 5-tuples $\left( H,H_o,K,K_o,\theta\right)$ (described in Theorem \ref{th:prod1}) the 4-tuples $\left(H,K,K_o,\varphi\right)$, where $\varphi:=\theta\circ \pi_1$. Conversely, for
an arbitrary$\left( H,K,K_o,\varphi\right)$ where $H$ is a subgroup of $\mathscr H_o$ (resp. $K$ is a subgroup of $\mathscr G_2$), $H_o$ is a normal
subgroup of $H$ and $\varphi :H \to K/K_o$ is an epimorphism, we have the
following subgroup of $\mathscr G_1\times \mathscr G_2$ 
\begin{equation*}
\mathscr H:=\{(h,k)\in H\times K: \varphi(h)= p_2(k)\},
\end{equation*}
Notice that $\pi_1(\mathscr H)=H$ and $H_o:=i_1(H)=\text{\textrm{Ker\,}}
\varphi$ (since $i_1(h)=p_2(e)=e$), and clearly, according to the definition
of $\theta$, we have by isomorphism theorem 
\begin{equation*}
\theta(\overline h)=\overline k, \;\; (h,k)\in \mathscr H\;\;
\Leftrightarrow\;\; \overline h=\text{\textrm{Ker\,}}\varphi\, h \;\text{
and }\; \varphi(h)=\overline k.
\end{equation*}
where $\overline h:=H_o h$ and $\overline k:=K_ok$.
\vs 
\begin{lemma}
\label{lem:product2} Let $\left( H,H_o,K,K_o, \theta\right)$ be 5-tuples
such that $H$is a subgroup of $\mathscr H_o$ (resp. $K$ is a subgroup of $\mathscr G_2$), $H_o$ is a normal subgroup of $H$ (resp. $K_o$ is a normal
subgroup of $K$) and $\theta :H/H_o \to K/K_o$ is an isomorphism. Put $\mathscr H_o=H_o\times K_o$ and denote by $\pi:\mathscr H\to \mathscr H/\mathscr H_o$ the natural projection. Then there exist natural epimorphisms $\alpha_1:\mathscr H/\mathscr H_o\to H/H_o$ and $\alpha_2:\mathscr H/\mathscr H_o\to K/K_o$ such that for all $(h,k)\in \mathscr H$ 
\begin{align*}
p_1\circ \pi_1(h,k)=\alpha_1\circ \pi(h,k),\quad \text{ and }\quad p_2\circ
\pi_1(h,k)=\alpha_2\circ \pi(h,k).
\end{align*}
(See Figure \ref{fig:productG})
\end{lemma}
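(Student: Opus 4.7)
The plan is essentially to construct $\alpha_1$ and $\alpha_2$ as the maps induced by $p_1\circ \pi_1$ and $p_2\circ \pi_2$, respectively, through the universal property of the quotient $\mathscr H/\mathscr H_o$, and then to read off surjectivity from the surjectivity of the composition $p_i\circ \pi_i$. The diagram in Figure \ref{fig:productG} is precisely the one we are realizing, so the work consists of (i) verifying that $\mathscr H_o = H_o\times K_o$ really is a (normal) subgroup of $\mathscr H$, (ii) checking that $p_1\circ \pi_1$ and $p_2\circ \pi_2$ vanish on $\mathscr H_o$, and (iii) checking surjectivity.

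For (i), I first observe that $(h_o,e)\in \mathscr H$ whenever $h_o\in H_o$. Indeed, by the construction in Theorem \ref{th:prod1}, $(h,k)\in \mathscr H$ iff $\theta(H_o h)=K_o k$, and for $h_o\in H_o$ we have $\theta(H_o h_o)=\theta(H_o)=K_o=K_o e$. Symmetrically $(e,k_o)\in \mathscr H$ for $k_o\in K_o$, so $H_o\times K_o\subseteq \mathscr H$. Normality then follows from normality of $H_o$ in $H$ and $K_o$ in $K$ coordinatewise: if $(h,k)\in \mathscr H$ and $(h_o,k_o)\in \mathscr H_o$ then $(h,k)(h_o,k_o)(h,k)^{-1}=(hh_oh^{-1},kk_ok^{-1})\in H_o\times K_o=\mathscr H_o$.

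For (ii), define
\[
\alpha_1\bigl(\mathscr H_o\cdot (h,k)\bigr):=H_o h, \qquad \alpha_2\bigl(\mathscr H_o\cdot (h,k)\bigr):=K_o k.
\]
These are well defined: if $(h',k')=(h,k)(h_o,k_o)$ for some $(h_o,k_o)\in\mathscr H_o$, then $h'=hh_o$ lies in $H_o h$, and $k'=kk_o$ lies in $K_o k$. Moreover $\alpha_i$ is a homomorphism because $p_i$ and $\pi_i$ are, and the prescribed identities $p_1\circ \pi_1 = \alpha_1\circ \pi$ and $p_2\circ \pi_2 = \alpha_2\circ \pi$ hold tautologically from the definition.

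For (iii), surjectivity of $\alpha_1$ uses only that $\pi_1:\mathscr H\to H$ is surjective (by construction of $H=\pi_1(\mathscr H)$): given $H_o h\in H/H_o$, pick $k\in K$ with $(h,k)\in \mathscr H$ (exists because $h\in H=\pi_1(\mathscr H)$); then $\alpha_1(\pi(h,k))=H_o h$. The argument for $\alpha_2$ is identical, using that $\pi_2(\mathscr H)=K$. Since each step is a direct unwinding of definitions, I do not expect a genuine obstacle; the only minor bookkeeping point is to recognize that the fact $\mathscr H_o\subseteq \mathscr H$ hinges on $\theta$ sending the trivial coset to the trivial coset (which it does, being a group isomorphism), a verification that matches exactly the construction of $\theta$ given just before the lemma.
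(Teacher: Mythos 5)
Your proof is correct and follows essentially the same route as the paper: both factor $p_i\circ\pi_i$ through the quotient $\mathscr H/\mathscr H_o$, the paper by computing $\ker(p_1\circ\pi_1)=\mathscr H_o$ and invoking the isomorphism theorem, you by writing the induced maps down explicitly and checking well-definedness. Your additional verifications that $\mathscr H_o$ is indeed a normal subgroup of $\mathscr H$ and that the $\alpha_i$ are surjective (together with the silent correction of the typo $p_2\circ\pi_1$ to $p_2\circ\pi_2$) are details the paper leaves implicit, but they do not change the argument.
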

\vs

One can easily notice from the following diagram 

\vglue3cm 
\hskip4cm 
\rput(0,2.5){$\mathscr H$} 
\rput(5,2.5){$H$} 
\rput(8,2.5){$H/H_o$} 
\rput(0,0){$\mathscr H/\mathscr H_o$} 
\psline{->}(.5,2.5)(4.7,2.5)
\rput(2.5,2.8){$\pi_1$}
\psline{->}(5.3,2.5)(7.5,2.5)
\rput(6.3,2.8){$p_1$} 
\psline[linecolor=red,linestyle=dashed]{->}(.6,0.21)(7.8,2.3)
\rput(4,1.5){\color{red}$\alpha_1$} 
\psline{->}(0,2.2)(0,.5)
\rput(-.3,1.5){$\pi$}
 \vskip.4cm 
 
 \noindent that $\text{\textrm{Ker\,}} (p_1\circ \pi_1)=\mathscr H_o$,
thus it follows from the isomorphism theorem that there exists a unique $\alpha_1:\mathscr H/\mathscr H_o\to H/H_o$ such that $p_1\circ
\pi_1=\alpha_1\circ \pi$. Similarly, one shows the existence of $\alpha_2$.

\vs
 It will be convenient to characterize the subgroups $\mathscr H$
of the product $\mathscr G_1\times \mathscr G_2$ using the following result:
\vs

\begin{theorem}
\label{th:product-1} For every subgroup $\mathscr H$ of the product group $\mathscr G_1\times \mathscr G_2$ there exist a group $L$ and two
epimorphisms $\varphi :\pi_1(\mathscr H)\rightarrow L$ and $\psi :\pi_2(\mathscr H)\rightarrow L$, such that 
\begin{equation}  \label{eq:product-rep}
\mathscr H=\{(h,k)\in H\times K: \varphi(h)=\psi(k)\}, \quad H:=\pi_1(\mathscr H),\; K:=\pi_2(\mathscr H).
\end{equation}
\end{theorem}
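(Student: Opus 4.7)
The plan is to deduce this theorem directly from the 5-tuple characterization already established in Theorem \ref{th:prod1}, by repackaging the isomorphism $\theta$ as a common target of two epimorphisms. First I would attach to $\mathscr{H}$ the 5-tuple $(H,H_o,K,K_o,\theta)$ given by $H=\pi_1(\mathscr{H})$, $K=\pi_2(\mathscr{H})$, $H_o=i_1^{-1}(\mathscr{H})$, $K_o=i_2^{-1}(\mathscr{H})$, and $\theta:H/H_o\to K/K_o$ the isomorphism constructed in the discussion preceding the statement.

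Next I would take $L:=H/H_o$ (any group isomorphic to $H/H_o$ works equally well) and set
\[
\varphi:=p_1:H\to H/H_o=L,\qquad \psi:=\theta^{-1}\circ p_2:K\to L,
\]
where $p_1,p_2$ are the natural projections. Both maps are epimorphisms: $p_1$ and $p_2$ are surjective by construction, and $\theta^{-1}$ is an isomorphism. It remains to verify the set equality in \eqref{eq:product-rep}. By Theorem \ref{th:prod1}, $(h,k)\in\mathscr{H}$ iff $(h,k)\in H\times K$ and $\theta\circ p_1(h)=p_2(k)$; applying $\theta^{-1}$ to both sides of the latter identity gives $p_1(h)=\theta^{-1}\circ p_2(k)$, i.e. $\varphi(h)=\psi(k)$. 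Conversely, if $\varphi(h)=\psi(k)$ with $(h,k)\in H\times K$, then $\theta\circ p_1(h)=p_2(k)$, so $(h,k)\in\mathscr{H}$.

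I do not anticipate any serious obstacle: the statement is a strict reformulation of Theorem \ref{th:prod1}, trading the isomorphism $\theta:H/H_o\to K/K_o$ for a pair of epimorphisms into an abstract target $L$. The only mild subtlety is the freedom in the choice of $L$ (and of $\varphi,\psi$), which is exactly the non-uniqueness already recorded in the proposition immediately following Definition \ref{Eq_A2}: two such presentations $(L,\varphi,\psi)$ and $(L,\varphi',\psi')$ yield the same subgroup iff they differ by an automorphism $\sigma\in\mathrm{Aut}(L)$ with $\varphi'=\sigma\circ\varphi$ and $\psi'=\sigma\circ\psi$. Thus no uniqueness claim is needed for the existence statement, and the argument above completes the proof.
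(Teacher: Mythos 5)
Your proof is correct and follows essentially the same route as the paper: both deduce the statement from the 5-tuple characterization of Theorem \ref{th:prod1} by converting the isomorphism $\theta:H/H_o\to K/K_o$ into a pair of epimorphisms onto a common target $L$. The only (immaterial) difference is that the paper takes $L=K/K_o$ with $\varphi=\theta\circ p_1$, $\psi=p_2$, whereas you take $L=H/H_o$ with $\varphi=p_1$, $\psi=\theta^{-1}\circ p_2$.
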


\vs

Let $\mathscr H$ be a subgroup of $\mathscr G_1\times \mathscr G_2$ and
suppose that the 5-tuple $(H,H_o,K,K_o,\theta)$ is associated with $\mathscr H$ (see Theorem \ref{th:prod1}). Then put $L=\overline{K}/K_o$, $%
\varphi:=\theta\circ p_1$ and $\psi:=p_2$. Then, by \eqref{eq:pr1}, we have 
\begin{equation*}
\mathscr H=\mathscr H:=\{(h,k)\in H\times K: \theta\circ p_1(h)=
p_2(k)\}=\{(h,k)\in H\times K: \varphi(h)=\psi(k)\}.
\end{equation*}

\vs

\begin{definition}\rm 
Let $H$ be a subgroup of $\mathscr G_1$ (resp. $K$ be a subgroup of $\mathscr G_2$) and $L$ a given group. Assume also that $\varphi:H\to L$ and $\psi:K\to L$ are two given homomorphisms. Then the subgroup of $\mathscr G_1\times \mathscr G_2$, given by 
\begin{equation}  \label{eq:amalg1}
H{^\varphi\times _L^\psi} K:=\{(h,k)\in H\times K: \varphi(h)=\psi(k)\}.
\end{equation}
will be called an {\it $(\varphi,\psi)$-amalgamated subgroup} for $L$   (or just simply an {\it amalgamated subgroup}). 
\end{definition}

\vs

\begin{remark}\rm 
Notice that by Theorem \ref{th:product-1}, every subgroup $\mathscr H$ of $\mathscr G_1\times \mathscr G_2$ is amalgamated subgroup $H{^\varphi\times
_L^\psi} K$ for certain $L$ and $(\varphi,\psi)$. Clearly, this
representation $H{^\varphi\times _L^\psi} K$  is not unique.
\end{remark}

\vs
The following result allows us to distinguish between different
amalgamated subgroups. 
\vs

\begin{proposition}
\label{pro_Equal_amal}
 Let $L$ be a group, $H\subset \mathscr G_1$ and $K\subset \mathscr G_2$ two subgroups and 
\begin{equation*}
\varphi,\,\varphi^{\prime }:H\to L,\quad \psi,\, \psi^{\prime }:K\to L
\end{equation*}
be epimorphisms. Then 
\begin{equation*}
H{^\varphi\times _{L}^\psi}K=H{^{\varphi^{\prime }}\times_L^{\psi^{\prime }}}K
\end{equation*}
if and only if there exists an automorphism $\gamma \in\text{\textrm{Aut}}(
L) ,$ such that $\varphi =\gamma \circ \varphi^{\prime }$ and $\psi =\gamma
\circ \psi ^{\prime }$.
\end{proposition}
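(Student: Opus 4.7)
The plan is to treat the two directions separately. The \emph{if} direction is immediate: if an automorphism $\gamma\in\mathrm{Aut}(L)$ satisfies $\varphi=\gamma\circ\varphi'$ and $\psi=\gamma\circ\psi'$, then since $\gamma$ is bijective, the equation $\varphi(h)=\psi(k)$ is equivalent to $\varphi'(h)=\psi'(k)$, so $H{^\varphi\times_L^\psi}K=H{^{\varphi'}\times_L^{\psi'}}K$.

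For the \emph{only if} direction, denote the common amalgamated subgroup by $\mathscr H$. The strategy proceeds in three steps. First, identify the kernels intrinsically: since $(h,e)\in\mathscr H$ iff $\varphi(h)=\psi(e)=e$ iff $h\in\ker\varphi$, we have $i_1^{-1}(\mathscr H)=\ker\varphi$; the same reasoning applied to the primed description gives $i_1^{-1}(\mathscr H)=\ker\varphi'$, so $\ker\varphi=\ker\varphi'=:H_o$. The analogous argument for the other factor yields $\ker\psi=\ker\psi'=:K_o$. Second, since $\varphi,\varphi'$ are epimorphisms with common kernel $H_o$, they factor through $p_1\colon H\to H/H_o$ as isomorphisms $\bar\varphi,\bar\varphi'\colon H/H_o\to L$; similarly $\psi,\psi'$ factor through $p_2\colon K\to K/K_o$ as isomorphisms $\bar\psi,\bar\psi'\colon K/K_o\to L$.

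Third, define $\gamma:=\bar\varphi\circ\bar\varphi'^{-1}\in\mathrm{Aut}(L)$. Then $\gamma\circ\varphi'=\bar\varphi\circ\bar\varphi'^{-1}\circ\bar\varphi'\circ p_1=\bar\varphi\circ p_1=\varphi$, settling the first relation. For the second, I would invoke the uniqueness in Theorem \ref{th:prod1}: the isomorphism $\theta\colon H/H_o\to K/K_o$ canonically associated with $\mathscr H$ satisfies $\theta(\bar h)=\bar k$ iff $(h,k)\in\mathscr H$, that is, iff $\bar\varphi(\bar h)=\bar\psi(\bar k)$; hence $\theta=\bar\psi^{-1}\circ\bar\varphi$, and the analogous description via the primed data gives $\theta=\bar\psi'^{-1}\circ\bar\varphi'$. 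Equating these yields $\bar\psi\circ\bar\psi'^{-1}=\bar\varphi\circ\bar\varphi'^{-1}=\gamma$, so $\gamma\circ\psi'=\bar\psi\circ\bar\psi'^{-1}\circ\bar\psi'\circ p_2=\bar\psi\circ p_2=\psi$, as required.

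The main subtlety lies in verifying the $\psi$-relation: although the presentation of $\mathscr H$ as an amalgamated product is not unique, the intrinsic datum $\theta$ attached to $\mathscr H$ by the 5-tuple $(H,H_o,K,K_o,\theta)$ of Theorem \ref{th:prod1} does not depend on that choice. This uniqueness is exactly what forces $\bar\psi^{-1}\circ\bar\varphi$ and $\bar\psi'^{-1}\circ\bar\varphi'$ to coincide, and hence allows a single automorphism $\gamma$ to intertwine both pairs $(\varphi,\varphi')$ and $(\psi,\psi')$ simultaneously.
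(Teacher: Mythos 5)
Your proof is correct and takes essentially the same route as the paper: the automorphism $\gamma=\bar\varphi\circ(\bar\varphi')^{-1}$ you build on the quotients is exactly the map $\gamma(l):=\varphi(h)$ for $\varphi'(h)=l$ that the paper constructs element-wise, and your verification of $\psi=\gamma\circ\psi'$ via $\theta=\bar\psi^{-1}\circ\bar\varphi=(\bar\psi')^{-1}\circ\bar\varphi'$ encodes the same observation the paper uses directly, namely that $(h,k)\in\mathscr H$ forces $\psi(k)=\varphi(h)$ in both descriptions. If anything, your write-up is slightly more careful, since you explicitly establish $\ker\varphi=\ker\varphi'$ and $\ker\psi=\ker\psi'$ (needed for $\gamma$ to be well defined), a point the paper passes over with ``clearly''.
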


\begin{proof}
Clearly, if  $\gamma \in \text{Aut}( L) $ is such that $\vp =\gamma \circ
\vp'$ and $\psi =\gamma \circ \psi '$, then it
\begin{align*}
H{^\vp\times _{L}^\psi}K&=\{(h,k)\in H\times K: \vp(h)=\psi(k)\}=\{(h,k)\in H\times K: \gamma\circ\vp'(h)=\gamma\circ\psi'(k)\}\\
&=\{(h,k)\in H\times K: \vp'(h)=\psi'(k)\}=H{^{\vp'}\times_L^{\psi'}}K.
\end{align*}
Suppose therefore, 
\begin{equation*}
H{^\vp\times _{L}^\psi}K=H{^{\vp'}\times_L^{\psi'}}K\;\;\text{ i.e. }\;\; \{(h,k)\in H\times K: \vp(h)=\psi(k)\}=\{(h,k)\in H\times K: \vp'(h)=\psi'(k)\}.
\end{equation*}
Since $\vp':H\to  L$ is surjective, for
every $l\in L$ there is $h\in H$ such that $\vp'(h)=l$. Then we define $\gamma \in \text{Aut}\left( L\right) $ by
$\gamma (l)=\varphi (h)$.
Clearly, $\gamma$ doesn't depend on the choice of $h$, and  we have
\begin{align*}
\vp(h) &=\gamma(\vp'(h))=( \gamma\circ \vp')(h)) , \\
\psi (k) &=\vp(h)=(\gamma\circ \vp')(h)
= (\gamma (\vp'(h))=(\gamma (\psi'(k))=(\gamma\circ \psi')(k).
\end{align*}
Therefore, we have
\begin{equation*}
\varphi =\gamma \circ \varphi ^{\prime }\text{ and }\psi =\gamma \circ \psi
^{\prime }
\end{equation*}
\end{proof}
\vs
We have also the following result:
\vs

\begin{proposition}
\label{prop_Numb_Elem_Amalgamated} Let $L$ be a group, $H\subset \mathscr G_1 $ and $K\subset \mathscr G_2$ two subgroups and $\varphi:H\to L$, $\psi:K\to L$ two epimorphisms. Put $H_o:=\text{\textrm{Ker\,}} \varphi$ and $K_o:=\text{\textrm{Ker\,}} \psi$ Then we have 
\begin{equation*}
\left| H{^\varphi\times _{L}^\psi}K \right|=|H_o||L||K_o|.
\end{equation*}
\end{proposition}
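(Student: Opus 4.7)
The plan is to count the elements of the amalgamated subgroup by fibering it over $L$. Consider the map
\[
\Phi: H{^\varphi\times_L^\psi}K \longrightarrow L, \qquad \Phi(h,k) := \varphi(h) = \psi(k),
\]
which is well-defined by the very definition of the amalgamated subgroup in \eqref{eq:amalg1}, and is surjective since $\varphi$ (or equivalently $\psi$) is an epimorphism onto $L$.

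Next, I would compute the cardinality of a fiber $\Phi^{-1}(l)$ for an arbitrary $l\in L$. A pair $(h,k)\in H\times K$ lies in $\Phi^{-1}(l)$ precisely when $\varphi(h)=l$ and $\psi(k)=l$, so
\[
\Phi^{-1}(l) = \varphi^{-1}(l)\times \psi^{-1}(l).
\]
Since $\varphi:H\to L$ is a surjective group homomorphism with kernel $H_o$, the preimage $\varphi^{-1}(l)$ is a coset of $H_o$ in $H$, hence $|\varphi^{-1}(l)|=|H_o|$. Analogously $|\psi^{-1}(l)|=|K_o|$, and therefore $|\Phi^{-1}(l)|=|H_o|\,|K_o|$ for every $l\in L$.

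Finally, since $H{^\varphi\times_L^\psi}K$ is the disjoint union of the fibers $\Phi^{-1}(l)$ as $l$ ranges over $L$, summation yields
\[
\bigl| H{^\varphi\times_L^\psi}K\bigr| = \sum_{l\in L} |\Phi^{-1}(l)| = |L|\,|H_o|\,|K_o|,
\]
which is the desired formula. There is no genuine obstacle here: the only thing to be careful about is the use of surjectivity of $\varphi$ and $\psi$ (to guarantee nonempty fibers of constant size over every $l\in L$), which is precisely the standing hypothesis that $\varphi$ and $\psi$ are epimorphisms.
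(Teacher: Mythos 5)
Your proof is correct and is essentially the same as the paper's: your fibers $\Phi^{-1}(l)$ are exactly the sets $\mathcal N_l=\{(h,k): \varphi(h)=l=\psi(k)\}$ that the paper uses, each of size $|H_o|\,|K_o|$, summed over $l\in L$. No substantive difference.
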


\begin{proof}
 By the definition
\begin{equation*}
H{^\vp\times _{L}^\psi}K=\{(h,k)\in H\times K: \vp(h)=\psi(k)\}=\bigcup_{l\in L}\mathcal N_l,
\end{equation*}
where $\mathcal N_l:=\{(h,k)\in H\times K: \vp(h)=l=\psi(k)\}$. Observe that for all $l,$ $l'\in L,$ if $l\neq l'$, we have $\mathcal N_l\cap \mathcal N_{l'}=\emptyset$.
Moreover, $\vp(h')=\vp(x)=l$
and $\psi(k')=\psi(k) =l$ if and only if $h^{-1}h'\in H_o$ and $k^{-1}k\in K_o$.
Therefore, for every $l\in L$, $\left|\mathcal N_l\right|=|H_o||K_o|$.,
which completes the proof.
\end{proof} 
\vs

\subsection{Conjugacy Classes of Subgroups in the Product $\mathscr G_1\times \mathscr G_2$}

Assume that $L$ is a fixed group and consider two subgroups $H\subset \mathscr G_1$ and $K\subset \mathscr G_2$. We are interested in describing
all possible different amalgamated subgroup $H{^\varphi\times _{L}^\psi}K$.
One can easily notice that every epimorphism $\varphi: H\to L$ is uniquely
determined by the kernel $H_o:=\text{\textrm{Ker\,}} \varphi$ and an
isomorphism $\theta: H/H_o\to L$. Since we can always identify $L$ with $K/K_o$, we can always take as $\psi$ the natural projection $p_2:K\to K/K_o$. On the other hand, we have that all possible epimorphisms $\varphi:H\to L$
are determined by normal subgroups $H_o$ of $H$ such that $\eta:H/H_o\simeq
L $ and automorphisms $\gamma : L\to L$ such that $\varphi=\eta\circ
\overline p_1$, here $\overline p_1:=\eta\circ \pi_1$, i.e. the following
diagram commutes:

 \vskip3cm
 \hskip6cm \rput(0,3){$H$} \rput(0.5,0){$\eta:H/H_o\simeq L$}
 \rput(4,0){$L$} 
 \psline{->}(0,2.7)(0,.3) 
 \psline{->}(1.7,0)(3.7,0) 
 \psline{->}(0.3,2.7)(3.8,.3) 
 \rput(2.5,.2){$\theta$} 
 \rput(-.3,1.5){$p_1 $} 
 \rput(2.2,1.8){$\varphi$} 
 \vskip.5cm

Therefore, by Proposition \ref{pro_Equal_amal} that there are exactly $|\mathcal{G}_{L}|,$ where $\mathcal{G}_{L}=\text{Aut}\left( L\right) $
different amalgamated subgroup $H{^\varphi\times _{L}^\psi}K$ for fixed $H$
and $K$.
\vs
Consider next for a fixed group $L$ and two subgroups $H\subset \mathscr G_1$ and $K\subset \mathscr G_2$ and their fixed normal divisors $H_o$ and $K_o$ satisfying $H/H_o\simeq L\simeq K/K_o$. We are interested to
find all different conjugacy classes of such amalgamated subgroup $H{^\varphi\times _{L}^\psi}K$ with $\text{\textrm{Ker\,}} \varphi=H_o$ and $\text{\textrm{Ker\,}} \psi = K_o$. \vskip.3cm We need the following lemma
\vs
\begin{lemma}
\label{lem_amal-con} Let $L$ be a fixed group, $H\subset \mathscr G_1$, $K\subset \mathscr G_2$ two subgroups and $H_o$ and $K_o$ their normal
divisors satisfying 
\begin{equation*}
H/H_o\simeq L\simeq K/K_o.
\end{equation*}
Then for two pairs of epimorphisms $(\varphi ,\psi )$, $(\varphi ^{\prime
},\psi ^{\prime })$, where 
\begin{equation*}
\varphi,\,\varphi ^{\prime }:H\rightarrow L,\;\; \text{\textrm{Ker\,}}
\varphi =\text{\textrm{Ker\,}} \varphi ^{\prime}=H_o,
\end{equation*}
and 
\begin{equation*}
\psi , \psi ^{\prime }:K\rightarrow L, \;\; \text{\textrm{Ker\,}} \psi =\text{\textrm{Ker\,}} \psi ^{\prime }=K_o,
\end{equation*}
the subgroups $H{^\varphi\times _{L}^\psi }K$ and $H{^{\varphi^{\prime
}}\times _{L}^{\psi ^{\prime }}} K$ are conjugate if and only if there
exists 
\begin{equation*}
(h,k)\in N_{\mathscr G_1}(H)\times N_{\mathscr G_2}(K)
\end{equation*}
satisfying for all $(h^{\prime },k^{\prime })\in H\times K$ 
\begin{equation*}
\varphi (h^{\prime })=\varphi ^{\prime }(hh^{\prime }h^{-1})\;\;\text{ and }\;\;\psi (k^{\prime })=\psi ^{\prime }(kk^{\prime }k^{-1}).
\end{equation*}
\end{lemma}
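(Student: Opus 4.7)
The plan is to prove both implications, leveraging Proposition~\ref{prop_Numb_Elem_Amalgamated} to handle cardinalities and Proposition~\ref{pro_Equal_amal} to translate coincidence of amalgamated subgroups into relations among their defining epimorphisms.

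\emph{Sufficiency.} Suppose there exists $(h,k)\in N_{\mathscr G_1}(H)\times N_{\mathscr G_2}(K)$ satisfying the two pointwise identities. I would verify directly that $(h,k)$ conjugates $\mathscr H:=H{^\varphi\times_L^\psi}K$ onto $\mathscr H':=H{^{\varphi'}\times_L^{\psi'}}K$. For $(h',k')\in\mathscr H$ the membership relation reads $\varphi(h')=\psi(k')$, so the hypothesis gives
\[
\varphi'(hh'h^{-1})=\varphi(h')=\psi(k')=\psi'(kk'k^{-1});
\]
since $h\in N(H)$ and $k\in N(K)$, the pair $(hh'h^{-1},kk'k^{-1})$ lies in $H\times K$ and hence in $\mathscr H'$. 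This yields $(h,k)\mathscr H(h,k)^{-1}\subseteq\mathscr H'$, and by Proposition~\ref{prop_Numb_Elem_Amalgamated} both sides have common cardinality $|H_o||L||K_o|$, so the inclusion is in fact an equality.

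\emph{Necessity.} Conversely, assume $(g_1,g_2)\mathscr H(g_1,g_2)^{-1}=\mathscr H'$ for some $(g_1,g_2)\in\mathscr G_1\times\mathscr G_2$. Applying the coordinate projections and observing that $\pi_i(\mathscr H)=\pi_i(\mathscr H')$ (both equal to $H$ in the first coordinate and $K$ in the second), I deduce $g_1\in N_{\mathscr G_1}(H)$ and $g_2\in N_{\mathscr G_2}(K)$. Next, noting that $(h_o,e)\in\mathscr H$ for every $h_o\in H_o$, its conjugate $(g_1h_og_1^{-1},e)$ lies in $\mathscr H'$, yielding $g_1 h_o g_1^{-1}\in\ker\varphi'=H_o$; applying the same reasoning to $(g_1^{-1},g_2^{-1})$ gives the reverse inclusion, hence $g_1 H_o g_1^{-1}=H_o$, and analogously $g_2K_og_2^{-1}=K_o$. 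Consequently the maps $\widetilde\varphi(h'):=\varphi'(g_1h'g_1^{-1})$ and $\widetilde\psi(k'):=\psi'(g_2k'g_2^{-1})$ are epimorphisms $H\to L$ and $K\to L$ with kernels $H_o$ and $K_o$ respectively, and the conjugation identity rewrites as the equality $H{^{\widetilde\varphi}\times_L^{\widetilde\psi}}K=H{^\varphi\times_L^\psi}K$ of amalgamated subgroups.

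\emph{Main obstacle.} From this last equality, Proposition~\ref{pro_Equal_amal} supplies a single $\gamma\in\mathrm{Aut}(L)$ with $\widetilde\varphi=\gamma\circ\varphi$ and $\widetilde\psi=\gamma\circ\psi$. To obtain the exact pointwise identities of the lemma (rather than equalities only up to $\gamma$), I would absorb $\gamma$ by modifying the conjugating element: I would seek $u\in N_{\mathscr G_1}(H)\cap N_{\mathscr G_1}(H_o)$ and $v\in N_{\mathscr G_2}(K)\cap N_{\mathscr G_2}(K_o)$ whose induced actions on $H/H_o\simeq L$ and $K/K_o\simeq L$ (through $\varphi$ and $\psi$) realize $\gamma^{-1}$. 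Setting $(h,k):=(g_1u,g_2v)$ then produces an element of $N(H)\times N(K)$ still conjugating $\mathscr H$ to $\mathscr H'$ but now satisfying $\widetilde\varphi=\varphi$ and $\widetilde\psi=\psi$, which is exactly the pointwise identities sought. Verifying that suitable $u$ and $v$ can always be extracted from the ambient normalizers is the technical core of the argument.
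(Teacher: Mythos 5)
Your sufficiency argument is correct and is essentially the paper's (though note that your cardinality step via Proposition \ref{prop_Numb_Elem_Amalgamated} silently assumes the groups are finite, which fails for subgroups of $\Gamma\times O(2)$; the cleaner route, taken in the paper, is that $x\mapsto hxh^{-1}$ is a bijection of $H$, so the displayed computation already gives set equality). The genuine problem is the necessity direction, and you have put your finger on exactly the right spot before stopping: after conjugating you only obtain $\widetilde\varphi=\gamma\circ\varphi$ and $\widetilde\psi=\gamma\circ\psi$ for a single $\gamma\in\mathrm{Aut}(L)$ supplied by Proposition \ref{pro_Equal_amal}, and you defer the absorption of $\gamma$ into the conjugating pair to an unverified ``technical core.'' That step cannot be carried out in general, because $\gamma$ need not be realizable by conjugation inside $N_{\mathscr G_1}(H)$. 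Concretely, take $\mathscr G_1=\mathscr G_2=H=K=L=\mathbb{Z}_3$, $H_o=K_o=\{e\}$, $\varphi=\psi=\mathrm{id}$ and $\varphi'=\psi'$ the inversion automorphism: then $H{^{\varphi}\times_{L}^{\psi}}K$ and $H{^{\varphi'}\times_{L}^{\psi'}}K$ are both the diagonal of $\mathbb{Z}_3\times\mathbb{Z}_3$, hence conjugate (indeed equal), yet all conjugations are trivial, so the required identity $\varphi(h')=\varphi'(hh'h^{-1})$ would force $\mathrm{id}=\mathrm{inv}$. The ``only if'' implication is therefore false as literally stated; the correct conclusion must allow a simultaneous twist by an automorphism of $L$, as in Theorem \ref{th:conj}.

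For comparison, the paper's own proof has the same defect: in the necessity half it deduces only that $a\in N_{\mathscr G_1}(H)$, $b\in N_{\mathscr G_2}(K)$ and the joint relation $\varphi'(axa^{-1})=\psi'(byb^{-1})$ for all $(x,y)$ with $\varphi(x)=\psi(y)$, and never separates this into the two pointwise identities claimed in the statement. The counting argument that follows the lemma quotients $\mathrm{Epi}_{H_o}(H,L)\times\mathrm{Epi}_{K_o}(K,L)$ by $\mathcal G_1\times\mathcal G_2\times\mathcal G_L$ with $\mathcal G_L=\mathrm{Aut}(L)$, i.e., it already uses the $\mathrm{Aut}(L)$-twisted version. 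So your instinct to invoke Proposition \ref{pro_Equal_amal} was exactly right; the fix is not to find the elements $u,v$ you were hoping for (they need not exist), but to weaken the target of the necessity direction to the twisted identities $\varphi(h')=\gamma\bigl(\varphi'(hh'h^{-1})\bigr)$ and $\psi(k')=\gamma\bigl(\psi'(kk'k^{-1})\bigr)$ for a common $\gamma\in\mathrm{Aut}(L)$.
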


\begin{proof}
 Suppose that there is $\left( a,b\right) \in \mathscr G_1\times \mathscr G_2$
such that
\begin{equation*}
\left( a,b\right) H{^{\vp}\times _{L}^{\psi}}
K\left( a^{-1},b^{-1}\right) =H{^{\vp'}\times _{L}^{\psi ^{\prime }}}
K
\end{equation*}
Therefore, we have
\begin{eqnarray*}
&&\left( a,b\right) \left\{ \left( x,y\right) \in H\times K:\varphi \left( x\right) =\psi \left(y\right) \right\} \left( a^{-1},b^{-1}\right)  \\
&=&\left\{ \left( a,b\right) \left( x,y\right) \left( a^{-1},b^{-1}\right)\in H\times K\varphi \left(x\right) =\psi \left( y\right) \right\}  \\
&=&\left\{ \left( axa^{-1},byb^{-1}\right) \in H\times K : \varphi \left( x\right) =\psi \left(y\right) \right\}  \\
&=&\left\{ \left( x,y\right) \in H\times K : \varphi ^{\prime }\left( x\right) =\psi ^{\prime }\left(y\right) \right\} =H{^{\vp'}\times _{L}^{\psi^{\prime }}}K
\end{eqnarray*}
Therefore, for all $\left( x,y\right) \in H\times K$, such that $\varphi \left( x\right) =\psi \left( y\right) ,$ we have   $\left( axa^{-1},byb^{-1}\right) \in H{^{\vp'}\times _{L}^{\psi ^{\prime }}}K$, which implies 
\begin{equation*}
\varphi ^{\prime }\left( axa^{-1}\right) =\psi ^{\prime }\left(byb^{-1}\right) 
\end{equation*}
In particular, for all $x\in H$ and $y\in 
K$ we have
\begin{equation*}
axa^{-1}\in H\text{ and }byb^{-1}\in K
\end{equation*}
which implies that
\begin{equation*}
a\in N_{\mathscr G_1}\left( H\right) \text{ and }b\in N_{\mathscr G_2}\left( K\right) .
\end{equation*}
On the other hand suppose that there are $a\in N_{\mathscr G_1}\left( 
H\right) $ and $b\in N_{\mathscr G_2}\left( K\right) $ such that for all $ (x,y)\in H\times K$ we have
\begin{equation*}
\varphi(x)=\varphi ^{\prime }(axa^{-1})\;\;\text{ and }\;\;\psi (y)=\psi ^{\prime}(byb^{-1})
\end{equation*}
Then, 
\begin{eqnarray*}
&&\left( a,b\right) \left\{ \left( x,y\right) \in H\times K : \varphi \left( x\right) =\psi \left(y\right) \right\} \left( a^{-1},b^{-1}\right)  \\
&=&\left\{ \left( a,b\right) \left( x,y\right) \left( a^{-1},b^{-1}\right)\in H\times K : \varphi \left(x\right) =\psi \left( y\right) \right\}  \\
&=&\left\{ \left( axa^{-1},byb^{-1}\right) \in H\times K :\varphi \left( x\right) =\psi \left(y\right) \right\}  \\
&=&\left\{ \left( axa^{-1},byb^{-1}\right) \in H\times K : \varphi ^{\prime }(axa^{-1})=\psi ^{\prime}(byb^{-1})\right\}  \\
&=&\left\{ \left( x,y\right) \in H\times K :\varphi ^{\prime }\left( x\right) =\psi ^{\prime }\left(y\right) \right\} =H{^{\vp'}\times _{L}^{\psi ^{\prime }}}K
\end{eqnarray*}
It follows that
\begin{equation*}
\left( a,b\right) H{^\vp\times _{L}^\psi}K\left( a^{-1},b^{-1}\right) =H{^{\vp'}\times _{L}^{\psi ^{\prime }}}K
\end{equation*}
as we claimed.
 
\end{proof} 
\vs

Notice that for fixed subgroup $H\subset \mathscr G_1$, and a normal
subgroup $H_o$ such that $H/H_o=L$ we have an action of 
\begin{equation*}
\mathcal{G}_{1}=N(H_o)\cap N(H)
\end{equation*}
on the set \textrm{Epi}$_{H_o}(H,L)$ of all epimorphisms $\varphi $ from $H$
to $L$ with $\text{\textrm{Ker\,}} \varphi=H_o$, given by, for $h\in
N(H_o)\cap N(H)=\mathcal{G}_{1} $ 
\begin{equation*}
(h\cdot \varphi )(h^{\prime })=\varphi (hh^{\prime }h^{-1})\text{ for }\;h^{\prime }\in H.
\end{equation*}
Of course, similar properties can be also established for $K$ and $K_o$,
with 
\begin{equation*}
\mathcal{G}_{2}=N(K_o)\cap N(K)
\end{equation*}
acting on the set \textrm{Epi}$_{K_o}(K,L)$. Then the group 
\begin{equation*}
\mathcal{G}=\mathcal{G}_{1}\times \mathcal{G}_{2}\times \mathcal{G}_{L}
\end{equation*}
acts on the set \textrm{Epi}$_{H_o}(H,L)\times $\textrm{Epi} $_{K_o}(K,L)$
by the formula 
\begin{equation*}
\forall_{(h,k,\gamma )\in \mathcal{G}}\;\;\; \;(h, k, \gamma )\cdot
(\varphi,\psi )=(\gamma \circ (h\cdot \varphi ),\gamma \circ (k\cdot \psi ).
\end{equation*}
Then there are exactly 
\begin{equation*}
\left\vert \mathrm{Epi}_{H_o}(H,L)\times \mathrm{Epi}_{K_o}(K,L)/\mathcal{G}\right\vert
\end{equation*}
different conjugacy classes among such amalgamated subgroups. \vskip.3cm In
the case, we have 
\begin{equation*}
\left\vert \mathrm{Epi}_{H_o}(H,L)\right\vert =|\mathcal{G}_{L}|\text{ or }\left\vert \mathrm{Epi}_{K_o}(K,L)\right\vert =|\mathcal{G}_{L}|
\end{equation*}
this formula can be written in a more transparent form: 
\begin{equation*}
\frac{\left\vert \mathrm{Epi}_{H_o}(H,L)/\mathcal{G}_{1}\right\vert \cdot
\left\vert \mathrm{Epi}_{K_o}(K,L)/\mathcal{G}_{2}\right\vert }{|\mathcal{G}_{L}|}.
\end{equation*}
\vs

\begin{theorem}
\label{th:conj} Let $L$ be a fixed group, $H,\, H^{\prime }\subset \mathscr G_1$, $K,\, K^{\prime }\subset \mathscr G_2$ be subgroups and assume that $%
\varphi: H\to L$, $\varphi^{\prime }:H^{\prime }\to L$ and $\psi:H\to L$, $\psi^{\prime }:H^{\prime }\to L$ are epimorphisms. Then two amalgamated
subgroups $H{^\varphi \times _L^\psi}K$ and $H^{\prime }{^{\varphi^{\prime
}} \times _L^{\psi^{\prime }}}K^{\prime }$ are conjugate if and only if
there exist $(a,b)\in \mathscr G_1\times \mathscr G_2$ and $\alpha\in \text{\rm Aut\,}(L)$ such that $H'=aHa^{ -1}$, $K'=bKb^{ -1}$ and for all $h\in H$, $k\in K$ we have 
\begin{equation}  \label{eq:con2}
\alpha(\varphi'(a^{-1}ha))=\varphi(h)\quad \text{ and } \quad \alpha(\psi'(b^{
-1}kb))=\psi(k).
\end{equation}
\end{theorem}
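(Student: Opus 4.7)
The plan is to reduce this theorem to Lemma \ref{lem_amal-con} (which handled the special case $H=H'$, $K=K'$). The bridge is to \emph{pull back} the epimorphisms $\varphi',\psi'$ along the conjugation so as to obtain a second pair of epimorphisms defined on $H$ and $K$, and then invoke Proposition \ref{pro_Equal_amal}.

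For the necessity direction, I would start with $(a,b)\in\mathscr G_1\times\mathscr G_2$ such that
$$(a,b)\,\bigl(H{^\varphi\times_L^\psi}K\bigr)\,(a^{-1},b^{-1}) \;=\; H'{^{\varphi'}\times_L^{\psi'}}K'.$$
Projecting onto each factor, and using that $\varphi,\psi,\varphi',\psi'$ are surjective so that each amalgamated subgroup projects fully onto its underlying subgroups, one obtains $aHa^{-1}=H'$ and $bKb^{-1}=K'$. I then introduce the pulled-back epimorphisms $\tilde\varphi:H\to L,\ \tilde\psi:K\to L$ defined by $\tilde\varphi(h):=\varphi'(aha^{-1})$ and $\tilde\psi(k):=\psi'(bkb^{-1})$, both well-defined and surjective. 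The central observation is
$$H{^\varphi\times_L^\psi}K \;=\; H{^{\tilde\varphi}\times_L^{\tilde\psi}}K,$$
which follows because the bijection $(h,k)\mapsto(aha^{-1},bkb^{-1})\colon H\times K\to H'\times K'$ identifies each of these subgroups with $H'{^{\varphi'}\times_L^{\psi'}}K'$ --- the left side by the conjugation hypothesis, the right side by the very definition of $\tilde\varphi,\tilde\psi$. Proposition \ref{pro_Equal_amal} then furnishes $\alpha\in\text{\rm Aut\,}(L)$ with $\varphi=\alpha\circ\tilde\varphi$ and $\psi=\alpha\circ\tilde\psi$, which are exactly the required relations \eqref{eq:con2}.

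For the sufficiency direction, given $(a,b)$ and $\alpha$ as in the hypothesis, I would show that the map $(h,k)\mapsto(aha^{-1},bkb^{-1})$ sends $H{^\varphi\times_L^\psi}K$ into $H'{^{\varphi'}\times_L^{\psi'}}K'$: whenever $\varphi(h)=\psi(k)$, the assumed identities yield
$$\alpha(\varphi'(aha^{-1})) \;=\; \varphi(h) \;=\; \psi(k) \;=\; \alpha(\psi'(bkb^{-1})),$$
so $\varphi'(aha^{-1})=\psi'(bkb^{-1})$ and the image lies in the target. Upgrading this inclusion to an equality (which is what conjugation requires) follows from Proposition \ref{prop_Numb_Elem_Amalgamated} together with the equalities $|\ker\varphi|=|\ker\varphi'|$ and $|\ker\psi|=|\ker\psi'|$, both a consequence of conjugation invariance of cardinality.

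The main technical obstacle is the key equality $H{^\varphi\times_L^\psi}K = H{^{\tilde\varphi}\times_L^{\tilde\psi}}K$, without which Proposition \ref{pro_Equal_amal} cannot be invoked. The subtle point is that one must view \emph{both} sides as preimages of the common set $H'{^{\varphi'}\times_L^{\psi'}}K'$ under the same bijection, rather than merely proving one containment and attempting a size comparison; only then does the equality --- and with it the existence of $\alpha$ --- follow cleanly.
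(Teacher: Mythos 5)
Your overall strategy matches the paper's: both directions ultimately rest on the explicit computation
$(a,b)\,\{(h,k):\varphi(h)=\psi(k)\}\,(a^{-1},b^{-1})=\{(aha^{-1},bkb^{-1}):\varphi(h)=\psi(k)\}$.
Your necessity argument is in fact more complete than the paper's own: the paper passes from the set equality directly to the assertion that ``there exists $\alpha\in\text{\rm Aut\,}(L)$'' without saying why, whereas you make this precise by pulling $\varphi',\psi'$ back to epimorphisms $\tilde\varphi,\tilde\psi$ on $H,K$, observing that $H{^{\tilde\varphi}\times_L^{\tilde\psi}}K=H{^{\varphi}\times_L^{\psi}}K$, and invoking Proposition \ref{pro_Equal_amal}; that is exactly the justification the paper leaves implicit. (Both you and the paper have to contend with the $a$ versus $a^{-1}$ inconsistency in \eqref{eq:con2}: with $H'=aHa^{-1}$ the pulled-back map must be $h\mapsto\varphi'(aha^{-1})$, as you write, not $h\mapsto\varphi'(a^{-1}ha)$.)

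The one step that does not survive scrutiny is your upgrade from inclusion to equality in the sufficiency direction. Proposition \ref{prop_Numb_Elem_Amalgamated} together with $|\ker\varphi|=|\ker\varphi'|$ and $|\ker\psi|=|\ker\psi'|$ gives equal cardinalities, but ``an injection between sets of equal cardinality is onto'' is valid only for finite sets, and the setting of this paper explicitly includes infinite subgroups (e.g.\ $K=SO(2)$ or $K=O(2)$ inside $\mathscr G_2=O(2)$, in which case $H{^\varphi\times_L^\psi}K$ is infinite). The fix is immediate and is what the paper's displayed chain of equalities actually does: since $\alpha$ is an automorphism, the equivalence $\varphi(h)=\psi(k)\iff\alpha(\varphi'(aha^{-1}))=\alpha(\psi'(bkb^{-1}))\iff\varphi'(aha^{-1})=\psi'(bkb^{-1})$ runs in both directions, so the conjugate of $H{^\varphi\times_L^\psi}K$ is equal to, not merely contained in, $H'{^{\varphi'}\times_L^{\psi'}}K'$. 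No counting argument is needed, and none should be used here.
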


\begin{proof} Assume that the condition \eqref{eq:con2} is satisfied then we have
\begin{align*}
(a,b)H{^{\vp} \times _L^{\psi}}K(a^{-1},b^{-1})&= (a,b)\{(h,k)\in H\times K: \vp(h)=\psi(k)\}(a^{-1},b^{-1})\\
&=\{(aha^{-1},bkb^{-1})\in aHa^{-1}\times bKb^{-1}: \vp(h)=\psi(k)\}\\
&=\{(h',k')\in H'\times K': \vp(a^{-1}h'a)=\psi(b^{-1}k'b)\}\\
&=\{(h',k')\in H'\times K': \alpha\vp'(h')=\alpha\psi'(k'))\}\\
&=\{(h',k')\in H'\times K': \vp'(h')=\psi'(k'))\}\\
&={H'}{^{\vp'} \times _L^{\psi'}}K'.
\end{align*}
Conversely, assume that $(a,b)H{^{\vp} \times _L^{\psi}}K(a^{-1},b^{-1})=H'{^{\vp' }\times _L^{\psi'}}K'$, then clearly  $H'=aHa^{-1}$, $K'=bKb^{-1}$ and 
\[
\{(aha^{-1},bkb^{-1})\in aHa^{-1}\times bKb^{-1}: \vp(h)=\psi(k)\}=\{(h',k')\in H'\times K': \vp'(h')=\psi'(k'))\},
\]
which implies that for $(aha^{-1},bkb^{-1})=(h,k)$ we have that there exists $\alpha\in \text{\rm Aut\,}(L)$  such that $\alpha\vp'(a^{-1}ha)=\vp(h)$ and $\alpha\psi'(b^{-1}kb)=\psi(k)$.
\end{proof}
\vs

\begin{remark}\rm 
Let $L$ be a fixed group, $H\le \mathscr G_1$, $K\le \mathscr G_2$ be subgroups. Assume that $\varphi:H\to L$ and $\psi:H\to L$ are epimorphisms. Then by Theorem \ref{th:conj}, all the conjugated to  $H{^\varphi \times _L^\psi}K$ subgroups are the amalgamated subgroups $H^{\prime }{^{\varphi^{\prime }} \times _L^{\psi^{\prime }}}K^{\prime }$ such that are there exist inner automorphisms $\mu_a: \mathscr G_1\to \mathscr G_1$ and $\nu_b:\mathscr G_2\to \mathscr G_2$ satisfying
\begin{equation*}
\mu_a(g_1)=a^{-1}g_1a, \quad \nu_b(g_2)=b^{-1}g_2b,\quad g_1\in \mathscr G_1, \; g_2\in \mathscr G_2,
\end{equation*}
for some $(a,b)\in \mathscr G_1\times \mathscr G_2$ and  the following
diagrams commute 
\vglue3.3cm\hskip1.5cm 
\rput(0,3){$H^{\prime }$} 
\rput(4,3){$H$} 
\rput(2,0){$L$} 
\psline{->}(0.5,3)(3.5,3) 
\psline{->}(0,2.7)(1.8,.3) 
\psline{->}(4,2.7)(2.2,0.3) 
\rput(2,3.3){$\mu_a$} 
\rput(1.3,1.5){$\varphi^{\prime }$} 
\rput(2.7,1.5){$\varphi$} 
\rput(7,0) {\rput(0,3){$K^{\prime }$} \rput(4,3){$K$} \rput(2,0){$L$} \psline{->}(0.5,3)(3.5,3) \psline{->}(0,2.7)(1.8,.3) \psline{->}(4,2.7)(2.2,0.3) \rput(2,3.3){$\nu_b$} \rput(1.3,1.5){$\psi^{\prime }$} \rput(2.7,1.5){$\psi$} } 
\vskip.5cm 
\noi In particular if $H=H^{\prime }$ and $K=K^{\prime }$, then $(a,b)\in
N_{\mathscr G_1}(H)\times N_{\mathscr G_2}(K)$. 
\end{remark}

\vs

\begin{table}[tbp]
\label{tab: Tab1-s4} 
\scalebox{.93} {\setlength\extrarowheight{5pt}\begin{tabular}{|rcc|rcc|rcc|}
    \hline
    ID & $(\mathcal H)$ & $\left\lvert W(\mathcal H)\right\rvert$ &ID & $(\mathcal H)$ & $\left\lvert W(\mathcal H)\right\rvert$ &ID & $(\mathcal H)$ & $\left\lvert W(\mathcal H)\right\rvert$ \\
    \hline
    1 & $(\amal{\mathbb{Z}_1}{\mathbb{Z}_{n}}{\mathbb{Z}_{1}}{\mathbb{Z}_1}{})$ & $\infty$&  35 & $(\amal{D_4}{D_{n}}{D_{1}}{\mathbb{Z}_4}{})$ & $4$ &    69 & $(\amal{\mathbb{Z}_2}{SO(2)}{\mathbb{Z}_{1}}{\mathbb{Z}_2}{})$ & $8$ \\
    2 & $(\amal{\mathbb{Z}_2}{\mathbb{Z}_{n}}{\mathbb{Z}_{1}}{\mathbb{Z}_2}{})$ & $\infty$&     36 & $(\amal{S_4}{D_{n}}{D_{1}}{A_4}{})$ & $4$ &    70 & $(\amal{D_1}{SO(2)}{\mathbb{Z}_{1}}{D_1}{})$ & $4$ \\
    3 & $(\amal{D_1}{\mathbb{Z}_{n}}{\mathbb{Z}_{1}}{D_1}{})$ & $\infty$ &  37 & $(\amal{V_4}{D_{2n}}{D_{2}}{\mathbb{Z}_1}{})$ & $8$ & 71 & $(\amal{\mathbb{Z}_3}{SO(2)}{\mathbb{Z}_{1}}{\mathbb{Z}_3}{})$ & $4$ \\
    4 & $(\amal{\mathbb{Z}_3}{\mathbb{Z}_{n}}{\mathbb{Z}_{1}}{\mathbb{Z}_3}{})$ & $\infty$&     38 & $(\amal{D_2}{D_{2n}}{D_{2}}{\mathbb{Z}_1}{\mathbb{Z}_2})$ & $8$ &     72 & $(\amal{V_4}{SO(2)}{\mathbb{Z}_{1}}{V_4}{})$ & $12$ \\
    5 & $(\amal{V_4}{\mathbb{Z}_{n}}{\mathbb{Z}_{1}}{V_4}{})$ & $\infty$&  39 & $(\amal{D_2}{D_{2n}}{D_{2}}{\mathbb{Z}_1}{D_1})$ & $4$ &  73 & $(\amal{D_2}{SO(2)}{\mathbb{Z}_{1}}{D_2}{})$ & $4$ \\
    6 & $(\amal{D_2}{\mathbb{Z}_{n}}{\mathbb{Z}_{1}}{D_2}{})$ & $\infty$&    40 & $(\amal{D_4}{D_{2n}}{D_{2}}{\mathbb{Z}_2}{V_4})$ & $4$ &  74 & $(\amal{\mathbb{Z}_4}{SO(2)}{\mathbb{Z}_{1}}{\mathbb{Z}_4}{})$ & $4$ \\
    7 & $(\amal{\mathbb{Z}_4}{\mathbb{Z}_{n}}{\mathbb{Z}_{1}}{\mathbb{Z}_4}{})$ & $\infty$ & 41 & $(\amal{D_4}{D_{2n}}{D_{2}}{\mathbb{Z}_2}{D_2})$ & $4$ &    75 & $(\amal{D_3}{SO(2)}{\mathbb{Z}_{1}}{D_3}{})$ & $2$ \\
    8 & $(\amal{D_3}{\mathbb{Z}_{n}}{\mathbb{Z}_{1}}{D_3}{})$ & $\infty$&  42 & $(\amal{D_4}{D_{2n}}{D_{2}}{\mathbb{Z}_2}{\mathbb{Z}_4})$ & $4$ &    76 & $(\amal{D_4}{SO(2)}{\mathbb{Z}_{1}}{D_4}{})$ & $2$ \\
    9 & $(\amal{D_4}{\mathbb{Z}_{n}}{\mathbb{Z}_{1}}{D_4}{})$ & $\infty$ & 43 & $(\amal{D_3}{D_{3n}}{D_{3}}{\mathbb{Z}_1}{})$ & $2$ &    77 & $(\amal{A_4}{SO(2)}{\mathbb{Z}_{1}}{A_4}{})$ & $4$ \\
    10 & $(\amal{A_4}{\mathbb{Z}_{n}}{\mathbb{Z}_{1}}{A_4}{})$ & $\infty$ & 44 & $(\amal{S_4}{D_{3n}}{D_{3}}{V_4}{})$ & $2$ &    78 & $(\amal{S_4}{SO(2)}{\mathbb{Z}_{1}}{S_4}{})$ & $2$ \\
    11 & $(\amal{S_4}{\mathbb{Z}_{n}}{\mathbb{Z}_{1}}{S_4}{})$ & $\infty$&     45 & $(\amal{D_4}{D_{4n}}{D_{4}}{\mathbb{Z}_1}{})$ & $2$ &   79 & $(\amal{\mathbb{Z}_2}{O(2)}{D_{1}}{\mathbb{Z}_1}{})$ & $8$ \\
    12 & $(\amal{\mathbb{Z}_2}{\mathbb{Z}_{2n}}{\mathbb{Z}_{2}}{\mathbb{Z}_1}{})$ & $\infty$ &46 & $(\amal{\mathbb{Z}_1}{D_{n}}{\mathbb{Z}_{1}}{\mathbb{Z}_1}{})$ & $48$ &     80 & $(\amal{D_1}{O(2)}{D_{1}}{\mathbb{Z}_1}{})$ & $4$ \\
    13 & $(\amal{D_1}{\mathbb{Z}_{2n}}{\mathbb{Z}_{2}}{\mathbb{Z}_1}{})$ & $\infty$ &    47 & $(\amal{\mathbb{Z}_2}{D_{n}}{\mathbb{Z}_{1}}{\mathbb{Z}_2}{})$ & $8$ &    81 & $(\amal{V_4}{O(2)}{D_{1}}{\mathbb{Z}_2}{})$ & $4$ \\
    14 & $(\amal{V_4}{\mathbb{Z}_{2n}}{\mathbb{Z}_{2}}{\mathbb{Z}_2}{})$ & $\infty$ &     48 & $(\amal{D_1}{D_{n}}{\mathbb{Z}_{1}}{D_1}{})$ & $4$ &     82 & $(\amal{D_2}{O(2)}{D_{1}}{\mathbb{Z}_2}{})$ & $4$ \\
    15 & $(\amal{D_2}{\mathbb{Z}_{2n}}{\mathbb{Z}_{2}}{\mathbb{Z}_2}{})$ & $\infty$ &    49 & $(\amal{\mathbb{Z}_3}{D_{n}}{\mathbb{Z}_{1}}{\mathbb{Z}_3}{})$ & $4$ &   83 & $(\amal{D_2}{O(2)}{D_{1}}{D_1}{})$ & $2$ \\
    16 & $(\amal{D_2}{\mathbb{Z}_{2n}}{\mathbb{Z}_{2}}{D_1}{})$ & $\infty$&     50 & $(\amal{V_4}{D_{n}}{\mathbb{Z}_{1}}{V_4}{})$ & $12$ &     84 & $(\amal{\mathbb{Z}_4}{O(2)}{D_{1}}{\mathbb{Z}_2}{})$ & $4$ \\
    17 & $(\amal{\mathbb{Z}_4}{\mathbb{Z}_{2n}}{\mathbb{Z}_{2}}{\mathbb{Z}_2}{})$ & $\infty$ &    51 & $(\amal{D_2}{D_{n}}{\mathbb{Z}_{1}}{D_2}{})$ & $4$   &  85 & $(\amal{D_3}{O(2)}{D_{1}}{\mathbb{Z}_3}{})$ & $2$ \\
    18 & $(\amal{D_3}{\mathbb{Z}_{2n}}{\mathbb{Z}_{2}}{\mathbb{Z}_3}{})$ & $\infty$ &    52 & $(\amal{\mathbb{Z}_4}{D_{n}}{\mathbb{Z}_{1}}{\mathbb{Z}_4}{})$ & $4$   &  86 & $(\amal{D_4}{O(2)}{D_{1}}{V_4}{})$ & $2$ \\
    19 & $(\amal{D_4}{\mathbb{Z}_{2n}}{\mathbb{Z}_{2}}{V_4}{})$ & $\infty$ &  53 & $(\amal{D_3}{D_{n}}{\mathbb{Z}_{1}}{D_3}{})$ & $2$ &    87 & $(\amal{D_4}{O(2)}{D_{1}}{D_2}{})$ & $2$ \\
    20 & $(\amal{D_4}{\mathbb{Z}_{2n}}{\mathbb{Z}_{2}}{D_2}{})$ & $\infty$ &   54 & $(\amal{D_4}{D_{n}}{\mathbb{Z}_{1}}{D_4}{})$ & $2$ &  88 & $(\amal{D_4}{O(2)}{D_{1}}{\mathbb{Z}_4}{})$ & $2$ \\
    21 & $(\amal{D_4}{\mathbb{Z}_{2n}}{\mathbb{Z}_{2}}{\mathbb{Z}_4}{})$ & $\infty$ &    55 & $(\amal{A_4}{D_{n}}{\mathbb{Z}_{1}}{A_4}{})$ & $4$ &     89 & $(\amal{S_4}{O(2)}{D_{1}}{A_4}{})$ & $2$ \\
    22 & $(\amal{S_4}{\mathbb{Z}_{2n}}{\mathbb{Z}_{2}}{A_4}{})$ & $\infty$ &  56 & $(\amal{S_4}{D_{n}}{\mathbb{Z}_{1}}{S_4}{})$ & $2$ &     90 & $(\amal{\mathbb{Z}_1}{O(2)}{\mathbb{Z}_{1}}{\mathbb{Z}_1}{})$ & $24$ \\
    23 & $(\amal{\mathbb{Z}_3}{\mathbb{Z}_{3n}}{\mathbb{Z}_{3}}{\mathbb{Z}_1}{})$ & $\infty$ &    57 & $(\amal{\mathbb{Z}_2}{D_{2n}}{\mathbb{Z}_{2}}{\mathbb{Z}_1}{})$ & $8$&   91 & $(\amal{\mathbb{Z}_2}{O(2)}{\mathbb{Z}_{1}}{\mathbb{Z}_2}{})$ & $4$ \\
    24 & $(\amal{A_4}{\mathbb{Z}_{3n}}{\mathbb{Z}_{3}}{V_4}{})$ & $\infty$ &    58 & $(\amal{D_1}{D_{2n}}{\mathbb{Z}_{2}}{\mathbb{Z}_1}{})$ & $4$ &    92 & $(\amal{D_1}{O(2)}{\mathbb{Z}_{1}}{D_1}{})$ & $2$ \\
    25 & $(\amal{\mathbb{Z}_4}{\mathbb{Z}_{4n}}{\mathbb{Z}_{4}}{\mathbb{Z}_1}{})$ & $\infty$ &     59 & $(\amal{V_4}{D_{2n}}{\mathbb{Z}_{2}}{\mathbb{Z}_2}{})$ & $4$ &   93 & $(\amal{\mathbb{Z}_3}{O(2)}{\mathbb{Z}_{1}}{\mathbb{Z}_3}{})$ & $2$  \\
    26 & $(\amal{\mathbb{Z}_2}{D_{n}}{D_{1}}{\mathbb{Z}_1}{})$ & $16$ &    60 & $(\amal{D_2}{D_{2n}}{\mathbb{Z}_{2}}{\mathbb{Z}_2}{})$ & $4$ &   94 & $(\amal{V_4}{O(2)}{\mathbb{Z}_{1}}{V_4}{})$ & $6$  \\
    27 & $(\amal{D_1}{D_{n}}{D_{1}}{\mathbb{Z}_1}{})$ & $8$&   61 & $(\amal{D_2}{D_{2n}}{\mathbb{Z}_{2}}{D_1}{})$ & $2$ &   95 & $(\amal{D_2}{O(2)}{\mathbb{Z}_{1}}{D_2}{})$ & $2$  \\
    28 & $(\amal{V_4}{D_{n}}{D_{1}}{\mathbb{Z}_2}{})$ & $8$&     62 & $(\amal{\mathbb{Z}_4}{D_{2n}}{\mathbb{Z}_{2}}{\mathbb{Z}_2}{})$ & $4$ &  96 & $(\amal{\mathbb{Z}_4}{O(2)}{\mathbb{Z}_{1}}{\mathbb{Z}_4}{})$ & $2$\\
    29 & $(\amal{D_2}{D_{n}}{D_{1}}{\mathbb{Z}_2}{})$ & $8$ &    63 & $(\amal{D_3}{D_{2n}}{\mathbb{Z}_{2}}{\mathbb{Z}_3}{})$ & $2$ &   97 & $(\amal{D_3}{O(2)}{\mathbb{Z}_{1}}{D_3}{})$ & $1$   \\
    30 & $(\amal{D_2}{D_{n}}{D_{1}}{D_1}{})$ & $4$ &  64 & $(\amal{D_4}{D_{2n}}{\mathbb{Z}_{2}}{V_4}{})$ & $2$ &  98 & $(\amal{D_4}{O(2)}{\mathbb{Z}_{1}}{D_4}{})$ & $1$   \\
    31 & $(\amal{\mathbb{Z}_4}{D_{n}}{D_{1}}{\mathbb{Z}_2}{})$ & $8$&     65 & $(\amal{D_4}{D_{2n}}{\mathbb{Z}_{2}}{D_2}{})$ & $2$ &  99 & $(\amal{A_4}{O(2)}{\mathbb{Z}_{1}}{A_4}{})$ & $2$ \\
    32 & $(\amal{D_3}{D_{n}}{D_{1}}{\mathbb{Z}_3}{})$ & $4$ &    66 & $(\amal{D_4}{D_{2n}}{\mathbb{Z}_{2}}{\mathbb{Z}_4}{})$ & $2$ &   100 & $(\amal{S_4}{O(2)}{\mathbb{Z}_{1}}{S_4}{})$ & $1$ \\
    33 & $(\amal{D_4}{D_{n}}{D_{1}}{V_4}{})$ & $4$&   67 & $(\amal{S_4}{D_{2n}}{\mathbb{Z}_{2}}{A_4}{})$ & $2$ & &&\\
    34 & $(\amal{D_4}{D_{n}}{D_{1}}{D_2}{})$ & $4$&     68 & $(\amal{\mathbb{Z}_1}{SO(2)}{\mathbb{Z}_{1}}{\mathbb{Z}_1}{})$ & $48$ &&&\\
    \hline
  \end{tabular}  }
\caption{Conjugacy classes in $S_4\times O(2)$}
\end{table}

 \begin{table}[H]
    \centering
    \caption{Conjugacy Classes of Subgroups in $D_4\times\bz_2\times O(2)$ (part 1)}\label{tab1}
    \vskip 1em
    \begin{tabular}{|rcc|rcc|rcc|}
      \toprule
      ID & $(S)$ & $\abs{W(S)}$ &
        ID & $(S)$ & $\abs{W(S)}$ &
        ID & $(S)$ & $\abs{W(S)}$ \\
      \midrule
      1 & $(\amal{\bz_1}{\bz_{n}}{}{}{})$ & $\infty$ &
        36 & $(\amal{\bz_2^2}{\bz_{2n}}{\bz_{2}}{\bz_2}{})$ & $\infty$ &
        71 & $(\amal{D_2^2}{\bz_{2n}}{\bz_{2}}{D_2^d}{})$ & $\infty$ \\
      2 & $(\amal{\bz_1^2}{\bz_{n}}{}{}{})$ & $\infty$ &
        37 & $(\amal{\bz_2^2}{\bz_{2n}}{\bz_{2}}{\bz_2^z}{})$ & $\infty$ &
        72 & $(\amal{D_2^2}{\bz_{2n}}{\bz_{2}}{D_1^2}{})$ & $\infty$ \\
      3 & $(\amal{\bz_2}{\bz_{n}}{}{}{})$ & $\infty$ &
        38 & $(\amal{D_2}{\bz_{2n}}{\bz_{2}}{\bz_2}{})$ & $\infty$ &
        73 & $(\amal{D_4^\td}{\bz_{2n}}{\bz_{2}}{D_2^z}{})$ & $\infty$ \\
      4 & $(\amal{\bz_2^z}{\bz_{n}}{}{}{})$ & $\infty$ &
        39 & $(\amal{D_2}{\bz_{2n}}{\bz_{2}}{D_1}{})$ & $\infty$ &
        74 & $(\amal{D_4^\td}{\bz_{2n}}{\bz_{2}}{\bz_4^z}{})$ & $\infty$ \\
      5 & $(\amal{\tD_1}{\bz_{n}}{}{}{})$ & $\infty$ &
        40 & $(\amal{D_2^z}{\bz_{2n}}{\bz_{2}}{\bz_2}{})$ & $\infty$ &
        75 & $(\amal{D_4^\td}{\bz_{2n}}{\bz_{2}}{\tD_2}{})$ & $\infty$ \\
      6 & $(\amal{\tD_1^\td}{\bz_{n}}{}{}{})$ & $\infty$ &
        41 & $(\amal{D_2^z}{\bz_{2n}}{\bz_{2}}{D_1^d}{})$ & $\infty$ &
        76 & $(\amal{D_4}{\bz_{2n}}{\bz_{2}}{D_2}{})$ & $\infty$ \\
      7 & $(\amal{D_1}{\bz_{n}}{}{}{})$ & $\infty$ &
        42 & $(\amal{D_2^d}{\bz_{2n}}{\bz_{2}}{\bz_2^z}{})$ & $\infty$ &
        77 & $(\amal{D_4}{\bz_{2n}}{\bz_{2}}{\bz_4}{})$ & $\infty$ \\
      8 & $(\amal{D_1^d}{\bz_{n}}{}{}{})$ & $\infty$ &
        43 & $(\amal{D_2^d}{\bz_{2n}}{\bz_{2}}{D_1}{})$ & $\infty$ &
        78 & $(\amal{D_4}{\bz_{2n}}{\bz_{2}}{\tD_2}{})$ & $\infty$ \\
      9 & $(\amal{\bz_2^2}{\bz_{n}}{}{}{})$ & $\infty$ &
        44 & $(\amal{D_2^d}{\bz_{2n}}{\bz_{2}}{D_1^d}{})$ & $\infty$ &
        79 & $(\amal{D_4^d}{\bz_{2n}}{\bz_{2}}{D_2}{})$ & $\infty$ \\
      10 & $(\amal{D_2}{\bz_{n}}{}{}{})$ & $\infty$ &
        45 & $(\amal{D_1^2}{\bz_{2n}}{\bz_{2}}{\bz_1^2}{})$ & $\infty$ &
        80 & $(\amal{D_4^d}{\bz_{2n}}{\bz_{2}}{\bz_4^z}{})$ & $\infty$ \\
      11 & $(\amal{D_2^z}{\bz_{n}}{}{}{})$ & $\infty$ &
        46 & $(\amal{D_1^2}{\bz_{2n}}{\bz_{2}}{D_1}{})$ & $\infty$ &
        81 & $(\amal{D_4^d}{\bz_{2n}}{\bz_{2}}{\tD_2^z}{})$ & $\infty$ \\
      12 & $(\amal{D_2^d}{\bz_{n}}{}{}{})$ & $\infty$ &
        47 & $(\amal{D_1^2}{\bz_{2n}}{\bz_{2}}{D_1^d}{})$ & $\infty$ &
        82 & $(\amal{D_4^z}{\bz_{2n}}{\bz_{2}}{D_2^z}{})$ & $\infty$ \\
      13 & $(\amal{D_1^2}{\bz_{n}}{}{}{})$ & $\infty$ &
        48 & $(\amal{\bz_4}{\bz_{2n}}{\bz_{2}}{\bz_2}{})$ & $\infty$ &
        83 & $(\amal{D_4^z}{\bz_{2n}}{\bz_{2}}{\bz_4}{})$ & $\infty$ \\
      14 & $(\amal{\bz_4}{\bz_{n}}{}{}{})$ & $\infty$ &
        49 & $(\amal{\bz_4^z}{\bz_{2n}}{\bz_{2}}{\bz_2}{})$ & $\infty$ &
        84 & $(\amal{D_4^z}{\bz_{2n}}{\bz_{2}}{\tD_2^z}{})$ & $\infty$ \\
      15 & $(\amal{\bz_4^z}{\bz_{n}}{}{}{})$ & $\infty$ &
        50 & $(\amal{\tD_2}{\bz_{2n}}{\bz_{2}}{\bz_2}{})$ & $\infty$ &
        85 & $(\amal{D_4^2}{\bz_{2n}}{\bz_{2}}{\tD_2^2}{})$ & $\infty$ \\
      16 & $(\amal{\tD_2}{\bz_{n}}{}{}{})$ & $\infty$ &
        51 & $(\amal{\tD_2}{\bz_{2n}}{\bz_{2}}{\tD_1}{})$ & $\infty$ &
        86 & $(\amal{D_4^2}{\bz_{2n}}{\bz_{2}}{\bz_4^2}{})$ & $\infty$ \\
      17 & $(\amal{\tD_2^z}{\bz_{n}}{}{}{})$ & $\infty$ &
        52 & $(\amal{\tD_2^z}{\bz_{2n}}{\bz_{2}}{\bz_2}{})$ & $\infty$ &
        87 & $(\amal{D_4^2}{\bz_{2n}}{\bz_{2}}{D_2^2}{})$ & $\infty$ \\
      18 & $(\amal{\tD_2^\td}{\bz_{n}}{}{}{})$ & $\infty$ &
        53 & $(\amal{\tD_2^z}{\bz_{2n}}{\bz_{2}}{\tD_1^\td}{})$ & $\infty$ &
        88 & $(\amal{D_4^2}{\bz_{2n}}{\bz_{2}}{D_4^\td}{})$ & $\infty$ \\
      19 & $(\amal{\tD_1^2}{\bz_{n}}{}{}{})$ & $\infty$ &
        54 & $(\amal{\tD_2^\td}{\bz_{2n}}{\bz_{2}}{\bz_2^z}{})$ & $\infty$ &
        89 & $(\amal{D_4^2}{\bz_{2n}}{\bz_{2}}{D_4}{})$ & $\infty$ \\
      20 & $(\amal{\tD_2^2}{\bz_{n}}{}{}{})$ & $\infty$ &
        55 & $(\amal{\tD_2^\td}{\bz_{2n}}{\bz_{2}}{\tD_1}{})$ & $\infty$ &
        90 & $(\amal{D_4^2}{\bz_{2n}}{\bz_{2}}{D_4^d}{})$ & $\infty$ \\
      21 & $(\amal{\bz_4^2}{\bz_{n}}{}{}{})$ & $\infty$ &
        56 & $(\amal{\tD_2^\td}{\bz_{2n}}{\bz_{2}}{\tD_1^\td}{})$ & $\infty$ &
        91 & $(\amal{D_4^2}{\bz_{2n}}{\bz_{2}}{D_4^z}{})$ & $\infty$ \\
      22 & $(\amal{D_2^2}{\bz_{n}}{}{}{})$ & $\infty$ &
        57 & $(\amal{\tD_1^2}{\bz_{2n}}{\bz_{2}}{\bz_1^2}{})$ & $\infty$ &
        92 & $(\amal{\bz_4}{\bz_{4n}}{\bz_{4}}{\bz_1}{})$ & $\infty$ \\
      23 & $(\amal{D_4^\td}{\bz_{n}}{}{}{})$ & $\infty$ &
        58 & $(\amal{\tD_1^2}{\bz_{2n}}{\bz_{2}}{\tD_1}{})$ & $\infty$ &
        93 & $(\amal{\bz_4^z}{\bz_{4n}}{\bz_{4}}{\bz_1}{})$ & $\infty$ \\
      24 & $(\amal{D_4}{\bz_{n}}{}{}{})$ & $\infty$ &
        59 & $(\amal{\tD_1^2}{\bz_{2n}}{\bz_{2}}{\tD_1^\td}{})$ & $\infty$ &
        94 & $(\amal{\bz_4^2}{\bz_{4n}}{\bz_{4}}{\bz_1^2}{})$ & $\infty$ \\
      25 & $(\amal{D_4^d}{\bz_{n}}{}{}{})$ & $\infty$ &
        60 & $(\amal{\tD_2^2}{\bz_{2n}}{\bz_{2}}{\bz_2^2}{})$ & $\infty$ &
        95 & $(\amal{\bz_4^2}{\bz_{4n}}{\bz_{4}}{\bz_2^z}{})$ & $\infty$ \\
      26 & $(\amal{D_4^z}{\bz_{n}}{}{}{})$ & $\infty$ &
        61 & $(\amal{\tD_2^2}{\bz_{2n}}{\bz_{2}}{\tD_2}{})$ & $\infty$ &
        96 & $(\amal{\bz_1^2}{D_{n}}{D_{1}}{\bz_1}{})$ & $32$ \\
      27 & $(\amal{D_4^2}{\bz_{n}}{}{}{})$ & $\infty$ &
        62 & $(\amal{\tD_2^2}{\bz_{2n}}{\bz_{2}}{\tD_2^z}{})$ & $\infty$ &
        97 & $(\amal{\bz_2}{D_{n}}{D_{1}}{\bz_1}{})$ & $32$ \\
      28 & $(\amal{\bz_1^2}{\bz_{2n}}{\bz_{2}}{\bz_1}{})$ & $\infty$ &
        63 & $(\amal{\tD_2^2}{\bz_{2n}}{\bz_{2}}{\tD_2^\td}{})$ & $\infty$ &
        98 & $(\amal{\bz_2^z}{D_{n}}{D_{1}}{\bz_1}{})$ & $32$ \\
      29 & $(\amal{\bz_2}{\bz_{2n}}{\bz_{2}}{\bz_1}{})$ & $\infty$ &
        64 & $(\amal{\tD_2^2}{\bz_{2n}}{\bz_{2}}{\tD_1^2}{})$ & $\infty$ &
        99 & $(\amal{\tD_1}{D_{n}}{D_{1}}{\bz_1}{})$ & $16$ \\
      30 & $(\amal{\bz_2^z}{\bz_{2n}}{\bz_{2}}{\bz_1}{})$ & $\infty$ &
        65 & $(\amal{\bz_4^2}{\bz_{2n}}{\bz_{2}}{\bz_2^2}{})$ & $\infty$ &
        100 & $(\amal{\tD_1^\td}{D_{n}}{D_{1}}{\bz_1}{})$ & $16$ \\
      31 & $(\amal{\tD_1}{\bz_{2n}}{\bz_{2}}{\bz_1}{})$ & $\infty$ &
        66 & $(\amal{\bz_4^2}{\bz_{2n}}{\bz_{2}}{\bz_4}{})$ & $\infty$ &
        101 & $(\amal{D_1}{D_{n}}{D_{1}}{\bz_1}{})$ & $16$ \\
      32 & $(\amal{\tD_1^\td}{\bz_{2n}}{\bz_{2}}{\bz_1}{})$ & $\infty$ &
        67 & $(\amal{\bz_4^2}{\bz_{2n}}{\bz_{2}}{\bz_4^z}{})$ & $\infty$ &
        102 & $(\amal{D_1^d}{D_{n}}{D_{1}}{\bz_1}{})$ & $16$ \\
      33 & $(\amal{D_1}{\bz_{2n}}{\bz_{2}}{\bz_1}{})$ & $\infty$ &
        68 & $(\amal{D_2^2}{\bz_{2n}}{\bz_{2}}{\bz_2^2}{})$ & $\infty$ &
        103 & $(\amal{\bz_2^2}{D_{n}}{D_{1}}{\bz_1^2}{})$ & $16$ \\
      34 & $(\amal{D_1^d}{\bz_{2n}}{\bz_{2}}{\bz_1}{})$ & $\infty$ &
        69 & $(\amal{D_2^2}{\bz_{2n}}{\bz_{2}}{D_2}{})$ & $\infty$ &
        104 & $(\amal{\bz_2^2}{D_{n}}{D_{1}}{\bz_2}{})$ & $16$ \\
      35 & $(\amal{\bz_2^2}{\bz_{2n}}{\bz_{2}}{\bz_1^2}{})$ & $\infty$ &
        70 & $(\amal{D_2^2}{\bz_{2n}}{\bz_{2}}{D_2^z}{})$ & $\infty$ &
        105 & $(\amal{\bz_2^2}{D_{n}}{D_{1}}{\bz_2^z}{})$ & $16$ \\
    \bottomrule
  \end{tabular}
\end{table}

  \begin{table}[H]
    \centering
    \caption{Conjugacy Classes of Subgroups in $D_4\times\bz_2\times O(2)$ (part 2)}\label{tab2}
    \vskip 1em
    \begin{tabular}{|rcc|rcc|rcc|}
      \toprule
      ID & $(S)$ & $\abs{W(S)}$ &
        ID & $(S)$ & $\abs{W(S)}$ &
        ID & $(S)$ & $\abs{W(S)}$ \\
      \midrule
      106 & $(\amal{D_2}{D_{n}}{D_{1}}{\bz_2}{})$ & $16$ &
        141 & $(\amal{D_4^\td}{D_{n}}{D_{1}}{D_2^z}{})$ & $8$ &
        176 & $(\amal{\tD_2^z}{D_{2n}}{D_{2}}{\bz_1}{\tD_1^\td})$ & $8$ \\
      107 & $(\amal{D_2}{D_{n}}{D_{1}}{D_1}{})$ & $8$ &
        142 & $(\amal{D_4^\td}{D_{n}}{D_{1}}{\bz_4^z}{})$ & $8$ &
        177 & $(\amal{\tD_2^\td}{D_{2n}}{D_{2}}{\bz_1}{\bz_2^z})$ & $8$ \\
      108 & $(\amal{D_2^z}{D_{n}}{D_{1}}{\bz_2}{})$ & $16$ &
        143 & $(\amal{D_4^\td}{D_{n}}{D_{1}}{\tD_2}{})$ & $8$ &
        178 & $(\amal{\tD_2^\td}{D_{2n}}{D_{2}}{\bz_1}{\tD_1})$ & $8$ \\
      109 & $(\amal{D_2^z}{D_{n}}{D_{1}}{D_1^d}{})$ & $8$ &
        144 & $(\amal{D_4}{D_{n}}{D_{1}}{D_2}{})$ & $8$ &
        179 & $(\amal{\tD_2^\td}{D_{2n}}{D_{2}}{\bz_1}{\tD_1^\td})$ & $8$ \\
      110 & $(\amal{D_2^d}{D_{n}}{D_{1}}{\bz_2^z}{})$ & $8$ &
        145 & $(\amal{D_4}{D_{n}}{D_{1}}{\bz_4}{})$ & $8$ &
        180 & $(\amal{\tD_1^2}{D_{2n}}{D_{2}}{\bz_1}{\bz_1^2})$ & $8$ \\
      111 & $(\amal{D_2^d}{D_{n}}{D_{1}}{D_1}{})$ & $8$ &
        146 & $(\amal{D_4}{D_{n}}{D_{1}}{\tD_2}{})$ & $8$ &
        181 & $(\amal{\tD_1^2}{D_{2n}}{D_{2}}{\bz_1}{\tD_1})$ & $8$ \\
      112 & $(\amal{D_2^d}{D_{n}}{D_{1}}{D_1^d}{})$ & $8$ &
        147 & $(\amal{D_4^d}{D_{n}}{D_{1}}{D_2}{})$ & $8$ &
        182 & $(\amal{\tD_1^2}{D_{2n}}{D_{2}}{\bz_1}{\tD_1^\td})$ & $8$ \\
      113 & $(\amal{D_1^2}{D_{n}}{D_{1}}{\bz_1^2}{})$ & $8$ &
        148 & $(\amal{D_4^d}{D_{n}}{D_{1}}{\bz_4^z}{})$ & $8$ &
        183 & $(\amal{\tD_2^2}{D_{2n}}{D_{2}}{\bz_1^2}{\bz_2^2})$ & $8$ \\
      114 & $(\amal{D_1^2}{D_{n}}{D_{1}}{D_1}{})$ & $8$ &
        149 & $(\amal{D_4^d}{D_{n}}{D_{1}}{\tD_2^z}{})$ & $8$ &
        184 & $(\amal{\tD_2^2}{D_{2n}}{D_{2}}{\bz_1^2}{\tD_1^2})$ & $4$ \\
      115 & $(\amal{D_1^2}{D_{n}}{D_{1}}{D_1^d}{})$ & $8$ &
        150 & $(\amal{D_4^z}{D_{n}}{D_{1}}{D_2^z}{})$ & $8$ &
        185 & $(\amal{\tD_2^2}{D_{2n}}{D_{2}}{\bz_2}{\bz_2^2})$ & $8$ \\
      116 & $(\amal{\bz_4}{D_{n}}{D_{1}}{\bz_2}{})$ & $16$ &
        151 & $(\amal{D_4^z}{D_{n}}{D_{1}}{\bz_4}{})$ & $8$ &
        186 & $(\amal{\tD_2^2}{D_{2n}}{D_{2}}{\bz_2}{\tD_2})$ & $8$ \\
      117 & $(\amal{\bz_4^z}{D_{n}}{D_{1}}{\bz_2}{})$ & $16$ &
        152 & $(\amal{D_4^z}{D_{n}}{D_{1}}{\tD_2^z}{})$ & $8$ &
        187 & $(\amal{\tD_2^2}{D_{2n}}{D_{2}}{\bz_2}{\tD_2^z})$ & $8$ \\
      118 & $(\amal{\tD_2}{D_{n}}{D_{1}}{\bz_2}{})$ & $16$ &
        153 & $(\amal{D_4^2}{D_{n}}{D_{1}}{\tD_2^2}{})$ & $4$ &
        188 & $(\amal{\tD_2^2}{D_{2n}}{D_{2}}{\bz_2^z}{\bz_2^2})$ & $8$ \\
      119 & $(\amal{\tD_2}{D_{n}}{D_{1}}{\tD_1}{})$ & $8$ &
        154 & $(\amal{D_4^2}{D_{n}}{D_{1}}{\bz_4^2}{})$ & $4$ &
        189 & $(\amal{\tD_2^2}{D_{2n}}{D_{2}}{\bz_2^z}{\tD_2^\td})$ & $4$ \\
      120 & $(\amal{\tD_2^z}{D_{n}}{D_{1}}{\bz_2}{})$ & $16$ &
        155 & $(\amal{D_4^2}{D_{n}}{D_{1}}{D_2^2}{})$ & $4$ &
        190 & $(\amal{\tD_2^2}{D_{2n}}{D_{2}}{\tD_1}{\tD_2})$ & $4$ \\
      121 & $(\amal{\tD_2^z}{D_{n}}{D_{1}}{\tD_1^\td}{})$ & $8$ &
        156 & $(\amal{D_4^2}{D_{n}}{D_{1}}{D_4^\td}{})$ & $4$ &
        191 & $(\amal{\tD_2^2}{D_{2n}}{D_{2}}{\tD_1}{\tD_2^\td})$ & $4$ \\
      122 & $(\amal{\tD_2^\td}{D_{n}}{D_{1}}{\bz_2^z}{})$ & $8$ &
        157 & $(\amal{D_4^2}{D_{n}}{D_{1}}{D_4}{})$ & $4$ &
        192 & $(\amal{\tD_2^2}{D_{2n}}{D_{2}}{\tD_1}{\tD_1^2})$ & $4$ \\
      123 & $(\amal{\tD_2^\td}{D_{n}}{D_{1}}{\tD_1}{})$ & $8$ &
        158 & $(\amal{D_4^2}{D_{n}}{D_{1}}{D_4^d}{})$ & $4$ &
        193 & $(\amal{\tD_2^2}{D_{2n}}{D_{2}}{\tD_1^\td}{\tD_2^z})$ & $4$ \\
      124 & $(\amal{\tD_2^\td}{D_{n}}{D_{1}}{\tD_1^\td}{})$ & $8$ &
        159 & $(\amal{D_4^2}{D_{n}}{D_{1}}{D_4^z}{})$ & $4$ &
        194 & $(\amal{\tD_2^2}{D_{2n}}{D_{2}}{\tD_1^\td}{\tD_2^\td})$ & $4$ \\
      125 & $(\amal{\tD_1^2}{D_{n}}{D_{1}}{\bz_1^2}{})$ & $8$ &
        160 & $(\amal{\bz_2^2}{D_{2n}}{D_{2}}{\bz_1}{\bz_1^2})$ & $16$ &
        195 & $(\amal{\tD_2^2}{D_{2n}}{D_{2}}{\tD_1^\td}{\tD_1^2})$ & $4$ \\
      126 & $(\amal{\tD_1^2}{D_{n}}{D_{1}}{\tD_1}{})$ & $8$ &
        161 & $(\amal{\bz_2^2}{D_{2n}}{D_{2}}{\bz_1}{\bz_2})$ & $16$ &
        196 & $(\amal{\bz_4^2}{D_{2n}}{D_{2}}{\bz_2}{\bz_2^2})$ & $8$ \\
      127 & $(\amal{\tD_1^2}{D_{n}}{D_{1}}{\tD_1^\td}{})$ & $8$ &
        162 & $(\amal{\bz_2^2}{D_{2n}}{D_{2}}{\bz_1}{\bz_2^z})$ & $16$ &
        197 & $(\amal{\bz_4^2}{D_{2n}}{D_{2}}{\bz_2}{\bz_4})$ & $8$ \\
      128 & $(\amal{\tD_2^2}{D_{n}}{D_{1}}{\bz_2^2}{})$ & $8$ &
        163 & $(\amal{D_2}{D_{2n}}{D_{2}}{\bz_1}{\bz_2})$ & $16$ &
        198 & $(\amal{\bz_4^2}{D_{2n}}{D_{2}}{\bz_2}{\bz_4^z})$ & $8$ \\
      129 & $(\amal{\tD_2^2}{D_{n}}{D_{1}}{\tD_2}{})$ & $8$ &
        164 & $(\amal{D_2}{D_{2n}}{D_{2}}{\bz_1}{D_1})$ & $8$ &
        199 & $(\amal{D_2^2}{D_{2n}}{D_{2}}{\bz_1^2}{\bz_2^2})$ & $8$ \\
      130 & $(\amal{\tD_2^2}{D_{n}}{D_{1}}{\tD_2^z}{})$ & $8$ &
        165 & $(\amal{D_2^z}{D_{2n}}{D_{2}}{\bz_1}{\bz_2})$ & $16$ &
        200 & $(\amal{D_2^2}{D_{2n}}{D_{2}}{\bz_1^2}{D_1^2})$ & $4$ \\
      131 & $(\amal{\tD_2^2}{D_{n}}{D_{1}}{\tD_2^\td}{})$ & $4$ &
        166 & $(\amal{D_2^z}{D_{2n}}{D_{2}}{\bz_1}{D_1^d})$ & $8$ &
        201 & $(\amal{D_2^2}{D_{2n}}{D_{2}}{\bz_2}{\bz_2^2})$ & $8$ \\
      132 & $(\amal{\tD_2^2}{D_{n}}{D_{1}}{\tD_1^2}{})$ & $4$ &
        167 & $(\amal{D_2^d}{D_{2n}}{D_{2}}{\bz_1}{\bz_2^z})$ & $8$ &
        202 & $(\amal{D_2^2}{D_{2n}}{D_{2}}{\bz_2}{D_2})$ & $8$ \\
      133 & $(\amal{\bz_4^2}{D_{n}}{D_{1}}{\bz_2^2}{})$ & $8$ &
        168 & $(\amal{D_2^d}{D_{2n}}{D_{2}}{\bz_1}{D_1})$ & $8$ &
        203 & $(\amal{D_2^2}{D_{2n}}{D_{2}}{\bz_2}{D_2^z})$ & $8$ \\
      134 & $(\amal{\bz_4^2}{D_{n}}{D_{1}}{\bz_4}{})$ & $8$ &
        169 & $(\amal{D_2^d}{D_{2n}}{D_{2}}{\bz_1}{D_1^d})$ & $8$ &
        204 & $(\amal{D_2^2}{D_{2n}}{D_{2}}{\bz_2^z}{\bz_2^2})$ & $8$ \\
      135 & $(\amal{\bz_4^2}{D_{n}}{D_{1}}{\bz_4^z}{})$ & $8$ &
        170 & $(\amal{D_1^2}{D_{2n}}{D_{2}}{\bz_1}{\bz_1^2})$ & $8$ &
        205 & $(\amal{D_2^2}{D_{2n}}{D_{2}}{\bz_2^z}{D_2^d})$ & $4$ \\
      136 & $(\amal{D_2^2}{D_{n}}{D_{1}}{\bz_2^2}{})$ & $8$ &
        171 & $(\amal{D_1^2}{D_{2n}}{D_{2}}{\bz_1}{D_1})$ & $8$ &
        206 & $(\amal{D_2^2}{D_{2n}}{D_{2}}{D_1}{D_2})$ & $4$ \\
      137 & $(\amal{D_2^2}{D_{n}}{D_{1}}{D_2}{})$ & $8$ &
        172 & $(\amal{D_1^2}{D_{2n}}{D_{2}}{\bz_1}{D_1^d})$ & $8$ &
        207 & $(\amal{D_2^2}{D_{2n}}{D_{2}}{D_1}{D_2^d})$ & $4$ \\
      138 & $(\amal{D_2^2}{D_{n}}{D_{1}}{D_2^z}{})$ & $8$ &
        173 & $(\amal{\tD_2}{D_{2n}}{D_{2}}{\bz_1}{\bz_2})$ & $16$ &
        208 & $(\amal{D_2^2}{D_{2n}}{D_{2}}{D_1}{D_1^2})$ & $4$ \\
      139 & $(\amal{D_2^2}{D_{n}}{D_{1}}{D_2^d}{})$ & $4$ &
        174 & $(\amal{\tD_2}{D_{2n}}{D_{2}}{\bz_1}{\tD_1})$ & $8$ &
        209 & $(\amal{D_2^2}{D_{2n}}{D_{2}}{D_1^d}{D_2^z})$ & $4$ \\
      140 & $(\amal{D_2^2}{D_{n}}{D_{1}}{D_1^2}{})$ & $4$ &
        175 & $(\amal{\tD_2^z}{D_{2n}}{D_{2}}{\bz_1}{\bz_2})$ & $16$ &
        210 & $(\amal{D_2^2}{D_{2n}}{D_{2}}{D_1^d}{D_2^d})$ & $4$ \\
    \bottomrule
  \end{tabular}
\end{table}

  \begin{table}[H]
    \centering
    \caption{Conjugacy Classes of Subgroups in $D_4\times\bz_2\times O(2)$ (part 3)}\label{tab3}
    \vskip 1em
    \begin{tabular}{|rcc|rcc|rcc|}
      \toprule
      ID & $(S)$ & $\abs{W(S)}$ &
        ID & $(S)$ & $\abs{W(S)}$ &
        ID & $(S)$ & $\abs{W(S)}$ \\
      \midrule
      211 & $(\amal{D_2^2}{D_{2n}}{D_{2}}{D_1^d}{D_1^2})$ & $4$ &
        246 & $(\amal{D_4}{D_{4n}}{D_{4}}{\bz_1}{})$ & $4$ &
        281 & $(\amal{\tD_1}{D_{2n}}{\bz_{2}}{\bz_1}{})$ & $8$ \\
      212 & $(\amal{D_4^\td}{D_{2n}}{D_{2}}{\bz_2}{D_2^z})$ & $8$ &
        247 & $(\amal{D_4^d}{D_{4n}}{D_{4}}{\bz_1}{})$ & $4$ &
        282 & $(\amal{\tD_1^\td}{D_{2n}}{\bz_{2}}{\bz_1}{})$ & $8$ \\
      213 & $(\amal{D_4^\td}{D_{2n}}{D_{2}}{\bz_2}{\bz_4^z})$ & $8$ &
        248 & $(\amal{D_4^z}{D_{4n}}{D_{4}}{\bz_1}{})$ & $4$ &
        283 & $(\amal{D_1}{D_{2n}}{\bz_{2}}{\bz_1}{})$ & $8$ \\
      214 & $(\amal{D_4^\td}{D_{2n}}{D_{2}}{\bz_2}{\tD_2})$ & $8$ &
        249 & $(\amal{D_4^2}{D_{4n}}{D_{4}}{\bz_1^2}{})$ & $2$ &
        284 & $(\amal{D_1^d}{D_{2n}}{\bz_{2}}{\bz_1}{})$ & $8$ \\
      215 & $(\amal{D_4}{D_{2n}}{D_{2}}{\bz_2}{D_2})$ & $8$ &
        250 & $(\amal{D_4^2}{D_{4n}}{D_{4}}{\bz_2^z}{})$ & $2$ &
        285 & $(\amal{\bz_2^2}{D_{2n}}{\bz_{2}}{\bz_1^2}{})$ & $8$ \\
      216 & $(\amal{D_4}{D_{2n}}{D_{2}}{\bz_2}{\bz_4})$ & $8$ &
        251 & $(\amal{\bz_1}{D_{n}}{}{}{})$ & $32$ &
        286 & $(\amal{\bz_2^2}{D_{2n}}{\bz_{2}}{\bz_2}{})$ & $8$ \\
      217 & $(\amal{D_4}{D_{2n}}{D_{2}}{\bz_2}{\tD_2})$ & $8$ &
        252 & $(\amal{\bz_1^2}{D_{n}}{}{}{})$ & $16$ &
        287 & $(\amal{\bz_2^2}{D_{2n}}{\bz_{2}}{\bz_2^z}{})$ & $8$ \\
      218 & $(\amal{D_4^d}{D_{2n}}{D_{2}}{\bz_2}{D_2})$ & $8$ &
        253 & $(\amal{\bz_2}{D_{n}}{}{}{})$ & $16$ &
        288 & $(\amal{D_2}{D_{2n}}{\bz_{2}}{\bz_2}{})$ & $8$ \\
      219 & $(\amal{D_4^d}{D_{2n}}{D_{2}}{\bz_2}{\bz_4^z})$ & $8$ &
        254 & $(\amal{\bz_2^z}{D_{n}}{}{}{})$ & $16$ &
        289 & $(\amal{D_2}{D_{2n}}{\bz_{2}}{D_1}{})$ & $4$ \\
      220 & $(\amal{D_4^d}{D_{2n}}{D_{2}}{\bz_2}{\tD_2^z})$ & $8$ &
        255 & $(\amal{\tD_1}{D_{n}}{}{}{})$ & $8$ &
        290 & $(\amal{D_2^z}{D_{2n}}{\bz_{2}}{\bz_2}{})$ & $8$ \\
      221 & $(\amal{D_4^z}{D_{2n}}{D_{2}}{\bz_2}{D_2^z})$ & $8$ &
        256 & $(\amal{\tD_1^\td}{D_{n}}{}{}{})$ & $8$ &
        291 & $(\amal{D_2^z}{D_{2n}}{\bz_{2}}{D_1^d}{})$ & $4$ \\
      222 & $(\amal{D_4^z}{D_{2n}}{D_{2}}{\bz_2}{\bz_4})$ & $8$ &
        257 & $(\amal{D_1}{D_{n}}{}{}{})$ & $8$ &
        292 & $(\amal{D_2^d}{D_{2n}}{\bz_{2}}{\bz_2^z}{})$ & $4$ \\
      223 & $(\amal{D_4^z}{D_{2n}}{D_{2}}{\bz_2}{\tD_2^z})$ & $8$ &
        258 & $(\amal{D_1^d}{D_{n}}{}{}{})$ & $8$ &
        293 & $(\amal{D_2^d}{D_{2n}}{\bz_{2}}{D_1}{})$ & $4$ \\
      224 & $(\amal{D_4^2}{D_{2n}}{D_{2}}{\bz_2^2}{\tD_2^2})$ & $4$ &
        259 & $(\amal{\bz_2^2}{D_{n}}{}{}{})$ & $8$ &
        294 & $(\amal{D_2^d}{D_{2n}}{\bz_{2}}{D_1^d}{})$ & $4$ \\
      225 & $(\amal{D_4^2}{D_{2n}}{D_{2}}{\bz_2^2}{\bz_4^2})$ & $4$ &
        260 & $(\amal{D_2}{D_{n}}{}{}{})$ & $8$ &
        295 & $(\amal{D_1^2}{D_{2n}}{\bz_{2}}{\bz_1^2}{})$ & $4$ \\
      226 & $(\amal{D_4^2}{D_{2n}}{D_{2}}{\bz_2^2}{D_2^2})$ & $4$ &
        261 & $(\amal{D_2^z}{D_{n}}{}{}{})$ & $8$ &
        296 & $(\amal{D_1^2}{D_{2n}}{\bz_{2}}{D_1}{})$ & $4$ \\
      227 & $(\amal{D_4^2}{D_{2n}}{D_{2}}{D_2}{D_2^2})$ & $4$ &
        262 & $(\amal{D_2^d}{D_{n}}{}{}{})$ & $4$ &
        297 & $(\amal{D_1^2}{D_{2n}}{\bz_{2}}{D_1^d}{})$ & $4$ \\
      228 & $(\amal{D_4^2}{D_{2n}}{D_{2}}{D_2}{D_4})$ & $4$ &
        263 & $(\amal{D_1^2}{D_{n}}{}{}{})$ & $4$ &
        298 & $(\amal{\bz_4}{D_{2n}}{\bz_{2}}{\bz_2}{})$ & $8$ \\
      229 & $(\amal{D_4^2}{D_{2n}}{D_{2}}{D_2}{D_4^d})$ & $4$ &
        264 & $(\amal{\bz_4}{D_{n}}{}{}{})$ & $8$ &
        299 & $(\amal{\bz_4^z}{D_{2n}}{\bz_{2}}{\bz_2}{})$ & $8$ \\
      230 & $(\amal{D_4^2}{D_{2n}}{D_{2}}{D_2^z}{D_2^2})$ & $4$ &
        265 & $(\amal{\bz_4^z}{D_{n}}{}{}{})$ & $8$ &
        300 & $(\amal{\tD_2}{D_{2n}}{\bz_{2}}{\bz_2}{})$ & $8$ \\
      231 & $(\amal{D_4^2}{D_{2n}}{D_{2}}{D_2^z}{D_4^\td})$ & $4$ &
        266 & $(\amal{\tD_2}{D_{n}}{}{}{})$ & $8$ &
        301 & $(\amal{\tD_2}{D_{2n}}{\bz_{2}}{\tD_1}{})$ & $4$ \\
      232 & $(\amal{D_4^2}{D_{2n}}{D_{2}}{D_2^z}{D_4^z})$ & $4$ &
        267 & $(\amal{\tD_2^z}{D_{n}}{}{}{})$ & $8$ &
        302 & $(\amal{\tD_2^z}{D_{2n}}{\bz_{2}}{\bz_2}{})$ & $8$ \\
      233 & $(\amal{D_4^2}{D_{2n}}{D_{2}}{\bz_4}{\bz_4^2})$ & $4$ &
        268 & $(\amal{\tD_2^\td}{D_{n}}{}{}{})$ & $4$ &
        303 & $(\amal{\tD_2^z}{D_{2n}}{\bz_{2}}{\tD_1^\td}{})$ & $4$ \\
      234 & $(\amal{D_4^2}{D_{2n}}{D_{2}}{\bz_4}{D_4})$ & $4$ &
        269 & $(\amal{\tD_1^2}{D_{n}}{}{}{})$ & $4$ &
        304 & $(\amal{\tD_2^\td}{D_{2n}}{\bz_{2}}{\bz_2^z}{})$ & $4$ \\
      235 & $(\amal{D_4^2}{D_{2n}}{D_{2}}{\bz_4}{D_4^z})$ & $4$ &
        270 & $(\amal{\tD_2^2}{D_{n}}{}{}{})$ & $4$ &
        305 & $(\amal{\tD_2^\td}{D_{2n}}{\bz_{2}}{\tD_1}{})$ & $4$ \\
      236 & $(\amal{D_4^2}{D_{2n}}{D_{2}}{\bz_4^z}{\bz_4^2})$ & $4$ &
        271 & $(\amal{\bz_4^2}{D_{n}}{}{}{})$ & $4$ &
        306 & $(\amal{\tD_2^\td}{D_{2n}}{\bz_{2}}{\tD_1^\td}{})$ & $4$ \\
      237 & $(\amal{D_4^2}{D_{2n}}{D_{2}}{\bz_4^z}{D_4^\td})$ & $4$ &
        272 & $(\amal{D_2^2}{D_{n}}{}{}{})$ & $4$ &
        307 & $(\amal{\tD_1^2}{D_{2n}}{\bz_{2}}{\bz_1^2}{})$ & $4$ \\
      238 & $(\amal{D_4^2}{D_{2n}}{D_{2}}{\bz_4^z}{D_4^d})$ & $4$ &
        273 & $(\amal{D_4^\td}{D_{n}}{}{}{})$ & $4$ &
        308 & $(\amal{\tD_1^2}{D_{2n}}{\bz_{2}}{\tD_1}{})$ & $4$ \\
      239 & $(\amal{D_4^2}{D_{2n}}{D_{2}}{\tD_2}{\tD_2^2})$ & $4$ &
        274 & $(\amal{D_4}{D_{n}}{}{}{})$ & $4$ &
        309 & $(\amal{\tD_1^2}{D_{2n}}{\bz_{2}}{\tD_1^\td}{})$ & $4$ \\
      240 & $(\amal{D_4^2}{D_{2n}}{D_{2}}{\tD_2}{D_4^\td})$ & $4$ &
        275 & $(\amal{D_4^d}{D_{n}}{}{}{})$ & $4$ &
        310 & $(\amal{\tD_2^2}{D_{2n}}{\bz_{2}}{\bz_2^2}{})$ & $4$ \\
      241 & $(\amal{D_4^2}{D_{2n}}{D_{2}}{\tD_2}{D_4})$ & $4$ &
        276 & $(\amal{D_4^z}{D_{n}}{}{}{})$ & $4$ &
        311 & $(\amal{\tD_2^2}{D_{2n}}{\bz_{2}}{\tD_2}{})$ & $4$ \\
      242 & $(\amal{D_4^2}{D_{2n}}{D_{2}}{\tD_2^z}{\tD_2^2})$ & $4$ &
        277 & $(\amal{D_4^2}{D_{n}}{}{}{})$ & $2$ &
        312 & $(\amal{\tD_2^2}{D_{2n}}{\bz_{2}}{\tD_2^z}{})$ & $4$ \\
      243 & $(\amal{D_4^2}{D_{2n}}{D_{2}}{\tD_2^z}{D_4^d})$ & $4$ &
        278 & $(\amal{\bz_1^2}{D_{2n}}{\bz_{2}}{\bz_1}{})$ & $16$ &
        313 & $(\amal{\tD_2^2}{D_{2n}}{\bz_{2}}{\tD_2^\td}{})$ & $2$ \\
      244 & $(\amal{D_4^2}{D_{2n}}{D_{2}}{\tD_2^z}{D_4^z})$ & $4$ &
        279 & $(\amal{\bz_2}{D_{2n}}{\bz_{2}}{\bz_1}{})$ & $16$ &
        314 & $(\amal{\tD_2^2}{D_{2n}}{\bz_{2}}{\tD_1^2}{})$ & $2$ \\
      245 & $(\amal{D_4^\td}{D_{4n}}{D_{4}}{\bz_1}{})$ & $4$ &
        280 & $(\amal{\bz_2^z}{D_{2n}}{\bz_{2}}{\bz_1}{})$ & $16$ &
        315 & $(\amal{\bz_4^2}{D_{2n}}{\bz_{2}}{\bz_2^2}{})$ & $4$ \\
    \bottomrule
  \end{tabular}
\end{table}

  \begin{table}[H]
    \centering
    \caption{Conjugacy Classes of Subgroups in $D_4\times\bz_2\times O(2)$ (part 4)}\label{tab4}
    \vskip 1em
    \begin{tabular}{|rcc|rcc|rcc|}
      \toprule
      ID & $(S)$ & $\abs{W(S)}$ &
        ID & $(S)$ & $\abs{W(S)}$ &
        ID & $(S)$ & $\abs{W(S)}$ \\
      \midrule
      316 & $(\amal{\bz_4^2}{D_{2n}}{\bz_{2}}{\bz_4}{})$ & $4$ &
        351 & $(\amal{D_2}{SO(2)}{}{}{})$ & $8$ &
        386 & $(\amal{D_1^2}{O(2)}{D_{1}}{\bz_1^2}{})$ & $4$ \\
      317 & $(\amal{\bz_4^2}{D_{2n}}{\bz_{2}}{\bz_4^z}{})$ & $4$ &
        352 & $(\amal{D_2^z}{SO(2)}{}{}{})$ & $8$ &
        387 & $(\amal{D_1^2}{O(2)}{D_{1}}{D_1}{})$ & $4$ \\
      318 & $(\amal{D_2^2}{D_{2n}}{\bz_{2}}{\bz_2^2}{})$ & $4$ &
        353 & $(\amal{D_2^d}{SO(2)}{}{}{})$ & $4$ &
        388 & $(\amal{D_1^2}{O(2)}{D_{1}}{D_1^d}{})$ & $4$ \\
      319 & $(\amal{D_2^2}{D_{2n}}{\bz_{2}}{D_2}{})$ & $4$ &
        354 & $(\amal{D_1^2}{SO(2)}{}{}{})$ & $4$ &
        389 & $(\amal{\bz_4}{O(2)}{D_{1}}{\bz_2}{})$ & $8$ \\
      320 & $(\amal{D_2^2}{D_{2n}}{\bz_{2}}{D_2^z}{})$ & $4$ &
        355 & $(\amal{\bz_4}{SO(2)}{}{}{})$ & $8$ &
        390 & $(\amal{\bz_4^z}{O(2)}{D_{1}}{\bz_2}{})$ & $8$ \\
      321 & $(\amal{D_2^2}{D_{2n}}{\bz_{2}}{D_2^d}{})$ & $2$ &
        356 & $(\amal{\bz_4^z}{SO(2)}{}{}{})$ & $8$ &
        391 & $(\amal{\tD_2}{O(2)}{D_{1}}{\bz_2}{})$ & $8$ \\
      322 & $(\amal{D_2^2}{D_{2n}}{\bz_{2}}{D_1^2}{})$ & $2$ &
        357 & $(\amal{\tD_2}{SO(2)}{}{}{})$ & $8$ &
        392 & $(\amal{\tD_2}{O(2)}{D_{1}}{\tD_1}{})$ & $4$ \\
      323 & $(\amal{D_4^\td}{D_{2n}}{\bz_{2}}{D_2^z}{})$ & $4$ &
        358 & $(\amal{\tD_2^z}{SO(2)}{}{}{})$ & $8$ &
        393 & $(\amal{\tD_2^z}{O(2)}{D_{1}}{\bz_2}{})$ & $8$ \\
      324 & $(\amal{D_4^\td}{D_{2n}}{\bz_{2}}{\bz_4^z}{})$ & $4$ &
        359 & $(\amal{\tD_2^\td}{SO(2)}{}{}{})$ & $4$ &
        394 & $(\amal{\tD_2^z}{O(2)}{D_{1}}{\tD_1^\td}{})$ & $4$ \\
      325 & $(\amal{D_4^\td}{D_{2n}}{\bz_{2}}{\tD_2}{})$ & $4$ &
        360 & $(\amal{\tD_1^2}{SO(2)}{}{}{})$ & $4$ &
        395 & $(\amal{\tD_2^\td}{O(2)}{D_{1}}{\bz_2^z}{})$ & $4$ \\
      326 & $(\amal{D_4}{D_{2n}}{\bz_{2}}{D_2}{})$ & $4$ &
        361 & $(\amal{\tD_2^2}{SO(2)}{}{}{})$ & $4$ &
        396 & $(\amal{\tD_2^\td}{O(2)}{D_{1}}{\tD_1}{})$ & $4$ \\
      327 & $(\amal{D_4}{D_{2n}}{\bz_{2}}{\bz_4}{})$ & $4$ &
        362 & $(\amal{\bz_4^2}{SO(2)}{}{}{})$ & $4$ &
        397 & $(\amal{\tD_2^\td}{O(2)}{D_{1}}{\tD_1^\td}{})$ & $4$ \\
      328 & $(\amal{D_4}{D_{2n}}{\bz_{2}}{\tD_2}{})$ & $4$ &
        363 & $(\amal{D_2^2}{SO(2)}{}{}{})$ & $4$ &
        398 & $(\amal{\tD_1^2}{O(2)}{D_{1}}{\bz_1^2}{})$ & $4$ \\
      329 & $(\amal{D_4^d}{D_{2n}}{\bz_{2}}{D_2}{})$ & $4$ &
        364 & $(\amal{D_4^\td}{SO(2)}{}{}{})$ & $4$ &
        399 & $(\amal{\tD_1^2}{O(2)}{D_{1}}{\tD_1}{})$ & $4$ \\
      330 & $(\amal{D_4^d}{D_{2n}}{\bz_{2}}{\bz_4^z}{})$ & $4$ &
        365 & $(\amal{D_4}{SO(2)}{}{}{})$ & $4$ &
        400 & $(\amal{\tD_1^2}{O(2)}{D_{1}}{\tD_1^\td}{})$ & $4$ \\
      331 & $(\amal{D_4^d}{D_{2n}}{\bz_{2}}{\tD_2^z}{})$ & $4$ &
        366 & $(\amal{D_4^d}{SO(2)}{}{}{})$ & $4$ &
        401 & $(\amal{\tD_2^2}{O(2)}{D_{1}}{\bz_2^2}{})$ & $4$ \\
      332 & $(\amal{D_4^z}{D_{2n}}{\bz_{2}}{D_2^z}{})$ & $4$ &
        367 & $(\amal{D_4^z}{SO(2)}{}{}{})$ & $4$ &
        402 & $(\amal{\tD_2^2}{O(2)}{D_{1}}{\tD_2}{})$ & $4$ \\
      333 & $(\amal{D_4^z}{D_{2n}}{\bz_{2}}{\bz_4}{})$ & $4$ &
        368 & $(\amal{D_4^2}{SO(2)}{}{}{})$ & $2$ &
        403 & $(\amal{\tD_2^2}{O(2)}{D_{1}}{\tD_2^z}{})$ & $4$ \\
      334 & $(\amal{D_4^z}{D_{2n}}{\bz_{2}}{\tD_2^z}{})$ & $4$ &
        369 & $(\amal{\bz_1^2}{O(2)}{D_{1}}{\bz_1}{})$ & $16$ &
        404 & $(\amal{\tD_2^2}{O(2)}{D_{1}}{\tD_2^\td}{})$ & $2$ \\
      335 & $(\amal{D_4^2}{D_{2n}}{\bz_{2}}{\tD_2^2}{})$ & $2$ &
        370 & $(\amal{\bz_2}{O(2)}{D_{1}}{\bz_1}{})$ & $16$ &
        405 & $(\amal{\tD_2^2}{O(2)}{D_{1}}{\tD_1^2}{})$ & $2$ \\
      336 & $(\amal{D_4^2}{D_{2n}}{\bz_{2}}{\bz_4^2}{})$ & $2$ &
        371 & $(\amal{\bz_2^z}{O(2)}{D_{1}}{\bz_1}{})$ & $16$ &
        406 & $(\amal{\bz_4^2}{O(2)}{D_{1}}{\bz_2^2}{})$ & $4$ \\
      337 & $(\amal{D_4^2}{D_{2n}}{\bz_{2}}{D_2^2}{})$ & $2$ &
        372 & $(\amal{\tD_1}{O(2)}{D_{1}}{\bz_1}{})$ & $8$ &
        407 & $(\amal{\bz_4^2}{O(2)}{D_{1}}{\bz_4}{})$ & $4$ \\
      338 & $(\amal{D_4^2}{D_{2n}}{\bz_{2}}{D_4^\td}{})$ & $2$ &
        373 & $(\amal{\tD_1^\td}{O(2)}{D_{1}}{\bz_1}{})$ & $8$ &
        408 & $(\amal{\bz_4^2}{O(2)}{D_{1}}{\bz_4^z}{})$ & $4$ \\
      339 & $(\amal{D_4^2}{D_{2n}}{\bz_{2}}{D_4}{})$ & $2$ &
        374 & $(\amal{D_1}{O(2)}{D_{1}}{\bz_1}{})$ & $8$ &
        409 & $(\amal{D_2^2}{O(2)}{D_{1}}{\bz_2^2}{})$ & $4$ \\
      340 & $(\amal{D_4^2}{D_{2n}}{\bz_{2}}{D_4^d}{})$ & $2$ &
        375 & $(\amal{D_1^d}{O(2)}{D_{1}}{\bz_1}{})$ & $8$ &
        410 & $(\amal{D_2^2}{O(2)}{D_{1}}{D_2}{})$ & $4$ \\
      341 & $(\amal{D_4^2}{D_{2n}}{\bz_{2}}{D_4^z}{})$ & $2$ &
        376 & $(\amal{\bz_2^2}{O(2)}{D_{1}}{\bz_1^2}{})$ & $8$ &
        411 & $(\amal{D_2^2}{O(2)}{D_{1}}{D_2^z}{})$ & $4$ \\
      342 & $(\amal{\bz_1}{SO(2)}{}{}{})$ & $32$ &
        377 & $(\amal{\bz_2^2}{O(2)}{D_{1}}{\bz_2}{})$ & $8$ &
        412 & $(\amal{D_2^2}{O(2)}{D_{1}}{D_2^d}{})$ & $2$ \\
      343 & $(\amal{\bz_1^2}{SO(2)}{}{}{})$ & $16$ &
        378 & $(\amal{\bz_2^2}{O(2)}{D_{1}}{\bz_2^z}{})$ & $8$ &
        413 & $(\amal{D_2^2}{O(2)}{D_{1}}{D_1^2}{})$ & $2$ \\
      344 & $(\amal{\bz_2}{SO(2)}{}{}{})$ & $16$ &
        379 & $(\amal{D_2}{O(2)}{D_{1}}{\bz_2}{})$ & $8$ &
        414 & $(\amal{D_4^\td}{O(2)}{D_{1}}{D_2^z}{})$ & $4$ \\
      345 & $(\amal{\bz_2^z}{SO(2)}{}{}{})$ & $16$ &
        380 & $(\amal{D_2}{O(2)}{D_{1}}{D_1}{})$ & $4$ &
        415 & $(\amal{D_4^\td}{O(2)}{D_{1}}{\bz_4^z}{})$ & $4$ \\
      346 & $(\amal{\tD_1}{SO(2)}{}{}{})$ & $8$ &
        381 & $(\amal{D_2^z}{O(2)}{D_{1}}{\bz_2}{})$ & $8$ &
        416 & $(\amal{D_4^\td}{O(2)}{D_{1}}{\tD_2}{})$ & $4$ \\
      347 & $(\amal{\tD_1^\td}{SO(2)}{}{}{})$ & $8$ &
        382 & $(\amal{D_2^z}{O(2)}{D_{1}}{D_1^d}{})$ & $4$ &
        417 & $(\amal{D_4}{O(2)}{D_{1}}{D_2}{})$ & $4$ \\
      348 & $(\amal{D_1}{SO(2)}{}{}{})$ & $8$ &
        383 & $(\amal{D_2^d}{O(2)}{D_{1}}{\bz_2^z}{})$ & $4$ &
        418 & $(\amal{D_4}{O(2)}{D_{1}}{\bz_4}{})$ & $4$ \\
      349 & $(\amal{D_1^d}{SO(2)}{}{}{})$ & $8$ &
        384 & $(\amal{D_2^d}{O(2)}{D_{1}}{D_1}{})$ & $4$ &
        419 & $(\amal{D_4}{O(2)}{D_{1}}{\tD_2}{})$ & $4$ \\
      350 & $(\amal{\bz_2^2}{SO(2)}{}{}{})$ & $8$ &
        385 & $(\amal{D_2^d}{O(2)}{D_{1}}{D_1^d}{})$ & $4$ &
        420 & $(\amal{D_4^d}{O(2)}{D_{1}}{D_2}{})$ & $4$ \\
    \bottomrule
  \end{tabular}
\end{table}

  \begin{table}[H]
    \centering
    \caption{Conjugacy Classes of Subgroups in $D_4\times\bz_2\times O(2)$ (part 5)}\label{tab5}
    \vskip 1em
    \begin{tabular}{|rcc|rcc|rcc|}
      \toprule
      ID & $(S)$ & $\abs{W(S)}$ &
        ID & $(S)$ & $\abs{W(S)}$ &
        ID & $(S)$ & $\abs{W(S)}$ \\
      \midrule
      421 & $(\amal{D_4^d}{O(2)}{D_{1}}{\bz_4^z}{})$ & $4$ &
        434 & $(\amal{\bz_1^2}{O(2)}{}{}{})$ & $8$ &
        447 & $(\amal{\bz_4^z}{O(2)}{}{}{})$ & $4$ \\
      422 & $(\amal{D_4^d}{O(2)}{D_{1}}{\tD_2^z}{})$ & $4$ &
        435 & $(\amal{\bz_2}{O(2)}{}{}{})$ & $8$ &
        448 & $(\amal{\tD_2}{O(2)}{}{}{})$ & $4$ \\
      423 & $(\amal{D_4^z}{O(2)}{D_{1}}{D_2^z}{})$ & $4$ &
        436 & $(\amal{\bz_2^z}{O(2)}{}{}{})$ & $8$ &
        449 & $(\amal{\tD_2^z}{O(2)}{}{}{})$ & $4$ \\
      424 & $(\amal{D_4^z}{O(2)}{D_{1}}{\bz_4}{})$ & $4$ &
        437 & $(\amal{\tD_1}{O(2)}{}{}{})$ & $4$ &
        450 & $(\amal{\tD_2^\td}{O(2)}{}{}{})$ & $2$ \\
      425 & $(\amal{D_4^z}{O(2)}{D_{1}}{\tD_2^z}{})$ & $4$ &
        438 & $(\amal{\tD_1^\td}{O(2)}{}{}{})$ & $4$ &
        451 & $(\amal{\tD_1^2}{O(2)}{}{}{})$ & $2$ \\
      426 & $(\amal{D_4^2}{O(2)}{D_{1}}{\tD_2^2}{})$ & $2$ &
        439 & $(\amal{D_1}{O(2)}{}{}{})$ & $4$ &
        452 & $(\amal{\tD_2^2}{O(2)}{}{}{})$ & $2$ \\
      427 & $(\amal{D_4^2}{O(2)}{D_{1}}{\bz_4^2}{})$ & $2$ &
        440 & $(\amal{D_1^d}{O(2)}{}{}{})$ & $4$ &
        453 & $(\amal{\bz_4^2}{O(2)}{}{}{})$ & $2$ \\
      428 & $(\amal{D_4^2}{O(2)}{D_{1}}{D_2^2}{})$ & $2$ &
        441 & $(\amal{\bz_2^2}{O(2)}{}{}{})$ & $4$ &
        454 & $(\amal{D_2^2}{O(2)}{}{}{})$ & $2$ \\
      429 & $(\amal{D_4^2}{O(2)}{D_{1}}{D_4^\td}{})$ & $2$ &
        442 & $(\amal{D_2}{O(2)}{}{}{})$ & $4$ &
        455 & $(\amal{D_4^\td}{O(2)}{}{}{})$ & $2$ \\
      430 & $(\amal{D_4^2}{O(2)}{D_{1}}{D_4}{})$ & $2$ &
        443 & $(\amal{D_2^z}{O(2)}{}{}{})$ & $4$ &
        456 & $(\amal{D_4}{O(2)}{}{}{})$ & $2$ \\
      431 & $(\amal{D_4^2}{O(2)}{D_{1}}{D_4^d}{})$ & $2$ &
        444 & $(\amal{D_2^d}{O(2)}{}{}{})$ & $2$ &
        457 & $(\amal{D_4^d}{O(2)}{}{}{})$ & $2$ \\
      432 & $(\amal{D_4^2}{O(2)}{D_{1}}{D_4^z}{})$ & $2$ &
        445 & $(\amal{D_1^2}{O(2)}{}{}{})$ & $2$ &
        458 & $(\amal{D_4^z}{O(2)}{}{}{})$ & $2$ \\
      433 & $(\amal{\bz_1}{O(2)}{}{}{})$ & $16$ &
        446 & $(\amal{\bz_4}{O(2)}{}{}{})$ & $4$ &
        459 & $(\amal{D_4^2}{O(2)}{}{}{})$ & $1$ \\
    \bottomrule
  \end{tabular}
\end{table}


\begin{example}\rm 
\label{ex:S4xO2} Recall that for given two groups $\mathscr G_1$ and $\mathscr G_2$, we have introduced the notation $\mathcal{H}=H^\varphi\times_L^\psi K$ for a subgroup $\mathcal{H}\subset \mathscr G_1\times\mathscr G_2$, where $H\subset \mathscr G_1$, $K\subset \mathscr G_2 $ are two subgroups, $\varphi:H\rightarrow L$ and $\psi:H\rightarrow L$ are epimorphismns. Consider as our main example the group $S_4\times O(2)$
and assume that $\mathcal{H}$ is a closed subgroup. The method explained in this section was applied by Hao-Pin Wu (cf \cite{Pin}) to the group $S_4\times O(2)$ using GAP programming who obtain the classification of the  all conjugacy classes $(\mathcal{H})$ of  closed subgroups in $S_4\times O(2)$.  To be more precise, in order to 
identify $L$ with $K/\text{Ker\,}(\psi)\subset O(2)$ and denote by $r$ the
rotation generator in $L$. Next we put 
\begin{align*}
Z=\text{Ker\,}(\varphi)\quad \text{ and }\quad R=\varphi^{-1}(\left<r\right>)
\end{align*}
and define 
\begin{equation}  \label{eq:amalg}
\mathcal{H}=:H\prescript{Z}{R}\times_{L}K,.
\end{equation}
Of course, in the case when all the epimorphisms $\varphi$ with the kernel $Z $ are conjugate, there is no need to use the symbol $Z$ in \eqref{eq:amalg}
and we will simply write $\mathcal{H}=H\prescript{Z}{}\times_{L}K$.
Moreover, in the case all epimorphisms $\varphi$ from $H$ to $L$ are
conjugate, we can also omit the symbol $L$, i.e. we will write $\mathcal{H}=H\prescript{}{}\times_{L}K$. The conjugacy classes of subgroups in $S_4\times
O(2)$ are listed Table \ref{tab: Tab1-s4}, which were obtained in \cite{Pin}.
Notice in Table Table \ref{tab: Tab1-s4} the classes 38, 39 and 40, 41, 42.
Here $<r>={\mathbb{Z}}_2$, so for the class 38, the isomorphism $\varphi:D_2\to D_2$ is the identity, while for the class 38, we have $\varphi(\kappa)=-1$. Similarly, for the class 40, the epimorphism maps $V_4$ onto ${\mathbb{Z}}_2$, for the class 41, $\varphi(\kappa)=-1$, and for class 42, $\varphi({\mathbb{Z}}_4)={\mathbb{Z}}_2$. 
\end{example}
\vs
\begin{example} \label{ex:D4xZ2xO2} \rm As another example that will be used later in this paper, we consider the group $\Gamma:=D_4\times \bz_2$. Since $D_4\times \bz_2\le D_4\times S^1$, we can use the same convention as in \cite{AED} in order to denote the conjugacy classes of subgroups in $D_4\times \bz_2$, namely
\begin{gather*}
 (\bz_1),\, (\bz_1^2),\,  (\bz_2),\, (\bz_2^z,\, (\tD_1),\, (\tD_1^\td),\, (D_1),\, (D_1^d),\, (\bz_2^2),\, D_2),
 (D_2^z),\, (D_2^d),\, (D_1^2),\, (\bz_4),\\
  (\bz_4^z),\, \tD_2),\, (\tD_2^z),\, (\tD_2^\td),\, (\tD_1^2),\, (\tD_2^2),(\bz_4^2),\, (D_2^2),\, (D_4^\td),\, (D_4),\, (D_4^d),\, (D_4^z),\,(D_4^2). 
\end{gather*}
By applying GAP programs developed by Hao-Pin Wu (cf \cite{Pin}) to the group $D_4\times \bz_2\times O(2)$ we get all the conjugacy classes of subgroups in $D_4\times \bz_2\times O(2)$ which are listed in Tables \ref{tab1}--\ref{tab5}.

\end{example}


\vs
\section{Computations of the Euler Ring $U(\Gamma\times O(2))$}\label{sec:UGO2}
Before we begin our discussion about the Euler ring $U(\Gamma\times O(2))$ we collect the related to its computation information. 
\vs 
\subsection{Twisted Subgroups and Related Modules}
\label{sec:A-mod} Let $\Gamma$ is a finite group and $G = \Gamma \times S^1$. In this case, there are exactly two types of subgroups $H \le G$, namely,
\begin{itemize}
\item[(a)] the product groups  $H = K \times S^1$ with $K$ being a subgroup of $\Gamma$;

\item[(b)] the so-called \textit{$\varphi$-twisted $l$-folded} subgroups $K^{\varphi,l}$ (in short, twisted subgroups) defined as follows: if $K$ is a
subgroup of $\Gamma$, $\varphi : K \to S^1$ a homomorphism and $l = 1,...$,
then 
\begin{equation*}
K^{\varphi,l} :=\{(\gamma,z) \in K \times S^1 : \; \varphi(\gamma) = z^l\}.
\end{equation*}
For a trivial homomorphism $\varphi: K\to S^1$, $\varphi(k)=1$ for all $k\in
K$, the instead of $K^{\varphi,l}$ we will write $K^l$, and if in addition $%
l=1$, then we will simply write $K$. Moreover, if $l=1$, then instead of $%
K^{\varphi,1}$ we will write $K^\varphi$. \vskip.3cm
\end{itemize}
Clearly $S^1$ can be identified with $SO(2)\subset O(2)$, then the twisted
subgroups $K^{\varphi,l} $ are also amalgamated subgroups and we have 
\begin{equation*}
K^{\varphi,l}=K\times ^\varphi_{\mathbb{Z}_k}\mathbb{Z}_n,
\end{equation*}
where $\varphi(K)=\mathbb{Z}_k$ and $n=k\cdot l$. 
\vs

\begin{table}
\begin{center}
\begin{tabular}{|c|c|c|c|c|}
\noalign{\vskip2pt\hrule\vskip2pt}
$H^{\varphi}$& $\Ker \varphi$ & $\Im \varphi$& $N(H^{\varphi})$& $W(H^{\varphi})$\\
\noalign{\vskip2pt\hrule\vskip2pt}
$S_4^l$ & $S_4$ & $\bz_1$ & $S_4\times S^1$ & $S^1$ \\
$A_4^l$ & $A_4$ & $\bz_1$ & $S_4\times S^1$ & $\bz_2\times S^1$ \\
$D_4^l$ & $D_4$ & $\bz_1$ & $D_4\times S^1$ & $S^1$ \\
$D_3^l$ & $D_3$ & $\bz_1$ & $D_3\times S^1$ & $S^1$ \\
$D_2^l$ & $D_2$ & $\bz_1$ & $D_4\times S^1$ & $\bz_2\times S^1$\\
$\bz_4^l$ & $\bz_4$ & $\bz_1$ & $D_4\times S^1$ & $\bz_2\times S^1$ \\
$V_4^l$ & $V_4$ & $\bz_1$ & $S_4\times S^1$ & $S_3\times S^1$  \\
$\bz_3$ & $\bz_3$ & $\bz_1$ & $D_3\times S^1$ & $\bz_2\times S^1$\\
$D_1^l$ & $D_1$ & $\bz_1$ & $D_2\times S^1$ & $\bz_2\times S^1$ \\
$\bz_2^l$ & $\bz_2$ & $\bz_1$ & $D_4\times S^1$ & $V_4\times S^1$ \\
$\bz_1^l$ & $\bz_1$ & $\bz_1$ & $S_4\times S^1$ & $S_4\times S^1$  \\
\noalign{\vskip2pt\hrule\vskip2pt}
$S_4^{-,l}$ &$A_4$& $\bz_2$ & $S_4\times S^1$ & $S^1$ \\
$D_4^{z,l}$ & $\bz_4$ &$\bz_2$ & $D_4\times S^1$ &$S^1$\\
$D_4^{d,l}$ & $D_2$ &$\bz_2$ & $D_4\times S^1$ &$S^1$\\
$D_4^{\hat d,l}$ & $V_4$ &$\bz_2$ & $D_4\times S^1$ &$S^1$\\
$A_4^{t,l}$  & $V_4$ & $\bz_3$ &$A_4\times S^1$& $S^1$\\
$D_3^{z,l}$ & $\bz_3$ & $\bz_2$ & $D_3\times S^1$& $S^1$\\
$D_2^{z,l}$ & $\bz_2$ &$ \bz_2$ &$D_4\times S^1$ &$\bz_2\times S^1$\\
$D_2^{d,l}$ & $D_1$ &$ \bz_2$ &$D_2\times S^1$ &$ S^1$\\
$V_4^{-,l}$ & $\bz_2$  & $\bz_2$ & $D_4\times S^1$ & $\bz_2\times S^1$\\
$D_1^{z,l}$ &$\bz_1$& $\bz_2$ &$D_2\times S^1$ & $\bz_2\times S^1$\\
$\bz^{-,l}_4$ &$\bz_2$ & $\bz_2$& $D_4\times S^1$ &$\bz_2\times S^1$ \\
$\bz^{c,l}_4$ &$\bz_1$ &$\bz_{4}$ & $\bz_4\times S^1$& $S^1$ \\
$\bz^{t,l}_3$ &$\bz_1$ &$\bz_3$ & $\bz_3\times S^1$& $S^1$ \\
$\bz_2^{-,l}$ &$\bz_1$ & $\bz_2$ & $D_4\times S^1$ &$V_4\times S^1$\\
\noalign{\vskip2pt\hrule\vskip2pt}
\end{tabular}\end{center}
\caption{Twisted subgroups in $S_4\times S^1$, where $\varphi : H \to S^1$ is a homomorphism $l\in \bn$.
}\label{tab:toct}
\end{table}


\begin{example}\rm
\label{ex:S1twist} Let us consider the group $S_4\times S^1$. For
the representatives of the conjugacy classes $K$ in $S_4$, we introduce
the following  homomorphisms: $\varphi: K\to S^1$: 
\begin{alignat*}{2}
z:&D_n\to {\mathbb{Z}}_2\subset S^1, \;\; \text{Ker\,} z={\mathbb{Z}}_n,
\;\; n=2,3,4,\quad & d:& D_{2m}\to {\mathbb{Z}}_2\subset S^1,\;\; \text{Ker\,%
} d=D_m,\;\; m=1,2, \\
\widetilde d:&D_4\to {\mathbb{Z}}_2\subset S^1,\;\; \text{Ker\,} \widetilde
d=V_4,\quad & c:&{\mathbb{Z}}_4\to {\mathbb{Z}}_4\subset S^1,\;\; \text{Ker\,%
} c={\mathbb{Z}}_1, \\
\nu:&{\mathbb{Z}}_{2m}\to {\mathbb{Z}}_2\subset S^1, \; \; \text{Ker\,} \nu={%
\mathbb{Z}}_m, m=1,2, \;\; & \nu:&V_4\to {\mathbb{Z}}_2\subset S^1, \;\; 
\text{Ker\,} \nu={\mathbb{Z}}_2, \\
\nu:&S_4\to {\mathbb{Z}}_2\subset S^1,\;\; \text{Ker\,} \nu=A_4,\quad & t:&{%
\mathbb{Z}}_3\to {\mathbb{Z}}_3\subset S^1,\;\;\text{Ker\,} t={\mathbb{Z}}_1,
\\
t&:A_4\to {\mathbb{Z}}_3\subset S^1,\;\; \text{Ker\,} t=V_4.\;\; & ~&~
\end{alignat*}
In what follows, instead of $K^{\nu,l}$ we will write $K^{-.,l}$, i.e. we
have the twisted subgroups ${\mathbb{Z}}_2^{-,l}$, ${\mathbb{Z}}_4^{-,l}$, $%
V_4^{-,l}$, and $S_4^{-,l}$. All the twisted subgroups in $S_4\times S^1$
are listed in Table \ref{tab:toct}. 
\end{example}


\vs
Put 
\begin{equation*}
\Phi_1^t(G):=\{(H)\in \Phi(G): \; H=K^{\varphi,l} \text{ for some $K\le
\Gamma$}\},
\end{equation*}
where $G:=\Gamma\times S^1$ and $\Gamma$ is a finite group.
One can easily observe that $\Phi_1(G)=\Phi_1^t(G)$ and $\Phi_0(G)$ can be
identified with $\Phi(\Gamma)$ (with $(K\times S^1)$ identified with $(K)$).
Therefore, the Euler ring $U(G)$, as a $\mathbb{Z}$-module is $A(\Gamma)\oplus A_1(G)$, where (by following the same  notations as in \cite{AED}) $A_1(G)$ stands for $\mathbb{Z}[\Phi_1(G)]$. It is well-known (see \cite{AED,survey}) that $A_1(G)$ admits a structure of $A(\Gamma)$-module. The
Burnside ring structure $A(\Gamma)$ as well as the $A(\Gamma)$-module structure for$A_1(G)$ were explicitly described for many specific groups $\Gamma$ (see \cite{AED}) and several computational routines were created for
these algebraic structures.

\vs The following result (cf. \cite{RR}) provides a more practical description of the multiplication in the Euler ring $U(G)$: 
\vs

\begin{proposition}
\label{pro:twisted-euler} Let $G=\Gamma\times S^1$ with $\Gamma$ being a
finite group. Then the multiplication `$*$' in the Euler ring $U(G)=\bz[\Phi(G)]=\bz[\Phi_0(G)]\oplus \bz[\Phi_1(G)]\simeq A(\Gamma)\oplus A_1(G)$ can be described by the following table 
\vs

\begin{tabular}{|c|c|c|}
\hline
$*$ & $A(\Gamma)$ & $A_1(G)$ \\ \hline
$A(\Gamma)$ & Burnside ring $A(\Gamma)$ multiplication & $A(\Gamma)$-module
multiplication \\ 
$A_1(G)$ & $A(\Gamma)$-module multiplication & $0$ \\ \hline
\end{tabular}
\end{proposition}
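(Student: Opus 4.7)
The plan is to unpack the Euler-ring multiplication coefficients
\[
n_L=\chi_c\!\bigl((G/H_1\times G/H_2)_L/N(L)\bigr)
\]
from \eqref{eq:Euler-coeff} case by case, depending on whether each of $(H_1),(H_2)$ lies in $\Phi_0(G)\cong\Phi(\Gamma)$ (a product subgroup $K\times S^1$) or in $\Phi_1(G)$ (a twisted subgroup $H^{\varphi,l}$). This splits the verification into the four boxes of the table.

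For the $A(\Gamma)\ast A(\Gamma)$ entry, $H_i=K_i\times S^1$ makes $G/H_i\cong\Gamma/K_i$ a finite $\Gamma$-set on which the central $S^1$ acts trivially; every isotropy arising in the product has the form $L'\times S^1$ with $N(L'\times S^1)=N_\Gamma(L')\times S^1$, and the coefficient collapses to
\[
n_{L'\times S^1}=\bigl|(\Gamma/K_1\times\Gamma/K_2)_{L'}/N_\Gamma(L')\bigr|,
\]
which is exactly the Burnside-ring coefficient \eqref{eq:CoeffBurnside} of $(K_1)\cdot(K_2)$ in $A(\Gamma)$. For the mixed $A(\Gamma)\ast A_1(G)$ entry, with $H_1=K\times S^1$ and $H_2=H^{\varphi,l}$, a direct inspection of isotropies on $\Gamma/K\times G/H^{\varphi,l}$ shows that every $L$ that arises is again twisted, of the form $L=M^{\tilde\varphi,l}$, where $M\le\Gamma$ is an intersection of conjugates of $K$ and $H$ and $\tilde\varphi$ is the twist induced from $\varphi$; a routine $\chi_c$-bookkeeping on these strata identifies $n_L$ with the structure constants of the $A(\Gamma)$-module structure on $A_1(G)$ recalled from \cite{AED,survey}.

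The only entry requiring a genuinely topological argument is $A_1(G)\ast A_1(G)=0$. For $H_i=H_i^{\varphi_i,l_i}$ twisted I would introduce on $G/H_1\times G/H_2$ the $S^1$-action
\[
w\cdot(xH_1,yH_2)=(x(1,w)H_1,\,yH_2),\qquad w\in S^1,
\]
well defined because $(1,w)\in Z(G)=Z(\Gamma)\times S^1$; this action commutes with the left $G$-action, preserves every orbit-type stratum, and thus descends to $(G/H_1\times G/H_2)_L/N(L)$ for each $(L)$. The key observation is that the induced $S^1$-action on the quotient is \emph{fixed-point free}: a routine chase shows that $[xH_1,yH_2]$ is fixed by $w$ iff $(xH_1x^{-1})(1,w)\cap yH_2y^{-1}\neq\emptyset$, i.e.\ $(1,w)\in(xH_1x^{-1})(yH_2y^{-1})$, but the right-hand side is a finite subset of $G$ (both twisted conjugates are finite subgroups), whereas $\{1\}\times S^1$ is infinite, so the condition fails for most $w$.

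Having ruled out $S^1$-fixed points, I would conclude $n_L=0$ via the torus Euler-characteristic principle \eqref{eq:EchTn}: stratifying the quotient by its $S^1$-orbit type, each stratum has finite $S^1$-isotropy and therefore fibers as an $S^1$-bundle over its $S^1$-quotient, contributing $\chi_c=\chi(S^1)\cdot\chi_c(\text{base})=0$; additivity of $\chi_c$ on the (finite) stratification then yields $n_L=0$ for every $(L)$ and hence $(H_1)\ast(H_2)=0$. The step I expect to require the most care is precisely this last passage from the compact formulation of \eqref{eq:EchTn} to the locally closed, generally noncompact quotient $(G/H_1\times G/H_2)_L/N(L)$, which is handled by the standard $G$-ENR stratification of the compact space $G/H_1\times G/H_2$ together with the fiber-bundle formula for $\chi_c$ recalled from \cite{Spa}.
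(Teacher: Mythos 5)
Your argument is correct, but note that the paper does not actually prove this proposition: it is quoted from \cite{RR}, so what you have produced is a self-contained verification rather than a variant of an argument given in the text. The first two entries of the table are handled exactly as one would expect (for $H_i=K_i\times S^1$ the central circle acts trivially on $G/H_1\times G/H_2\cong\Gamma/K_1\times\Gamma/K_2$ and $\chi_c$ of a finite orbit set is its cardinality; in the mixed case every isotropy group is finite, hence twisted, and the $N(L)$-quotient is again finite). The real content is $A_1(G)\ast A_1(G)=0$, and your device of letting $S^1$ act by central right translation on the \emph{first} factor only is a good one: the diagonal central action is useless here because $\{1\}\times S^1\subseteq N(L)$ makes it descend to the trivial action on $(G/H_1\times G/H_2)_L/N(L)$, whereas your one-sided action survives to the quotient and has finite isotropy everywhere. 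Two points deserve to be made explicit. First, your ``iff'' for the fixed-point criterion is valid, but only because any $n\in (xH_1x^{-1})(1,w)\cap yH_2y^{-1}$ carrying $p$ to $\rho_w(p)$ automatically satisfies $nLn^{-1}=G_{n\cdot p}=G_{\rho_w(p)}=L$, i.e.\ $n\in N(L)$; without this observation you would only have a necessary condition, since the quotient identifies points along $N(L)$-orbits and not along arbitrary $G$-translates. Second, the passage from \eqref{eq:EchTn} (stated for compact $G$-ENRs) to the locally closed, noncompact quotient is indeed the step requiring care; your resolution via the finite orbit-type stratification, the locally trivial $S^1$-bundle structure on each stratum (so each contributes $\chi(S^1)\cdot\chi_c(\mathrm{base})=0$), and additivity of $\chi_c$ is the standard and correct one, and it is legitimate here because everything sits inside the compact manifold $G/H_1\times G/H_2$, which has only finitely many orbit types. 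With these two clarifications your proof is complete.
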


\vskip.3cm

\begin{example}\rm 
Let us discuss as an example, the group $G=S_4\times S^1$. The representatives of the twisted conjugacy classes of subgroups in $G:=S_4\times S^1$  are listed in Table \ref{tab:toct}, where the upper part of the table also provides the list of the representatives of the conjugacy classes of subgroup in $S_4$. Thus,  we have 
\begin{equation*}
\Phi(S_4):=\{ (S_4), (A_4),(D_4),(D_3),(V_4), (D_2), ({\mathbb{Z}}_3),(D_1),({\mathbb{Z}}_2),({\mathbb{Z}}_1)\},
\end{equation*}
and 
\begin{align*}
\Phi_1^t(S_4\times S^1):=&\{(S_4^l), (A_4^l),(D_4^l),(D_3^l),(V_4^l),
(D_2^l), ({\mathbb{Z}}_3^l), (D_1^l),({\mathbb{Z}}_2^l),({\mathbb{Z}}_1^l) \\
&(S_4^{-,l}),(D_4^{z,l}), (D_4^{\hat d,l}), (D_4^{d,l}), (A_4^{t,l}),
(D_3^{z,l}), (D_2^{z,l}),(D_2^{d,l}),(V_4^{-,l}),(D_1^{z,l}), \\
&({\mathbb{Z}}_4^{-,l}),({\mathbb{Z}}_4^{c,l}),({\mathbb{Z}}_3^{t,l}),({\mathbb{Z}}_2^{-,l}), l\in \mathbb{N}\},
\end{align*}
where $l$ is used to indicate that the twisted subgroup is $l$-folded. Therefore, the
multiplication table for the Euler ring $U(S_4\times S^1)$ can be easily established from Proposition \ref{pro:twisted-euler} using  the $A(S_4)$-module multiplication table  for $A_1^t(S_4\times S^1)$ (see \cite{AED}, Table 6.14, which also contains the Burnside ring $A(S_4)$).
\end{example}

\subsection{Multiplication in  the Euler Ring $U(\Gamma\times O(2))$}\label{sec:mUGO2}

Assume that $\Gamma$ is a finite group and consider the group $G:=\Gamma\times O(2)$. Our goal is to describe the multiplicative structure
of the Euler ring  $U(\Gamma\times O(2))$. More precisely, we
are interested in finding algorithms allowing the computation of the
products $(H)*(K)$ for $(H)$, $(K)\in \Phi(\Gamma\times O(2)$. We will be
using following information that is already available to us:

\begin{itemize}
\item[(a)] The  Burnside Ring $A(\Gamma\times O(2))$ is a
part of the Euler ring $U(\Gamma\times O(2))$. In fact, the computer
algorithms implemented in the software created by  Hao-Pin Wu
(cf. \cite{Pin}) allow an effective computations of the full multiplication
table for groups of type $\Gamma\times O(2))$. The size of such a table can
be significant. For example, in the case of $G:=S_4\times O(2)$, this table
has 5625 entries.

\item[(b)] The multiplicative structure of the Euler ring $U(\Gamma\times
S^1)$, which was described in Proposition \ref{pro:twisted-euler}.

\end{itemize}

Denote by $\Psi :U(\Gamma\times O(2))\to U(\Gamma\times S^1)$the Euler ring homomorphism  induced by  the natural embedding $\Gamma\times SO(2)\hookrightarrow \Gamma\times O(2)$ (here  $SO(2)\simeq S^{1}$). \vs

Therefore, for the group $G:=\Gamma\times O(2)$ we have $\Phi_0(G)=\Phi_0^{I}(G)\cup \Phi_0^{II}(G)\cup\Phi_0^{III}$, where

\begin{align*}
\Phi _{0}^{\text{I}}(G)& :=\Phi _{0}(\Gamma \times SO(2)); \\
\Phi _{0}^{\text{II}}(G)& :=\{(K{^{\varphi }\times _{L}}D_{n}):n\in \mathbb{N%
}\}; \\
\Phi _{0}^{\text{III}}(G)& :=\{(K\times _{L}O(2)):L:=\mathbb{Z}_{1},\,%
\mathbb{Z}_{2}\};
\end{align*}
On the other hand, we have $\Phi_1(G)=\Phi _{1}(\Gamma \times SO(2))$.
\vs

In order to describe the Euler ring structure in $U(\Gamma \times O(2))$ we
will apply the Euler ring homomorphism 
\begin{equation*}
\Psi :U(\Gamma \times O(2))\rightarrow U(\Gamma \times S^{1})
\end{equation*}%
induced by the natural inclusion $\psi :\Gamma \times S^{1}\rightarrow
\Gamma \times O(2)$ and use the known Euler ring structure of $U(\Gamma
\times S^{1})$ (see Proposition \ref{pro:twisted-euler}) to describe the
multiplicative structure of $U(\Gamma \times O(2))$. By direct application
of the definition of the homomorphism $\Psi $, we obtain that 
\begin{equation}
\Psi (\mathcal{H})=\begin{cases}
2(H_o) &\text{ if } H\le \Gamma\times SO(2)\\
(H_o) &\text{ if } H\not\le \Gamma\times SO(2),
\end{cases}
\end{equation}
where $H_o:=H\cap (\Gamma\times SO(2))$ is identified with a subgroup of $\Gamma\times S^1$.
\vs
We have the following straight forward but still  important result:
\vs
\begin{theorem}\label{th:UGO2}
Assume that $\Gamma$ is a finite group and $G:=\Gamma\times O(2)$. Then the multiplication table for the Euler ring $U(\Gamma\times O(2))$ can be completely determined from the multiplication table for the Burnside ring $A(G)$ and the  $A(\Gamma)$-module structure $A_1^t(\Gamma\times S^1)$ via the Euler ring homomorphism $\Psi : U(\Gamma\times O(2))\to U(\Gamma\times S^1)$. More precisely, for $(\mathcal H)$, $(\mathcal K)\in \Phi(G)$, we can write the product of $(\mathcal H)$ and $(\mathcal K)$ as the following sum 
\[
(\mathcal H)*(\mathcal K)=\sum_{(\mathcal L)\in \Phi_0(G)} n_{\mathcal L} \, (\mathcal L)+ \sum_{(\mathcal L')\in \Phi_1(G)} x_{\mathcal L'}\, (\mathcal L'),
\]
where $n_{\mathcal L}$ are known coefficients, while $x_{\mathcal L'}$ are unknown coefficients that can be determined from the following relation
\begin{equation}\label{eq:relUG}
 \sum_{(\mathcal L')\in \Phi_1(G)} x_{\mathcal L'}\, (\mathcal L')=\frac 12\left(\Psi(\mathcal H)*\Psi(\mathcal K)-\sum_{(\mathcal L)\in \Phi_0(G)} n_{\mathcal L} \, \Psi(\mathcal L)\right).
\end{equation} 
and $(\mathcal L')\in \Phi_1(\Gamma \times SO(2))$.
\end{theorem}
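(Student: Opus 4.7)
The plan is to extract the $\Phi_0(G)$- and $\Phi_1(G)$-parts of $(\mathcal H)*(\mathcal K)$ separately by applying two different ring homomorphisms out of $U(G)$, and then to assemble the results. First I will apply the Burnside--ring projection $\pi_0:U(G)\to A(G)$, which is a ring homomorphism that kills every generator with positive--dimensional Weyl group and fixes the generators of $A(G)$. Applying $\pi_0$ to the expansion
\[
(\mathcal H)*(\mathcal K)=\sum_{(\mathcal L)\in\Phi_0(G)} n_{\mathcal L}(\mathcal L)+\sum_{(\mathcal L')\in\Phi_1(G)} x_{\mathcal L'}(\mathcal L')
\]
yields $\pi_0(\mathcal H)\cdot\pi_0(\mathcal K)=\sum_{(\mathcal L)\in\Phi_0(G)} n_{\mathcal L}(\mathcal L)$ inside $A(G)$. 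Since the multiplication table in $A(G)=A(\Gamma\times O(2))$ is assumed to be computable from \eqref{eq:rec-coef}, this determines every coefficient $n_{\mathcal L}$ (and, in particular, forces $n_{\mathcal L}=0$ whenever either $\mathcal H$ or $\mathcal K$ lies in $\Phi_1(G)$).

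Next I will apply the Euler ring homomorphism $\Psi:U(G)\to U(\Gamma\times S^1)$ to the same expansion. The essential ingredient is the identification $\Phi_1(G)=\Phi_1(\Gamma\times SO(2))$ recorded just before the theorem: any class $(\mathcal L')$ with one--dimensional Weyl group in $\Gamma\times O(2)$ is represented by a twisted subgroup already contained in $\Gamma\times SO(2)$. By the formula for $\Psi$ displayed immediately before the theorem this gives $\Psi((\mathcal L'))=2(\mathcal L')$ for every $(\mathcal L')\in\Phi_1(G)$, while the image $\Psi((\mathcal L))$ of each $\Phi_0$--generator is read off directly from that same formula. Applying $\Psi$ to the expansion above therefore produces
\[
\Psi(\mathcal H)*\Psi(\mathcal K)=\sum_{(\mathcal L)\in\Phi_0(G)} n_{\mathcal L}\,\Psi(\mathcal L)+2\sum_{(\mathcal L')\in\Phi_1(G)} x_{\mathcal L'}(\mathcal L'),
\]
in which the left--hand side is computable from the already--described Euler ring structure of $U(\Gamma\times S^1)$ supplied by Proposition~\ref{pro:twisted-euler} (based on the Burnside ring $A(\Gamma)$ and the $A(\Gamma)$--module $A_1^t(\Gamma\times S^1)$). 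Solving for the last sum yields exactly \eqref{eq:relUG}.

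The remaining technical point, which I expect to be the main obstacle, is to verify that the right--hand side of \eqref{eq:relUG} is genuinely divisible by $2$ inside $\mathbb Z[\Phi_1(\Gamma\times SO(2))]$, so that the coefficients $x_{\mathcal L'}$ come out as well--defined integers. The cleanest way I would argue this is to observe that the restriction of $\Psi$ to the submodule $\mathbb Z[\Phi_1(G)]\subset U(G)$ is, under the identification $\Phi_1(G)\cong\Phi_1(\Gamma\times S^1)$, nothing other than multiplication by $2$, and is therefore injective with image exactly $2\,\mathbb Z[\Phi_1(\Gamma\times S^1)]$. Since the displayed equation exhibits the right--hand side of \eqref{eq:relUG} as an element of precisely that image, both the divisibility and the uniqueness of the $x_{\mathcal L'}$ are automatic. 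Apart from this divisibility check, the hardest part will be the careful bookkeeping that keeps the identifications between twisted subgroups of $\Gamma\times SO(2)$, $\Gamma\times S^1$, and $\Gamma\times O(2)$ consistent; once those identifications are fixed, the argument reduces to straightforward algebraic manipulation of $\pi_0$ and $\Psi$.
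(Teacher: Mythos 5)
Your proposal is correct and follows essentially the same route as the paper's own proof: both determine the $\Phi_0(G)$-coefficients from the Burnside ring product (you make this explicit via $\pi_0$, the paper simply invokes the recursive Burnside formulae), then apply $\Psi$ and use that $\Psi$ doubles each generator of $\Phi_1(G)\cong\Phi_1(\Gamma\times SO(2))$ to solve for the $x_{\mathcal L'}$. Your closing observation that $\Psi$ restricted to $\mathbb Z[\Phi_1(G)]$ is injective with image $2\,\mathbb Z[\Phi_1(\Gamma\times S^1)]$ is precisely the paper's monomorphism argument, which settles both the divisibility by $2$ and the uniqueness of the coefficients.
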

\begin{proof}
Denote by $\mathscr M$ the $\bz$-submodule of $U(G)$ generated by $\Phi_1(G)$, i.e. $\mathscr M=\bz[\Phi_1(\Gamma\times SO(2))]$. Clearly, the restriction of the homomorphism $\Psi$ to $\mathscr M$ is given by $\Psi(H)=2(H)$ for all $(H)\in \Phi_1(\Gamma\times SO(2))$. Therefore, $\Psi:\mathscr M\to U(\Gamma\times S^1)$ is a monomorphism of $\bz$-modules. Indeed,  by Theorem  \ref{th:conj} two subgroups $H$, $K\le \Gamma\times SO(2)$ are conjugate in $\Gamma\times SO(2)$ if and only if they are conjugate in $\Gamma\times O(2)$. 
Next,
by the definition of the Euler ring
multiplication, for any $(\mathcal{H})$, $(\mathcal{K})\in \Phi(\Gamma\times
O(2))$ we have 
\begin{equation*}
(\mathcal{H})*(\mathcal{K})=\sum _{(\mathcal{L})\in \Phi(G)} n_{\mathcal{L}} (\mathcal{L})
\end{equation*}
where $\mathcal{L}=g\mathcal{H }g^{-1}\cap \mathcal{K}$ for
some $g\in \Gamma\times O(2)$. 
Since $\mathcal L\le \mathcal H$ implies dim$\,W(\mathcal L)\ge \text{dim\,}W(\mathcal H)$, if $(\mathcal H)\in \Phi_1(G)$ and  $\mathcal{L}=g\mathcal{H }g^{-1}\cap \mathcal{K}$, then  
dim$\,W(\mathcal L)=1$, i.e. $(\mathcal L)\in \Phi_1(G)$.  
\vs
On the other hand, we have for  $(\mathcal{H})$, $(\mathcal{K})\in \Phi_0(\Gamma\times
O(2))$ the following Burnside ring multiplication
\begin{equation*}  \label{eq:Burn-prod}
(\mathcal{H})\cdot(\mathcal{K})=\sum _{(\mathcal{L})\in \Phi_0(G)} n_{\mathcal{L}} (\mathcal{L})
\end{equation*}
where all the coefficients $n_{\mathcal L}$ can be evaluated using  the algorithmic formulae given in  \cite{AED}, and 
\begin{equation}  \label{eq:Euler-prod}
(\mathcal{H})*(\mathcal{K})=\sum _{(\mathcal{L})\in \Phi_0(G)} n_{\mathcal{L}} (\mathcal{L})+\sum _{(\mathcal{L'})\in \Phi_1(G)} x_{\mathcal{L'}} (\mathcal{'L}),
\end{equation}
where the coefficients $x_{\mathcal L'}$ are to be determined. Then, by applying the Euler ring homomorphism $\Psi$ to \eqref{eq:Euler-prod} we obtain 
\begin{align*}
\Psi(\mathcal{H})*\Psi(\mathcal{K})&=\Psi\Big((\mathcal{H})*(\mathcal{K})   \Big)\\
&=\Psi\left(\sum _{(\mathcal{L})\in \Phi_0(G)} n_{\mathcal{L}} (\mathcal{L})+\sum _{(\mathcal{L'})\in \Phi_1(G)} x_{\mathcal{L'}} (\mathcal{'L})   \right)\\
&=\sum _{(\mathcal{L})\in \Phi_0(G)} n_{\mathcal{L}} \Psi(\mathcal{L})+\sum _{(\mathcal{L'})\in \Phi_1(G)} x_{\mathcal{L'}}\Psi (\mathcal{'L})\\
&=\sum _{(\mathcal{L})\in \Phi_0(G)} n_{\mathcal{L}} \Psi(\mathcal{L})+2\sum _{(\mathcal{L'})\in \Phi_1(G)} x_{\mathcal{L'}}  (\mathcal{'L}).
\end{align*}
Consequently, the formula \eqref{eq:relUG} follows.
\end{proof}
\vs

\begin{example}\rm 
(a)  Suppose $(H\times SO(2))$, $(K\times SO(2))\in \Phi_0(\Gamma\times
O(2))$, then clearly 
\begin{equation*}
(H\times SO(2))*(K\times SO(2))=(H\times SO(2))\cdot (K\times SO(2)).
\end{equation*}
Indeed, since for all $g\in \Gamma\times O(2)$, we have for some $L\subset
\Gamma$ that $gH\times SO(2)g^{-1}\cup K\times SO(2)=L\times SO(2) $, it
follows that all the conjugacy classes $(\mathcal{L})$ in \eqref{eq:Euler-prod} (for $\mathcal{H}:=H\times SO(2)$ and $\mathcal{K}:=K\times SO(2)$) belong to $\Phi_0(\Gamma\times O(2))$. Consequently, for
the elements $(H\times SO(2))$ and $(K\times SO(2))$ from $%
\Phi_0(\Gamma\times O(2))$, their Euler ring product coincides with their
Burnside ring product.
\vs
\noi
(b) Consider now $(H\times SO(2))\in \Phi_0^{\text{I}}(\Gamma\times
O(2))$ and $(K^{\varphi,l})\in \Phi_1(\Gamma\times SO(2)) $. Then
the product of these two elements in $U(\Gamma\times O(2))$ is given by 
\begin{equation*}
(H\times SO(2))*(K^{\varphi,l})=\sum_{(\mathcal{L'})\in \Phi_1(G)} x_{\mathcal{L'}}(\mathcal{L}).
\end{equation*}
Notice that that the conjugacy classes $(\mathcal{L'})$ are such that for
some $\gamma\in \Gamma$ 
\begin{equation*}
\mathcal{L'}=\gamma H\gamma^{-1}\times SO(2)\cap K^{\varphi,l} =
L^{\varphi,n},\quad L:=\gamma H\gamma^{-1}\cap K.
\end{equation*}
That implies $(\mathcal{L})\in \Phi_1^I(\Gamma\times O(2))$ and by applying
the homomorphism $\Psi$ we have 
\begin{align*}
\Psi\Big((H\times SO(2))*(K^{\varphi,l})\Big)&=\Psi\left(\sum_{(L)\in
\Phi(\Gamma)} x_L(L^{\varphi,l}) \right) \\
&=\sum_{(L)\in \Phi(\Gamma)} x_L \Psi(L^{\varphi,l}) \\
&=\sum_{(L)\in \Phi(\Gamma)} 2x_L (L^{\varphi,l}).
\end{align*}
On the other hand, since $\Psi$ is a ring homomorphism, we have 
\begin{align*}
\Psi\Big((H\times SO(2))*(K^{\varphi,l})\Big)&=\Psi(H\times
SO(2))*\Psi(K^{\varphi,l}) \\
&=4(H\times SO(2))*(K^{\varphi,l}) \\
&=\sum_{(L)\in \Phi(\Gamma)} 4m_L (L^{\varphi,l}),
\end{align*}
where 
\begin{equation*}
(H\times SO(2))\circ (K^{\varphi,l})= \sum_{(L)\in \Phi(\Gamma)} m_L
(L^{\varphi,l}),
\end{equation*}
is the multiplication in $A(\Gamma)$-module $A_1(\Gamma\times S^1)$.
Therefore, we obtain that $x_L=2m_L$, and the multiplication of such
conjugacy classes in $U(\Gamma\times O(2))$ coincides with the products in $%
A(\Gamma)$-module $A_1(\Gamma\times S^1)$ multiplied by 2.
\vs

\noi (c) If $(H^{\varphi,l})$, $({H^{\prime }}^{n^{\prime},l^{\prime
}})\in \Phi_1(\Gamma\times SO(2))$, then we claim that $(H^{\varphi,n})*({H^{\prime }}^{\varphi^{\prime },l^{\prime }})=0$. Indeed,
since 
\begin{equation*}
(H^{\varphi,l})*({H^{\prime }}^{\varphi^{\prime },l^{\prime
}})=\sum_{i=1}^kx_i(H_i^{\varphi_i,l_i}),
\end{equation*}
then by applying the homomorphism $\Psi$ we obtain 
\begin{align*}
0&=4(H^{\varphi,l})*({H^{\prime }}^{\varphi^{\prime
},l^{\prime}})=\Psi(H^{\varphi,l})*\Psi ({H^{\prime }}^{\varphi^{\prime
},l^{\prime }}) \\
&=\Psi((H^{\varphi,l})*({H^{\prime }}^{\varphi^{\prime
},l^{\prime}}))=\Psi\left(\sum_{i=1}^k x_i(H_i^{\varphi_i,l_i})\right) \\
&=2\sum_{i=1}^k x_i(H_i^{\varphi_i,l_i}),
\end{align*}
which implies $x_i=0$ for all $i$.

\vs
\noi (d) Let $(\mathcal{H})$, $(\mathcal{K})\in \Phi^{II}
_0(\Gamma\times O(2))$. Then, the space 
\begin{equation*}
\mathcal{Y}:=\frac{\Gamma\times O(2) }{\mathcal{H}}\times \frac{\Gamma\times
O(2)}{\mathcal{K}}
\end{equation*}
has exactly two types of orbits $(\mathcal{L})\in \Phi_0^{\text{II}}$ and $(%
\mathcal{L}^{\prime })\in \Phi_1^{\text{I}}$. One can easily notice that if $%
(\mathcal{L})$ is an orbit type in $\mathcal{Y}$ which is also from $%
\Phi_0^{II}$, then $(\mathcal{L}^{\prime })$, where $\mathcal{L}^{\prime }:=%
\mathcal{L}\cap \Gamma\times SO(2)$, is also an orbit type in $\mathcal{Y}$.
Moreover, $\Psi(\mathcal{L})=(\mathcal{L}^{\prime })$. On the other hand we
have the following product in the Burnside ring $A(\Gamma\times O(2))$ 
\begin{equation*}
(\mathcal{H})\cdot (\mathcal{K})=\sum_{\mathcal{L}} n_{\mathcal{L}}(\mathcal{%
L}),
\end{equation*}
where $(\mathcal{L})\in \Phi_0^{\text{II}}(\Gamma\times O(2))$. Since these
coefficients $n_{\mathcal{L}}$ are the same for the Euler ring
multiplication, we have 
\begin{equation*}
(\mathcal{H})* (\mathcal{K})=\sum_{\mathcal{L}} n_{\mathcal{L}}(L)+\sum_{%
\mathcal{L}^{\prime }} m_{\mathcal{L}^{\prime }}(\mathcal{L}^{\prime }),
\end{equation*}
where $(\mathcal{L}^{\prime \text{I}}_1(\Gamma\times O(2))$ and $m_{\mathcal{%
L}^{\prime }}$ are unknown coefficients. Next, by applying the Euler ring
homomorphism $\Psi$, we obtain 
\begin{align*}
0&= \Psi(\mathcal{H})* \Psi(\mathcal{K})= \Psi\Big((\mathcal{H})* (\mathcal{K%
})\Big) \\
&=\Psi\left(\sum_{\mathcal{L}} n_{\mathcal{L}}(\mathcal{L})+\sum_{\mathcal{L}%
^{\prime }} m_{\mathcal{L}^{\prime }}(\mathcal{L}^{\prime })\right) \\
&=\sum_{\mathcal{L}} n_{\mathcal{L}}\Psi (\mathcal{L})+\sum_{\mathcal{L}%
^{\prime }} m_{\mathcal{L}^{\prime }}\Psi(\mathcal{L}^{\prime }) \\
&=\sum_{\mathcal{L}} n_{\mathcal{L}}\Psi (\mathcal{L})+\sum_{\mathcal{L}%
^{\prime }} 2m_{\mathcal{L}^{\prime }}(\mathcal{L}^{\prime }).
\end{align*}
Since for each orbit type $(\mathcal{L}^{\prime })$ in $\mathcal{Y}$ there
exists an orbit type $(\mathcal{L})$ in $\mathcal{Y}$, such that $\Psi(%
\mathcal{L})=(\mathcal{L}^{\prime })$, the above relations allow us to
evaluate each $m_{\mathcal{L}^{\prime }}$ in a unique way.
\vs 
\noi
(e) Suppose now that $(\mathcal{H})\in \Phi_1(\Gamma\times
O(2))$ and $(\mathcal{K})\in \Phi^{\text{II}}_0(\Gamma\times O(2))$.
Clearly, all the orbit types $(\mathcal{L}^{\prime })$ in $\mathcal{Y}:=\frac{\Gamma\times O(2) }{\mathcal{H}}\times \frac{\Gamma\times O(2)}{\mathcal{K}}$ belong to $\Phi_1(\Gamma\times O(2)$, thus we have 
\begin{equation*}
(\mathcal{H})*(\mathcal{K})=\sum_{\mathcal{L}^{\prime }}x_{\mathcal{L}^{\prime }}(\mathcal{L}^{\prime }),
\end{equation*}
where $n_{\mathcal{L}^{\prime }}$ are unknown coefficients, thus by applying
the Euler ring homomorphism $\Psi$, we obtain 
\begin{align*}
0&=\Psi(\mathcal{H})*\Psi(\mathcal{K})=\Psi\Big((\mathcal{H})*(\mathcal{K})\Big) \\
&=\Psi\left(\sum_{\mathcal{L}^{\prime }}x_{\mathcal{L}^{\prime }}(\mathcal{L}^{\prime })\right) \\
&=\sum_{\mathcal{L}^{\prime }}x_{\mathcal{L}^{\prime }}\Psi(\mathcal{L}^{\prime }) \\
&=\sum_{\mathcal{L}^{\prime }}x_{\mathcal{L}^{\prime }}2(\mathcal{L}^{\prime}),
\end{align*}
which implies that all $x_{\mathcal{L}^{\prime }}=0$.
\vs

\noi(f) Assume now S that $(\mathcal{H})\in \Phi^{\text{I}}_0(\Gamma\times O(2))$ and $(\mathcal{K})\in \Phi^{\text{II}}_0(\Gamma\times O(2))$. Since all the orbit types $(\mathcal{L}^{\prime })$
in $\mathcal{Y}:=\frac{\Gamma\times O(2) }{\mathcal{H}}\times \frac{\Gamma\times O(2)}{\mathcal{K}}$ belong to $\Phi_1(\Gamma\times O(2))$, thus we have 
\begin{equation*}
(\mathcal{H})*(\mathcal{K})=\sum_{\mathcal{L}^{\prime }}x_{\mathcal{L}^{\prime }}(\mathcal{L}^{\prime }),
\end{equation*}
where $x_{\mathcal{L}^{\prime }}$ are unknown coefficients, thus by applying
the Euler ring homomorphism $\Psi$, we obtain 
\begin{align*}
2(\mathcal{H})*\Psi(\mathcal{K})&=\Psi(\mathcal{H})*\Psi(\mathcal{K})=\Psi\Big((\mathcal{H})*(\mathcal{K})\Big) \\
&=\Psi\left(\sum_{\mathcal{L}^{\prime }}x_{\mathcal{L}^{\prime }}(\mathcal{L}^{\prime })\right) \\
&=\sum_{\mathcal{L}^{\prime }}x_{\mathcal{L}^{\prime }}\Psi(\mathcal{L}^{\prime }) \\
&=\sum_{\mathcal{L}^{\prime }}x_{\mathcal{L}^{\prime }}2(\mathcal{L}^{\prime
}).
\end{align*}
Put $\mathcal{H}=H\times SO(2)$ and $\mathcal{K}=K^{\theta,n}$. Then, 
\begin{equation*}
(\mathcal{H})*\Psi(\mathcal{K})=(H\times SO(2))\circ (K^{\theta,n})=\sum_{L}
m_{L}(L^{\theta,n}),
\end{equation*}
where the last product is taken in the $A(\Gamma)$-module $A_1(\Gamma\times
S^1)$. Consequently, we obtain the relations 
\begin{equation*}
\sum_{L} 2m_{L}(L^{\theta,n})=\sum_{\mathcal{L}^{\prime }}x_{\mathcal{L}^{\prime }}2(\mathcal{L}^{\prime }).
\end{equation*}
Notice, that the orbit types $(\mathcal{L}^{\prime })$ are also of the type $(L^{\theta,n})$ for some $L\subset \Gamma$. Therefore, the above relations
can be solved for $x_{\mathcal{L}^{\prime }}$.
\vs
\noi
(g) Since for $(\mathcal{H})$), $(\mathcal{H})\in \Phi^{\text{III}}_0(\Gamma\times O(2)$, all the orbit types $(\mathcal{L})$ in $\mathcal{Y}:=\frac{\Gamma\times O(2) }{\mathcal{H}}\times \frac{\Gamma\times O(2)}{\mathcal{K}}$ ether belong to $\Phi^{\text{II}}_0(\Gamma\times O(2))$, or it
is $(\mathcal{L}^{\prime })$, where $\mathcal{L}^{\prime }:=\mathcal{L}\cap
\Gamma\times SO(2)$, i.e. $(\mathcal{L}^{\prime })\in \Phi_1(\Gamma\times O(2))$. Thus we have 
\begin{equation*}
(\mathcal{H})*(\mathcal{K})=(\mathcal{H})\cdot(\mathcal{K}),
\end{equation*}
i.e. the product $(\mathcal{H})*(\mathcal{K})$ coincide with the product $(\mathcal{H})\cdot(\mathcal{K})$ in the Burnside ring $A(\Gamma\times )(2))$.
\vs

\noi (h) Suppose now that $(\mathcal{H})\in \Phi^{\text{III}}_0(\Gamma\times O(2))$ and $(\mathcal{K})\in \Phi^{\text{II}}_0(\Gamma\times O(2))$. Clearly, all the orbit types in $\mathcal{Y}:=\frac{\Gamma\times O(2) }{\mathcal{H}}\times \frac{\Gamma\times O(2)}{\mathcal{K}}$
belong to either to $\Phi^{\text{II}}_0(\Gamma\times O(2)$ or $\Phi^{\text{I}}_1(\Gamma\times O(2)$, thus we have 
\begin{equation*}
(\mathcal{H})*(\mathcal{K})=\sum_{\mathcal{L}}n_{\mathcal{L}}(\mathcal{L})+
\sum_{\mathcal{L}^{\prime }}x_{\mathcal{L}^{\prime }}(\mathcal{L}^{\prime }),
\end{equation*}
where the coefficients $n_{\mathcal{L}}$ are obtain from the product formula
in the Burnside ring $A(\Gamma\times O(2))$ and the coefficients $x_{\mathcal{L}^{\prime }}$ are unknown. By applying the Euler ring homomorphism 
$\Psi$ we obtain 
\begin{align*}
\Psi(\mathcal{H})*\Psi(\mathcal{K})&=\Psi\Big((\mathcal{H})*(\mathcal{K})\Big) \\
&=\Psi\left(\sum_{\mathcal{L}}n_{\mathcal{L}}(\mathcal{L})+\sum_{\mathcal{L}^{\prime }}x_{\mathcal{L}^{\prime }}(\mathcal{L}^{\prime })\right) \\
&=\sum_{\mathcal{L}}n_{\mathcal{L}}\Psi(\mathcal{L})+\sum_{\mathcal{L}^{\prime }}x_{\mathcal{L}^{\prime }}\Psi(\mathcal{L}^{\prime }) \\
&=\sum_{\mathcal{L}}n_{\mathcal{L}}\Psi(\mathcal{L})+\sum_{\mathcal{L}^{\prime }}x_{\mathcal{L}^{\prime }}2(\mathcal{L}^{\prime }).
\end{align*}
On the other hand if $\mathcal{H}=H\times SO(2)$ and $\Psi(\mathcal{K})=(K^{\theta,n})$, then 
\begin{equation*}
\Psi(\mathcal{H})*\Psi(\mathcal{K})=(H\times SO(2))\circ
(K^{\theta,n})=\sum_{L} k_{L}(L^{\theta,n}),
\end{equation*}
where the last product is taken in the $A(\Gamma)$-module $A_1(\Gamma\times
S^1)$. Consequently, we obtain the relations 
\begin{equation*}
\sum_{L} 2k_{L}(L^{\theta,n})=\sum_{\mathcal{L}}n_{\mathcal{L}}\Psi(\mathcal{L})+\sum_{\mathcal{L}^{\prime }}x_{\mathcal{L}^{\prime }}2(\mathcal{L}^{\prime }),
\end{equation*}
which can be solved with respect to $x_{\mathcal{L}^{\prime }}$.
\vs 
\noi (i) Assume now $(\mathcal{H})\in \Phi^{\text{III}}_0(\Gamma\times
O(2))$ and $(\mathcal{K})\in \Phi_1(\Gamma\times O(2))$. Then,
all the orbit types $(\mathcal{L}^{\prime })$ in $\mathcal{Y}:=\frac{\Gamma\times O(2) }{\mathcal{H}}\times \frac{\Gamma\times O(2)}{\mathcal{K}}$
belong to $\Phi_1(\Gamma\times O(2)$, thus we have 
\begin{equation*}
(\mathcal{H})*(\mathcal{K})=\sum_{\mathcal{L}^{\prime }}x_{\mathcal{L}^{\prime }}(\mathcal{L}^{\prime }),
\end{equation*}
where the coefficients $x_{\mathcal{L}^{\prime }}$ are unknown. By applying
the Euler ring homomorphism $\Psi$ we obtain 
\begin{align*}
\Psi(\mathcal{H})*\Psi(\mathcal{K})&=\Psi\Big((\mathcal{H})*(\mathcal{K})\Big) \\
&=\Psi\left(\sum_{\mathcal{L}^{\prime }}x_{\mathcal{L}^{\prime }}(\mathcal{L}^{\prime })\right) \\
&=\sum_{\mathcal{L}^{\prime }}x_{\mathcal{L}^{\prime }}\Psi(\mathcal{L}^{\prime }) \\
&=\sum_{\mathcal{L}^{\prime }}x_{\mathcal{L}^{\prime }}2(\mathcal{L}^{\prime
}).
\end{align*}
On the other hand if $\mathcal{H}=H\times SO(2)$ and $\Psi(\mathcal{K})=(K^{\theta,n})$, then 
\begin{equation*}
\Psi(\mathcal{H})*\Psi(\mathcal{K})=2(H\times SO(2))\circ (K^{\theta,n})=2\sum_{L} k_{L}(L^{\theta,n}),
\end{equation*}
where the last product is taken in the $A(\Gamma)$-module $A_1(\Gamma\times
S^1)$. Consequently, we obtain the relations 
\begin{equation*}
\sum_{L} 2k_{L}(L^{\theta,n})=\sum_{\mathcal{L}^{\prime }}x_{\mathcal{L}
^{\prime }}2(\mathcal{L}^{\prime }),
\end{equation*}
which can be solved with respect to $x_{\mathcal{L}^{\prime }}$.
\vs
\noi
(j) Finally, assume $(\mathcal{H})\in \Phi^{\text{III}
}_0(\Gamma\times O(2))$ and $(\mathcal{K})\in \Phi^{\text{I}}_0(\Gamma\times
O(2))$. Then, all the orbit types $(\mathcal{L})$ in $\mathcal{Y}:=\frac{
\Gamma\times O(2) }{\mathcal{H}}\times \frac{\Gamma\times O(2)}{\mathcal{K}}$
belong to $\Phi^{\text{I}}_0(\Gamma\times O(2)$, thus we have 
\begin{equation*}
(\mathcal{H})*(\mathcal{K})=\sum_{\mathcal{L}}n_{\mathcal{L}}(\mathcal{L}),
\end{equation*}
where the coefficients $n_{\mathcal{L}}$ are obtain from the product formula
in the Burnside ring $A(\Gamma\times O(2))$ .
\end{example}

\
\vskip.3cm

\section{Properties of $G$-Equivariant Gradient Degree}

In what follows we assume that $G$ is a compact Lie group. In particular we
will be interested in the case when $G=\Gamma\times O(2)$, where $\Gamma$ is
a finite group.

\subsection{$G$-Equivariant Degree Without Free Parameter}

Assume that $V$ is an orthogonal $G$-representation and $\Omega\subset V$ an
open bounded $G$-invariant set. A $G$-equivariant (continuous) map $f:V\to V$
is called \textit{$\Omega$-admissible} if $f(x)\not=0$ for $x\in \partial
\Omega$. Then the pair $(f,\Omega)$ is called \textit{$G$-admissible}. We
denote by $\mathcal{M}^G(V,V)$ the set of all such admissible $G$-pairs, and
put $\mathcal{M}^G:=\bigcup_V \mathcal{M}^{G}(V,V)$, where $V$ is an
orthogonal $G$-representation. We have the following result (see \cite{AED}):

\begin{theorem}
\label{thm:GpropDeg} There exists a unique map $G$-$\deg:\mathcal{M}^G\to
A(G)$, which assigns to every admissible $G$-pair $(f,\Omega )$ an element $%
G $-$\deg(f,\Omega )\in A(G)$, called the \textit{$G$-equivariant degree (or
simply $G$-degree)} of $f$ on $\Omega$, 
\begin{equation}  \label{eq:G-deg0}
G\text{{\rm -deg}}(f,\Omega)=\sum_{(H_i)\in \Phi_0(G)}
n_{H_i}(H_i)=n_{H_1}(H_1)+\dots + n_{H_m}(H_m),
\end{equation}
satisfying the following properties:

\begin{itemize}
\item[($G$1)] \textbf{(Existence)} If $G$-$\deg\,(f,\Omega )\ne 0$, i.e.
there is in \eqref{eq:G-deg0} a non-zero coefficient $n_{H_i}$, then $%
\exists_{x\in\Omega }$ such that $f(x)=0$ and $(G_x)\geq (H_i)$.

\item[($G$2)] \textbf{(Additivity)} Let $\Omega _1$ and $\Omega _2$ be two
disjoint open $G$-invariant subsets of $\Omega $ such that $f^{-1}(0)\cap
\Omega \subset \Omega _1\cup \Omega _2.$ Then, 
\begin{equation*}
G\text{{\rm -deg}}(f,\Omega )=G\text{{\rm -deg}}(f,\Omega _1) + G\text{%
{\rm -deg}}(f,\Omega _2).
\end{equation*}

\item[($G$3)] \textbf{(Homotopy)} If $h:[0,1]\times V\to V$ is an $\Omega $%
-admissible $G$-homotopy, then 
\begin{equation*}
G\text{{\rm -deg}}(h_t,\Omega) =\;\text{\textit{constant}}.
\end{equation*}

\item[($G$4)] \textbf{(Normalization)} Let $\Omega $ be a $G$-invariant open
bounded neighborhood of $0$ in $V$. Then, 
\begin{equation*}
G\text{{\rm -deg}}(\text{{\rm Id\,}},\Omega ) =(G).
\end{equation*}

\item[($G$5)] \textbf{(Multiplicativity)} For any $(f_1,\Omega_1),
(f_2,\Omega_2) \in \mathcal{M}^{G}$, 
\begin{equation*}
G\text{{\rm-deg}}(f_1\times f_2,\Omega_1\times \Omega_2) =G\text{{\rm
-deg}}(f_1,\Omega_1)\cdot G\text{{\rm -deg}}(f_2,\Omega_2),
\end{equation*}
where the multiplication `$\cdot$' is taken in the Burnside ring $A(G)$.

\item[($G$6)] \textbf{(Suspension)} \textit{If $\,W$ is an orthogonal $G$%
-representation and $\mathcal{B}$ is an open bounded invariant neighborhood
of $0\in W$, then} 
\begin{equation*}
G\text{{\rm -deg}}(f\times \text{{\rm Id}}_W,\Omega\times \mathcal{B}%
)=G\text{{\rm -deg}}(f,\Omega).
\end{equation*}

\item[($G$7)] \textbf{(Recurrence Formula)} For an admissible $G$-pair $%
(f,\Omega)$, the $G$-degree \eqref{eq:G-deg0} can be computed using the
following recurrence formula 
\begin{equation}  \label{eq:RF-0}
n_H = \frac{\deg(f^H, \Omega^H)-\sum_{(K)>(H)} n_K\, n(H,K)\, |W(K)|} {|W(H)|%
},
\end{equation}
where $|X|$ stands for the number of elements in the set $X$ and $\deg(f^H,\Omega^H)$ is the Brouwer degree of the map $f^H:=f|_{V^H}$ on the
set $\Omega^H\subset V^H$.

\item[($G$8)] \textbf{(Hopf Property)} Assume $B(V)$ is the unit ball of an
orthogonal $G$-representa\-tion $V$ and for $(f_1,B(V)),(f_2,B(V)) \in 
\mathcal{M}^{G}$, one has $\Gamma\text{\textrm{-deg\,}}(f_1,B(V))$ = $G\text{\textrm{-deg\,}}(f_2,B(V))$. Then, $f_1$ and $f_2$ are $B(V)$-admissible $G$%
-homotopic.
\end{itemize}
\end{theorem}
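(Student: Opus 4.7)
The plan is to build the degree via the recurrence formula (G7), taken as a constructive definition, and then verify that properties (G1)--(G8) hold; uniqueness is obtained afterwards by showing that (G2), (G3), (G4) alone force any functional on $\mathcal{M}^G$ with values in $A(G)$ to satisfy the very recurrence that was used to define $G\text{-}\deg$. I would begin by observing that, for an admissible pair $(f,\Omega)$ and $(H)\in\Phi_0(G)$, the fixed-point set $\Omega^H\subset V^H$ is open, bounded, and $W(H)$-invariant, and $f^H:=f|_{V^H}$ is $\Omega^H$-admissible. Since $\dim W(H)=0$, $W(H)$ is finite, and the classical Brouwer degree $\deg(f^H,\Omega^H)\in\mathbb{Z}$ is well-defined. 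Only finitely many orbit types appear in $\overline\Omega$ (as $\overline\Omega$ is compact and $G$-ENR), so the recurrence (G7) terminates.

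For existence, define the coefficients $n_H$ by descending induction along the partial order on $\Phi_0(G;\overline\Omega)$, starting from the maximal orbit types (for which the recurrence gives $n_H=\deg(f^H,\Omega^H)/|W(H)|$). The crucial non-trivial point is that each $n_H$ is an integer; this requires showing that, after subtracting the contributions from higher strata, the remaining zeros of $f^H$ lie in $(\Omega^H)_H$, on which $W(H)$ acts freely, so pairing zero-orbits in $W(H)$-orbits divides the integer $\deg(f^H,\Omega^H)-\sum_{(K)>(H)} n_K\,n(H,K)\,|W(K)|$ evenly by $|W(H)|$. I would establish this via an equivariant approximation: by a standard transversality/Whitney-type argument, $f$ can be approximated $\Omega$-admissibly by an equivariant map whose zero set is a finite union of orbits, at which point the divisibility becomes a direct counting statement.

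Next I would verify the axioms one by one. (G4) is immediate from the recurrence applied to $f=\mathrm{Id}$. (G2) and (G3) follow because both additivity and homotopy invariance pass from Brouwer degree of each $f^H$ to the coefficients $n_H$ built from them. (G5) reduces, after stratifying $\Omega_1\times\Omega_2$ by isotropy types $(H_1\cap gH_2g^{-1})$, to multiplicativity of the Brouwer degree and the definition of multiplication in $A(G)$ via $\chi((G/H_1\times G/H_2)_L/N(L))$, which is precisely how the Burnside product was defined in Subsection~3.2. (G6) follows from stability of Brouwer degree under product with $\mathrm{Id}_W$ on each fixed-point subspace. (G1) is a contrapositive argument: if no zero of $f$ has isotropy $\geq(H_i)$, then $f^{H_i}$ is $\Omega^{H_i}$-admissible with no zeros in strata $\leq (H_i)$, and the recurrence yields $n_{H_i}=0$. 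The hard part will be (G8), the Hopf property: this requires a genuine equivariant Hopf-type classification of homotopy classes $[(\overline{B(V)},\partial B(V)),(V,V\setminus\{0\})]^G$, which I would obtain by an obstruction-theoretic argument built on equivariant cell decompositions of $B(V)$, inducting on dimension of orbit types and using that each free cell carries a single $\mathbb{Z}$-valued obstruction matching the recurrence coefficient $n_H$.

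For uniqueness, suppose $D:\mathcal{M}^G\to A(G)$ satisfies (G1)--(G8) and write $D(f,\Omega)=\sum m_H(H)$. Applying $D$ to $(f^H,\Omega^H)$ viewed as a $W(H)$-admissible pair and using (G2), (G3), (G6) together with the fact that the Brouwer degree is the unique integer-valued functional satisfying Normalization, Additivity and Homotopy, one identifies $\deg(f^H,\Omega^H)$ with a linear combination of the coefficients $m_K$ for $(K)\geq(H)$ with weights $n(H,K)|W(K)|$; inverting this triangular system forces $m_H=n_H$ for every $(H)\in\Phi_0(G)$. Thus $D=G\text{-}\deg$. I expect the principal obstacle to be the integrality of the recurrence coefficients and the Hopf property; both are standard but technically involved results whose full justification I would invoke from \cite{AED,BKR}.
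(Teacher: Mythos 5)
The paper does not prove this theorem at all: it is quoted from the monograph \cite{AED}, where the degree is constructed in the opposite order to the one you propose --- one first shows that every admissible pair is admissibly $G$-homotopic to a regular normal map whose zero set is a finite union of orbits, defines $n_H$ by counting the zero orbits of orbit type $(H)$ with signs, and only then derives the recurrence (G7) as a consequence. Taking (G7) as the definition and proving integrality afterwards is logically equivalent, but it front-loads exactly the same approximation machinery (equivariant transversality and normal regularization), so nothing is gained, and your treatment of the two genuinely hard points is not yet a proof: for the Hopf property (G8) you only gesture at equivariant obstruction theory, which is precisely the part that requires the full primary-obstruction formalism developed at length in \cite{AED}.

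The one concrete flaw is in your uniqueness argument. You cannot ``apply $D$ to $(f^H,\Omega^H)$ viewed as a $W(H)$-admissible pair'': $D$ is a functional on $\mathcal{M}^G$, and $(f^H,\Omega^H)$ is a $W(H)$-pair, not a $G$-pair, so the expression $D(f^H,\Omega^H)$ is undefined. What you actually need is that the coefficients $m_K$ of any $D$ satisfying the axioms obey $\deg(f^H,\Omega^H)=\sum_{(K)\ge (H)} m_K\, n(H,K)\,|W(K)|$; but that relation is exactly property (G7), which is already on the list of axioms $D$ is assumed to satisfy. Hence uniqueness is immediate from (G7) alone --- the Brouwer degrees on the left depend only on $(f,\Omega)$, and the triangular system determines every $m_H$ --- and the detour through (G2)--(G4) is both unnecessary and, as written, ill-posed. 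If you want the stronger claim that (G1)--(G6) already force uniqueness, the standard route is to homotope to a regular normal map and evaluate $D$ on tubular neighborhoods of single orbits using additivity and normalization, not to re-derive the recurrence abstractly.
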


Suppose that $\psi:G^{\prime }\to G$ is a homomorphism between two (for
simplicity) \textit{finite} groups. Then, $\psi$ induces a homomorphism $\Psi: A(G)\to A(G^{\prime })$ of Burnside rings. More precisely, the formula 
\begin{equation}  \label{eq:induced-action}
{g^{\prime }}x := \psi({g^{\prime }})x \quad (g^{\prime }\in G^{\prime })
\end{equation}
defines a ${G^{\prime }}$-action on $G$. In particular, for any subgroup $H
\subset G$, the map $\psi$ induces the $G^{\prime}$-action on $G/H$. Then, 
\begin{equation}  \label{eq:Burnside-homo}
\Psi(H)=\sum_{(K)\in \Phi_0(\Gamma^{\prime })} n_K (K),
\end{equation}
where 
\begin{equation}  \label{eq:Burnside-homoCoef}
n_K:=\left| (\Gamma/H)_{(K)}/ \Gamma^{\prime }\right|= \left|
(\Gamma/H)_{K}/ N(K)\right|.
\end{equation}

\begin{itemize}
\item[($G$9)] \textbf{(Functoriality Property)} Let $(f,\Omega) \in 
\mathcal{M}^{G}$ and $\psi :G^{\prime }\to G$ a homomorphism of finite
groups. Then, $(f,\Omega) \ \in \mathcal{M}^{G^{\prime }}$ and 
\begin{equation}  \label{eq:deg-hom-gr}
G^{\prime }\text{{\rm -deg\,}}(f,\Omega)=\Psi\left[ G\text{{\rm -deg}}(f,\Omega) \right].
\end{equation}
\end{itemize}

\vskip.3cm


\subsection{$G$-Equivariant Gradient Degree}

Let $V$ be an orthogonal $G$-representation. Denote by $C^2_G(V,\mathbb{R})$
the space of $G$-invariant real $C^2$-functions on $V$. Let $\varphi \in
C^2_G(V,\mathbb{R})$ and $\Omega\subset V$ be an open bounded invariant set
such that $\nabla \varphi(x)\not=0$ for $x\in \partial \Omega$. Then the
pair $(f,\Omega)$ is called \textit{$G$-gradient admissible}. Denote by $\mathcal{M}^G_{\nabla}(V,V)$ the set of all $G$-gradient $\Omega$-admissible
pairs in $\mathcal{M}^G(V,V)$ and put $\mathcal{M}^G_{\nabla}:= \bigcup_V 
\mathcal{M}^G_{\nabla}(V,V)$. In an obvious way, one can define a $G$-gradient $\Omega$-admissible homotopy between two $G$-gradient $\Omega$%
-admissible maps. Finally, given a $G$-gradient $\Omega$-admissible homotopy
class, one should have a concept of its ``nice representatives''. The
corresponding concept of \textit{special $\Omega$-Morse functions} was
elaborated by K.H. Mayer in \cite{MA}. 

\vs
\begin{definition}\rm\label{def:gene}
A $G$-gradient $\Omega$-admissible map $f:=\nabla \vp$ is called a {\it special } 
 $\Omega$-Morse function if 
 \begin{itemize}
 \item[(i)] $f|_{\Omega}$ is of class $C^1$;
\item[(ii)] $f^{-1}(0)\cap \Omega$ is composed of regular zero orbits;
\item[(iii)] for each $(H)$ with $f^{-1}(0)\cap\Omega_{(H)}\ne\emptyset$, 
there exists a tubular neighborhood $\mathcal N(U,\ve)$ such that $f$ is 
$(H)$-normal on $\mathcal N(U,\ve)$.
\end{itemize}
\end{definition}
\vs

\begin{theorem}
\label{thm:Ggrad-properties} There exists a unique map $\nabla_G\text{\textrm{-deg\,}}:\mathcal{M}_\nabla^G\to U(G)$, which assigns to every $(\nabla \varphi,\Omega) \in \mathcal{M}^G_{\nabla}$ an element $\nabla_G\text{\textrm{-deg\,}}(\nabla \varphi,\Omega)\in U(G)$, called the \textit{$G $-gradient degree} of $\nabla \varphi$ on $\Omega$, 
\begin{equation}  \label{eq:grad-deg}
\nabla_G\text{{\rm -deg\,}}(\nabla
\varphi,\Omega)=\sum_{(H_i)\in\Phi(\Gamma)} n_{H_i}(H_i)=n_{H_1}(H_1)+\dots
+ n_{H_m}(H_m),
\end{equation}
satisfying the following properties:

\begin{itemize}
\item[($\protect\nabla$1)\hspace{-.095cm}] \textbf{(Existence)} If $\nabla_G\text{{\rm -deg\,}}(\nabla \varphi,\Omega)\not=0$, i.e. there is in \eqref{eq:grad-deg} a non-zero coefficient $n_{H_i}$, then $\exists_{x\in\Omega }$ such that $\nabla \varphi (x)=0$ and $(G_x)\geq (H_i)$.

\item[($\protect\nabla$2)\hspace{-.095cm}] \textbf{(Additivity)} Let $\Omega_1$ and $\Omega _2$ be two disjoint open $G$-invariant subsets of $\Omega $ such that $(\nabla\varphi)^{-1}(0)\cap \Omega \subset \Omega _1\cup\Omega_2.$ Then, 
\begin{equation*}
\nabla_G\text{{\rm -deg\,}}(\nabla \varphi,\Omega)=\nabla_G\text{{\rm -deg\,}}(\nabla \varphi,\Omega_1) + \nabla_G\text{{\rm -deg\,}}(\nabla\varphi,\Omega_2).
\end{equation*}

\item[($\protect\nabla$3)\hspace{-.095cm}] \textbf{(Homotopy)} If $\nabla_v\Psi:[0,1]\times V\to V$ is a $G$-gradient $\Omega$-admissible
homotopy, then 
\begin{equation*}
\nabla_G\text{{\rm -deg\,}}(\nabla_v\Psi(t,\cdot),\Omega)=\text{ constant}.
\end{equation*}

\item[($\protect\nabla$4)\hspace{-.095cm}] \textbf{(Normalization)} Let $\varphi\in C^2_G(V,\mathbb{R})$ be a special $\Omega$-Morse function such
that $(\nabla\varphi)^{-1}(0)\cap \Omega=G(v_0)$ and $G_{v_0}=H$. Then, 
\begin{equation*}
\nabla_G\text{{\rm -deg\,}}(\nabla \varphi,\Omega)= (-1)^{{m}^-(\nabla^2\varphi(v_0))}\cdot (H),
\end{equation*}
where ``${m}^-(\cdot)$'' stands for the total dimension of
eigenspaces for negative eigenvalues of a (symmetric) matrix.

\item[($\protect\nabla$5)\hspace{-.095cm}] \textbf{(Multiplicativity)} For
all $(\nabla\varphi_1,\Omega_1)$, $(\nabla\varphi_2,\Omega_2) \in \mathcal{M}^G_\nabla$, 
\begin{equation*}
\nabla_G\text{{\rm -deg\,}}(\nabla\varphi_1\times\nabla\varphi_2,\Omega_1\times\Omega_2)=\nabla_G\text{{\rm-deg\,}}(\nabla
\varphi_1,\Omega_1)\ast \nabla_G\text{{\rm-deg\,}}(\nabla
\varphi_2,\Omega_2)
\end{equation*}
where the multiplication `$\ast$' is taken in the Euler ring $U(G)$.

\item[($\protect\nabla$6)\hspace{-.095cm}] \textbf{(Suspension)} If $W$ is
an orthogonal $G$-representation and $\mathcal{B}$ an open bounded invariant
neighborhood of $0\in W$, then 
\begin{equation*}
\nabla_G\text{{\rm -deg\,}}(\nabla \varphi\times \mbox{\rm Id}_W,\Omega\times \mathcal{B}) = \nabla_G\text{{\rm-deg\,}}(\nabla
\varphi,\Omega).
\end{equation*}

\item[($\protect\nabla$7)\hspace{-.095cm}] \textbf{(Hopf Property)} Assume $B(V)$ is the unit ball of an orthogonal $\Gamma$-representa\-tion $V$ and
for $(\nabla\varphi_1,B(V)),(\nabla\varphi_2,B(V)) \in \mathcal{M}^{G}_{\nabla}$, one has 
\begin{equation*}
\nabla_G\text{{\rm-deg\,}}(\nabla\varphi_1,B(V)) = \nabla_G\text{{\rm-deg\,}}(\nabla\varphi_2,B(V)).
\end{equation*}
Then, $\nabla\varphi_1$ and $\nabla\varphi_2$ are $G$-gradient $B(V)$-admissible homotopic.

\item[($\protect\nabla$8)\hspace{-.095cm}] \textbf{(Functoriality Property)}
Let $V$ be an orthogonal $G$-representation, $f : V \to V$ a $G$-gradient $\Omega$-admissible map, and $\psi:G_o\hookrightarrow G$ an embedding of Lie groups (here we assume that $G$ and $G_o$ have the same dimensions).
Then, $\psi$ induces a $G_o$-action on $V$ such that $f$ is an $\Omega$-admissible $G_o$-gradient map, and the following equality holds 
\begin{equation}  \label{eq:funct-G}
\Psi[ \nabla_G\text{{\rm-deg\,}}(f,\Omega)]=\nabla_{G_o}\text{{\rm-deg}}(f,\Omega),
\end{equation}
where $\Psi:U(G)\to U(G_o)$ is the homomorphism of Euler rings induced by $\psi$.

\item[($\protect\nabla$9)\hspace{-.095cm}] \textbf{(Reduction Property)} Let 
$V$ be an orthogonal $G$-representation, $f : V \to V$ a $G$-gradient $\Omega $-admissible map, then 
\begin{equation}  \label{eq:red-G}
\pi_0\left[ \nabla_G\text{{\rm-deg\,}}(f,\Omega) \right] = G\text{{\rm-deg\,}}(f,\Omega).
\end{equation}
where the ring homomorphism $\pi_0:U(G)\to A(G)$ is given by \eqref{eq:pi_0-homomorphism}.
\end{itemize}
\end{theorem}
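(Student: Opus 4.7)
The plan is to construct $\nabla_G\text{-deg\,}$ by first reducing the problem to a distinguished class of representatives and then verifying the nine axioms one at a time. First I would invoke the result of K.H. Mayer cited before Definition \ref{def:gene}: every $G$-gradient $\Omega$-admissible pair $(\nabla\varphi,\Omega)$ is $G$-gradient $\Omega$-admissibly homotopic to a special $\Omega$-Morse function $\nabla\psi$ whose zero set $(\nabla\psi)^{-1}(0)\cap\Omega$ consists of finitely many regular orbits $G(v_1),\dots,G(v_N)$. On such a representative, define
\begin{equation*}
\nabla_G\text{-deg\,}(\nabla\psi,\Omega):=\sum_{j=1}^N (-1)^{m^-(\nabla^2\psi(v_j))}\,(G_{v_j})\in U(G),
\end{equation*}
which is forced by ($\nabla$4), ($\nabla$2) and ($\nabla$3). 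Well-definedness reduces to the following claim: if $\nabla\psi_0$ and $\nabla\psi_1$ are two special $\Omega$-Morse representatives joined by a $G$-gradient $\Omega$-admissible homotopy, then after a generic perturbation the homotopy has only finitely many degenerate moments, each corresponding either to a birth/death cancellation on a single stratum $\Omega_{(H)}$ or to an equivariant Morse index transition; in either case the signed count on each stratum $(H)$ is preserved. This is the standard equivariant Cerf-theoretic argument on each fixed-point manifold $V^H$, carried out stratum by stratum.

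Once the degree is well-defined, properties ($\nabla$1)--($\nabla$4) are essentially immediate from the construction: existence of a zero with isotropy at least $(H_i)$ follows from the fact that each term in the sum is produced by an actual zero orbit; additivity follows by restricting the Morse representative to $\Omega_1$ and $\Omega_2$; homotopy invariance is the well-definedness statement; normalization is built into the formula. For suspension ($\nabla$6) I would take a Morse representative $\nabla\psi$ on $\Omega$ and consider $\nabla\psi\times \text{Id}_W$, noting that the only zeros of the product are $(v_j,0)$, the isotropy is unchanged since $W$ has no contribution to the negative eigenspace, and $(-1)^{m^-}$ is preserved. The Hopf property ($\nabla$7) is then obtained by a reverse Morse-theoretic surgery argument: given two representatives with the same degree, construct an admissible gradient homotopy between them by successively cancelling matching zero orbits.

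The functoriality property ($\nabla$8) follows from computing $\Psi((H))$ via \eqref{eq:RingHomomorphism}: for a $G$-orbit $G(v_j)$ with isotropy $H$, the $G_o$-orbit decomposition of $G(v_j)$ is exactly what appears in the formula for $\Psi$, and the Morse index is invariant under the forgetful functor since $\dim G=\dim G_o$ forces the tangential directions on each orbit to match. The reduction property ($\nabla$9) follows by comparing the Morse-theoretic formula for $\nabla_G\text{-deg\,}$ with the recurrence formula ($G$7) for the Brouwer-type $G$-degree: on zero-dimensional Weyl group strata the signed count $(-1)^{m^-}$ coincides with the local Brouwer degree of $\nabla\psi^H$ at the isolated zero $v_j$, and $\pi_0$ truncates all higher-dimensional Weyl group contributions.

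The main obstacle will be the Multiplicativity property ($\nabla$5), because it requires matching the combinatorics of orbit-type intersections in the product $G$-space $G/H\times G/K$ with the topologically defined Euler ring coefficients $\chi_c((G/H\times G/K)_L/N(L))$ from \eqref{eq:Euler-mult}. My plan is to take product special Morse representatives $\nabla\psi_1\times\nabla\psi_2$ on $\Omega_1\times\Omega_2$, identify their zero orbits with products $G(v_i)\times G(w_j)$ whose isotropy-stratified structure is governed by the intersection formula for double cosets developed in Section 4, and then show that the local contribution of each zero orbit to $\nabla_G\text{-deg\,}$ -- which carries the sign $(-1)^{m^-(\nabla^2\psi_1(v_i))+m^-(\nabla^2\psi_2(w_j))}$ -- reassembles into the Euler ring product $(G_{v_i})\ast(G_{w_j})$ by verifying that the Euler-characteristic coefficients $n_L$ in \eqref{eq:Euler-coeff} are precisely the signed counts arising from the equivariant transversality of the product Morse decomposition.
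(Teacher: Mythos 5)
You should first be aware of what the paper actually proves here: of the nine properties, the authors supply an argument only for the Functoriality Property ($\nabla$8) --- the existence of the degree and properties ($\nabla$1)--($\nabla$7), ($\nabla$9) are taken from the literature (Geba, tom Dieck, the cited survey), and the point of restating the theorem is precisely to correct the previously overstated functoriality claim by adding the hypothesis $\dim G_o=\dim G$. On ($\nabla$8) your argument coincides with the paper's: both reduce to a special $\Omega$-Morse function with a single zero orbit $G(v_0)$, use the equal-dimension hypothesis to conclude that the $G$-orbit and the $G_o$-orbit share the same slice (so the map stays special Morse for $G_o$ and the Morse index is unchanged), and identify the resulting sum over $G_o$-orbit types of $G/H$ with $\Psi((H))$ via \eqref{eq:RingHomomorphism}. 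You correctly locate where the dimension hypothesis enters, which is the substantive content of the paper's proof; the subsequent Remark's counterexample with $G=S^1$, $G_o=\bz_n$ shows why it cannot be dropped.

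The from-scratch portions of your plan, however, contain gaps that are each theorems in their own right. First, well-definedness: the ``equivariant Cerf argument stratum by stratum'' is not routine when $\dim W(H)>0$; on such strata the zero orbits are positive-dimensional, the corresponding generators $(H)$ are nilpotent in $U(G)$, and the bookkeeping of births and deaths is exactly what forces the Euler characteristic with compact supports into the definition --- this cannot be dismissed as standard. Second, multiplicativity: $\nabla\psi_1\times\nabla\psi_2$ is \emph{not} a special $\Omega$-Morse function, since its zero set is a union of $G$-spaces $G(v_i)\times G(w_j)\cong G/H_i\times G/K_j$ containing many orbit types; you must perturb it equivariantly near each such set and prove that the zero orbits of type $(L)$ so created have total signed count $\chi_c\big((G/H\times G/K)_L/N(L)\big)$. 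That identification is the entire content of tom Dieck's construction of the Euler ring product, not a verification one does in passing. Third, the Hopf property is a completeness statement; ``successively cancelling matching zero orbits'' by a \emph{gradient} admissible homotopy requires an equivariant Whitney-trick type cancellation lemma in the gradient category, which is delicate and is the reason the gradient degree is a strictly finer invariant than the degree without parameters. As a roadmap your proposal follows the standard route, but as a proof it leaves precisely the hard steps unproved; the paper avoids this by citing them and proving only the one property it needs to repair.
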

\begin{proof} {\bf (Functoriality Property)} ($\protect\nabla$8): It is sufficient to assume that $f$ is special $\Omega$-Morse function such
that $(\nabla\varphi)^{-1}(0)\cap \Omega=G(v_0)$ and $G_{v_0}=H$. Then clearly the orbit $G_o(v_0)$ has the same dimension as $G(v_0)$, which implies that they share the same slice $S$ at the point $v_0$. Since $(G_o)_{v_0}=G_o\cap H=:H_o$, it follows that $S^{H_o}\supset S^H$, which implies that $f$ is also special $\Omega$-Morse function with respect to the group action $G_o$, and therefore 
\[
\nabla_{G_o}\text{{\rm -deg\,}}(\nabla \varphi,\Omega)= (-1)^{{m}^-(\nabla^2\varphi(v_0))}\cdot \Psi(H).
\]
\end{proof}
\begin{remark}\rm
Let us point out that the {\bf Functoriality Property} for the gradient degree was {\it overstated} in \cite{survey} (see property ($\protect\nabla$8), page 30). This statement is not true for general Lie groups $G$ and $G_o$.  Indeed, consider for example $G=S^1$ and $G_o=\bz_n$, $n\ge 3$, and let $V:=\bc$ with the action of $S^1$ and $\bz_3$ by complex multiplication. Then  for $f:=-\id: V\to V$  we have 
\[
\nabla_{G_o}\text{{\rm -deg\,}}(\nabla \varphi,\Omega)=(\bz_n)-(\bz_1),\quad \nabla_{G}\text{{\rm -deg\,}}(\nabla \varphi,\Omega)=(S^1)-(\bz_1).
\]
On the other hand, $\Psi(S^1)=(\bz_n)$, $\Psi(\bz_1)=0$, so 
\[
(\bz_n)=\Psi\left[  \nabla_{G}\text{{\rm -deg\,}}(\nabla \varphi,\Omega)  \right]\not=\nabla_{G_o}\text{{\rm -deg\,}}(\nabla \varphi,\Omega).
\]
\end{remark}
\vs
\begin{remark}\label{rem:Psi}
\rm
We would like to point out that the Euler homomorphism described in  Functoriality Property ($\protect\nabla$8) may take a generator $(H)$ into a linear combination (with positive coefficients) of more then one generator from $\Phi(G_o\times \Gamma)$. Indeed, consider $G_o:=A_4\times S^1 \le S_4\times S^1=:G$, then $\Psi(\bz_3^t)=(\bz_3^{t_1})+(\bz_3^{t_2})$ (see \cite{AED} for the explanation of notation). 
\end{remark}
\vs
\subsection{$G$-Equivariant Gradient Degree in Hilbert
Spaces} Let $\mathscr H$ be a Hilbert space and
consider a $C^1$-differentiable $G$-invariant functional and $f:\mathscr{H}%
\to \mathbb{R}$ given by $f(x)=\frac 12 \|x\|^2-\varphi(x)$, $x\in \mathscr{H}
$. Then 
\begin{equation*}
\nabla f(x)=x-\nabla \varphi(x), \quad x\in \mathscr{H}.
\end{equation*}
We will say that the functional $\varphi$ is admissible if $\nabla \varphi:\mathscr{H}\to \mathscr{H}$ is a completely continuous map. Suppose $\Omega\subset \mathscr{H}$ is a $G$-invariant bounded open set such that the
map $\nabla f:\mathscr{H}\to \mathscr{H}$ is $\Omega$-admissible, i.e. 
\begin{equation*}
\forall_{x\in \Omega}\;\;\;\nabla f(x)= x-\nabla \varphi(x)\not=0.
\end{equation*}
By a \textit{$G$-equivariant approximation scheme} $\{P_n\}_{n=1}^\infty$ in 
$\mathscr{H}$, we mean a sequence of $G$-equivariant orthogonal projections $P_n:\mathscr{H}\to\mathscr{H}$, $n=1$, $2$, \dots, such that:

\begin{itemize}
\item[(a)] the subspaces $\mathscr{H}^n:=P_n(\mathscr{H})$, $n=1,2,$ \dots,
are finite-dimensional;

\item[(b)] $\mathscr{H}^n\subset \mathscr{H}^{n+1}$, $n=0,1,2,$ \dots;

\item[(c)] $\displaystyle \lim_{n\to \infty} P_nx=x$ for all $x\in \mathscr{H}
$.
\end{itemize}

\vskip.3cm

\begin{lemma}
\label{lem:lem1} Suppose that $\varphi :\mathscr{H}\to \mathbb{R}$ is a $G$-equivariant $C^1$-differentiable admissible functional and let $\Omega\subset \mathscr{H}$ be a $G$-invariant bounded open set such that $\nabla f$, where $f(x):=\frac 12\|x\|^2-\varphi(x)$, $x\in \mathscr{H}$, is $\Omega$-admissible. Let $\{P_n\}_{n=1}^\infty$ be a $G$-equivariant
approximation scheme in $\mathscr{H}$. Define $f_n:\mathscr{H}_n\to \mathbb{R}$
by $f_n(x):=\frac 12\|x\|^2-\varphi(x)$, $x\in \mathscr{H}_n$. Then for
sufficiently large $n\in \mathbb{N}$, the maps $\nabla
f_n(x):=x-P_n\nabla\varphi(x)$, $x\in \mathscr{H}$, are $\Omega_n$-admissible, where $\Omega_n:=\Omega\cap \mathscr{H}_n$.
\end{lemma}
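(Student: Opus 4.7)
The plan is to argue by contradiction using the complete continuity of $\nabla\varphi$ together with the pointwise convergence $P_n x\to x$.

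Suppose the lemma fails. Then there exist a subsequence (which we relabel) $n_k\to\infty$ and points $x_{n_k}\in\partial_{\mathscr H_{n_k}}\Omega_{n_k}$ (boundary taken inside $\mathscr H_{n_k}$) such that
\begin{equation*}
x_{n_k}=P_{n_k}\nabla\varphi(x_{n_k}).
\end{equation*}
Since $\Omega$ is bounded, the sequence $\{x_{n_k}\}$ is bounded in $\mathscr H$. The first step is to extract, using that $\nabla\varphi:\mathscr H\to\mathscr H$ is completely continuous, a subsequence along which $\nabla\varphi(x_{n_k})\to y$ for some $y\in\mathscr H$.

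Next I would show $x_{n_k}\to y$. Write
\begin{equation*}
x_{n_k}-y=P_{n_k}\nabla\varphi(x_{n_k})-y=P_{n_k}\bigl(\nabla\varphi(x_{n_k})-y\bigr)+\bigl(P_{n_k}y-y\bigr).
\end{equation*}
The first term tends to zero because $\|P_{n_k}\|\le 1$ and $\nabla\varphi(x_{n_k})\to y$, and the second term tends to zero by property (c) of the approximation scheme. Continuity of $\nabla\varphi$ then yields $\nabla\varphi(x_{n_k})\to\nabla\varphi(y)$, so $y=\nabla\varphi(y)$, i.e.\ $\nabla f(y)=0$.

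It remains to locate $y$ on $\partial\Omega$, which will contradict the $\Omega$-admissibility of $\nabla f$. Here I would use the elementary fact that if $\Omega$ is open in $\mathscr H$ and $\mathscr H_n$ is a closed subspace, then any point of $\partial_{\mathscr H_n}(\Omega\cap\mathscr H_n)$ belongs to $\partial\Omega$: indeed such a point is a limit in $\mathscr H_n$ of points of $\Omega\cap\mathscr H_n\subset\Omega$, hence lies in $\overline\Omega$, yet it cannot be interior to $\Omega$ (else a neighborhood in $\mathscr H_n$ would sit inside $\Omega_n$). Applying this to each $x_{n_k}\in\partial_{\mathscr H_{n_k}}\Omega_{n_k}\subset\partial\Omega$ and passing to the limit gives $y\in\overline{\partial\Omega}=\partial\Omega$, which is the desired contradiction.

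The only step that requires care is the extraction of the convergent subsequence of $\{\nabla\varphi(x_{n_k})\}$ together with the bookkeeping that $x_{n_k}\to y$ in $\mathscr H$ (not merely weakly); this is where the complete continuity hypothesis is essential, and the argument would collapse if only continuity of $\nabla\varphi$ were assumed.
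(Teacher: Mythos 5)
Your proof is correct and follows essentially the same route as the paper's: argue by contradiction, use complete continuity of $\nabla\varphi$ to extract a convergent subsequence of $\{\nabla\varphi(x_{n_k})\}$, show $x_{n_k}\to x_*$ via the same triangle-inequality decomposition, and conclude $\nabla f(x_*)=0$ with $x_*\in\partial\Omega$. Your additional care in verifying that the relative boundary $\partial_{\mathscr H_{n_k}}\Omega_{n_k}$ is contained in $\partial\Omega$ is a small refinement the paper glosses over, but otherwise the arguments coincide.
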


\begin{proof} Suppose for contradiction that there exists a sequence $\{x_{n_k}\}\subset \partial \Omega$, such that $x_{n_k}=P_{n_k}\nabla\vp(x_{n_k})$. Since the map $\nabla \vp:\mathscr H\to \mathscr H$ is completely continuous, we can assume without loss of generality that the sequence $\{\nabla \vp(x_{n_k})\}$ converges to $x_*$ as $k\to \infty$. Then
\begin{align*}
\|x_*-P_{n_k}\nabla \vp(x_{n_k})\|&\le \|x_*-P_{n_k}x_*\|+\|P_{n_k}(x_*-\nabla\vp(x_{n_k}))\|\\
&\le  \|x_*-P_{n_k}x_*\|+\|P_{n_k}\|\cdot \|x_*-\nabla\vp(x_{n_k})\|\\
&= \|x_*-P_{n_k}x_*\|+\|x_*-\nabla\vp(x_{n_k})\|\to 0 \;\; \text{ as }\; k\to \infty.
\end{align*}
which implies that $x_{n_k}=P_{n_k}\nabla\vp(x_{n_k})\to x_*$ as $k\to\infty$, and therefore, by continuity of $\nabla \vp$, $x_*-\nabla \vp(x_*)=0$, Since $\partial \Omega$ is closed, $x_*\in \partial \Omega$, which is a contradiction with the assumption that $\nabla f$ is $\Omega$-admissible.
\end{proof}

\begin{lemma}
\label{lem:lem2} Suppose that $\varphi :\mathscr{H}\to \mathbb{R}$ is a $G$-equivariant $C^1$-differentiable admissible functional and let $\Omega\subset \mathscr{H}$ be a $G$-invariant bounded open set such that $\nabla f$, where $f(x):=\frac 12\|x\|^2-\varphi(x)$, $x\in \mathscr{H}$, is $\Omega$-admissible. Let $\{P_n\}_{n=1}^\infty$ be a $G$-equivariant
approximation scheme in $\mathscr{H}$. Define $f_n:\mathscr{H}_n\to \mathbb{R}$
by $f_n(x):=\frac 12\|x\|^2-\varphi(x)$, $x\in \mathscr{H}_n$. Then for
sufficiently large $n\in\mathbb{N}$, the maps $\mathfrak{h}_n:[0,1]\times 
\mathscr{H}_{n+1}\to \mathscr{H}_{n+1}$, defined by 
\begin{equation*}
\mathfrak{h}_n(x):=x-tP_n\nabla\varphi(x)-(1-t)P_{n+1}\nabla\varphi(x),\quad
x\in \mathscr{H}_{n+1},
\end{equation*}
are are $\Omega_{n+1}$-admissible homotopies, where $\Omega_{n+1}:=\Omega\cap \mathscr{H}_{n+1}$.
\end{lemma}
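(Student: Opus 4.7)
The plan is to mimic the contradiction argument used for Lemma \ref{lem:lem1}, adapting it to accommodate the convex combination of two projections. First I would verify that $\mathfrak{h}_n$ indeed maps $[0,1]\times \mathscr{H}_{n+1}$ into $\mathscr{H}_{n+1}$: this is immediate since $P_n(\mathscr{H}_{n+1})\subset \mathscr{H}_n\subset \mathscr{H}_{n+1}$ and $P_{n+1}(\mathscr{H}_{n+1})\subset \mathscr{H}_{n+1}$.

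Suppose, for the sake of contradiction, that the conclusion fails. Then there exist sequences $n_k\to\infty$, $t_k\in[0,1]$, and $x_k\in\partial \Omega_{n_k+1}\subset \partial \Omega\cap \mathscr{H}_{n_k+1}$ such that
\[
x_k=t_k P_{n_k}\nabla\varphi(x_k)+(1-t_k)P_{n_k+1}\nabla\varphi(x_k).
\]
Since $\Omega$ is bounded, $\{x_k\}$ is bounded, and by complete continuity of $\nabla\varphi$ we may pass to a subsequence (still indexed by $k$) so that $\nabla\varphi(x_k)\to y_*$ in $\mathscr{H}$ and $t_k\to t_*\in[0,1]$.

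The next step is to show that $x_k\to y_*$. Using the triangle inequality together with $\|P_m\|\le 1$, I would estimate
\begin{align*}
\|x_k-y_*\|
&\le t_k\,\|P_{n_k}\nabla\varphi(x_k)-y_*\|+(1-t_k)\,\|P_{n_k+1}\nabla\varphi(x_k)-y_*\|\\
&\le t_k\bigl(\|P_{n_k}y_*-y_*\|+\|\nabla\varphi(x_k)-y_*\|\bigr)\\
&\qquad+(1-t_k)\bigl(\|P_{n_k+1}y_*-y_*\|+\|\nabla\varphi(x_k)-y_*\|\bigr).
\end{align*}
By property (c) of the approximation scheme, $\|P_{n_k}y_*-y_*\|\to 0$ and $\|P_{n_k+1}y_*-y_*\|\to 0$, while $\|\nabla\varphi(x_k)-y_*\|\to 0$ by construction. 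Hence $x_k\to y_*$. Continuity of $\nabla\varphi$ then gives $\nabla\varphi(x_k)\to\nabla\varphi(y_*)$, so $y_*=\nabla\varphi(y_*)$, i.e.\ $\nabla f(y_*)=0$. Since $\partial\Omega$ is closed, $y_*\in\partial\Omega$, contradicting the $\Omega$-admissibility of $\nabla f$ and completing the proof.

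The main (and essentially only) obstacle is ensuring the convex-combination estimate closes up, which is handled by the observation that $\|P_m\|\le 1$ uniformly, so the two projection terms can be controlled independently and then combined via $t_k+(1-t_k)=1$. All other ingredients are straightforward consequences of the approximation scheme and the complete continuity hypothesis, exactly as in the proof of Lemma \ref{lem:lem1}.
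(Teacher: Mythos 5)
Your proof is correct and follows essentially the same contradiction argument as the paper: extract a convergent subsequence of $\nabla\varphi(x_k)$ by complete continuity, show the convex combination of projections converges to the same limit, and conclude that the limit is a zero of $\nabla f$ on $\partial\Omega$, contradicting admissibility. Your version is in fact slightly more careful than the paper's, since you spell out the triangle-inequality estimate (using $\|P_m\|\le 1$ and property (c) of the approximation scheme) that the paper only asserts.
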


\begin{proof}
Suppose for contradiction that there exists sequence $\{x_{n_k}\}\subset \partial \Omega$ and $\{t_{n_k}\}\subset [0,1]$, such that $x_{n_k}=t_{n_k} P_{n_k}\nabla\vp(x_{n_k})+(1-t_{n_k} )P_{n_k+1}\nabla\vp(x_{n_k})$. Since the map $\nabla \vp:\mathscr H\to \mathscr H$ is completely continuous, we can assume without loss of generality that the sequence $\{\nabla \vp(x_{n_k})\}$ converges to $x_*$ and $t_{n_k}$ converges to $t_*$  as $k\to \infty$. Then
$t_{n_k} P_{n_k}\nabla\vp(x_{n_k})+(1-t_{n_k} )P_{n_k+1}\nabla\vp(x_{n_k})\to x_*$ as $k\to\infty$, and therefore $x_{n_k}\to x_*$. Therefore, by continuity of $\nabla\vp$,
\[
x_*=t_*\nabla\vp(x_*)+(1-t_*)\nabla\vp(x_*)=\nabla \vp(x_*).
\]
 Since $\partial \Omega$ is closed, $x_*\in \partial \Omega$, which is a contradiction with the assumption that $\nabla f$ is $\Omega$-admissible.
\end{proof}
\vskip.3cm

Consider therefore a $G$-equivariant $C^1$-differentiable admissible
functional $\varphi :\mathscr {H}\to \mathbb{R}$ and let $\Omega\subset 
\mathscr{H}$ be a $G$-invariant bounded open set such that $\nabla f$, where $f(x):=\frac 12\|x\|^2-\varphi(x)$, $x\in \mathscr{H}$, is $\Omega$-admissible. In order to define the $G$-equivariant gradient degree $\nabla_G\text{\textrm{-deg\,}}(\nabla f,\Omega)$, we consider a $G$-equivariant
approximation scheme $\{P_n\}_{n=1}^\infty$ in $\mathscr{H}$, define $f_n:\mathscr{H}_n\to \mathbb{R}$ by $f_n(x):=\frac 12\|x\|^2-\varphi(x)$, $x\in 
\mathscr{H}_n$. Then by Lemma \ref{lem:lem1}, $\nabla f_n:\mathscr{H}_n\to 
\mathscr{H}_n$ is $\Omega_n$-admissible for $n$ sufficiently large, where $\Omega_n:=\Omega\cap \mathscr{H}_n$, and therefore the $G$-equivariant
degrees $\nabla_G\text{\textrm{-deg\,}}(\nabla f_n,\Omega_n)$ are
well-defined for $n$ sufficiently large. On the other hand, by Lemma \ref{lem:lem2} and the Suspension Property of the $G$-equivariant gradient
degree, for sufficiently large $n$ we have 
\begin{equation*}
\nabla_G\text{\textrm{-deg\,}}(\nabla f_n,\Omega_n)=\nabla_G\text{\textrm{-deg\,}}(\nabla f_{n+1},\Omega_{n+1})
\end{equation*}
which implies that we can put 
\begin{equation}  \label{eq:grad-H}
\nabla_G\text{\textrm{-deg\,}}(\nabla f,\Omega):= \nabla_G\text{\textrm{-deg\,}}(\nabla f_n,\Omega_n),\;\; \text{ where } \; n\; \text{ is
sufficiently large}.
\end{equation}
One can easily verify that this construction doesn't depend on the choice of
a $G$-approximation scheme in the space $\mathscr{H}$. We should mention that
the ideas behind the usage of the approximation methods to define
topological degree can be rooted to \cite{BP}.

\vskip.3cm The $G$-equivariant gradient degree defined by \eqref{eq:grad-H}
has all the standard properties of a topological degree, i.e. existence,
additivity, homotopy and multiplicativity. 

\vs
\subsection{Computations of the Gradient $G$-Equivariant Degree}

 Similarly to the case of the Brouwer degree, the gradient equivariant degree
can be computed using standard linearization techniques. Therefore, it is
important to establish computational formulae for linear gradient operators.

Let $V$ be an orthogonal (finite-dimensional) $G$-representation and suppose
that $A:V\to V$ is a $G$-equivariant symmetric isomorphism of $V$, i.e. $A:=\nabla\varphi$, where $\varphi(x)=\frac 12 Ax\bullet x$. Consider the $G$%
-isotypical decomposition of $V$ 
\begin{equation*}
V=\bigoplus_{i} V_i, \quad V_i \;\; \text{modeled on $\mathcal{V} _i$}.
\end{equation*}
We assume here that $\{ \mathcal{V} _i\}_i$ is the complete list of
irreducible $G$-representations.

Let $\sigma(A)$ denote the spectrum of $A$ and $\sigma_-(A)$ the negative
spectrum of $A$, i.e. 
\begin{equation*}
\sigma_-(A):=\{\lambda\in \sigma(A): \lambda<1\},
\end{equation*}
and let $E_\mu(A)$ stands for the eigenspace of $A$ corresponding to $\mu\in
\sigma(A)$. Put $\Omega:=\{x\in V: \|x\|<1\}$. Then $A$ is $\Omega$-admissibly homotopic (in the class of gradient maps) to a linear operator $A_o:V\to V$ such that 
\begin{equation*}
A_o(v):= 
\begin{cases}
-v & \;\text{ if } \; v\in E_\mu(A),\; \mu\in \sigma_-(A), \\ 
v & \; \text{ if } \; v\in E_\mu(A),\; \mu\in \sigma(A)\setminus \sigma_-(A).
\end{cases}
\end{equation*}
In other words, $A_o|_{E_\mu(A)}=-\text{\textrm{Id\,}}$ for $\mu
\in\sigma_-(A)$ and $A_o|_{E_\mu(A)}=\text{\textrm{Id\,}}$ for $\mu \in
\sigma(A)\setminus \sigma_-(A)$. Suppose that $\mu\in \sigma_-(A)$, then
denote by $m_i(\mu)$ the integer 
\begin{equation*}
m_i(\mu):= \text{\textrm{dim\,}} (E_\mu(A)\cap V_i)/\text{\textrm{dim\,}} 
\mathcal{V} _i,
\end{equation*}
which is called the \textit{$\mathcal{V} _i$-multiplicity} of $\mu$. Since $\nabla_G\text{\textrm{-deg\,}}(\text{\textrm{Id\,}},\mathcal{V} _i)=(G)$ is
the unit element in $U(G)$, we immediately obtain, by Multiplicativity
property ($\nabla$5), the following formula 
\begin{equation}\label{eq:grad-lin}
\nabla_G\text{\textrm{-deg\,}}(A,\Omega)=\prod_{\mu\in \sigma_-(A)}\prod_{i} 
\left[\nabla_G\text{\textrm{-deg\,}}(-\text{\textrm{Id\,}},B(\mathcal{V} _i))
\right]^{m_i(\mu)},
\end{equation}
where $B(W)$ is the unit ball in $W$.

\vskip.3cm

\begin{definition}\rm 
Assume that $\mathcal{V}_i$ is an irreducible $G$-representation. Then the $G $-equivariant gradient degree: 
\begin{equation*}
\text{\textrm{Deg\,}}_{\mathcal{V}_i}:=\nabla_G\text{\textrm{-deg\,}}(-\text{\textrm{Id\,}},B(\mathcal{V}_i))\in U(G)
\end{equation*}
is called the \textit{gradient $G$-equivariant basic degree} for $\mathcal{V}_i$.
\end{definition}

Clearly, the basic degrees $\text{\textrm{Deg}}_{\mathcal{V}_i}$ are
invertible elements in $U(G)$. \vskip.3cm

\begin{remark}\rm
For $\widetilde G=\Gamma\times S^1$, where $\Gamma$ is a finite
group, the computation of basic gradient degrees $\wt{\text{\rm Deg}}_{\mathcal{V}_o}$ (here $\mathcal V_o$ stands for an irreducible $\Gamma\times S^1$-representation) can be completely reduced
to the computation of basic degrees $\deg_{\mathcal{V}_i}$ without parameter 
and twisted basic degrees $\deg_{\mathcal{V}_{j,l}} $ with one free parameter. Namely, we have the following types of irreducible $\widetilde G$-representations: (i) irreducible $\Gamma$-representation $\mathcal{V}_i$,
where $S^1$ acts trivially; (ii) the representations $\mathcal{V}_{j,l}$, $l\in \mathbb{N}$, where $\mathcal{V}_{j,l}$ as a $\Gamma$-representation is
equivalent to a complex irreducible $\Gamma$-representation $\mathcal{U}_j$,
and the $S^1$ action on $\mathcal{V}_{j,l}$ is given by \textit{$l$-folding}, i.e. $e^{i\alpha} v:=e^{il\alpha}\cdot v$, $v\in \mathcal{V}_{j,l}$, $e^{i\alpha}\in S^1$ and `$\cdot$' stands for complex multiplication. Then: 

\begin{itemize}
\item[(i)] $\wt{\text{\rm Deg}}_{\mathcal{V}_i} = \wt{\deg}_{\mathcal{V}_i}$; 

\item[(ii)] $\wt{\text{\rm Deg}}_{\mathcal{V}_{j,l}}=(\widetilde G)-\wt{\text{\rm deg}}_{\mathcal{V}_{j,l}}$, 
\end{itemize}
where $\wt{\text{\rm deg}}_{\mathcal{V}_i}=\widetilde G\text{\rm -deg\,}(-\text{\rm Id\,},B(\mathcal{V}_i))\in A(\widetilde G)$ and $\deg_{\mathcal{V}_{j,l}}$ is the so-called \textit{twisted basic degree} (see \cite{AED} for
more details and definitions). More precisely, the basic degree 
\begin{equation*}
\wt{\text{\rm deg}}_{\mathcal{V}_i} =(\widetilde G)+n_{L_1}(L_1)+\dots+n_{L_n}(L_n),
\end{equation*}
can be computed from the recurrence formula 
\begin{equation}  \label{eq:coeff-jo}
n_{L_{k}}=\frac{(-1)^{n_{k}}-\sum_{L_{k}<L_l} n(L_{k},L_k)\cdot n_{L_l}\cdot
|W(L_l)|}{|W(L_{k})|},\quad n_k=\text{dim\,}\mathcal V_i^{L_k}
\end{equation}
and the twisted twisted degree 
\begin{equation*}
\wt{\deg}_{\mathcal{V}_{j,l}}=n_{H_1}(H_1)+n_{H_2}(H_2)+\dots + n_{H_m}(H_m),
\end{equation*}
can be computed from the recurrence formula 
\begin{equation}  \label{eq:bdeg-nL}
n_{H_k}=\frac{\frac 12 \text{dim}\,\, \mathcal{V}_{j,l}^{H_k}-
\sum_{H_k<H_s}n_{H_s}\, n(H_k,H_s)\, |W(H_s)/S^1|}{\left| \frac{W(H_k)}{S^1}\right|}.
\end{equation}
One can also find in \cite{AED} complete lists of these basic degrees for
several groups $\wt G=\Gamma\times S^1$. 
\end{remark}

\vskip.3cm

\paragraph{The Case $G:=\Gamma\times O(2)$ with $\Gamma$ being a finite group:}

Consider a symmetric $G$-equivariant linear isomorphism $T:V\to V$, where $V$
is an orthogonal $G$-representation, i.e. $T=\nabla \varphi$ for $\varphi(v)=\frac 12 (Tv\bullet T)$, $v\in V$, where ``$\bullet$'' stands for
the inner product. We will show how to compute $\nabla_G\text{-deg\,}(T,B(V)) $.

\vskip.3cm Assume that $\{\mathcal{V}_i\}_{i=0}^r$ denotes the collection of
all real $\Gamma$-irreducible representations and 
\begin{equation*}
\{ \mathcal{U}_0,\mathcal{U}_{\frac 12}, \mathcal{U}_1,\mathcal{U}_l,\mathcal{U}_{l+1},\dots\}
\end{equation*}
denotes the collection of all real irreducible $O(2)$-representations, where
(following \cite{AED}) $\mathcal{U}_0\simeq \mathbb{R}$ is the trivial
representation of $O(2)$, $\mathcal{U}_{\frac 12}\simeq \mathbb{R}$ is the
one-dimensional irreducible real representation, on which $O(2)$ acts
through the homomorphism $O(2)\to O(2)/SO(2)\simeq {\mathbb{Z}}_2$, and $\mathcal{U}_l\simeq {\mathbb{C}}$, $l\in \mathbb{N}$, is the two-dimensional
irreducible real representation of $O(2)$ with the action of $O(2)$ given by 
\begin{gather*}
e^{i\theta} z:= e^{il\theta}\cdot z, \quad e^{i\theta}=\left[ 
\begin{array}{cc}
\cos \theta & -\sin\theta \\ 
\sin \theta & \cos \theta
\end{array}
\right]\in SO(2), \\
\kappa z:= \overline z,\quad \kappa ==\left[ 
\begin{array}{cc}
1 & 0 \\ 
0 & -1
\end{array}
\right],
\end{gather*}
where `$\cdot$' denotes complex multiplication.

\vskip.3cm There are three types of irreducible $G$-representations $\mathcal{V}_o$: (i) those irreducible $G$-representations on which $O(2)$
acts trivially, i.e. $\mathcal{V}_o$ is in fact an irreducible $\Gamma$-representation (there is no $O(2)$-action) which we will denote by $%
\mathcal{V}_i$, $i=0,1,2,\dots,r$, (ii) those irreducible $G$-representations on which $SO(2)$ acts trivially and $SO(2)\kappa$ acts by
multiplication by $-1$, which we will denote by $\mathcal{V}^-_i$, $i=0,1,2,\dots,r$, and finally we have (iii) those irreducible $G$%
-representations on which $S^1=SO(2)$ acts non-trivially. Therefore, each of
such irreducible $G$-representations $\mathcal{V}_o$ admits a complex
structure induced by the action of $S^1$ and there exists $l\in \mathbb{N}$
such that for all $z\in S^1$, $v\in \mathcal{V}_o$, $zv= z^l\cdot v$ (here `$\cdot $' denotes the complex multiplication). One can show that there exists
an irreducible $\Gamma$-representation $\mathcal{V}_i$ such that $\mathcal{V}_o=\mathcal{V}_i\otimes_{\mathbb{R}} U_l$. In such a case, we will denote
this irreducible $G$-representation by $\mathcal{V}_{i,l}$, $i=0,1,2,\dots,
r $, $l=1,2,\dots$. The irreducible $G$-representation $\mathcal{V}_{i,l}$
can be easily described: as $\Gamma$-representation it is $\mathcal{V}_{j,l}=\mathcal{V}_i\oplus \mathcal{V}_i$ with $z\in \mathcal{V}_{i,l}$ written as $z=(x,y)^T$, $x,y\in \mathcal{V}_i$, where $S^1$-action is given by 
\begin{equation*}
e^{i\alpha} z:=\left[ 
\begin{array}{cc}
\cos (l\alpha)\, \text{\textrm{Id\,}}_{\mathcal{V}_i} & -\sin (l\alpha)\, 
\text{\textrm{Id\,}}_{\mathcal{V}_i} \\ 
\sin (l\alpha)\, \text{\textrm{Id\,}}_{\mathcal{V}_i} & \cos (l\alpha)\, 
\text{\textrm{Id\,}}_{\mathcal{V}_i}
\end{array}
\right] \left[ 
\begin{array}{c}
x \\ 
y
\end{array}
\right]
\end{equation*}
and $\kappa z=\kappa (x,y)^T=(y,x)^T$, for $(x,y)^T\in \mathcal{V}_{i,l}$.

\vskip.3cm Let us describe the algorithm for the computation of the basic
gradient degrees $\text{\textrm{Deg\,}}_{\mathcal{V}_i}$, $\text{\textrm{Deg\,}}_{\mathcal{V}^-_i}$ and $\text{\textrm{Deg\,}}_{\mathcal{V}_{i,l}}$.
Notice that the degrees $\text{\textrm{Deg\,}}_{\mathcal{V}_i}$ and $\text{\textrm{Deg\,}}_{\mathcal{V}^-_i}$ belong to $A(G)$, therefore they can be
computed using the recurrence formula \eqref{eq:RF-0}. In the case of the
gradient basic degree $\text{\textrm{Deg\,}}_{\mathcal{V}_{i,l}}$, consider
two elements $\text{\textrm{Deg\,}}^0_{\mathcal{V}_{i,l}}\in A(G)$ and $%
\text{\textrm{Deg\,}}^1_{\mathcal{V}_{i,l}}\in \mathbb{Z}[\Phi_1(G)]$ such
that 
\begin{equation}\label{eq:basic-1}
\text{\textrm{Deg\,}}_{\mathcal{V}_{i,l}}=\text{\textrm{Deg\,}}^0_{\mathcal{V}_{i,l}}+\text{\textrm{Deg\,}}^1_{\mathcal{V}_{i,l}}.
\end{equation}
Then, 
\begin{equation}\label{eq:basic-2}
\text{\textrm{Deg\,}}^0_{\mathcal{V}_{i,l}}=\pi_0[\text{\textrm{Deg\,}}_{\mathcal{V}_{i,l}}]= n_1(L_1)+\dots +n_m(L_m),
\end{equation}
and the coefficients $n_j$, $j=1,2,\dots,m$ can be effectively computed from
the recurrence formula \eqref{eq:RF-0}. Since the coefficients $x_j$ in 
\begin{equation}\label{eq:basic-3}
\text{\textrm{Deg\,}}^1_{\mathcal{V}_{i,l}}= x_1(H_1)+\dots +x_s(H_s),
\end{equation}
are unknown, by Functoriality Property we can apply the Euler ring homomorphisms $\Psi: U(\Gamma\times
O(2))\to U(\Gamma\times S^1)$. More precisely, since $\mathcal V_{i,l}$ is also an irreducible $\Gamma\times S^1$-representation, we have 
\[
\Psi[\text{\textrm{Deg\,}}_{\mathcal{V}_{i,l}}]=\wt{\text{\textrm{Deg\,}}}_{\mathcal{V}_{i,l}},
\]
where $\wt{\text{\textrm{Deg\,}}}_{\mathcal{V}_{i,l}}$ denotes the corresponding gradient basic degree for $\wt G$. Then, by applying $\Psi$ to \eqref{eq:basic-1}, \eqref{eq:basic-1}
and \eqref{eq:basic-3}, we obtain
\begin{equation}\label{eq:basic-4}
2x_1(H_1)+\dots +2x_s(H_s)=\wt{\text{\textrm{Deg\,}}}_{\mathcal{V}_{i,l}}-n_1\Psi(L_1)+\dots +n_m\Psi(L_m),
\end{equation}
which is a system of simple linear equations, which can be easily solved (see Example \ref{ex:s4o2} below).

\vs 
Consequently, we obtain the following result:
\vs
\begin{theorem} For a given group $G:=\Gamma\times O(2)$ all the basic degrees can be effectively computed by applying Euler ring homomorphism $\Psi:U(\Gamma\times O(2))\to U(\Gamma\times S^1)$ and the gradient  basic degrees $\wt{\text{Deg}}_{\mathcal V_o}$ for the group $\wt G:=\Gamma\times S^1$.
\end{theorem}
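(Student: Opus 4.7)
The plan is to treat the three families of irreducible $G$-representations separately, exploiting the fact that two of them land in the Burnside ring $A(G)$, where the standard recurrence formula applies, and that the remaining, technically harder, family can be reconstructed from its image under $\Psi$ together with its ``Burnside part''. More precisely, recall that every irreducible $G$-representation is of one of the three types $\mathcal V_i$, $\mathcal V_i^{-}$, or $\mathcal V_{i,l}$ listed just before the theorem. For $\mathcal V = \mathcal V_i$ and $\mathcal V=\mathcal V_i^{-}$, one has $\dim W(G_v)=0$ for every nonzero $v\in \mathcal V$, so by the Existence Property ($\nabla 1$) the basic degree $\text{Deg}_{\mathcal V}\in U(G)$ actually lies in the submodule $A(G)\subset U(G)$; consequently it coincides, by the Reduction Property ($\nabla 9$), with the ordinary $G$-degree $G\text{-deg}(-\id,B(\mathcal V))$, which can be computed from the recurrence formula \eqref{eq:RF-0}. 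Thus these two families are settled immediately.

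The substantive case is $\mathcal V=\mathcal V_{i,l}$. First I would write, as in \eqref{eq:basic-1},
\[
\text{Deg}_{\mathcal V_{i,l}} \;=\; \text{Deg}^{0}_{\mathcal V_{i,l}} \;+\; \text{Deg}^{1}_{\mathcal V_{i,l}},
\]
with $\text{Deg}^{0}_{\mathcal V_{i,l}}\in A(G)$ supported on $\Phi_0(G)$ and $\text{Deg}^{1}_{\mathcal V_{i,l}}\in \mathbb Z[\Phi_1(G)]$. Again by ($\nabla 9$), $\text{Deg}^{0}_{\mathcal V_{i,l}}=\pi_0[\text{Deg}_{\mathcal V_{i,l}}]=G\text{-deg}(-\id,B(\mathcal V_{i,l}))$, which is completely determined by the recurrence \eqref{eq:RF-0}; write it as in \eqref{eq:basic-2}. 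So the real task is to pin down the unknown coefficients $x_1,\dots,x_s$ in
\[
\text{Deg}^{1}_{\mathcal V_{i,l}} \;=\; x_1(H_1)+\dots+x_s(H_s),
\]
where $(H_1),\dots,(H_s)\in \Phi_1(G)=\Phi_1(\Gamma\times SO(2))$. For this I would apply the Euler ring homomorphism $\Psi:U(\Gamma\times O(2))\to U(\Gamma\times S^1)$ induced by the inclusion $i:\Gamma\times SO(2)\hookrightarrow \Gamma\times O(2)$. By the Functoriality Property ($\nabla 8$), and because $\mathcal V_{i,l}$ is also an irreducible $\wt G$-representation carrying the restricted action,
\[
\Psi[\text{Deg}_{\mathcal V_{i,l}}] \;=\; \wt{\text{Deg}}_{\mathcal V_{i,l}},
\]
which is already known (it is assembled, as recalled in the excerpt, from the basic degree without parameter and the twisted basic degree via the formulae \eqref{eq:coeff-jo}--\eqref{eq:bdeg-nL}).

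Combining these two observations yields the linear equation \eqref{eq:basic-4}:
\[
2\,x_1(H_1)+\dots+2\,x_s(H_s) \;=\; \wt{\text{Deg}}_{\mathcal V_{i,l}} \;-\; \bigl(n_1\Psi(L_1)+\dots+n_m\Psi(L_m)\bigr),
\]
where the right-hand side is entirely known. Here the key input, established in the proof of Theorem \ref{th:UGO2}, is that on the submodule $\mathbb Z[\Phi_1(\Gamma\times SO(2))]\subset U(G)$ the homomorphism $\Psi$ is simply multiplication by $2$, and in particular is injective; hence the displayed equation uniquely determines each $x_j$ as half of an integer coefficient on the right-hand side. The only genuine obstacle here is bookkeeping: one must verify that the $\Psi$-images of the $(L_k)\in\Phi_0(G)$ appearing in $\text{Deg}^{0}_{\mathcal V_{i,l}}$ contribute only to the $\Phi_0(\wt G)$-part of $\wt{\text{Deg}}_{\mathcal V_{i,l}}$ or are otherwise absorbed correctly, so that the $\Phi_1(\wt G)$-part of the right-hand side is automatically divisible by $2$ in the basis $\{(H_j)\}$; this follows from the orbit-type analysis in Remark \ref{rem:Psi} together with the structural description of $\Psi$ already used in Section \ref{sec:mUGO2}. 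Finally, invoking the multiplicativity formula \eqref{eq:grad-lin} in conjunction with the isotypical decomposition of $V$ shows that the knowledge of all the basic degrees $\text{Deg}_{\mathcal V_i}$, $\text{Deg}_{\mathcal V_i^{-}}$, $\text{Deg}_{\mathcal V_{i,l}}$ suffices to compute $\nabla_G\text{-deg}(T,B(V))$ for any symmetric $G$-equivariant isomorphism $T$, completing the argument.
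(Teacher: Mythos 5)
Your proposal is correct and follows essentially the same route as the paper: the same three-family classification of irreducible $\Gamma\times O(2)$-representations, the recurrence formula \eqref{eq:RF-0} for the two families landing in $A(G)$, and for $\mathcal V_{i,l}$ the decomposition \eqref{eq:basic-1}, the Reduction Property for the $\Phi_0$-part, and the Functoriality Property together with the injectivity of $\Psi$ on $\mathbb Z[\Phi_1(\Gamma\times SO(2))]$ (where it is multiplication by $2$) to solve \eqref{eq:basic-4} for the unknown coefficients. One small caveat: the fact that $\text{Deg}_{\mathcal V_i}$ and $\text{Deg}_{\mathcal V_i^-}$ are supported on $\Phi_0(G)$ follows from the degree being supported on orbit types occurring in the representation (via the Normalization/Recurrence properties), not from the Existence Property $(\nabla 1)$, which for $-\id$ on a ball gives no restriction since the only zero is the origin.
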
\vskip.3cm

\begin{example}\rm\label{ex:s4o2}
{\bf  Gradient Basic Degrees for Irreducible $S_4\times O(2)$-Representations:}
Let us  illustrate the usage of the Euler homomorphism $\Psi : U(\Gamma\times O(2))\to U(\Gamma\times S^1)$ to compute partially the
gradient basic degrees for $S_4\times O(2)$-actions. The full computations of these degrees, although elementary, are beyond the format of this paper. 
\vs
There are exactly five real irreducible representations of $S_4$:
the one-dimensional trivial representation $\mathcal{W}_0$, the
one-dimensional representation $\mathcal{W} _1$ induced by the homomorphism $S_4\to S_4/A_4\simeq {\mathbb{Z}}_2$, the two-dimensional representation $\mathcal{W} _2$ corresponding to the homomorphism $S_4\to S_4/V_4=D_3\subset
O(2)$, and two three-dimensional representations, one $\mathcal{W} _3$ --
the natural representation of $S_4$ and $\mathcal{W} _4$ -- the tensor
product $\mathcal{W} _1\otimes \mathcal{W}_3$.

\vskip.3cm Then we have the following irreducible orthogonal $S_4\times O(2)$-representations:

\begin{itemize}
\item[(i)] The representations $\mathcal{V} _i\simeq \mathcal{W} _i$, $i=0,1,2,3,4.$

\item[(ii)] The representations $\mathcal{V} _i^-\simeq \mathcal{W}
_i\otimes \mathcal{U} _{\frac 12}$, $i=0,1,2,3,4$,

\item[(iii)] The representations $\mathcal{V} _{l,i}\simeq \mathcal{W}
_i\otimes \mathcal{U} _l$, $i=0,1,2,3,4$, $l\in {\mathbb{N}}$.
\end{itemize}

For the representations of the type (i), we have the following gradient  basic
degrees: 
\begin{align*}
\text{Deg}_{\mathcal{V} _0}&= -(S_4\times O(2)), \\
\text{Deg}_{\mathcal{V} _1}&= (S_4\times O(2)) -(A_4\times O(2)), \\
\text{Deg}_{\mathcal{V} _2}&=(S_4\times O(2))-2(D_4\times O(2)) +(V_4\times O(2)),
\\
\text{Deg}_{\mathcal{V} _3}&=(S_4)-2(D_3\times O(2))-(D_2\times O(2))+3(D_1\times
O(2))-(\mathbb{Z}_1\times O(2)), \\
\deg_{\mathcal{V} _4}&= (S_4\times O(2))-(\mathbb{Z}_4\times
O(2))-(D_1\times O(2))-(\mathbb{Z}_3\times O(2))+(\mathbb{Z}_1\times O(2)).
\end{align*}
For the representations of the type (ii), we have the following gradient  basic
degrees: 
 \begin{align*}
   \text{Deg}_{\mathcal V_0}^-=&\;(\amal{S_4}{O(2)}{}{}{})
        -(\amal{S_4}{SO(2)}{}{}{}),\\
    \text{Deg}_{\mathcal V_1}^-=&\;(\amal{S_4}{O(2)}{}{}{})
        -(\amal{S_4}{O(2)}{D_{1}}{A_4}{}),\\
    \text{Deg}_{\mathcal V_2}^-=&\;(\amal{S_4}{O(2)}{}{}{})
        -(\amal{D_4}{SO(2)}{}{}{})
        -(\amal{D_4}{O(2)}{D_{1}}{V_4}{})
        +(\amal{V_4}{SO(2)}{}{}{}),\\
   \text{Deg}_{\mathcal V_3}^-=&\;(\amal{S_4}{O(2)}{}{}{})
        -(\amal{D_4}{O(2)}{D_{1}}{D_2}{})
        -(\amal{D_3}{SO(2)}{}{}{})
        -(\amal{D_2}{O(2)}{D_{1}}{D_1}{})\\
      & +(\amal{\mathbb{Z}_2}{O(2)}{D_{1}}{\mathbb{Z}_1}{})
        +2(\amal{D_1}{SO(2)}{}{}{})
        -(\amal{\mathbb{Z}_1}{SO(2)}{}{}{}),\\
   \text{Deg}_{\mathcal V_4}^-=&\;(\amal{S_4}{O(2)}{}{}{})
        -(\amal{D_4}{O(2)}{D_{1}}{\mathbb{Z}_4}{})
        -(\amal{D_3}{O(2)}{D_{1}}{\mathbb{Z}_3}{})
        -(\amal{D_2}{O(2)}{D_{1}}{D_1}{})\\
      & +(\amal{\mathbb{Z}_2}{O(2)}{D_{1}}{\mathbb{Z}_1}{})
        +2(\amal{D_1}{O(2)}{D_{1}}{\mathbb{Z}_1}{})
        -(\amal{\mathbb{Z}_1}{SO(2)}{}{}{}).
  \end{align*}
For the representations of the type (iii) ($l\in \mathbb{N}$), the gradient  basic
degrees can be obtained (as an example) using the Euler homomorphism $\Psi$:
 \begin{align*}
    \text{Deg}_{\mathcal V_{0,l}}=&\;(\amal{S_4}{O(2)}{}{}{})
        -(\amal{S_4}{D_{l}}{\mathbb{Z}_{1}}{S_4}{}),\\
    \text{Deg}_{\mathcal V_{1,l}}=&\;(\amal{S_4}{O(2)}{}{}{})
        -(\amal{S_4}{D_{2l}}{\mathbb{Z}_{2}}{A_4}{}),\\
    \text{Deg}_{\mathcal V_{2,l}}=&\;(\amal{S_4}{O(2)}{}{}{})
        -(\amal{S_4}{D_{3l}}{D_{3}}{V_4}{})
        -(\amal{D_4}{D_{l}}{}{}{})
        -(\amal{D_4}{D_{2l}}{\mathbb{Z}_{2}}{V_4}{})\\
      & +(\amal{V_4}{D_{l}}{}{}{})
        +2(\amal{D_4}{D_{l}}{D_{1}}{V_4}{})+x_1(\bz_4\times \bz_l),
        \end{align*}
 where $x_1$ is an unknown. Then we have (see \cite{AED}) 
 \begin{align*}
\Psi(\text{Deg}_{\mathcal V_{2,l}})
 &=(S_4\times S^1)-(A_4^{t,l})-(D_4^{l})-(D_4^{\hat d,l})+(V_4)+(2x_1+2)(\bz_4\times \bz_l)\\
 &=(S_4\times S^1)-(A_4^{t,l})-(D_4^{l})-(D_4^{\hat d,l})+(V_4)\quad \text{(basic degree for $S_4\times S^1$)}.
 \end{align*}       
 Therefore, $x_1=-1$. Similarly, we have 
        \begin{align*}
    \text{Deg}_{\mathcal V_{3,l}}&=\;(\amal{S_4}{O(2)}{}{}{})
        -(\amal{D_4}{D_{2l}}{\mathbb{Z}_{2}}{D_2}{})
        -(\amal{D_3}{D_{l}}{}{}{})
        -(\amal{D_4}{D_{4l}}{D_{4}}{\mathbb{Z}_1}{})\\
      & -(\amal{D_2}{D_{2l}}{\mathbb{Z}_{2}}{D_1}{})
        -(\amal{D_3}{D_{3l}}{D_{3}}{\mathbb{Z}_1}{})
        +(\amal{D_2}{D_{2l}}{D_{2}}{\mathbb{Z}_1}{D_1})
        +(\amal{V_4}{D_{2l}}{D_{2}}{\mathbb{Z}_1}{})\\
      & +(\amal{D_2}{D_{l}}{D_{1}}{D_1}{})
        +(\amal{\mathbb{Z}_2}{D_{2l}}{\mathbb{Z}_{2}}{\mathbb{Z}_1}{})
        +2(\amal{D_1}{D_{l}}{}{}{})
        -(\amal{\mathbb{Z}_2}{D_{l}}{D_{1}}{\mathbb{Z}_1}{})\\
      & -(\amal{\mathbb{Z}_1}{D_{l}}{}{}{})+x_1(\bz_2\times_{\bz_2}\bz_{2l})+x_2(D_1\times \bz_l)+x_3(\bz_1\times \bz_{l}),
      \end{align*}
  where   $x_1$, $x_2$ and $x_3$ are unknown. Similarly, we obtain 
  \begin{align*}
  \Psi( \text{Deg}_{\mathcal V_{3,l}})&=(S_4\times S^1)-(D_4^{d,l})-(D_3^l)-(\bz_4^{c,l})-(D_2^{d,l})-(\bz_3^{t,l}) +(2x_1-1)(\bz_2^{-,l})\\
  &+(2x_2+3)(D_1^{l})+(2x_3-2)(\bz_1^{l})\\
  &=(S_4\times S^1)-(D_4^{d,l})-(D_3^l)-(\bz_4^{c,l})-(D_2^{d,l})-(\bz_3^{t,l}) +(\bz_2^{-,l})\\
  &+(D_1^{l}) \quad \text{(basic degree for $S_4\times S^1$)},
  \end{align*}
 which implies that $x_1=1$, $x_2=-2$ and $x_3=0$. Finally, we have 
      \begin{align*}
    \text{Deg}_{\mathcal V_{4,l}}=&\;(\amal{S_4}{O(2)}{}{}{})
        -(\amal{D_4}{D_{2l}}{\mathbb{Z}_{2}}{\mathbb{Z}_4}{})
        -(\amal{D_3}{D_{2l}}{\mathbb{Z}_{2}}{\mathbb{Z}_3}{})
        -(\amal{D_4}{D_{4l}}{D_{4}}{\mathbb{Z}_1}{})\\
      & -(\amal{D_2}{D_{2l}}{\mathbb{Z}_{2}}{D_1}{})
        -(\amal{D_3}{D_{3l}}{D_{3}}{\mathbb{Z}_1}{})
        +(\amal{D_2}{D_{2l}}{D_{2}}{\mathbb{Z}_1}{D_1})
        +(\amal{D_2}{D_{2l}}{D_{2}}{\mathbb{Z}_1}{\mathbb{Z}_2})\\
      & +(\amal{V_4}{D_{2l}}{D_{2}}{\mathbb{Z}_1}{})
        +2(\amal{D_1}{D_{2l}}{\mathbb{Z}_{2}}{\mathbb{Z}_1}{})
        +(\amal{\mathbb{Z}_2}{D_{2l}}{\mathbb{Z}_{2}}{\mathbb{Z}_1}{})
        -(\amal{\mathbb{Z}_2}{D_{l}}{D_{1}}{\mathbb{Z}_1}{})\\
      & -(\amal{\mathbb{Z}_1}{D_{l}}{}{}{})+x_1(\bz_2\times_{\bz_2}\bz_{2l})+x_2(D_1\times_{\bz_2}\bz_{2l})+x_3(\bz_1\times \bz_l).
  \end{align*}
 Then, we have
 \begin{align*}
\Psi(  \text{Deg}_{\mathcal V_{4,l}})&=(S_4\times S^1)-(D_4^{z,l})-(\bz_4^{c,l})-(D_2^{d,l})-(D_3^{z,l})-(\bz_3^{t,l})+(2x_1+3)(\bz_2^{-,l})\\
&+(2x_2+3)(D_1^{z,l})+(2x_3-2)(\bz_1\times \bz_l)\\
&=(S_4\times S^1)-(D_4^{z,l})-(\bz_4^{c,l})-(D_2^{d,l})-(D_3^{z,l})-(\bz_3^{t,l})+(\bz_2^{-,l})\\
&+(D_1^{z,l})\quad \text{(basic degree for $S_4\times S^1$)},
 \end{align*}
 and obtain $x_1=-1$, $x_2=-1$ and $x_3=0$.
\end{example}
\vskip.3cm

\section{Periodic Solutions to Symmetric Systems of Newtonian Equations}
Let $\Gamma$ be a finite group and assume that $V=\mathbb{R}^N$ is an
orthogonal $\Gamma$-representation, where the group $\Gamma$ acts on the the
vectors $x=(x_1,x_2,\dots,x_N)$ by permuting their coordinates. More
precisely, we assume that there is given a group homomorphism $\sigma
:\Gamma\to S_N$ (here $S_N$ stands for the permutation group for $N$-elements), then 
\begin{equation*}
\gamma(x_1,x_2,\dots,
x_N):=(x_{\sigma(\gamma)1},x_{\sigma(\gamma)2},\dots,x_{\sigma(\gamma)N}),\quad \gamma\in \Gamma.
\end{equation*}
For a given number $p>0$, we are interested in finding non-constant periodic
solution to the following Newtonian second order system of differential
equations: 
\begin{equation*}  \label{eq:Newtonian}
\begin{cases}
\ddot x(t) =-\nabla f(x(t)), \;\;\;\; x(t)\in V \\ 
x(0)=x(p), \; \dot x(0)=\dot x(p),
\end{cases}
\end{equation*}
where $f:V\to \mathbb{R}$ is a $C^2$-differentiable function. 
By rescaling the time, this problem  can be reduced  to system
\eqref{eq:New-L}, where $\lambda:=\frac p{2\pi} $. 

\vs
We will also introduce the following conditions:

\begin{itemize}
\item[(A1)] $f$ is a $\Gamma$-invariant function, i.e. 
\begin{equation*}
\forall_{x\in V}\; \forall_{\gamma\in \Gamma}\quad f(\gamma x)=f(x)
\end{equation*}

\item[(A2)] There exists a constant $R>0$ such that for all $x\in V$
satisfying $\|x\|>R$ we have 
\begin{equation*}
\nabla f(x)\bullet x <0.
\end{equation*}

\item[(A3)]  $\nabla f(0)=0$ and for a given $p>0$  the operator $A:=\nabla^2 f(0):V\to V$
satisfy the condition 
\begin{equation*}
\left\{\frac{p^2\mu^A}{4\pi^2}: \mu^A\in \sigma(A) \right \}\cap \{k^2:
k=0,1,2,\dots\}=\emptyset.
\end{equation*}
\item[(A4)] $\nabla f(x)\not=0$ for all $x\not=0$. 
\item[(A5)] For all $x\in V^\Gamma\setminus \{0\}$ we have $
 f(-x)=f(x)$. 
 \item[(A6)] There exists a symmetric matrix $B\in \text{GL}(N,\br)$ such that 
 \[
 \lim_{|x|\to\infty} \frac{\nabla  f(x)-Bx}{|x|}=0,
 \]
 and 
\begin{equation*}
\left\{\frac{p^2\mu^B} {4\pi^2}: \mu^B\in \sigma(B) \right \}\cap \{k^2:
k=0,1,2,\dots\}=\emptyset.
\end{equation*}

\item[(A7)] $\nabla f(0)=0$  and  $0\notin \sigma(A)$, where  $A:=\nabla^2 f(0):V\to V$.
\end{itemize}

\begin{lemma}
\label{lem:A4} Let $f:V\to \mathbb{R}$ be a $C^1$-differential function
satisfying the condition (A2). Then there exists a constant $R_o>0$
such that for any non-constant periodic solution to the system 
\begin{equation}  \label{eq:A4}
\ddot x(t)=-\rho\nabla f(x(t))+(1-\rho)x(t),
\end{equation}
where $\rho \in[0,1]$ is a fixed number, we have 
\begin{equation*}
R_o\geq \max\left\{\|x\|_\infty, \|\dot x\|_\infty\right\},
\end{equation*}
where $\|y\|_\infty:=\sup\{ |y(t)|: t\in \mathbb{R}\}$, here $y$ stands for
a continuous periodic function.
\end{lemma}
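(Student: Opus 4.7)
The plan is to bound $\|x\|_\infty$ and $\|\dot x\|_\infty$ separately by elementary ODE techniques, uniformly in $\rho\in[0,1]$ and in the (unspecified) period. The guiding observation is that although \eqref{eq:A4} is a homotopy rather than the target equation, it remains a Newtonian system with a $\rho$-dependent potential, so energy and maximum-principle arguments apply throughout.

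First, I would treat the bound on $\|x\|_\infty$ via a maximum-principle argument applied to $\phi(t):=\tfrac12\|x(t)\|^2$. Since $x$ is periodic and $C^2$, $\phi$ attains its maximum at some $t_0$, where $\phi''(t_0)\le 0$. Differentiating twice and substituting \eqref{eq:A4} yields
\[
\phi''(t_0)=\|\dot x(t_0)\|^2-\rho\,\nabla f(x(t_0))\bullet x(t_0)+(1-\rho)\|x(t_0)\|^2.
\]
Under the hypothetical assumption $\|x(t_0)\|>R$, condition (A2) forces the middle summand to be nonnegative (strictly positive when $\rho>0$), while the last summand is nonnegative (strictly positive when $\rho<1$). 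For every $\rho\in[0,1]$ the right-hand side is therefore strictly positive (the case $\rho=0$ is vacuous, since $\ddot x=x$ admits no non-constant periodic solutions), contradicting $\phi''(t_0)\le 0$. Hence $\|x\|_\infty\le R$.

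Next, I would bound $\|\dot x\|_\infty$ by recognising \eqref{eq:A4} as a gradient system: setting $g_\rho(x):=\rho f(x)-\tfrac12(1-\rho)\|x\|^2$, the equation reads $\ddot x=-\nabla g_\rho(x)$, so the energy $E(t):=\tfrac12\|\dot x(t)\|^2+g_\rho(x(t))$ is conserved along solutions. Because $x(\cdot)$ is periodic, $\dot x$ must vanish at some $t^*$, and combined with the previous step this gives $E\equiv g_\rho(x(t^*))$ with $\|x(t^*)\|\le R$, whence
\[
\tfrac12\|\dot x(t)\|^2\le \max_{\|y\|\le R}g_\rho(y)-\min_{\|y\|\le R}g_\rho(y)\le M_f+\tfrac{R^2}{2},
\]
where $M_f:=\max_{\|y\|\le R}f-\min_{\|y\|\le R}f$; the last inequality is uniform in $\rho\in[0,1]$ thanks to the convex-combination structure of $g_\rho$. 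Setting $R_o:=\max\{R,\sqrt{2M_f+R^2}\}$ completes the proof.

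The main subtlety worth highlighting is that the period of the solution is not fixed in the statement, so a naive bound on $\dot x$ obtained by integrating $\ddot x$ over one period would depend on the period. The energy argument circumvents this, and the fact that the homotopy term $(1-\rho)x$ is itself a gradient (rather than merely the target equation at $\rho=1$ being variational) is precisely what makes the resulting bound uniform in $\rho$.
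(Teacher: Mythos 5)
Your bound on $\|x\|_\infty$ is correct and is essentially the paper's own argument: a maximum-principle computation for $\phi(t)=\tfrac12\|x(t)\|^2$ at its maximum, where (A2) together with the term $(1-\rho)\|x(t_0)\|^2$ forces $\phi''(t_0)>0$ whenever $\|x(t_0)\|>R$, contradicting $\phi''(t_0)\le 0$.

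The second half has a genuine gap: the claim that ``because $x(\cdot)$ is periodic, $\dot x$ must vanish at some $t^*$'' is Rolle's theorem for scalar functions and fails for vector-valued periodic curves. For $x(t)=(\cos t,\sin t)$ one has $\dot x(t)=(-\sin t,\cos t)\ne 0$ for all $t$; rotating orbits of exactly this type are the solutions one expects (and wants to detect) in the symmetric setting of this paper, so this is not a removable edge case. Without a zero of $\dot x$, conservation of $E=\tfrac12\|\dot x\|^2+g_\rho(x)$ only yields $\tfrac12\|\dot x(t)\|^2-\tfrac12\|\dot x(s)\|^2=g_\rho(x(s))-g_\rho(x(t))$, i.e. a bound on the \emph{oscillation} of $\|\dot x\|^2$ along the orbit, which controls $\|\dot x\|_\infty$ only if one separately bounds $\inf_t\|\dot x(t)\|$. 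Your approach can be repaired: integrating by parts over one period gives $\int_0^p\|\dot x\|^2\,dt=-\int_0^p x\bullet\ddot x\,dt\le p\,R(M'+R)$ with $M':=\sup_{\|y\|\le R}\|\nabla f(y)\|$, hence $\inf_t\|\dot x(t)\|^2\le R(M'+R)$, and your energy identity then bounds $\sup_t\|\dot x(t)\|^2$ --- indeed by a constant independent of the period, which is more than the paper obtains. The paper instead circumvents the issue by projecting onto each unit vector $v$: the scalar periodic function $x(t)\bullet v$ has a critical point $t_1$ with $\dot x(t_1)\bullet v=0$, and integrating $\ddot x\bullet v$ from $t_1$ gives $|\dot x(t)\bullet v|\le p(M'+R)$ uniformly in $v$; this bound depends on the (fixed) period $p$, which suffices for its application. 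As written, though, your proof of the velocity bound does not go through.
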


\begin{proof}
Let $x(t)$ be a non-constant periodic solution to \eqref{eq:A4}. We claim that $|r(t)|\le M$ for all $t\in \br$. Assume for contradiction that  the function
\[
r(t)=\frac{1}{2}|x(t)|^2,  t\in\br.
\]
achieves its maximum/minimum at $t_o$ and  $|r(t_o)|>M$. If $\rho =0$ then $x(t)$ is a zero solution. Assume therefore $\rho\in (0,1]$. 
Then
\[0=r'(t_0)=x(t_0)\bullet\dot x(t_0)\]
and  by (A2)
\begin{align*}
0&= r''(t_0)=|\dot x(t_0)|^2+x(t_0)\bullet\ddot x(t_0)\\
&\geq x(t_0)\bullet\ddot x(t_0)=x(t_0)\bullet(-\rho\nabla f(x(t_0))+(1-\rho)x(t_0))>0,
\end{align*}
which is a contradiction. Thus,  $\sup_{t\in\br}|x(t)|\leq M$.
On the other hand,  for all $t\in \br$
\[|\dot x(t)|=\sup_{|v|\leq1}\dot x(t)\bullet v.\]
Hence, assume that $v\in V$ is an arbitrary vector such that $|v|\leq 1$ and  define
\[\psi_v(t)=\dot x(t)\bullet v, \quad t\in \br.\] 
Then, we have 
\[
\frac d{dt} \psi_v(t)=\ddot x(t)\bullet v, \quad t\in \br.
\]
Put  $M':=\sup_{\|x\| \leq R}\|\nabla f(x)\|$. Since  $\vp_v(t):=x(t)\bullet v$,  $t\in \br$ is a scalar  $p$-periodic function such that $\vp'_v(t)=\psi_v(t)$, there exists  $t_1\in \br$ such that 
$\vp_v(t_1)=\max_{t\in\br}\vp_v(t)$, thus $\vp'_v(t_1)=\psi_v(t_1)=0$ and we have
\begin{align*}
|\psi_v(t)|&=\psi_v(t_1)+\left|\int_{t_1}^t \frac d{dt}\psi_v(\tau)d\tau\right|\\
&=\left|\int_{t_1}^t \frac d{dt}\psi_v(\tau)d\tau\right|\le \int_{t_1}^t\left| \frac d{dt}\psi_v(\tau)\right|d\tau\\
&=\int_{t_1}^{t} |\ddot x(\tau)\bullet v| d\tau=\int_{t_1}^t |-\rho \nabla f(x(\tau)+(1-\rho)x(\tau))\bullet v)|d\tau\\
&\leq \int_{t_1}^{t}\left(\rho|\nabla f(x(\tau))+(1-\rho)|x(\tau)|\right)|\cdot |v|d\tau\leq \int_{t_1}^{t}\left(|\nabla f(x(\tau))|+|x(\tau)|\right)d\tau\\
&\leq|t-t_1|(M'+M) \le p(M'+M)=:M_1.
\end{align*}
Therefore, we obtain that
\[ \forall_{|v|\leq1}\:\forall_{t\in\br} \;\; \dot x(t)\bullet v\leq M_1.\]
which implies
\[\forall_{t\in \br}\;\;\;|\dot x(t)|\leq M_1.\]
Consequently,
\[ \forall_{t\in \br} \;\; \max\left\{\sup_{t\in\br}|x(t)|, \sup_{t\in\br}|\dot x(t)|\right\}\le \max\{M, M_1  \}=:R_o.
\]
\end{proof}

\vskip.3cm

\paragraph{\bf Sobolev Spaces of $2\pi$-Periodic Functions:}

\label{sec:H}Let $V=\mathbb{R}^N$ be an orthogonal representation
of a finite group $\Gamma$ and let $\mathscr{H}$ denote the first Sobolev
space of $2\pi$-periodic functions from $\mathbb{R}$ to $V$, i.e. 
\begin{equation*}
\mathscr{H}:=H^1_{2\pi}(\mathbb{R},V)=\{x:\mathbb{R}\to V\;:\; x(0)=x(2\pi), \;
x|_{[0,2\pi]}\in H^1([0,2\pi];V)\},
\end{equation*}
equipped with the inner product 
\begin{equation*}
\langle x,y\rangle:=\int_0^{2\pi}(\dot x(t)\bullet \dot y(t)+x(t)\bullet
y(t))dt,\quad x,y\in H^1_{2\pi}(\mathbb{R},V).
\end{equation*}
Let $O(2)$ denote the group of $2\times 2$-orthogonal matrices.
Notice that $O(2)=SO(2)\cup SO(2)\kappa$, where $\kappa=\left[ 
\begin{array}{cc}
1 & 0 \\ 
0 & -1
\end{array}
\right]$. It is convenient to identify a rotation $\left[ 
\begin{array}{cc}
\cos \tau & -\sin\tau \\ 
\sin \tau & \cos \tau
\end{array}
\right]\in SO(2)$ with $e^{i\tau}\in S^1\subset \mathbb{C}$. Notice that $\kappa e^{i\tau}=e^{-i\tau}\kappa$. Then the space $\mathscr{H}$
is an orthogonal Hilbert representation of $G:=\Gamma\times O(2)$. Indeed,
we have for $x\in \mathscr{H}$ and $\gamma\in \Gamma$, $e^{i\tau}\in S^1$ we
have 
\begin{align*}
((\gamma , e^{i\tau})x)(t) &= \gamma x\left(t+\tau\right) \\
((\gamma ,e^{i\tau}\kappa)x)(t) &= \gamma x\left(-t+\tau\right).
\end{align*}

It is useful to identify a $2\pi$-periodic function $x:\mathbb{R}\to V$ with a
function $\widetilde x:S^1\to V$ via the following commuting diagram: 
\newline
\vglue2cm\hskip4cm 
\rput(0,2){$\mathbb{R}$} 
\rput(3,2){$S^1$} 
\psline{->}(0.5,2)(2.5,2) 
\rput(1.5,0){$V$} 
\psline{->}(0.2,1.7)(1.3,.3) 
\psline{->}(2.8,1.7)(1.8,.3) 
\rput(1.5,2.3){$e$} 
\rput(1,1.){$x$} 
\rput(2,1){$\widetilde x $} 
\rput(5,1){$e(\tau)=e^{i\tau}$}

\vskip.3cm
 \noindent Using this identification, we will write $H^1(S^1,V)$
instead of $H^1_{2\pi}(\mathbb{R},V)$. In addition, notice that for $x\in 
\mathscr{H}$, the isotropy group $G^{\prime }_x$ is finite if and only if $x$
is a non-constant periodic function.
\vs
Consider the $O(2)$-isotypical decomposition of $\mathscr{H}$, which is: 
\begin{equation}  \label{eq:isoS1}
\mathscr{H}=\overline{ \bigoplus_{k=0}^\infty \mathscr{V}_k},\quad \mathscr{V}_k:=\{u_k\cos(k\cdot t)+v_k\sin(k\cdot t): u_k,v_k\in V\}.
\end{equation}
Each of the components $\mathscr{V}_k$ can be identified $G$-equivariantly to the following space  of complex $2\pi$-periodic functions  
\begin{equation*}
{\mathscr{V}}_k\simeq \left\{e^{i kt}(x_k+iy_k):
x_k,\,y_k\in V\right\}, \quad \text{ where }\quad x_k=\frac{u_k+v_k}{2},\;\;
y_k=\frac{u_k-v_k}{2}.
\end{equation*}
One can easily notice that the component ${\mathscr{V}}_0$ is $G$-equivalent to $V$ and for $k>0$ the component  $ {\mathscr{V}}_k$ is
$G$-equivalent to the complexification $V^c:=V\oplus V$ of $V$, where $\kappa$
acts on vectors in $V^c$ by conjugation and for $e^{i\tau}\in SO(2)\simeq
S^1 $, $e^{i\tau}z=e^{ik\tau}\cdot z$, where `$\cdot$' denotes complex
multiplication and $z=x+iy\in V^c$. Assume  next that  
\[
V=V_0\oplus V_1\oplus \dots \oplus V_r
\]
is a real $\Gamma$-isotypical decomposition of $V$ (with $V_j$ modeled on the $j$-th irreducible  real $\Gamma$-representation $\cV_j$)
and
\[
V^c=U_0\oplus U_1\oplus \dots \oplus U_s
\]
is a complex $\Gamma$-isotypical decomposition of $V^c$ (with $U_j$ modeled on the $j$-th irreducible  complex $\Gamma$-representation $\cU_j$). Then the $G$-isotypical decomposition of $\mathscr H$ is given by 
\[
\mathscr H= \bigoplus_{j=0}^r V_k\oplus \overline{\bigoplus_{ k=1}^\infty\bigoplus_{j=0}^s \mathscr {V}_{k,j}  }, 
\]
where $\mathscr V_{k,j}:=\{ e^{kt}z: z\in U_j \}$. Clearly, the isotypical component is modeled on the irreducible $G$-representation $\mathcal W_{k,j}:= \cV_j\otimes \cU_k$.
\vs 
For $k>0$ we put 
\[
\Phi^{\mathfrak f}_k:=\{ (G_x): x\in \mathscr {V}_k\;\text{ and $x$ is a non-constant function}\}\;\;\;\text{ and } \;\;\; \Phi^{\mathfrak f}:=\bigcup_{k>0}\Phi^{\mathfrak f}_k.
\]
Clearly, the order relation in $\Phi(G;\mathscr H)$ induces an order in $\Phi^{\mathfrak f}_k$. Notice that $\Phi^{\mathfrak f}_k$ is finite set.
\vs
\begin{definition}\rm An orbit type $(H)$ in $\mathscr H$ will be called a {\it maximal $\mathfrak f$-orbit type} if and only if $(H)$ is a maximal element in $\Phi^{\mathfrak f}_k$ for some $k>0$. Moreover, two maximal $\mathfrak f$-orbit types $(H)$ and $(H')$ will be called {\it similar} if there exists $(K)\in \Phi^{\mathfrak f}$ such that $(K)\ge (H)$ and $(K)\ge (H')$.
\end{definition}
\vs
Notice that $\mathfrak f$-orbit types in $\mathscr H$ belong to $\Phi_0(G;\mathscr H)$ and the similarity of the $\mathfrak f$-orbit types is an equivalent relation. Thus, we will call $\mathfrak f$-orbit types belonging to different equivalence classes {\it dissimilar}. 
\vs
\paragraph{\bf Variational Reformulation of \eqref{eq:New-L}: }

Using the standard arguments the system \eqref{eq:New-L} can be written as the following variational equation
\begin{equation}\label{eq:var1}
\nabla_u J(\lambda,u)=0, \quad (\lambda,u)\in \br\times \mathscr H,
\end{equation}
where $J:\br\times \mathscr H \to \br$ is defined by
\begin{equation}\label{eq:var-1}
J(\lambda,u)=\int_0^{2\pi} \left[ \frac 12 |\dot u(t)|^2-\lambda^2 V(u(t))  \right]dt.
\end{equation}
The gradient $\nabla_u J(\lambda,u)$  can be explicitly expressed as
\begin{equation}\label{eq:grad-J}
\nabla_u J(\lambda,u)=u-jL^{-1}\left(\lambda^2 N_{\nabla f}(u)+u   \right), \quad u\in \mathscr H,
\end{equation}
where 
\begin{align*}
j:H^2(S^1,V)\to H^1(S^1,V),\quad ju=u\\
L:H^2(S^1,V)\to L^2(S^1,V), \quad Lu=-\ddot u+u\\
N_{\nabla f}:C(S^1,V)\to L^2(S^1,V),\quad N_{\nabla f}(u)(t):=\nabla f(u(t)),
\end{align*}
which implies that $\nabla_u J$ is a compact vector field on $\br\times \mathscr H$. Moreover, if $\nabla f$ is continuously differentiable at $0$, with the Hessian matrix $\nabla^2f(0)$ then $\nabla_u J(\lambda,u)$ is also differentiable at $0$ and 
\[
\nabla^2 J(\lambda,0)=\id -jL^{-1}\left(\lambda^2 N_{\nabla^2 f(0)}+\id   \right),
\]
where
\[
N_{\nabla^2 f(0)}u(t)=\nabla^2f(0)u(t), \quad u\in C(S^1,V).
\]
We put 
\begin{equation}\label{eq:A}
\mathscr A:= \id -jL^{-1}\left(\lambda^2 N_{\nabla^2 f(0)}+\id   \right).
\end{equation}
Notice that if $p>0$ is a fixed number and $\lambda=\frac p{2\pi}$, then $\mathscr A:\mathscr H\to \mathscr H$ is an isomorphism if and only if the condition (A3) is satisfied.
\vs
\subsection{Existence Results for \eqref{eq:New-L} for a Fixed $p>0$}
\paragraph{\bf Asymptotically Linear System:} In this paragraph we will assume that $f$ satisfies the assumptions (A1), (A3), (A4) and (A6). Since $p>0$ is fixed we assume that $\lambda:=\frac p{2\pi}$ is also fixed and consider  $J:\mathscr H \to \br$ given by \eqref{eq:var-1}. 
\vs
We put 
\begin{equation}\label{eq:B}
\mathscr B:= \id -jL^{-1}\left(\lambda^2 N_{B}+\id   \right),
\end{equation}
where the matrix $B$ is given in (A6). By applying standard arguments (see for example \cite{RR,RY1}) one can easily obtain the following 
\vs
\begin{proposition}\label{pro:e-R} Under the assumptions (A1), (A3), (A6) and (A7), there exists a sufficiently small  $\ve>0$ and a sufficiently large $R>0$ such that 
\begin{itemize}
\item[(a)] 
$\nabla J$ and $\mathscr A$ are $\Omega_0$-admissibly $G$-homotopic (here $\Omega_0:=B_\ve(0)$ in $\mathscr H$),
\item[(b)] $\nabla J$ and $\mathscr B$ are $\Omega_\infty$-admissibly $G$-homotopic (here $\Omega_\infty:=B_R(0)$ in $\mathscr H$),
\end{itemize}
\end{proposition}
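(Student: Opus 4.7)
The plan is to construct explicit linear $G$-gradient homotopies connecting $\nabla J$ to $\mathscr A$ on $\Omega_0$ and to $\mathscr B$ on $\Omega_\infty$, and then to verify admissibility using the non-resonance conditions (A3), (A6) together with the compactness of $jL^{-1}$ and the Sobolev embedding $\mathscr H \hookrightarrow C(S^1,V)$.

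For part (a), I would consider the $G$-equivariant gradient homotopy
\[
\mathfrak h_0(s,u) := u - jL^{-1}\bigl(\lambda^2(sN_{\nabla f}(u) + (1-s)N_{A}u) + u\bigr),\quad s\in[0,1],
\]
which interpolates between $\mathscr A$ at $s=0$ and $\nabla J$ at $s=1$, and remains a gradient since the underlying potentials form a convex combination. Since $\nabla f$ is $C^2$ with $\nabla f(0)=0$ and $\nabla^2 f(0)=A$, for any $\eta>0$ there is $\delta>0$ with $|\nabla f(x) - Ax| \leq \eta|x|$ whenever $|x|\leq \delta$. Via the continuous embedding $\mathscr H \hookrightarrow C(S^1,V)$, one can choose $\ve>0$ so that $\|u\|_{\mathscr H}\leq \ve$ forces $\|u\|_{\infty}\leq \delta$, giving the Nemytskii estimate $\|N_{\nabla f}(u) - N_{A}u\|_{L^2}\leq \eta\|u\|_{\mathscr H}$. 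Rewriting $\mathfrak h_0(s,u) - \mathscr A u = -s\lambda^2 jL^{-1}(N_{\nabla f}(u) - N_A u)$ and invoking boundedness of $jL^{-1}\colon L^2\to \mathscr H$, this yields $\|\mathfrak h_0(s,u) - \mathscr A u\|_{\mathscr H}\leq C\eta\|u\|_{\mathscr H}$. For $\eta$ small enough relative to the coercivity constant of $\mathscr A$, it follows that $\mathfrak h_0(s,u)\neq 0$ on $[0,1]\times \partial B_\ve(0)$.

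For part (b), I would use the analogous $G$-gradient homotopy
\[
\mathfrak h_\infty(s,u) := u - jL^{-1}\bigl(\lambda^2(sN_{\nabla f}(u) + (1-s)N_{B}u) + u\bigr),
\]
satisfying $\mathfrak h_\infty(s,u) = \mathscr B u - s\lambda^2 jL^{-1}(N_{\nabla f}(u) - N_B u)$. Admissibility on a large sphere is established by contradiction: suppose $\mathfrak h_\infty(s_n,u_n)=0$ with $\|u_n\|_{\mathscr H}\to\infty$. Condition (A6) combined with local boundedness of $\nabla f$ yields a global bound $|\nabla f(x) - Bx| \leq \eta|x| + M_\eta$ on all of $V$, from which $\|N_{\nabla f}(u_n) - N_B u_n\|_{L^2} \leq \eta \|u_n\|_{\mathscr H} + M_\eta'$ follows. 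Combined with the coercivity estimate $\|\mathscr B u_n\|_{\mathscr H}\geq c'\|u_n\|_{\mathscr H}$ (provided by (A6) and the non-resonance part of that hypothesis), this forces $(c'-C\eta)\|u_n\|_{\mathscr H}\leq CM_\eta'$. Choosing $\eta$ small enough contradicts $\|u_n\|_{\mathscr H}\to\infty$, so there exists $R>0$ for which $\mathfrak h_\infty$ is $\partial B_R(0)$-admissible.

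The principal obstacle is establishing uniform coercivity of $\mathscr A$ and $\mathscr B$ on all of $\mathscr H$. This requires decomposing both operators with respect to the $O(2)$-isotypical decomposition $\mathscr H = \overline{\bigoplus_{k\geq 0} \mathscr V_k}$ from \eqref{eq:isoS1} and diagonalizing $A$ (resp.\ $B$) on $V$: the restriction of $\mathscr A$ to $\mathscr V_k$ acts on each eigenspace of $A$ with eigenvalue $\mu^A$ as multiplication by the factor $1 - \frac{\lambda^2 \mu^A + 1}{k^2 + 1}$, which is nonzero precisely when $\lambda^2 \mu^A \neq k^2$, i.e.\ exactly the non-resonance condition (A3). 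Moreover, since $\frac{\lambda^2\mu^A + 1}{k^2+1}\to 0$ as $k\to\infty$ uniformly in $\mu^A\in\sigma(A)$, the finitely many nonzero eigenvalues of $\mathscr A$ that approach zero are bounded away from $0$, furnishing the required coercivity constant $c$; the same analysis applies to $\mathscr B$, yielding $c'$.
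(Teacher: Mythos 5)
Your proposal is correct and is precisely the ``standard argument'' the paper invokes without writing out (it cites \cite{RR,RY1} in place of a proof): a convex-combination gradient homotopy between $\nabla J$ and its linearization at $0$ (resp.\ at infinity), with admissibility secured by the $o(|x|)$ (resp.\ $o(|x|)+$const) Nemytskii estimates through the embedding $\mathscr H\hookrightarrow C(S^1,V)$ and the coercivity of $\mathscr A$ and $\mathscr B$ coming from the non-resonance conditions (A3) and (A6), whose spectral verification on the components $\mathscr V_k$ matches the paper's own formulas for $\sigma^k(\mathscr A)$ and $\sigma^k(\mathscr B)$. No gaps; the argument is complete.
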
 
\vs
Using Proposition \ref{pro:e-R}, we can define $\Omega:=\Omega_\infty\setminus \overline{\Omega_0}$ so we have (by properties of the gradient degree)
\begin{align*}
\nabla_G\text{-deg}(\nabla J,\Omega)&=\nabla_G\text{-deg} (\nabla J,\Omega_\infty)-\nabla_G\text{-deg}(\nabla J,\Omega_0)\\
&=\nabla_G\text{-deg} (\mathscr B,\Omega_\infty)-\nabla_G\text{-deg}(\mathscr A,\Omega_0).
\end{align*}
Consequently we obtain the following result
\vs
\begin{theorem}\label{th:exist1}
 Under the assumptions (A1), (A3), (A6) and (A7), if 
 \[
 \nabla_G\text{\rm-deg}(\nabla J,\Omega)=n_1(H_1)+n_2(H_2)+\dots+ n_k(H_k), \quad n_j\not=0, \; j=1,,2,\dots,k,
 \]
then  for every $j=1,2,\dots,k$, the system \eqref{eq:New-L} has a non-constant periodic solution $x_j$ such that $(G_{x_j})\ge (H_j)$. Moreover, if $(H_j)$ is a maximal $\mathfrak f$-orbit type in $\mathscr H$ then the orbit $G(x_j)$ contains $\left| G/H_j \right|_{S^1}$ periodic non-constant solutions (where $\left| G/H_j \right|_{S^1}$  stands for the number of $S^1$-orbits in $G/H_j$).
\end{theorem}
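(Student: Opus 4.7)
The plan is to combine the two admissible homotopies asserted in Proposition \ref{pro:e-R} with the additivity, homotopy, existence and multiplicativity properties of the $G$-equivariant gradient degree. First, by the additivity property ($\nabla$2) applied to the decomposition $\Omega_\infty = \Omega_0 \sqcup \Omega$ (together with the fact that $\nabla J$ has no zeros on $\partial\Omega_0 \cup \partial\Omega_\infty$, which follows from Lemma \ref{lem:A4} and Proposition \ref{pro:e-R}), one obtains
\[
\nabla_G\text{-deg}(\nabla J, \Omega_\infty) = \nabla_G\text{-deg}(\nabla J, \Omega_0) + \nabla_G\text{-deg}(\nabla J, \Omega).
\]
Invoking the homotopy property ($\nabla$3) with the $G$-homotopies of Proposition \ref{pro:e-R} rewrites this identity as the one displayed immediately before the theorem, showing that the hypothesis $n_j \neq 0$ is a genuine condition on the equivariant topology of the problem.

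Next, for each $j$ with $n_j \neq 0$ the Existence Property ($\nabla$1) produces $x_j \in \Omega$ such that $\nabla J(x_j) = 0$ and $(G_{x_j}) \geq (H_j)$; since $x_j \notin \overline{\Omega_0}$, necessarily $\|x_j\| > \varepsilon$, and by the variational identity \eqref{eq:grad-J} the function $x_j$ is a $2\pi$-periodic solution of \eqref{eq:New-L}.

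The delicate step is ruling out that $x_j$ is a non-zero constant critical point of $f$ (which would correspond to a constant element of $\mathscr{H}$ with isotropy $\Gamma_{x_0} \times O(2) \supset \{e\} \times O(2)$). My argument would go as follows: because $\mathscr{A}$ and $\mathscr{B}$ respect the $G$-isotypical decomposition $\mathscr{H} = \bigoplus_j V_j \oplus \overline{\bigoplus_{k \geq 1, j} \mathscr{V}_{k,j}}$, the multiplicativity property ($\nabla$5) factors their degrees as products of basic gradient degrees. The factors coming from the constant modes $V_j$ involve only generators whose isotropies contain $\{e\} \times O(2)$, while the factors from the non-constant modes $\mathscr{V}_{k,j}$ contribute precisely $\mathfrak{f}$-orbit types. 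An $\mathfrak{f}$-orbit type $(H_j)$ does not contain $\{e\} \times SO(2)$, so no constant element of $\mathscr{H}$ can realize the isotropy condition $(G_{x_j}) \geq (H_j)$; hence $x_j$ is non-constant. For the final assertion, when $(H_j)$ is a maximal $\mathfrak{f}$-orbit type, maximality yields $(G_{x_j}) = (H_j)$, so $G(x_j) \cong G/H_j$, and the restriction of $\{e\} \times S^1$ acts on this orbit with finite stabilizers (again because $H_j \cap (\{e\} \times SO(2))$ is finite), partitioning $G/H_j$ into exactly $|G/H_j|_{S^1}$ time-translate classes, each giving a geometrically distinct solution.

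The hard part is the non-constancy argument: since assumption (A4) is not among the hypotheses, non-zero constant critical points of $f$ are a priori allowed, and one must therefore genuinely analyze the basic gradient degrees (along the lines of Example \ref{ex:s4o2}) to see that the orbit types with non-zero coefficient in $\nabla_G\text{-deg}(\nabla J, \Omega)$ are indeed $\mathfrak{f}$-orbit types rather than generators of $A(G)$ involving the full factor $O(2)$.
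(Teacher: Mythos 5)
Your first two paragraphs reproduce what the paper actually does: Proposition \ref{pro:e-R} together with the additivity, homotopy and existence properties of $\nabla_G\text{-deg}$ yields a zero $x_j\in\Omega=\Omega_\infty\setminus\overline{\Omega_0}$ of $\nabla J$ with $(G_{x_j})\ge (H_j)$, and your counting argument for maximal $\mathfrak f$-orbit types is the standard one. The paper offers essentially nothing beyond this skeleton (the theorem is introduced by ``Consequently we obtain'' right after the degree identity), so up to that point you and the authors coincide.

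The problem is your non-constancy step, and it is a genuine logical error rather than a missing detail. The relation $(G_{x_j})\ge (H_j)$ means that some conjugate of $H_j$ is \emph{contained in} $G_{x_j}$, not the reverse. A constant function $x\equiv v$ has the very large isotropy $G_x=\Gamma_v\times O(2)$; if $v\in V^\Gamma\setminus\{0\}$ then $G_x=G$ and $(G_x)\ge (H)$ for \emph{every} orbit type $(H)$, and more generally $(H_j)\le(\Gamma_v\times O(2))$ whenever $\pi_1(H_j)$ is subconjugate to $\Gamma_v$. That an $\mathfrak f$-orbit type $H_j$ meets $\{e\}\times SO(2)$ in a finite group prevents $H_j$ from \emph{being} the isotropy of a constant function, but not from being \emph{dominated} by one, which is all the Existence property controls. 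Since (A4) is not among the hypotheses, $f$ may have nonzero critical points $v$ with $\ve<\|v\|<R$, and the zero produced by ($\nabla$1) could in principle be such a constant solution. Contrast this with the paper's Nagumo-condition theorem, where non-constancy is obtained correctly precisely because the $\mathfrak{ar}$-orbit types are \emph{upward closed} (Remark \ref{rem:ar}): $(K)\ge (H)$ with $(H)$ anti-reflective forces $(K)$ anti-reflective, and no constant function has anti-reflective isotropy. No analogous upward-closedness holds for $\mathfrak f$-orbit types, so your inference fails. A correct repair must either add hypothesis (A4) (so the only constant critical point is $0\notin\Omega$) or restrict the conclusion to those $(H_j)$ not subconjugate to any $(\Gamma_v\times O(2))$ with $\nabla f(v)=0$ --- a gap which, it should be said, the paper itself also leaves unaddressed.
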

\vs
Notice that we have 
\begin{align*}
\sigma(\mathscr A)&:=\bigcup_{k=0}^\infty \sigma^k(\mathscr A), \quad \sigma^k(\mathscr A):=\left\{1-\frac {p^2\mu^A+4\pi^2}{4\pi^2(k^2+1)}  :  \mu^A\in \sigma(A) \right\},\\
\sigma(\mathscr B)&:=\bigcup_{k=0}^\infty \sigma^k(\mathscr B), \quad \sigma^k(\mathscr B):=\left\{1-\frac {p^2\mu^B+4\pi^2}{4\pi^2(k^2+1)}  :  \mu^B\in \sigma(B) \right\}.
\end{align*}
Moreover,  for $\xi:=1-\frac {p^2\mu^A+4\pi^2}{4\pi^2(k^2+1)} $ (resp.  $\xi=1-\frac {p^2\mu^B+4\pi^2}{4\pi^2(k^2+1)} $) we have that $m_{k,i}(\xi)=m_i(\mu^A)$ (resp.  $m_{k,i}(\xi)=m_i(\mu^A)$). Put,
\[
\sigma^k_0:=\{ \xi \in \sigma^k(\mathscr A): \xi<0\} \;\;\text{ and }\;\; \sigma^k_\infty:=\{ \xi \in \sigma^k(\mathscr B): \xi<0\}.
\] 
and 
Therefore, by \eqref{eq:grad-lin} we have (by assumption (A7))
\begin{align*}
 \nabla_G\text{\rm-deg}(\nabla J,\Omega)&= \mathfrak a *\left[\prod_{k=1}^\infty \prod_{\xi\in \sigma^k_\infty}\prod_{j=0}^s\left(\text{\rm Deg}_{\mathcal W_{kj}}\right)^{m_{k,j}(\xi)}
-\prod_{k=1}^\infty \prod_{\xi\in \sigma^k_0}\prod_{j=0}^s\left(\text{\rm Deg}_{\mathcal W_{kj}}\right)^{m_{k,j}(\xi)}\right]\\
\mathfrak a&:= \prod_{\xi\in \sigma^0_\infty}\prod_{j=0}^r\left(\text{\rm Deg}_{\mathcal V_{j}}\right)^{m_{0,j}(\xi)}=\prod_{\xi\in \sigma^0_0}\prod_{j=0}^r\left(\text{\rm Deg}_{\mathcal V_{j}}\right)^{m_{0,j}(\xi)}
\end{align*}
\vs
\begin{example}{\bf Asymptotically Linear System:}\rm \label{ex:1} \rm
 Let us provide an example of a function
    $f:V\to\br$ satisfying the assumptions
    (A1), (A3), (A6) and (A7). Recall that
    $V:=\br^N$ is an orthogonal
    $\Gamma$-representation and consider
    two symmetric $\Gamma$-equivariant
    $(N\times N)$-matrices such that
    $A-B$ (or $B-A$) are positive matrices.
    Then, for $x\in V$ we define
    \begin{align}
      f(x):=\begin{cases}
        \frac{1}{2}(Bx\bullet x)+
            \sqrt{(A-B)x\bullet x+1}\quad\mbox{if }A-B>0,\\
        \frac{1}{2}(Bx\bullet x)+
            \sqrt{(B-A)x\bullet x+1}\quad\mbox{if }A-B<0.
      \end{cases}
    \end{align}
    In order to assure that the assumption (A7)
    is satisfied, notice that, if $\nabla f(x)=0$
    for some $x\neq 0$, then $Bx+\alpha(A-B)x=0$
    for $\alpha=(\abs{(A-B)x\bullet x}+1)^{-\frac{1}{2}}<1$.
    Therefore, (A7) is satisfied if
    \begin{align*}
      \forall_{\alpha\in(0,1)}\quad
          0\notin\sigma(B+\alpha(A-B)).
    \end{align*}
    As a {\bf numerical example}, we consider
    $p=2\pi$ and $\Gamma=S_4$ acting on $V=\br^4$
    by permuting the coordinates of vectors.
    The space $V$ has the $S_4$-representations
    \begin{align}
      V=V_0\oplus V_3,
    \end{align}
    where $V_0$ is a one-dimensional trivial
    $S_4$-representation and $V_3$ is
    a 3-dimensional standard $S_4$-representation
    (we use here the convention introduced in [6]).
    Assume that $f:V\to\br$ satisfies (A1), (A6)
    and both matrices $A$ and $B$ are of the
    type
    \begin{align*}
      C:=\begin{bmatrix}
        c & d & d & d \\
        d & c & d & d \\
        d & d & c & d \\
        d & d & d & c
      \end{bmatrix}.
    \end{align*}
    The spectrum $\sigma(C)$ consists of 
    two eigenvalues $\mu_0=c+3d$ and
    $\mu_3=c-d$. here $E(\mu_0)=V_0$
      and $E(\mu_3)=V_3$. That means
    $m_0(\mu_0)=m_3(\mu_3)=1$.
    We choose the following values of $c$
    and $d$ for the matrices $A$ and $B$
    \begin{align*}
      A:\quad & c=18.8,\,d=-1.5, \\
      B:\quad & c=2.1,\,d=1.2.
    \end{align*}
    Then
    \begin{align*}
      \sigma(A)=\set{\mu_0^A=14.3,\mu_3^A=20.3}\quad
          \mbox{and}\quad\sigma(B)=
          \set{\mu_0^B=5.7,\mu_3^B=0.9},
    \end{align*}
    which implies the following list for $\sigma_0$ and
    $\sigma_\infty$ (see Table \ref{tbl:list_sigma}).

    \begin{table}[h]
      \newcolumntype{Y}{>{\centering\arraybackslash$}X<{$}}
      \newcolumntype{C}{>{$}c<{$}}
      \newcolumntype{L}{>{$}l<{$}}
      \centering
      \begin{tabularx}{\textwidth}{@{}Cp{.3cm}YYp{.3cm}YY@{}}
        \toprule
        & & \multicolumn{2}{c}{$\sigma_0$} &
            & \multicolumn{2}{c}{$\sigma_\infty$} \\
        \cmidrule{3-4}\cmidrule{6-7}
        k & & \sigma^{k,1}(\mathscr{A}) &
            \sigma^{k,3}(\mathscr{A}) & &
            \sigma^{k,1}(\mathscr{B}) &
            \sigma^{k,3}(\mathscr{B}) \\
        \midrule
        0 & & -14.3 & -20.3 & & -5.7 & -0.9 \\
        1 & & -6.65 & -9.65 & & -2.35 & \\
        2 & & -2.06 & -3.26 & & -0.34 & \\
        3 & & -0.53 & -1.13 & & & \\
        4 & & & -0.25 & & & \\
        \bottomrule
      \end{tabularx}
      \caption{List of $\sigma_0$ and $\sigma_\infty$}
      \label{tbl:list_sigma}
    \end{table}

  \noindent
  It follows that
  \begin{align*}
    \eqdeg{\nabla_G}(\nabla J,\Omega)=\;&
      {\boldsymbol{(\amal{D_4}{D_{2}}{\mathbb{Z}_{2}}{D_2}{})}}- 
      {\boldsymbol{(\amal{D_2}{D_{2}}{\mathbb{Z}_{2}}{D_1}{})}}+ 
      (\amal{D_1}{D_{2}}{\mathbb{Z}_{2}}{\mathbb{Z}_1}{})+ 
      (\amal{D_3}{D_{1}}{\mathbb{Z}_{1}}{D_3}{})\\ 
      &-(\amal{D_1}{D_{1}}{\mathbb{Z}_{1}}{D_1}{})+ 
      {\boldsymbol{(\amal{D_4}{D_{4}}{D_{4}}{\mathbb{Z}_1}{})}}+ 
      (\amal{D_3}{D_{3}}{D_{3}}{\mathbb{Z}_1}{})+ 
      3(\amal{D_2}{D_{2}}{D_{2}}{\mathbb{Z}_1}{D_1})\\ 
      &+5(\amal{D_2}{D_{2}}{D_{2}}{\mathbb{Z}_1}{\mathbb{Z}_2})- 
      (\amal{V_4}{D_{2}}{D_{2}}{\mathbb{Z}_1}{})- 
      (\amal{D_4}{D_{1}}{D_{1}}{D_2}{})- 
      84(\amal{D_1}{D_{1}}{D_{1}}{\mathbb{Z}_1}{})\\ 
      &+146(\amal{\mathbb{Z}_2}{D_{1}}{D_{1}}{\mathbb{Z}_1}{})- 
      {\boldsymbol{(\amal{D_4}{D_{4}}{\mathbb{Z}_{2}}{D_2}{})}}- 
      {\boldsymbol{(\amal{D_2}{D_{4}}{\mathbb{Z}_{2}}{D_1}{})}}+ 
      (\amal{\mathbb{Z}_2}{D_{4}}{\mathbb{Z}_{2}}{\mathbb{Z}_1}{})\\ 
      &-(\amal{D_3}{D_{2}}{\mathbb{Z}_{1}}{D_3}{})+ 
      2(\amal{D_1}{D_{2}}{\mathbb{Z}_{1}}{D_1}{})- 
      (\amal{\mathbb{Z}_1}{D_{2}}{\mathbb{Z}_{1}}{\mathbb{Z}_1}{})+ 
      {\boldsymbol{(\amal{D_4}{D_{8}}{D_{4}}{\mathbb{Z}_1}{})}}\\ 
      &-(\amal{D_3}{D_{6}}{D_{3}}{\mathbb{Z}_1}{})- 
      (\amal{D_2}{D_{4}}{D_{2}}{\mathbb{Z}_1}{\mathbb{Z}_2})+ 
      (\amal{D_4}{D_{2}}{D_{1}}{D_2}{})+ 
      (\amal{D_2}{D_{2}}{D_{1}}{D_1}{})\\ 
      &-(\amal{D_1}{D_{2}}{D_{1}}{\mathbb{Z}_1}{})+ 
      2(\amal{\mathbb{Z}_2}{D_{2}}{D_{1}}{\mathbb{Z}_1}{})- 
      {\boldsymbol{(\amal{D_4}{D_{6}}{\mathbb{Z}_{2}}{D_2}{})}}+ 
      {\boldsymbol{(\amal{D_2}{D_{6}}{\mathbb{Z}_{2}}{D_1}{})}}\\ 
      &-(\amal{D_1}{D_{6}}{\mathbb{Z}_{2}}{\mathbb{Z}_1}{})- 
      {\boldsymbol {(\amal{S_4}{D_{3}}{\boldsymbol{\mathbb{Z}}_{1}}{S_4}{})}}+ 
      (\amal{D_3}{D_{3}}{\mathbb{Z}_{1}}{D_3}{})+ 
      (\amal{D_2}{D_{3}}{\mathbb{Z}_{1}}{D_2}{})\\ 
      &-2(\amal{D_1}{D_{3}}{\mathbb{Z}_{1}}{D_1}{})+ 
      (\amal{\mathbb{Z}_1}{D_{3}}{\mathbb{Z}_{1}}{\mathbb{Z}_1}{})- 
      {\boldsymbol{(\amal{D_4}{D_{12}}{D_{4}}{\mathbb{Z}_1}{})}}+ 
      (\amal{D_3}{D_{9}}{D_{3}}{\mathbb{Z}_1}{})\\ 
      &-(\amal{D_2}{D_{6}}{D_{2}}{\mathbb{Z}_1}{D_1})+ 
      (\amal{D_2}{D_{6}}{D_{2}}{\mathbb{Z}_1}{\mathbb{Z}_2})+ 
      (\amal{V_4}{D_{6}}{D_{2}}{\mathbb{Z}_1}{})+ 
      (\amal{D_4}{D_{3}}{D_{1}}{D_2}{})\\ 
      &-{\boldsymbol{(\amal{D_4}{D_{8}}{\mathbb{Z}_{2}}{D_2}{})}}+ 
      {\boldsymbol{(\amal{D_2}{D_{8}}{\mathbb{Z}_{2}}{D_1}{})}}- 
      (\amal{D_1}{D_{8}}{\mathbb{Z}_{2}}{\mathbb{Z}_1}{})+ 
      (\amal{D_3}{D_{4}}{\mathbb{Z}_{1}}{D_3}{})\\ 
      &+(\amal{D_2}{D_{4}}{\mathbb{Z}_{1}}{D_2}{})- 
      2(\amal{D_1}{D_{4}}{\mathbb{Z}_{1}}{D_1}{})+ 
      (\amal{\mathbb{Z}_1}{D_{4}}{\mathbb{Z}_{1}}{\mathbb{Z}_1}{})- 
      {\boldsymbol{(\amal{D_4}{D_{16}}{D_{4}}{\mathbb{Z}_1}{})}}\\ 
      &+(\amal{D_3}{D_{12}}{D_{3}}{\mathbb{Z}_1}{})- 
      (\amal{D_2}{D_{8}}{D_{2}}{\mathbb{Z}_1}{D_1})+ 
      (\amal{D_2}{D_{8}}{D_{2}}{\mathbb{Z}_1}{\mathbb{Z}_2})+ 
      (\amal{V_4}{D_{8}}{D_{2}}{\mathbb{Z}_1}{})\\ 
      &-(\amal{D_2}{D_{4}}{D_{1}}{D_1}{})- 
      (\amal{D_1}{D_{4}}{D_{1}}{\mathbb{Z}_1}{}), 
  \end{align*}
  where the orbit types in bold are maximal ones.

\end{example}

\vs
\begin{example} {\bf System Satisfying Nagumo Condition:} \rm  Let us assume that the function $f:V\to \br$ satisfies the conditions (A1), (A2), (A3) and (A5).  Again we assume that  $p>0$ is fixed, i.e.  we assume that $\lambda:=\frac p{2\pi}$ is a  fixed number. Clearly, the space $\mathscr H$ is a Hilbert representation of $G:=\Gamma\times \bz_2\times O(2)$ (where $\bz_2$ acts on $x\in \mathscr H$ by simple multiplication) and the functional   $J:\mathscr H \to \br$ given by \eqref{eq:var-1} is $G$-invariant. One can easily show that, by Lemma \ref{lem:A4}, there exists a sufficiently $R>0$ such that $\nabla J$ is $B_R(0)$-admissible $G$-homotopic to $\id$. 
\vs
Consider the projection $\pi_2:\Gamma\times \bz_2\times O(2)\to \bz_2$ onto the $\bz_2$ factor. Clearly, if $x\in \mathscr H$ is a constant function then $G_x=K\times \{1\}\times O(2)$, thus $\pi_2(G_x)=\bz_1\subset \bz_2$. On the other hand, if a non-constant function $x\in \mathscr H$ satisfies the condition $x\left(\frac \pi 2+t  \right)=-x\left( \frac \pi 2-t \right)$ for $t\in \br$ then $\pi_2(G_x)=\bz_2$, i.e. the homomorphism $\pi_2:G_x\to \bz_2$ is surjective. This justifies the following definition:
\vs
\begin{definition}\label{def:a-r}\rm
An orbit type $(H)$ in $\mathscr H$ is called {\it $\frac \pi 2$-anti-reflective} (or simply {\it anti-reflective}) if $\pi_2:H\to \bz_2$ is surjective. We will also call the anti-reflective orbit types {\it $\mathfrak {ar}$-types}.
\end{definition}
\vs
Using Proposition \ref{pro:e-R}, we can define $\Omega_0:=B_\ve(0)$, $\Omega_\infty:=B_R(0)$  and  $\Omega:=\Omega_\infty\setminus \overline{\Omega_0}$.
\vs
\begin{remark}\label{rem:ar}\rm   Notice that if $(H)$ is an $\mathfrak {ar}$-orbit type in $\Omega$ and $(K)\ge (H)$ then $(K)$ is also $\mathfrak{ar}$-orbit type.
\end{remark}

\vs

 Then  we have (by properties of the gradient degree)
\begin{align*}
\nabla_G\text{-deg}(\nabla J,\Omega)&=\nabla_G\text{-deg} (\nabla J,\Omega_\infty)-\nabla_G\text{-deg}(\nabla J,\Omega_0)\\
&=\nabla_G\text{-deg} (\id,\Omega_\infty)-\nabla_G\text{-deg}(\mathscr A,\Omega_0)\\
&=(G)-\nabla_G\text{-deg}(\mathscr A,\Omega_0)
\end{align*}
where
\[
\nabla_G\text{-deg}(\mathscr A,\Omega_0)=\mathfrak a* \prod_{k=1}^\infty \prod_{\xi\in \sigma^k_0}\prod_{j=0}^s\left(\text{\rm Deg}_{\mathcal W_{kj}}\right)^{m_{k,j}(\xi)}.
\]

Consequently we obtain the following result
\vs
\begin{theorem}\label{th:exist1}
 Under the assumptions (A1), (A2), (A3) and (A5), if 
 \[
 \nabla_G\text{\rm-deg}(\nabla J,\Omega)=n_1(H_1)+n_2(H_2)+\dots+ n_k(H_k), \quad n_j\not=0, \; j=1,,2,\dots,k,
 \]
then  for every $j=1,2,\dots,k$, such that $(H_j)$ is an $\mathfrak {ar}$-orbit type, the system \eqref{eq:New-L} has a non-constant periodic solution $x_j$ such that $(G_{x_j})\ge (H_j)$. Moreover, if $(H_j)$ is a maximal  $\mathfrak f$-orbit type in $\mathscr H$ then the orbit $G(x_j)$ contains $\left| G/H_j \right|_{S^1}$ periodic non-constant solutions (where $\left| G/H_j \right|_{S^1}$  stands for the number of $S^1$-orbits in $G/H_j$).
\end{theorem}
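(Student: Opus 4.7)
The plan is to apply the Existence Property ($\nabla$1) of the equivariant gradient degree in the Hilbert-space setting, combined with the $\mathfrak{ar}$-condition on the maximal orbit types, to produce critical points of $J$ that must be non-constant. First, I would note that since $\nabla_G\text{-deg}(\nabla J,\Omega)=\sum_j n_j(H_j)$ with $n_j\neq 0$, property ($\nabla$1) immediately yields, for each index $j$, a critical point $x_j\in\Omega$ of $J$ with $(G_{x_j})\geq(H_j)$. Because $\Omega=\Omega_\infty\setminus\overline{\Omega_0}$ excludes the trivial solution, $x_j\not=0$, and by the variational reformulation \eqref{eq:var1}--\eqref{eq:grad-J}, $x_j$ is a weak (hence, by elliptic regularity on $S^1$, classical) $2\pi$-periodic solution of \eqref{eq:New-L}.

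Next I would invoke the $\mathfrak{ar}$-hypothesis on $(H_j)$ to rule out constant solutions. By Remark~\ref{rem:ar}, the orbit type $(G_{x_j})\geq(H_j)$ is itself $\mathfrak{ar}$, i.e.\ the projection $\pi_2:G_{x_j}\to\bz_2$ is surjective. On the other hand, for a constant function $x(t)\equiv c\in V$, the $\bz_2$-factor of $G=\Gamma\times\bz_2\times O(2)$ acts by $c\mapsto -c$, so $\pi_2(G_x)=\bz_2$ forces $c=0$. Since $x_j\not=0$, this rules out $x_j$ being constant, and therefore $x_j$ is a genuine non-constant $2\pi$-periodic solution with $(G_{x_j})\geq(H_j)$.

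For the multiplicity statement, I would use the maximality of $(H_j)$ as an $\mathfrak{f}$-orbit type. The key observation is that non-constant elements of $\mathscr H$ have finite $S^1$-isotropy; combined with the maximality within $\Phi^{\mathfrak f}_k$, this forces $(G_{x_j})=(H_j)$, for any strict enlargement would correspond either to a constant function (already excluded) or to another $\mathfrak{f}$-type strictly above $(H_j)$, contradicting maximality. Consequently, the group orbit $G(x_j)$ is $G$-equivariantly homeomorphic to $G/H_j$. Decomposing $G/H_j$ into orbits under the time-translation subgroup $SO(2)\simeq S^1\subset O(2)$, each $S^1$-orbit parameterizes a single periodic solution modulo time shift, while distinct $S^1$-orbits correspond to truly distinct (non-translation-equivalent) $2\pi$-periodic trajectories. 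The number of these $S^1$-orbits is by definition $|G/H_j|_{S^1}$, yielding the claimed count.

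The main obstacle I anticipate is the maximality argument in the third step: one must carefully verify that the $\mathfrak{f}$-maximality of $(H_j)$ in the Fourier stratification $\Phi^{\mathfrak f}_k$ genuinely excludes strictly larger isotropies $(G_{x_j})>(H_j)$ once constants are discarded, and that the $S^1$-orbit counting within $G/H_j$ indeed corresponds to the physically distinct periodic solutions and not to artifacts of the group-theoretic normalization. Everything else reduces to invoking the already established properties of $\nabla_G$-deg together with the \emph{a priori} bound supplied by Lemma~\ref{lem:A4} and Proposition~\ref{pro:e-R}.
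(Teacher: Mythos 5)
Your proposal is correct and follows essentially the same route as the paper, which obtains the theorem directly from the existence property ($\nabla$1) applied to the computed degree on the annulus $\Omega=\Omega_\infty\setminus\overline{\Omega_0}$, from Remark~\ref{rem:ar} (orbit types above an $\mathfrak{ar}$-type are $\mathfrak{ar}$) to exclude constant solutions, and from maximality of the $\mathfrak f$-orbit type to count the $S^1$-orbits in $G(x_j)\cong G/H_j$. The one caveat is that your step ``$\pi_2(G_c)=\bz_2$ forces $c=0$'' tacitly assumes no $\gamma\in\Gamma$ satisfies $\gamma c=-c$, but this is exactly the same tacit assumption the paper makes when it asserts that every constant function has isotropy of the form $K\times\{1\}\times O(2)$.
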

\vs

  As a numerical example, we consider $p=2\pi$ and $\Gamma=D_4$
    acting on $V=\br^4$ by permuting the coordinates of
    vectors, i.e.,
    \begin{align*}
      \gamma(x_1,x_2,x_3,x_4)^T=(x_4,x_1,x_2,x_3)^T\quad\mbox{and}\quad
      \kappa(x_1,x_2,x_3,x_4)^T=(x_4,x_3,x_2,x_1)^T.
    \end{align*}
    The space $V$ has $(D_4\times\bz_2)$-isotypical decomposition
    \begin{align*}
      V\;&=V_0^-\oplus V_1^-\oplus V_2^-\\
      &=V_0\otimes\sgnrep{\bz_2}\oplus V_1\otimes\sgnrep{\bz_2}\oplus V_2\otimes\sgnrep{\bz_2},
    \end{align*}
    where $V_0$ is the one-dimensional trivial $D_4$-representation,
    $V_1$ is the 2-dimensional $D_4$-representation
    (we use here the convention introduced in []),
    $V_2$ is the one-dimensional $D_4$-representation on which
    $\widetilde{D}_2$ acts trivially and $\sgnrep{\bz_2}$ is the
    one-dimensional faithful $\bz_2$-representation.
    To be more explicit,
    \begin{align*}
      &V_0^-=\myspan\set{(1,1,1,1)^T},\quad
          V_2^-=\myspan\set{(1,-1,1,-1)^T},\\
      &V_1^-=\myspan\set{(1,0,-1,0)^T,(0,1,0,-1)^T}.
    \end{align*}
    Assume that $f:V\to\br$ satisfies (A1), (A6) and
    \begin{align*}
      A:=\begin{bmatrix}
        c & d & 0 & d \\
        d & c & d & 0 \\
        0 & d & c & d \\
        d & 0 & d & c
      \end{bmatrix}.
    \end{align*}
    Clearly, $\sigma(A)=\set{\mu_0=c+2d,\mu_1=c,\mu_2=c-2d}$,
    where the corresponding eigenspaces are $V_0^-$, $V_1^-$ and $V_2^-$.
    To demonstrate the computation, take $c=3.1$, $d=1.2$.  Then,
    \begin{align*}
      \sigma(A)=\set{\mu_0=5.5,\mu_1=3.1,\mu_2=0.7},
    \end{align*}
    which implies the following list for $\sigma_0$ (see \Cref{tbl:list2_sigma}).

    \begin{table}[h]
      \newcolumntype{Y}{>{\centering\arraybackslash$}X<{$}}
      \newcolumntype{C}{>{$}c<{$}}
      \newcolumntype{L}{>{$}l<{$}}
      \centering
      \begin{tabularx}{\textwidth}{@{}Cp{.3cm}YYY@{}}
        \toprule
        k & & \sigma^{k,0}(\mathscr{A}) &
            \sigma^{k,1}(\mathscr{A}) &
            \sigma^{k,2}(\mathscr{A}) \\
        \midrule
        0 & & -5.5 & -3.1 & -0.7 \\
        1 & & -2.25 & -1.05 & \\
        2 & & -0.3 & & \\
        \bottomrule
      \end{tabularx}
      \caption{List of $\sigma_0$}
      \label{tbl:list2_sigma}
    \end{table}

    It follows that
    \begin{align*}
      \eqdeg{\nabla_G}(\nabla J,\Omega)=\;
          &(\amal{D_4}{O(2)}{}{}{})
          +(\amal{D_4^\td}{O(2)}{}{}{})
          +(\amal{\tD_2^\td}{O(2)}{}{}{})\\
          &-(\amal{\tD_2}{O(2)}{}{}{})
          +(\amal{D_2^d}{O(2)}{}{}{})
          -(\amal{D_1^d}{O(2)}{}{}{})\\
          &-(\amal{D_1}{O(2)}{}{}{})
          -(\amal{\bz_2^z}{O(2)}{}{}{})
          +(\amal{\bz_1}{O(2)}{}{}{})\\
          &+(\amal{D_4^2}{D_{2}}{\bz_{2}}{D_4}{})
          -(\amal{D_4^\td}{D_{2}}{\bz_{2}}{\tD_2}{})
          +\boldsymbol{(\amal{D_2^2}{D_{2}}{\bz_{2}}{D_2^d}{})}\\
          &+\boldsymbol{(\amal{\tD_2^2}{D_{2}}{\bz_{2}}{\tD_2^\td}{})}
          -(\amal{\tD_1^2}{D_{2}}{\bz_{2}}{\tD_1}{})
          -(\amal{\tD_2^\td}{D_{2}}{\bz_{2}}{\tD_1}{})\\
          &-(\amal{\tD_2^\td}{D_{2}}{\bz_{2}}{\bz_2^z}{})
          -(\amal{\tD_2}{D_{2}}{\bz_{2}}{\tD_1}{})
          -(\amal{D_1^2}{D_{2}}{\bz_{2}}{D_1}{})\\
          &-(\amal{D_2^d}{D_{2}}{\bz_{2}}{D_1}{})
          -(\amal{D_2^d}{D_{2}}{\bz_{2}}{\bz_2^z}{})
          -(\amal{D_2^z}{D_{2}}{\bz_{2}}{D_1^d}{})\\
          &-(\amal{\bz_2^2}{D_{2}}{\bz_{2}}{\bz_2^z}{})
          +2(\amal{D_1^d}{D_{2}}{\bz_{2}}{\bz_1}{})
          +(\amal{\tD_1^\td}{D_{2}}{\bz_{2}}{\bz_1}{})\\
          &+(\amal{\tD_1}{D_{2}}{\bz_{2}}{\bz_1}{})
          +(\amal{\bz_2^z}{D_{2}}{\bz_{2}}{\bz_1}{})
          +(\amal{\bz_2}{D_{2}}{\bz_{2}}{\bz_1}{})\\
          &+(\amal{\bz_1^2}{D_{2}}{\bz_{2}}{\bz_1}{})
          -(\amal{\tD_2^\td}{D_{1}}{}{}{})
          -(\amal{D_2^d}{D_{1}}{}{}{})\\
          &+(\amal{D_1^d}{D_{1}}{}{}{})
          +(\amal{D_1}{D_{1}}{}{}{})
          +2(\amal{\tD_1}{D_{1}}{}{}{})\\
          &+2(\amal{\bz_2^z}{D_{1}}{}{}{})
          -4(\amal{\bz_1}{D_{1}}{}{}{})
          +\boldsymbol{(\amal{D_4^2}{D_{4}}{D_{4}}{\bz_2^z}{})}\\
          &-(\amal{D_4^d}{D_{4}}{D_{4}}{\bz_1}{})
          -(\amal{D_4}{D_{4}}{D_{4}}{\bz_1}{})
          -(\amal{D_2^2}{D_{2}}{D_{2}}{D_1}{D_1^2})\\
          &-(\amal{D_2^2}{D_{2}}{D_{2}}{D_1}{D_2})
          -(\amal{D_2^2}{D_{2}}{D_{2}}{\bz_2^z}{\bz_2^2})
          -(\amal{\tD_2^2}{D_{2}}{D_{2}}{\tD_1}{\tD_1^2})\\
          &-(\amal{\tD_2^2}{D_{2}}{D_{2}}{\tD_1}{\tD_2})
          -(\amal{\tD_2^2}{D_{2}}{D_{2}}{\bz_2^z}{\bz_2^2})
          +(\amal{\tD_1^2}{D_{2}}{D_{2}}{\bz_1}{\bz_1^2})\\
          &+(\amal{\tD_2^\td}{D_{2}}{D_{2}}{\bz_1}{\tD_1^\td})
          +(\amal{\tD_2^\td}{D_{2}}{D_{2}}{\bz_1}{\tD_1})
          +(\amal{\tD_2^z}{D_{2}}{D_{2}}{\bz_1}{\bz_2})\\
          &+(\amal{\tD_2}{D_{2}}{D_{2}}{\bz_1}{\bz_2})
          +(\amal{D_1^2}{D_{2}}{D_{2}}{\bz_1}{\bz_1^2})
          +(\amal{D_2^d}{D_{2}}{D_{2}}{\bz_1}{D_1^d})\\
          &+(\amal{D_2^d}{D_{2}}{D_{2}}{\bz_1}{D_1})
          +(\amal{D_2^z}{D_{2}}{D_{2}}{\bz_1}{D_1^d})
          +(\amal{D_2^z}{D_{2}}{D_{2}}{\bz_1}{\bz_2})\\
          &+(\amal{D_2}{D_{2}}{D_{2}}{\bz_1}{\bz_2})
          +(\amal{\bz_2^2}{D_{2}}{D_{2}}{\bz_1}{\bz_2})
          +(\amal{\bz_2^2}{D_{2}}{D_{2}}{\bz_1}{\bz_1^2})\\
          &-(\amal{D_4^2}{D_{1}}{D_{1}}{D_4}{})
          +(\amal{D_4^\td}{D_{1}}{D_{1}}{\tD_2}{})
          +(\amal{\tD_1^2}{D_{1}}{D_{1}}{\tD_1}{})\\
          &+2(\amal{\tD_2^\td}{D_{1}}{D_{1}}{\tD_1}{})
          +(\amal{\tD_2^\td}{D_{1}}{D_{1}}{\bz_2^z}{})
          +(\amal{\tD_2}{D_{1}}{D_{1}}{\tD_1}{})\\
          &+(\amal{D_1^2}{D_{1}}{D_{1}}{D_1}{})
          +2(\amal{D_2^d}{D_{1}}{D_{1}}{D_1}{})
          +(\amal{D_2^d}{D_{1}}{D_{1}}{\bz_2^z}{})\\
          &-4(\amal{D_1^d}{D_{1}}{D_{1}}{\bz_1}{})
          -(\amal{D_1}{D_{1}}{D_{1}}{\bz_1}{})
          -2(\amal{\tD_1^\td}{D_{1}}{D_{1}}{\bz_1}{})\\
          &-2(\amal{\tD_1}{D_{1}}{D_{1}}{\bz_1}{})
          -4(\amal{\bz_2^z}{D_{1}}{D_{1}}{\bz_1}{})
          -(\amal{\bz_2}{D_{1}}{D_{1}}{\bz_1}{})\\
          &-(\amal{\bz_1^2}{D_{1}}{D_{1}}{\bz_1}{})
          +\boldsymbol{(\amal{D_4^2}{D_{4}}{\bz_{2}}{D_4}{})}
          -(\amal{D_4^\td}{D_{4}}{\bz_{2}}{\tD_2}{})\\
          &-(\amal{\tD_2^\td}{D_{4}}{\bz_{2}}{\tD_1}{})
          -(\amal{D_2^d}{D_{4}}{\bz_{2}}{D_1}{})
          +(\amal{D_1^d}{D_{4}}{\bz_{2}}{\bz_1}{})\\
          &+(\amal{\bz_2^z}{D_{4}}{\bz_{2}}{\bz_1}{})
          -(\amal{D_4}{D_{2}}{}{}{})
          +(\amal{\tD_2}{D_{2}}{}{}{})\\
          &+(\amal{D_1}{D_{2}}{}{}{})
          -(\amal{\bz_1}{D_{2}}{}{}{}),
    \end{align*}
    where the underlined orbit types are maximal ones.

\end{example}
\vs
\subsection{Bifurcation-Type of Existence Results for \eqref{eq:New-L}}  In this subsection we are interested in studying the existence of branches of periodic (non-constant) solutions to \eqref{eq:int1} emerging from the zero solution. We assume that the function $f$ satisfies the conditions  (A1) and (A7). By rescaling the time this problem can be reformulated as \eqref{eq:New-L}, where $\lambda>0$ is considered as an additional parameter.  Next, we reformulate the equation \eqref{eq:New-L} in the functional space $\mathscr H$ as the bifurcation problem
\begin{equation}\label{eq:bif1}
\nabla J(\lambda,x)=0,\quad \lambda>0,
\end{equation}
 where the functional $J:\br_+\times \mathscr H\to \br$ is given by \eqref{eq:grad-J}, i.e. we are looking for branches of non-constant periodic solutions $(\lambda, x_\lambda)$ bifurcating from $(\lambda_o,0)$ (for some $\lambda_o>0$).  
 \vs
 Put $G:=\Gamma\times \times O(2)$. Then clearly, $J$ is a $G$-invanriat functional. 
 \vs
 Define  the operators $\mathscr A(\lambda):\mathscr H\to \mathscr H$ by 
 \[
 \mathscr A(\lambda):= \id -jL^{-1}\left(\lambda^2 N_{\nabla^2 f(0)}+\id   \right).
 \]
 and put $\Lambda:=\{\lambda>0: \;\; \mathscr A(\lambda):\mathscr H\to \mathscr H \;\text{ is not an isomorphism}\}$, i.e.
 \[
 \Lambda=\left\{\lambda=\frac{k}{\sqrt{\mu^A}} : k\in \bn,\; \mu^A\in \sigma(A),\; \mu^A>0  \right\}.
 \]
Then, the necessary condition for a point $(\lambda_o,0)$ to be a bifurcation point for \eqref{eq:bif1} is that $\lambda_o\in \Lambda$, which implies that
for some $k_o>0$ there exists $\mu_o\in \sigma(A)$ ($\mu_o>0$), so $\lambda^2_o=\frac{k_o^2+1}{\mu_o}$. Then by choosing a sufficiently small $\delta>0$ such that 
$[\lambda_o^-,\lambda_o^+]\cap \Lambda=\{\lambda_o\}$, where $\lambda_o^\pm :=\lambda_o\pm \delta$. Then we put  
\begin{align*}
\sigma_{\lambda^\pm_o} &= \sigma_-(\mathscr A(\lambda_o^{\pm})):=\{\xi\in \sigma(\mathscr A(\lambda_o^{\pm})): \xi<0\}\\
&=\bigcup_{k=1}^\infty \left\{ \xi=1-\frac{(\lambda_o^\pm)^2 \mu+1}{k^2+1} : \mu\in \sigma(A) ,\; \xi<0\right\}\\
&=:\bigcup _{k=1}^\infty \sigma^k_{\lambda^\pm_o}. 
\end{align*}
Then, clearly 
\[
\sigma_{\lambda_o^+}=\sigma_{\lambda_o^-} \cup\left\{\xi_o \right\}, \quad \text{  where }\; \xi_o:= 1- \frac {(\lambda_o^+)^2+1\mu_o}{k^2_o+1} .
\]
For each $\lambda_o\in \Lambda)$, we choose a sufficiently small $\ve>0$ and put $\Omega_o:=B_\ve(0)$. Then we define the $G$-equivariant bifurcation invariant $\omega(\lambda_o)\in U(G)$ by
\[
\omega(\lambda_o):= \nabla_G\text{\rm-deg}(\nabla J(\lambda_o^-,\cdot ),\Omega_p)- \nabla_G\text{\rm-deg}(\nabla J(\lambda_o^+,\cdot ),\Omega_p).
\]
Notice that
\begin{align*}
\omega(\lambda_o)&=\nabla_G\text{\rm-deg}(\mathscr A(\lambda_o^-),\Omega_o)-\nabla_G\text{\rm-deg}(\mathscr A(\lambda_o^+),\Omega_o)\\
&=  \mathfrak a *\prod_{k\not=k_o}^\infty \prod_{\xi\in \sigma^k_{\lambda_o^-}}\prod_{j=0}^s\left(\text{\rm Deg}_{\mathcal W_{kj}}\right)^{m_{k,j}(\xi)}*\left[ (G)-\prod_{j=0}^s \left(\text{\rm Deg}_{\mathcal W_{k_oj}}\right)^{m_{k_o,j}(\xi_o)}\right],
\end{align*}
where 
\[
\mathfrak a:= \prod_{\xi\in \sigma^0_{\lambda_o^-}}\prod_{j=0}^r\left(\text{\rm Deg}_{\mathcal V_{j}}\right)^{m_{0,j}(\xi)}.
\]
\vs
The following result is a direct consequence of the properties of the equivariant degree theory.
\vs
\begin{theorem}\label{th:bif1}  Assume that the function $f$ satisfies the conditions  (A1) and (A7) and $\lambda_o\in \Lambda$. Suppose that 
\[
\omega(\lambda_o)=n_1(H_1)+n_2(H_2)+\dots+ n_k(H_k), \quad n_j\not=0, \; j=1,,2,\dots,k.
\]
Then  for every $j=1,2,\dots,k$, the system \eqref{eq:New-L} has a branch of non-constant $2\pi$-periodic solutions $\{(\lambda_s,x_s)\}$  bifurcating from $(\lambda_o,0)$ such that $(G_{x_s})\ge (H_j)$. This is equivalent to the existence of a branch of non-constant $p_s$-periodic solutions $\{x_s\}$ of the system \eqref{eq:int1} emerging from $0$ with the limit period $p_o=2\pi \lambda_o$).  Moreover, if $(H_j)$ is a maximal $\mathfrak f$-orbit type in $\mathscr H$ then the orbit $G(x_s)$ contains $\left| G/H_j \right|_{S^1}$ periodic non-constant solutions (where $\left| G/H_j \right|_{S^1}$  stands for the number of $S^1$-orbits in $G/H_j$).
\end{theorem}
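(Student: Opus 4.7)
The plan is to deduce Theorem \ref{th:bif1} from Theorem \ref{thm:Ggrad-properties} by the standard equivariant bifurcation scheme: show that $\omega(\lambda_o)$ is correctly given by the displayed Euler-ring formula, then use a Homotopy--Existence contradiction to produce bifurcating zeros, and finally translate back to the original time variable and count orbits. First I would justify the formula for $\omega(\lambda_o)$. Fix $\delta>0$ so that $[\lambda_o^-,\lambda_o^+]\cap\Lambda=\{\lambda_o\}$, so that $\mathscr A(\lambda_o^\pm)$ are $G$-equivariant isomorphisms. By (A7), $\nabla J(\lambda,\cdot)$ is Fréchet differentiable at $0$ with derivative $\mathscr A(\lambda)$, and the estimate $\nabla J(\lambda,x)-\mathscr A(\lambda)x=o(\|x\|)$ holds uniformly in $\lambda\in[\lambda_o^-,\lambda_o^+]$. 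Therefore, for $\varepsilon>0$ small, the straight-line gradient homotopy obtained by scaling the nonlinear part is $\Omega_o$-admissible with $\Omega_o:=B_\varepsilon(0)$, and the Homotopy Property $(\nabla 3)$ yields
\[
\nabla_G\text{-deg}(\nabla J(\lambda_o^\pm,\cdot),\Omega_o)=\nabla_G\text{-deg}(\mathscr A(\lambda_o^\pm),\Omega_o).
\]
The displayed formula for $\omega(\lambda_o)$ then follows from the linear formula \eqref{eq:grad-lin} combined with Multiplicativity $(\nabla 5)$, once one observes that $\sigma_{\lambda_o^+}$ and $\sigma_{\lambda_o^-}$ differ precisely by the negative eigenvalue $\xi_o$ arising from the pair $(k_o,\mu_o)$.

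Next, for existence, I would argue by contradiction. Assume no branch of non-constant $2\pi$-periodic solutions with isotropy $\geq (H_j)$ emanates from $(\lambda_o,0)$. A standard compactness/connectedness argument (a $G$-equivariant Rabinowitz-type continuation, see \cite{AED,survey,RR}) then yields $\delta,\varepsilon>0$ such that $\nabla J(\lambda,x)\neq 0$ on $[\lambda_o^-,\lambda_o^+]\times(\overline{\Omega_o}\setminus\{0\})$. In that case, the homotopy
\[
\mathfrak h(t,x):=\nabla J\bigl((1-t)\lambda_o^-+t\lambda_o^+,\,x\bigr),\qquad t\in[0,1],
\]
is $\Omega_o$-admissible and $G$-gradient, so $(\nabla 3)$ forces $\nabla_G\text{-deg}(\nabla J(\lambda_o^-,\cdot),\Omega_o)=\nabla_G\text{-deg}(\nabla J(\lambda_o^+,\cdot),\Omega_o)$, i.e.\ $\omega(\lambda_o)=0$, contradicting the hypothesis $n_j\neq 0$. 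The Existence Property $(\nabla 1)$, applied to the coefficient $n_j$ on every sufficiently small $G$-admissible neighborhood of $(\lambda_o,0)$, then produces a sequence of zeros $(\lambda_n,x_n)\to(\lambda_o,0)$ with $x_n\neq 0$ and $(G_{x_n})\geq(H_j)$; a Whyburn-type lemma adapted to $G$-invariant zero sets assembles these into a connected branch. The passage from $2\pi$-periodic solutions of \eqref{eq:New-L} to $p_s$-periodic solutions of \eqref{eq:int1} is the time rescaling $t\mapsto\lambda_s t$ with $p_s=2\pi\lambda_s\to 2\pi\lambda_o=p_o$.

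For the multiplicity statement, maximality of $(H_j)$ as an $\mathfrak f$-orbit type guarantees that $x_s$ is non-constant (so $G_{x_s}$ has finite $O(2)$-projection) and that $(G_{x_s})=(H_j)$. Two elements $g_1 x_s$, $g_2 x_s$ of the $G$-orbit of $x_s$ represent the same trajectory of \eqref{eq:int1} up to time translation if and only if $g_2^{-1}g_1\in G_{x_s}\cdot SO(2)$; hence the number of geometrically distinct $p_s$-periodic solutions produced is exactly $|G/H_j|_{S^1}$. The main technical obstacle is the step in which the \emph{absence of an isotropy-$\geq(H_j)$ branch} is upgraded to \emph{admissibility of the full homotopy $\mathfrak h$ on $\Omega_o$}: because the gradient degree only detects zero-orbits at prescribed isotropy strata, one must invoke the $G$-equivariant global-continuation machinery of \cite{AED,survey} to rule out zeros of all other isotropy types in the punctured neighborhood, rather than argue purely on fixed-point subspaces. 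Everything else is a direct application of the axioms $(\nabla 1)$--$(\nabla 5)$ and the structure of $U(G)$ developed in Section \ref{sec:UGO2}.
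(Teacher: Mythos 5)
The paper offers no written proof of Theorem \ref{th:bif1} (it is declared ``a direct consequence of the properties of the equivariant degree theory''), so your task was to supply the standard argument, and your overall architecture --- linearization and admissibility of $\nabla J(\lambda_o^{\pm},\cdot)$ on a small ball via (A7), the product formula \eqref{eq:grad-lin} for $\omega(\lambda_o)$, a contradiction via homotopy invariance, time rescaling, and the $S^1$-orbit count under maximality --- is indeed the intended one. However, your contradiction step contains a genuine gap. From the assumption that no branch with isotropy $\geq (H_j)$ emanates from $(\lambda_o,0)$ you infer that $\nabla J(\lambda,x)\neq 0$ on all of $[\lambda_o^-,\lambda_o^+]\times(\overline{\Omega_o}\setminus\{0\})$, so that the full homotopy $\mathfrak h$ is admissible and $\omega(\lambda_o)=0$. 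This inference is false: branches with orbit types not dominating $(H_j)$ may perfectly well bifurcate at the same $\lambda_o$ (generically $\omega(\lambda_o)$ has several nonzero coefficients at once, as the examples in Section 7 show), so $\mathfrak h$ need not be admissible. Your proposed remedy --- invoking global continuation to ``rule out zeros of all other isotropy types in the punctured neighborhood'' --- cannot succeed, precisely because such zeros may genuinely exist; no amount of machinery will remove them.

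The correct repair is local and stratified, and contradicts only the single coefficient $n_j$ rather than all of $\omega(\lambda_o)$. Under the contradiction hypothesis there are $\delta,\varepsilon>0$ such that $\nabla J(\lambda,x)\neq 0$ for all $\lambda\in[\lambda_o^-,\lambda_o^+]$ and all $x\in\overline{\Omega_o}\setminus\{0\}$ with $(G_x)\geq(H_j)$; in particular $\nabla J(\lambda,\cdot)$ does not vanish on $\partial\Omega_o\cap\mathscr H^{K}$ for any $(K)\geq(H_j)$, so each Brouwer degree $\deg\bigl((\nabla J(\lambda,\cdot))^{K},\Omega_o^{K}\bigr)$ is constant in $\lambda$ on $[\lambda_o^-,\lambda_o^+]$. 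By downward induction over $\{(K):(K)\geq(H_j)\}$, using the Reduction Property $(\nabla 9)$ together with the Recurrence Formula \eqref{eq:RF-0} (and its one-parameter analogue \eqref{eq:bdeg-nL} when $(H_j)\in\Phi_1(G)$; equivalently, the fact that the coefficient of $(H)$ in the gradient degree depends only on the map restricted to $\overline{\Omega_{(H)}}$, as is built into the normality condition of Definition \ref{def:gene}), the coefficient of every $(K)\geq(H_j)$ --- in particular of $(H_j)$ itself --- in $\nabla_G\text{-deg}(\nabla J(\lambda,\cdot),\Omega_o)$ is independent of $\lambda$, whence $n_j=0$, a contradiction. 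Two smaller points: non-constancy of the detected solutions for \emph{every} $j$ (not only maximal $(H_j)$) follows because (A7) makes $0$ an isolated zero of $\nabla f$, so a sufficiently small ball $\Omega_o$ contains no nonzero constant solutions at all; and the upgrade from a sequence $(\lambda_n,x_n)\to(\lambda_o,0)$ to a connected branch does require the Whyburn/separation argument you mention, which should be cited rather than waved at.
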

\vs
\begin{remark}\label{rem:glob-bif}\rm
Under additional assumption (A4), consider the set  
\[
\mathcal S:=\{(\lambda,x)\in \mathbb R_+\times \mathscr H: \nabla J(\lambda,x)=0\;\text{ and } x\not=0\}.
\]
Then by the well-known results (see for example \cite{DR}) we have that   $(0,0)\notin \overline{\mathcal S}$ (we also sketched the proof in the Appendix). Therefore, 
\[
\br\times \{0\} \cap \overline{\mathcal S}\subset \Lambda\times \{0\}.
\]
Then, we have the following well-known (see for example \cite{GolRyb}) Rabinowitz-type (cf. \cite{Rab}) global `bifurcation' result: {\it Suppose $\mathcal C$ is a connected bounded component of $\overline{\mathcal S}$. Then $\mathcal C$ is compact 
\[\Lambda\times \{0\} \cap \mathcal C=\{(\lambda_1,,0),(\lambda_2,0),\dots,(\lambda_k,0)\}\]
and 
\[
\omega(\lambda_1)+\omega(\lambda_2)+\dots+\omega(\lambda_k)=0.
\]
}
\end{remark}
\vs
\begin{remark} \rm 
In this paper for the sake of simplicity we consider only the case of a non-degenerate stationary solutions for \eqref{eq:int1}. However, similar methods can be applied to study the existence of  periodic solutions of autonomous Hamiltonian systems emanating from degenerate stationary solutions (see for example \cite{DR,RaRy,FRRuan}).
\end{remark}
\vs
In summary, under the assumptions (A1) and (A7), the existence of branches of non-constant periodic solutions for the system \eqref{eq:int1}  emanating from the origin and their symmetric properties 
are topologically classified by the set $\Lambda=\{\lambda_1,\lambda_2,\dots, \lambda_k\}$ (which provides the information about the limit periods) and the values of the equivariant invariants $\{\omega(\lambda_1),\omega(\lambda_2),\dots, \omega(\lambda_k)\}$, which describe the possible symmetries of these branches of periodic solutions.
\vs
\begin{example}\rm 
As a {\bf numerical example} we consider  $\Gamma=S_4$ acting on $V=\br^4$ by permuting the coordinates of vectors.  This representation was already analyzed in Example \ref{ex:1}, including  the $S_4$-isotypical decomposition
$V=V_0\oplus  V_3$. 
We assume that $f:V\to \br$ satisfies (A1), (A7) and the matrices $A$ is of the  type
\[
A:=\left[ \begin{array}{cccc}c&d&d&d\\
d&c&d&d\\
d&d&c&d\\
d&d&d&c
 \end{array}        \right]
\]
where $c=18.8$ and $d=-1.5$.
Then 
$\sigma(A)=\{\mu_0^A=14.3,\, \mu_3^A=20.3\}$
which implies
that 
\[
\sigma(\mathscr A(\lambda))=\left\{ 1-\frac{\lambda^2\cdot 14.3+1 }{k^2+1}, \, 1-\frac{\lambda^2\cdot 20.3+1}{k^2+1}: k=0,1,2,\dots  \right\}
\]
and 
\begin{align*}
\Lambda&=\left\{\lambda_3^k:= \frac{k}{\sqrt{20.3}} ,\; \lambda_0^k:= \frac{k}{\sqrt{14.3}} : k\in \bn\right\}\\
&=\{\lambda_3^1=0.22194838,\, \lambda_0^1=0.26444294,\, \lambda_3^2=0.44389676,\, \lambda_0^2=0.52888589,\, \dots   \}.
\end{align*}
By using our computational database,  the exact values of the $G$-equivariant bifurcation invariants $\omega(\lambda_3^k)$ and $\omega(\lambda_0^k)$ can be easily evaluated. For example we have
\begin{align*}
\omega(\lambda_3^1)&=(S_4\times O(2))-\text{Deg}_{\mathcal V_{3,1}}\\
&= \boldsymbol{ (\amal{D_4}{D_{2}}{\mathbb{Z}_{2}}{D_2}{})}
        +(\amal{D_3}{D_{1}}{}{}{})
        +\boldsymbol{(\amal{D_4}{D_{4}}{D_{4}}{\mathbb{Z}_1}{})}\\
      & -\boldsymbol{(\amal{D_2}{D_{2}}{\mathbb{Z}_{2}}{D_1}{})}
        +(\amal{D_3}{D_{3}}{D_{3}}{\mathbb{Z}_1}{})
        -(\amal{D_2}{D_{2}}{D_{2}}{\mathbb{Z}_1}{D_1})
        -(\amal{V_4}{D_{2}}{D_{2}}{\mathbb{Z}_1}{})\\
      & -(\amal{D_2}{D_{1}}{D_{1}}{D_1}{})
        -(\amal{\mathbb{Z}_2}{D_{2l}}{\mathbb{Z}_{2}}{\mathbb{Z}_1}{})
        -2(\amal{D_1}{D_{1}}{}{}{})
        +(\amal{\mathbb{Z}_2}{D_{1}}{D_{1}}{\mathbb{Z}_1}{})\\
      & +(\amal{\mathbb{Z}_1}{D_{1}}{}{}{}),      
\end{align*}
where the maximal orbit types are marked in bold. 
\end{example}

\vs
\subsection{Appendix: Minimal period of Non-constant periodic solutions}\label{sec:App}
Suppose that the function $f$ satisfies the assumptions (A1), (A4) and (A7). Let us use the following result of Vidossich (cf. \cite{Vid})
\begin{lemma}\label{lem:Vid}
Let $\mathbb E$ be a Banach space, $V:\br \to \mathbb E$ a $p$-periodic function with the following properties:
\begin{itemize}
\item[(i)] $V$ is integrable and $\int_0^p V(t)dt=0$

\item[(ii)] There exists $U\in L^1([0,p/2];\br_+)$ such that $\|V(t)-V(s)\|\le U(t-s)$ for almost all $t$, $s$ with $0\le s\le t\le p$ and $t-s\le p/2$.
\end{itemize}
Then 
\[
p\sup_{t\in \br}\|V(t)\|\le 2\int_0^{\frac p2} U(t)dt.
\]
\end{lemma}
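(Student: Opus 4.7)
The plan is to exploit the zero-mean condition to rewrite $V(t_0)$ as an average of differences $V(t_0)-V(t)$, and then to use periodicity to choose the domain of integration so that $|t-t_0|\le p/2$ everywhere, where the Lipschitz-type bound from (ii) applies.

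First, fix $t_0\in\br$. Since $\int_0^p V(t)\,dt=0$ and $V$ is $p$-periodic, for any interval $I$ of length $p$ one has $\int_I V(t)\,dt=0$; in particular, choosing $I=[t_0-p/2,\,t_0+p/2]$, I would write
\[
V(t_0)=\frac{1}{p}\int_{t_0-p/2}^{t_0+p/2}\bigl(V(t_0)-V(t)\bigr)\,dt.
\]
Then I would split this integral at $t_0$ into the two pieces $\int_{t_0}^{t_0+p/2}$ and $\int_{t_0-p/2}^{t_0}$. On the first piece $0\le t-t_0\le p/2$, so (ii) gives $\|V(t_0)-V(t)\|\le U(t-t_0)$; on the second piece $0\le t_0-t\le p/2$, and (ii) again yields $\|V(t_0)-V(t)\|\le U(t_0-t)$.

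Taking norms inside the integral and changing variables $s=t-t_0$ (resp.\ $s=t_0-t$) in each piece, I obtain
\[
\|V(t_0)\|\le \frac{1}{p}\int_{0}^{p/2}U(s)\,ds+\frac{1}{p}\int_{0}^{p/2}U(s)\,ds=\frac{2}{p}\int_{0}^{p/2}U(s)\,ds.
\]
Multiplying by $p$ and taking the supremum over $t_0\in\br$ gives the required inequality.

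The argument is essentially a one-line computation once the correct interval is chosen, so there is no real obstacle; the only subtlety is making sure the Lipschitz-type estimate in (ii), which is stated only for $0\le s\le t\le p$ with $t-s\le p/2$, is legitimately applied to differences $V(t_0)-V(t)$ with $t_0\in\br$. This is handled by translating (using $p$-periodicity of $V$) so that both $t_0$ and $t$ lie in a common period, which is possible precisely because we restricted $|t-t_0|\le p/2$ in the split above.
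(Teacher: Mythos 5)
First, a point of reference: the paper does not prove this lemma at all --- it is quoted from Vidossich \cite{Vid} and used as a black box in the Appendix --- so there is no internal proof to compare yours against. Your computation is the natural one: writing $V(t_0)=\frac1p\int_{t_0-p/2}^{t_0+p/2}\bigl(V(t_0)-V(t)\bigr)\,dt$ via the zero-mean condition and periodicity of $V$, splitting at $t_0$ and substituting, does produce exactly the bound $\frac2p\int_0^{p/2}U(s)\,ds$, and this is certainly the intended mechanism.

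The gap is precisely the step you flagged as ``the only subtlety'' and then dismissed. Hypothesis (ii) controls $\|V(t)-V(s)\|$ only for pairs with $0\le s\le t\le p$ and $t-s\le p/2$, i.e.\ pairs that do not wrap around the period. For generic $t_0$ your integration interval $[t_0-p/2,t_0+p/2]$ contains pairs $(t_0,t)$ whose interval straddles a multiple of $p$; such a pair cannot be translated by an integer multiple of $p$ into $[0,p]$, and if you reduce the two endpoints separately modulo $p$ the resulting ordered pair in $[0,p]$ has gap $p-|t-t_0|\ge p/2$, which lies outside the range of (ii). So periodicity does not deliver the estimate $\|V(t_0)-V(t)\|\le U(|t-t_0|)$ for wrapped pairs. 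Moreover, this gap cannot be closed, because the lemma as literally stated is false: take $p=1$, $\mathbb E=\br$, $V(t)=t-\lfloor t\rfloor-\frac12$ and $U(\tau)=\tau$; then (i) holds, (ii) holds for almost all admissible pairs in $[0,1]$, yet $p\sup_t\|V(t)\|=\frac12>\frac14=2\int_0^{1/2}U(t)\,dt$. What is actually needed --- and what holds in the paper's application, where $U$ is a global Lipschitz-type bound for $\dot x$ coming from $\ddot x=-\nabla f(x)$ --- is that $\|V(t)-V(s)\|\le U(t-s)$ for almost all real $s\le t$ with $t-s\le p/2$. Under that strengthened reading your argument is complete (up to replacing $\sup$ by the essential supremum, since (ii) is only an almost-everywhere statement); under the literal reading it is not a proof, because the statement is not a theorem.
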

\vs

Consider a non-constant $p$-periodic solution $x(t)$ to \eqref{eq:int1} and assume that $M>0$ is a constant such that  $\|x(t)\|\le M$.  Put $V(t)=\dot x(t)$. Then we have for $o\le s\le t\le p$
\begin{align*}
|\dot x(t)-\dot x(s)|&\le \sup_{|x|\le M}\|\nabla^2 f(x)\|\int_s^t|\dot x(\tau)|\tau d\tau\\
&\le 2p \sup_{|x|\le M}\|\nabla^2 f(x)\|\cdot \sup_{\tau\in \br} |\dot x(\tau)|\cdot (t-s).
\end{align*}
Put $K:=\sup_{|x|\le M}\|\nabla^2 f(x)\|$ and $U(t):=2pK \sup_{\tau\in \br} |\dot x(\tau)|t$. Then by Lemma \ref{lem:Vid} we have
\[
p \sup_{\tau\in \br} |\dot x(\tau)|\le 2\int_0^{\frac p2} U(t)dt=4pK \sup_{\tau\in \br} |\dot x(\tau)|\frac {p^2}4,
\]
which implies
\[
p^2\ge \frac 1K.
\]

\vskip.5cm

\end{document}